%
%
%
%
\documentclass{amsart}

\usepackage[scr=rsfs]{mathalpha}

\usepackage{bm}

\usepackage{rotfloat}

\usepackage{lscape}

\usepackage{tcolorbox}
\usepackage{subcaption}
\usepackage{graphicx}
\usepackage{enumitem}   

\usepackage{cleveref}
\usepackage{mathtools}
\usepackage{geometry}   
\usepackage{framed,multirow,booktabs}

\usepackage{array}

\geometry{top=2.5cm,bottom=2.39cm,left=3.34cm,right=3.34cm} %

\newtheorem{theorem}{Theorem}[section]
\newtheorem{lemma}[theorem]{Lemma}

\newtheorem{conjecture}{Conjecture}[section]

\theoremstyle{remark}
\newtheorem{remark}[theorem]{Remark}

\theoremstyle{problem}
\newtheorem{problem}[theorem]{Problem}

\newtheorem{algorithm}[theorem]{Algorithm}

\theoremstyle{definition}
\newtheorem{definition}[theorem]{Definition}

\numberwithin{equation}{section}



\newcommand{\Wmu}{{\overline \omega}}

\newcommand{\dt}{\Delta t}
\newcommand{\dx}{\Delta x}
\newcommand{\dy}{\Delta y}

\begin{document}
\global\pdfpageattr\expandafter{\the\pdfpageattr/Rotate 0}

\title[Optimal Cell Average Decomposition for Bound-Preserving Schemes]{On Optimal Cell Average Decomposition for High-Order Bound-Preserving Schemes of Hyperbolic Conservation Laws}

\author{Shumo Cui}
\address{Department of Mathematics, Southern University of Science and Technology, Shenzhen, Guangdong 518055, China}
\email{cuism@sustech.edu.cn}

\author{Shengrong Ding}
\address{Department of Mathematics, Southern University of Science and Technology, Shenzhen, Guangdong 518055, China}
\email{dingsr@sustech.edu.cn}

\author{Kailiang Wu}
\address{Corresponding author. Department of Mathematics \& SUSTech International Center for Mathematics, Southern University of Science and Technology, and National Center for Applied Mathematics Shenzhen (NCAMS), Shenzhen, Guangdong 518055, China}
\email{wukl@sustech.edu.cn}
\thanks{The third author is the corresponding author. This work is supported in part by NSFC grant 12171227.}

\subjclass[2020]{Primary 35L65, 65M12, 65M60, 65M08} 



\keywords{Discontinuous Galerkin methods, hyperbolic conservation laws, bound-preserving schemes,  cell average decomposition}

\begin{abstract}
This paper presents the first systematic study on the fundamental problem of seeking optimal cell average decomposition (OCAD), which arises from constructing efficient high-order bound-preserving (BP) numerical methods within Zhang--Shu framework. 
Since proposed in 2010, Zhang--Shu framework has attracted extensive attention and been applied to developing many 
 high-order BP discontinuous Galerkin and finite volume schemes for various hyperbolic equations. 
An essential ingredient in the framework is the decomposition of the cell averages of the numerical solution into a convex combination of the solution values at certain quadrature points.  
The classic CAD originally proposed by Zhang and Shu has been widely used in the past decade. However, the feasible CADs are not unique, and different CAD would affect the theoretical BP CFL condition and thus the computational costs. 
Zhang and Shu only checked, for the 1D $\mathbb P^2$ and $\mathbb P^3$ spaces, that their  classic CAD based on the Gauss--Lobatto quadrature is {\em optimal} 
in the sense of achieving the mildest BP CFL conditions. 
However, it was recently discovered that the classic CAD is generally not optimal for the multidimensional $\mathbb P^2$ and $\mathbb P^3$ spaces. 
It remained unclear what CAD is optimal for general polynomial spaces, especially in the multiple dimensions.  
In this paper, we establish the general theory for studying the OCAD problem on Cartesian meshes in 1D and 2D. 
We rigorously prove that the classic CAD is optimal for general 1D $\mathbb P^k$ spaces 
and general 2D $\mathbb Q^k$ spaces of an 
arbitrary $k$. 
For the widely used 2D $\mathbb P^k$ spaces, 
the classic CAD is not optimal, and we establish the general approach to find out the genuine OCAD and propose a more practical quasi-optimal CAD, 
both of which provide much milder BP CFL conditions than the classic CAD. 
As a result, our OCAD and quasi-optimal CAD notably improve the efficiency of high-order BP schemes for a large class of hyperbolic or convection-dominated equations, at the little cost of only a slight and local modification to the implementation code. 
The remarkable advantages in efficiency 
are further confirmed by 
several numerical examples covering four hyperbolic partial differential equations.

The proposed analysis and theory of OCADs are highly nontrivial and involve novel techniques from several branches of mathematics. We prove 
several key properties of the OCAD problem, including 
the existence of OCAD by using Carath\'eodory's theorem from convex geometry. 
Through transformation onto a reference cell, 
we simplify the 2D OCAD problem to a symmetric OCAD problem based on the invariant theory of symmetric group in abstract algebra. 
 Most notably, we discover that the symmetric OCAD problem is closely related to 
 polynomial optimization of a positive linear functional on the positive polynomial cone, thereby establishing four useful criteria for examining the optimality of a feasible CAD. Some geometric insights are also provided to interpret our critical findings.   


\end{abstract}

\maketitle



\section{Introduction}
This paper is concerned with robust and efficient high-order numerical methods for hyperbolic conservation laws 
\begin{equation}\label{HCL}
	\begin{cases}
	u_t + \nabla \cdot {\bm f} (u) =0,  \quad  & ({\bm x},t) \in \mathbb R^d \times \mathbb R^+,
	\\
	u({\bm x},0) = u_0({\bm x}), \quad & x \in \mathbb R^d,
\end{cases}
\end{equation}
where ${\bm x}$ denotes the spatial coordinate variable(s) in $d$-dimensional space, $t$ denotes the time, 
the conservative variable(s) $u$ takes values in $\mathbb R^m$, and the flux $\bm f$ takes values in 
$(\mathbb R^m)^d$. Our discussions in this paper can also be applicable to 
other related hyperbolic or convection dominated equations. 

Solutions to the hyperbolic equations \eqref{HCL} typically satisfy certain bounds, which define a convex invariant region $G \subset \mathbb R^m$. For example, the entropy solution to scalar conservation laws ($m=1$)  
satisfies the maximum principle \cite{zhang2010}: 
\begin{equation}\label{scalar_bounds}
	u({\bm x},t)  \in G:= [ U_{\rm min}, U_{\rm max} ]  \qquad \forall t\ge 0 
\end{equation}
with $U_{\rm min} := \min_{{\bm x}} u_0 ({\bm x})$ and $U_{\rm max} := \max_{{\bm x}} u_0 ({\bm x})$. 	
Other important examples include but are not limited to: 
\begin{itemize}
	\item the positivity of water height in shallow water equations \cite{xing2010positivity};
	\item the positivity of density and pressure in compressible Euler equations \cite{zhang2010b};
	\item the subluminal constraint on fluid velocity, the positivity of pressure, and the positivity of rest-mass density in relativistic hydrodynamics \cite{WuTang2015,QinShu2016,Wu2017}.
\end{itemize}
When numerically solving such hyperbolic equations, it is highly desirable or even essential to 
preserve the intrinsic bounds, namely, to preserve the numerical solutions in the region $G$. 
In fact, if the numerical solutions go outside the bounds, for example, negative density or negative pressure is produced 
when solving the Euler equations, the discrete problem would become ill-posed due to the loss of hyperbolicity of the system, 
leading to the instability or breakdown of the numerical computation. 

As well known  for scalar conservation laws,  the first-order monotone schemes
\begin{equation}\label{scalar-1st-order}
	\bar {u}_j^{n+1} = \bar u_{j}^n - \lambda  
\left(  \hat f( \bar u_{j}^n, \bar u_{j+1}^n  ) - \hat f( \bar u_{j-1}^n, \bar u_{j}^n  )  \right) =: H_\lambda(\bar u_{j-1}^n,\bar u_{j}^n, \bar u_{j+1}^n )  
\end{equation}
with $H_\lambda$ monotonically increasing in all of its arguments, 
have been proved to preserve the bounds \eqref{scalar_bounds} under a suitable CFL condition 
\begin{equation}\label{1st_CFL}
	a  \lambda \le c_0,
\end{equation}
where $\lambda = \Delta t/\Delta x$ is the ratio of the temporal and spatial step-sizes, $a$ denotes the maximum characteristic speed,  and $c_0$ is the maximum allowable CFL number. 
Examples of such first-order monotone schemes include the Godunov scheme, the Lax--Friedrichs scheme, and 
the Engquist-Osher scheme, etc. 
These first-order bound-preserving (BP) schemes were also extended to many hyperbolic systems. 

However, constructing high-order accurate BP schemes is rather nontrivial. 
In \cite{zhang2010,zhang2010b}, Zhang and Shu proposed a general framework of designing high-order BP discontinuous Galerkin (DG) and  finite volume (FV) schemes for hyperbolic conservation laws  on rectangular meshes. 
Later, Zhang, Xia, and Shu further extended the framework to unstructured triangular meshes in \cite{zhang2012maximum}. 
Over the past decade, their framework has attracted extensive attention and been generalized to various hyperbolic or convection dominated equations; see, for example, \cite{xing2010positivity,wang2012robust,zhang2013maximum,WuTang2015,QinShu2016,ZHANG2017301,Wu2017,du2019high,du2019third,wu2021minimum}  
and the survey papers \cite{zhang2011b,XuZhang2017}. 
Recently, inspired by a series of BP study on magnetohydrodynamics \cite{Wu2017a,WuShu2018,WuShu2020NumMath}, 
the geometric quasilinearization (GQL) framework was established in  \cite{WuShu2021GQL} for BP problems involving nonlinear constraints.  
The readers are also referred to \cite{Xu2014,xiong2016parametrized,WuTang2015,guermond2017invariant,jiang2018invariant,du2021maximum,campos2019algorithms} 
for some other BP techniques.

The Zhang--Shu approach \cite{zhang2010,zhang2010b} classifies 
the loss of BP property in high-order FV and DG schemes into two cases: The first case is that 
the updated cell averages of the numerical solutions may be outside the set $G$, 
while the second is that the point values of the piecewise polynomial solutions, 
either reconstructed in a FV scheme or evolved by a DG scheme, 
 may be out of the region $G$. 
As long as the BP property of the updated cell averages is guaranteed, 
then a simple scaling BP limiter can be employed to enforce the pointwise bounds 
of the piecewise polynomial solutions without affecting the high-order accuracy \cite{zhang2010,zhang2010b}. 
Therefore, the key task is to ensure that the cell averages are always preserved within the region $G$ during the updating process. 
Let us consider the evolution equation of cell averages for a high-order FV or DG scheme for the one-dimensional (1D) scalar conservation laws, 
which can be written in a unified form as 
\begin{equation}\label{ZS-1D-cell-ave-evo}
		\bar {u}_j^{n+1} = \bar u_{j}^n - \lambda 
	\left(  \hat f( u_{j+\frac12}^-, u_{j+\frac12}^+  ) - \hat f( u_{j-\frac12}^-, u_{j-\frac12}^+  )  \right). 
\end{equation} 
Here $\bar u_{j}^n$ denotes the average on cell $\Omega_{j}:=[x_{j-\frac12}, x_{j+\frac12}]$ at time level $n$, 
and the limiting values at the cell interfaces are computed by  
$$
u_{j-\frac12}^+ = p_j( x_{j-\frac12} ), \qquad  u_{j+\frac12}^- = p_j( x_{j+\frac12} ) \qquad \forall j,
$$
where the polynomial $p_j(x)$ of degree $k$ is either evolved in a DG scheme or reconstructed in a FV scheme on $\Omega_{j}$ with its cell average on $\Omega_{j}$ equaling $\bar u_{j}^n$. Assume the numerical flux $\hat f ( \cdot, \cdot )$ is monotone, so that the corresponding first-order scheme \eqref{scalar-1st-order} is BP under the CFL condition \eqref{1st_CFL}. 
It can be verified that the high-order scheme \eqref{ZS-1D-cell-ave-evo} is increasing 
only with $\bar u_{j}^n$ and  the exterior limiting values $\{u_{j+\frac12}^+, u_{j-\frac12}^-\}$, but decreasing 
with the interior limiting values $\{u_{j+\frac12}^-, u_{j-\frac12}^+\}$. 
Therefore, the monotonically increasing property is invalid to achieve the BP property for the high-order scheme \eqref{ZS-1D-cell-ave-evo}. 
To address this issue, Zhang and Shu \cite{zhang2010} proposed a novel strategy by using 
the cell average $\bar u_{j}^n$ to control the effect of the 
interior limiting values $\{u_{j+\frac12}^-, u_{j-\frac12}^+\}$. They decomposed the cell average $\bar u_{j}^n$ into a convex combination of some point values via the $L$--point Gauss--Lobatto quadrature with $L=\lceil \frac{k+3}{2}\rceil$, which is exact for polynomials of degree up to $k$. 
This implies 
\begin{equation}\label{eq:GL1D}
	\bar u_j^n = \frac{1}{\Delta x} \int_{ \Omega_{j} } p_j(x) {\rm d} x = \sum_{\ell = 1}^L {\omega}_\ell^{{\tt GL}} p_j( x_{j,\ell}^{{\tt GL}} ) =  \sum_{\ell = 2}^{L-1}  {\omega}_\ell^{{\tt GL}} p_j(  x_{j,\ell}^{{\tt GL}} )  
	+ {\omega}_1^{{\tt GL}}  u_{j-\frac12}^+  +  {\omega}_L^{{\tt GL}}  u_{j+\frac12}^-,
\end{equation}
where  $\{ {\omega}_\ell^{{\tt GL}} \}$ are the Gauss--Lobatto quadrature weights (which are all positive), and $\{  x_{j,\ell}^{{\tt GL}} \}$ are the quadrature nodes with $x_{j,1}^{{\tt GL}} = x_{j-\frac12}$ and $x_{j,L}^{{\tt GL}} = x_{j+\frac12}$. 
Based on the cell average decomposition \eqref{eq:GL1D}, Zhang and Shu rewrote the scheme \eqref{ZS-1D-cell-ave-evo} equivalently as 
$$
\bar u_j^{n+1} =  {\omega}_1^{{\tt GL}}   H_{ \frac{ \lambda}{  {\omega}_1^{{\tt GL}} } } \left( u_{j-\frac12}^-, u_{j-\frac12}^+, u_{j+\frac12}^- \right) +   {\omega}_L^{{\tt GL}} H_{ \frac{ \lambda}{  {\omega}_L^{{\tt GL}} } } \left( u_{j-\frac12}^+, u_{j+\frac12}^-, u_{j+\frac12}^+ \right)  + \sum_{\ell = 2}^{L-1}  {\omega}_\ell^{{\tt GL}} p_j(  x_{j,\ell}^{{\tt GL}} ),
$$
which is a convex combination form of the formally first-order scheme. If 
we use a simple accuracy-maintaining BP limiter \cite{zhang2010} to enforce  
\begin{equation}\label{eq:PPcond}
	p_{j}(  x_{j,\ell}^{{\tt GL}} ) \in G  \qquad \forall j, \ell, 
\end{equation}
then by the convexity of $G$, the high-order scheme \eqref{ZS-1D-cell-ave-evo} preserves $\bar u_j^{n+1} \in G$ under the CFL condition 
\begin{equation}\label{1Dhst_CFL}
	a  \lambda \le c_0 \min\{  \omega_1^{{\tt GL}},  \omega_L^{{\tt GL}} \}.
\end{equation}
For the $L$-point Gauss--Lobatto quadrature, $\omega_1^{{\tt GL}} = \omega_L^{{\tt GL}} = \frac{1}{L(L-1)}$.

As we have seen, the cell average decomposition (CAD) in \eqref{eq:GL1D} plays a critical role in constructing high-order BP schemes in Zhang--Shu framework. 
The classic decomposition \eqref{eq:GL1D} originally proposed by Zhang and Shu has been widely used over the past decade. Clearly, the {\em feasible} decomposition strategies, as defined below, are not unique. The classic CAD \eqref{eq:GL1D} is obviously feasible.

\begin{definition}[1D Feasible CAD]
	Let $\mathbb P^k$ denote the space of 1D polynomials of degree up to $k$. 
Let $\langle p \rangle_{ \Omega_j }$ denote the average of a polynomial $p$ over a closed interval $\Omega_j=[x_{j-\frac12},x_{j+\frac12}]$.  
A 1D cell average decomposition   
	\begin{equation}\label{1Ddecomp}
		\langle p \rangle_{ \Omega_j } := \frac{1}{\Delta x} \int_{ \Omega_j } p(x) {\rm d} x =
		\omega^- p( x_{j-\frac12} ) + \omega^+ p( x_{j+\frac12} ) + 
		 \sum_{s=1}^S {\omega}_{s} p(  x_{j}^{(s)} )
	\end{equation}
is said to be {\em feasible} for the space $\mathbb P^k$, if it simultaneously satisfies 
the following three conditions:
\begin{enumerate}[label=(\roman*)]
\item  the identity \eqref{1Ddecomp} exactly holds for all $p \in \mathbb P^k$;
\item  the weights $\{\omega^\pm,{\omega}_{s}\}$ are all positive (their summation equals one);
\item  the internal node set $\mathbb S_j := \{ x_{j}^{(s)} \}_{s=1}^S \subset \Omega_j$. 
\end{enumerate}
\end{definition}

Note that different decomposition 
strategies would give 
different values of $\min\{ \omega^-, \omega^+ \}$ and lead to 
different BP CFL condition  
\begin{equation}\label{1Dhst_CFL_all}
	a  \lambda \le c_0 \min\{ \omega^-, \omega^+ \},
\end{equation}
which affects the computational costs of the overall scheme. 
In view of the efficiency, it is natural to ask a fundamental and important question:
\\[0.01mm]
\begin{center} 
	\parbox{0.8\textwidth}{\em {
			What is the optimal CAD (OCAD) such that the maximum BP CFL number $c_0 \min\{ \omega^-, \omega^+ \}$ is largest?}\\[0.01mm]}
\end{center}
\begin{problem}[1D OCAD Problem]\label{prob:1DOCAD}
Given $k \in \mathbb N_+$, 
find the optimal decomposition in the form of \eqref{1Ddecomp} that is feasible and maximizes 
$${\mathcal G}_1 ( \omega^-, \omega^+ ) := \min\{ \omega^-, \omega^+ \}$$ 
among all feasible CADs, or equivalently, find the 1D positive quadrature rule\footnote{A quadrature rule is said to be positive, if its weights are all positive.} on $\Omega_j$, with the degree of algebraic accuracy 
being at least $k$ and the quadrature nodes including the two endpoints of $\Omega_j$, that provides the maximum ${\mathcal G}_1 ( \omega^-, \omega^+ )$. 
\end{problem}
In the trivial case of $k=1$, 
the classic decomposition \eqref{eq:GL1D} based on the Gauss--Lobatto quadrature is evidently optimal, because $\omega_1^{{\tt GL}}= \omega_L^{{\tt GL}}=\frac12$. 
Zhang and Shu mentioned in \cite[Remark 2.7]{zhang2010} that they had checked that
the decomposition \eqref{eq:GL1D} is optimal in the special cases of $2\le k\le3$. 
\\[0.01mm]
\begin{center} 
	\parbox{0.8\textwidth}{\em {
			 It remains unclear whether the CAD \eqref{eq:GL1D} is also optimal for general $k\ge 4$.}\\[0.01mm]}
\end{center}

Compared to the 1D OCAD problem, the multi-dimensional OCAD problems \cite{cui2022classic} are much more complicated and challenging. 
In this paper, we focus on two classes of two-dimensional (2D) polynomial spaces: 
the polynomial space of total degree up to $k$ (denoted by $\mathbb P^k$) and 
the tensor-product polynomial space of degree up to $k$ (denoted by $\mathbb Q^k$).
On 2D rectangular meshes, the feasible CAD 
for high-order BP schemes can be defined as follows.

\begin{definition}[2D Feasible CAD]\label{def:2D_FCAD}
	Let the 2D polynomial space $\mathbb V^k$ be either $\mathbb P^k$ or $\mathbb Q^k$. 
	Let  
	$$
	\langle p \rangle_{ \Omega_{ij} } : = \frac{1}{\Delta x \Delta y} \int_{ x_{i-\frac12} }^{ x_{i+\frac12} } \int_{ y_{j-\frac12} }^{ y_{j+\frac12} } p(x,y) {\rm d} y {\rm d} x
	$$
	denote the average of a 2D polynomial $p$ over a rectangular cell $\Omega_{ij}=[x_{i-\frac12},x_{i+\frac12}] \times [y_{j-\frac12},y_{j+\frac12}]$. 
	A 2D CAD    
	\begin{equation}\label{2Ddecom}
	\begin{aligned}
			\langle p \rangle_{ \Omega_{ij} } &=  
		\frac{1}{ \Delta y } \int_{ y_{j-\frac12} }^{ y_{j+\frac12} } 
		 \Big( \omega_1^-  p(x_{i-\frac12},y) + \omega_1^+  p(x_{i+\frac12},y)   \Big) 
		 {\rm d} y
		\\
		& + 
		\frac{1}{ \Delta x } \int_{ x_{i-\frac12} }^{ x_{i+\frac12} } 
	\Big(	\omega_2^- p(x,y_{j-\frac12}) + \omega_2^+ p(x,y_{j+\frac12})   \Big)
		{\rm d} x
		 + \sum_{s=1}^S \omega_s p ( x_{ij}^{(s)}, y_{ij}^{(s)} )
	\end{aligned}
	\end{equation}
	is said to be {\em feasible} for the space $\mathbb V^k$, if it simultaneously satisfies 
	the following three conditions:
	\begin{enumerate}[label=(\roman*)]
		\item  the identity \eqref{2Ddecom} exactly holds for all $p \in \mathbb V^k$;
		\item  the weights $\{{\omega}_1^\pm, {\omega}_2^\pm, {\omega}_s\}$ are all positive (their summation equals one);
		\item  the point set $\mathbb S_{ij} := \{ ( x_{ij}^{(s)}, y_{ij}^{(s)}  ) \}_{s=1}^S \subset \Omega_{ij}$. 
	\end{enumerate}
For convenience, we refer to $\{{\omega}_1^\pm, {\omega}_2^\pm\}$ as the {\bf boundary weights} and $\omega_s$ as the {\bf internal weights}, and 
refer to the node set $\mathbb S_{ij}$ as the set of {\bf internal nodes}. 
\end{definition}

In 2D high-order FV or DG methods, the integration of numerical fluxes on the cell interface should be discretized by a suitable 1D Gauss quadrature rule, which is exact for polynomials up to degree $k$. 
Applying this Gauss quadrature to the line integrals in \eqref{2Ddecom}, we can 
equivalently rewrite 
 the feasible CAD \eqref{2Ddecom} as  
	\begin{equation}\label{2DdecomGS}
	\begin{aligned}
		\langle p \rangle_{ \Omega_{ij} } &=  
		\sum_{q=1}^Q  \omega_{q}^{{\tt G}} 
		\Big( \omega_1^-  p(x_{i-\frac12}, y_{j,q}^{{\tt G}}  ) + \omega_1^+  p(x_{i+\frac12},   y_{j,q}^{{\tt G}}  )   \Big) 
		\\
		& + 
		\sum_{q=1}^Q   \omega_{q}^{{\tt G}}  
		\Big(	\omega_2^- p(  x_{i,q}^{{\tt G}}  ,y_{j-\frac12}) + \omega_2^+ p(  x_{i,q}^{{\tt G}}, y_{j+\frac12})   \Big)
		+\sum_{s=1}^S \omega_s p( x_{ij}^{(s)}, y_{ij}^{(s)}  ),
	\end{aligned}
\end{equation}
where $\{ x_{i,q}^{{\tt G}}  \}$ and $\{ y_{j,q}^{{\tt G}}  \}$ are the Gauss quadrature nodes in the intervals $[x_{i-\frac12},x_{i+\frac12}]$ and $[y_{j-\frac12},y_{j+\frac12}]$, respectively, 
and $\{\omega_{q}^{{\tt G}}  \}$ are the Gauss weights.   
As long as the feasible decomposition \eqref{2Ddecom} or \eqref{2DdecomGS} is available, one can construct high-order BP schemes on 2D rectangular meshes under the following CFL condition  (see \cite{cui2022classic})
\begin{equation}\label{2Dhst_CFL_all}
	\Delta t \le c_0 \min \left\{ \frac{\omega_1^- \Delta x}{a_1}, \frac{\omega_1^+ \Delta x}{a_1} , \frac{\omega_2^- \Delta y}{a_2}, \frac{\omega_2^+ \Delta y}{a_2} \right\}=: c_0 \, {\mathcal G}_2( \omega_1^-, \omega_1^+, \omega_2^-, \omega_2^+  ),
\end{equation}
where  $a_1$ and $a_2$ are the maximum characteristic speeds in the $x$- and $y$-directions, respectively. 
It is natural to seek the optimal decomposition such that the CFL condition \eqref{2Dhst_CFL_all} is mildest. 

\begin{problem}[2D OCAD Problem]\label{prob:2DOCAD}
Given $k \in \mathbb N_+$, the space $\mathbb V^k$ and $\{a_1>0, a_2>0, \Delta x>0, \Delta y>0 \}$, 
find the optimal feasible decomposition \eqref{2Ddecom} that maximizes ${\mathcal G}_2( \omega_1^-, \omega_1^+, \omega_2^-, \omega_2^+  )$. This is equivalent to 
find the 2D positive quadrature rule on $\Omega_{ij}$, which is exact for all $p \in \mathbb V^k$ with the quadrature nodes including all the Gauss nodes on $\partial \Omega_{ij}$ and maximizes ${\mathcal G}_2( \omega_1^-, \omega_1^+, \omega_2^-, \omega_2^+  )$. 
\end{problem}

The 2D OCAD problem is much more challenging than the 1D case. Based on the tensor product of 
the $Q$-point Gauss quadrature and the $L$-point Gauss--Lobatto quadrature, 
Zhang and Shu proposed the following feasible CAD  in \cite{zhang2010,zhang2010b}:
\begin{equation} \label{eq:U2Dsplit}
	\begin{split}
		\langle p \rangle_{ \Omega_{ij} }
		&	= \frac{ a_1 /\Delta x    }{  a_1 /\Delta x + a_2 /\Delta y } \omega_1^{{\tt GL}}
			\sum_{q=1}^Q   \omega_{q}^{{\tt G}}  
		\Big(  p(x_{i-\frac12},  y_{j,q}^{{\tt G}}  ) +   p(x_{i+\frac12},   y_{j,q}^{{\tt G}}  )   \Big) 
		\\
		& + 
	\frac{ a_2 /\Delta y    }{  a_1 /\Delta x + a_2 /\Delta y } \omega_1^{{\tt GL}}
	\sum_{q=1}^Q    \omega_{q}^{{\tt G}}  
	\Big(	 p(  x_{i,q}^{{\tt G}}  ,y_{j-\frac12}) +  p( x_{i,q}^{{\tt G}}  ,y_{j+\frac12})   \Big)
	\\
	& + \sum_{\ell=2}^{L-1} \sum_{q=1}^Q 
	 \omega_\ell^{{\tt GL}}   \omega_q^{{\tt G}}  \left(
	\frac{a_1/\Delta x}{a_1/\Delta x + a_2/\Delta y} p(x_{i,\ell}^{{\tt GL}} , y_{j,q}^{{\tt G}}) + 
	\frac{a_2/\Delta y}{a_1/\Delta x + a_2/\Delta y} p(x_{i,q}^{{\tt G}},  y_{j,\ell}^{{\tt GL}}) 
	\right),
	\end{split}
\end{equation}
which corresponds to ${\omega}_1^\pm = \frac{ a_1 \Delta y  \omega_1^{{\tt GL}} }{  a_1 \Delta y + a_2 \Delta x } $ and ${\omega}_2^\pm = \frac{ a_2 \Delta x  \omega_1^{{\tt GL}} }{  a_1 \Delta y + a_2 \Delta x } $. As a special case of \eqref{2Dhst_CFL_all}, the CAD \eqref{eq:U2Dsplit} leads to the following BP CFL condition \cite{zhang2010,zhang2010b}: 
\begin{equation}\label{eq:ZS-2D-CFL}
	\Delta t \le  \frac{  c_0  \omega_1^{{\tt GL}} }{  a_1/\Delta x + a_2 /\Delta y },
\end{equation}
namely, the corresponding maximum CFL number is $c_0  \omega_1^{{\tt GL}}$. 
It is natural to ask: Is the classic CAD \eqref{eq:U2Dsplit} optimal in the 2D case? 
This question had been open until our recent work in \cite{cui2022classic}. 
We found that the classic Zhang--Shu CAD \eqref{eq:U2Dsplit} is generally not optimal for $\mathbb P^k$ spaces in multiple dimensions, 
and we successfully constructed the OCAD for the multidimensional $\mathbb P^2$ and $\mathbb P^3$ spaces; see \cite{cui2022classic} for details. 
\\[0.01mm]
\begin{center} 
	\parbox{0.8\textwidth}{\em {
			 It is still unclear what CAD is optimal for general polynomial spaces, including 2D $\mathbb P^k$ spaces with $k \ge 4$ and 2D $\mathbb Q^k$ spaces.  
			}\\[0.01mm]}
\end{center}


 \begin{table}[b!] 
	\renewcommand\arraystretch{1.8}
	\centering
	\caption{Comparison of  
		optimal BP CFL number $\Wmu_\star c_0$, classic BP CFL number $\omega_1^{{\tt GL}} c_0$, and 
		standard linearly stable CFL number $\frac{1}{2k+1}$, in the case of  $c_0=1$ and $a_1 \Delta y = a_2 \Delta x$ for $\mathbb P^k$-based DG methods with $1\le k \le 9$.}
	\label{tab:CFL}
	\setlength{\tabcolsep}{4mm}{
		\begin{tabular}{cllll}
			\toprule[1.5pt]
			$k$ & standard ($\frac{1}{2k+1}$) & classic ($\omega_1^{{\tt GL}}$) & {optimal} ($\Wmu_{\star}$)   \\
			
			\midrule[1.5pt]
			1 & $\frac{1}{3} = 0.3333$  & $\frac{1}{2} = 0.5$       &  $\frac{1}{2} = 0.5$ \\
			2 & $\frac{1}{5} = 0.2$      & $\frac{1}{6} = 0.1667$   &  $\frac{1}{4} = 0.25$       \\
			3 & $\frac{1}{7} = 0.1429$   & $\frac{1}{6} = 0.1667$   &  $\frac{1}{4} = 0.25$       \\
			4 & $\frac{1}{9} = 0.1111$  & $\frac{1}{12} = 0.08333$  &  $2-\frac{\sqrt{14}}{2} = 0.1292$ \\
			5 & $\frac{1}{11} = 0.09091$ & $\frac{1}{12} = 0.08333$ &  $2-\frac{\sqrt{14}}{2} = 0.1292$ \\
			6 & $\frac{1}{13} = 0.07692$ & $\frac{1}{20} = 0.05$                &  $1-\frac{\sqrt{30}}{6} = 0.08713$ \\
			7 & $\frac{1}{15} = 0.06667$ & $\frac{1}{20} = 0.05$                &  $1-\frac{\sqrt{30}}{6} = 0.08713$ \\
			8 & $\frac{1}{17} = 0.05882$  & $\frac{1}{30} = 0.03333$  &  0.05767 \\
			9 & $\frac{1}{19} = 0.05263$  & $\frac{1}{30} = 0.03333$  &  0.05767 \\
			\bottomrule
			
		\end{tabular}
	}
\end{table}

This paper makes the first attempt to systematically study the OCAD problems for general 1D and 2D polynomial spaces. 
Our efforts and findings are summarized as follows. 
\begin{itemize}
	\item  
	We rigorously prove that, in the 1D case, the classic CAD \eqref{eq:GL1D} is optimal for general $\mathbb P^k$ spaces of an arbitrary $k \in \mathbb N_+$. 
	The key point of the proof is to first relate the OCAD problem to a new optimization problem (\Cref{prob:1Dnew}) with infinitely many linear constraints described by non-negative polynomials, and then to construct a non-negative polynomial in $\mathbb P^k$ vanishing at all the internal nodes of \eqref{eq:GL1D}. 
	\item We establish the general theory for studying the 2D OCAD problem on Cartesian meshes. 
	This is highly nontrivial and involves novel techniques from several branches of mathematics. We prove 
	several key properties of the OCAD problem, including 
	the existence of OCAD by using Carath\'eodory's theorem from convex geometry. 
	Through transformation onto a reference cell, 
	we simplify the 2D OCAD problem to a symmetric OCAD problem (\Cref{prb:4.4}) based on the invariant theory of symmetric group. 
	Most notably, we discover that the symmetric OCAD problem is closely related to 
	polynomial optimization of a positive linear functional on the positive polynomial cone, thereby establishing four useful criteria for examining the optimality of a feasible CAD. Some geometric insights are also provided to interpret our critical findings.
	\item Based on the proposed theory, 
	we rigorously prove that, in the 2D case, the classic CAD \eqref{eq:U2Dsplit} is optimal for general $\mathbb Q^k$ spaces of an arbitrary $k \in \mathbb N_+$. 
	\item 
	It is observed that the classic CAD \eqref{eq:U2Dsplit} is {\em not} optimal for 2D $\mathbb P^k$ spaces. 
	As the polynomial degree $k$ increases, seeking the genuine OCAD becomes more and more difficult. 
	We develop a systematical approach to 
	find the genuinely optimal CADs for the 2D $\mathbb P^k$ spaces, which is a highly nontrivial task. 
	We derive the analytical formulas of OCADs for $\mathbb P^k$ spaces with $k \le 7$. 
	A general algorithm is also proposed to construct the OCADs for $\mathbb P^k$ spaces with $k\ge 8$. 
	The discovery of OCAD is highly nontrivial yet meaningful, as it leads to an improvement of high-order BP schemes for a large class of hyperbolic or convection-dominated equations, at the little cost of only a slight and local modification to the implementation code; see \Cref{tab:CFL} for a comparison. 
	\item Based on geometric insights, we also propose a more practical quasi-optimal CAD, which can be easily constructed via a convex combination of the OCADs in three special cases. It is demonstrated that the quasi-optimal CAD can achieve a near-optimal BP CFL condition, which is very close to the optimal one.
	\item We apply the proposed OCAD and quasi-optimal CAD to designing more efficient BP high-order schemes with milder CFL condition for hyperbolic conservation laws. The notable advantages in efficiency are also demonstrated by several examples covering four hyperbolic partial differential equations, including the convection equation, the inviscid Burgers' equation, the compressible Euler equations, and the relativistic hydrodynamic equations.   
\end{itemize}

The paper is organized as illustrated in \Cref{fig:paper}. 
We analyze the 1D OCAD problem in \Cref{sec:1D_OCAD} and prove that classic CAD \eqref{eq:GL1D} is optimal in the 1D case. 
\Cref{sec:2Dtheory} proposes the general theory for the 2D OCAD problem. 
The OCADs for 2D $\mathbb Q^k$ spaces and 2D $\mathbb P^k$ spaces are studied in \Cref{sec:2DQk} and \Cref{sec:2DPk}, respectively. \Cref{sec:quasi-optimal} discusses the 2D quasi-optimal CAD for $\mathbb{P}^{k}$ spaces. 
We apply the OCAD and quasi-optimal CAD to design efficient BP high-order schemes in \Cref{sec:BPscheme}. 
Several numerical tests are presented in \Cref{sec:examples}, before concluding the paper in \Cref{sec:conclusion}.

\begin{figure}[htbp]
	\centering
	\includegraphics[width=0.999\textwidth]{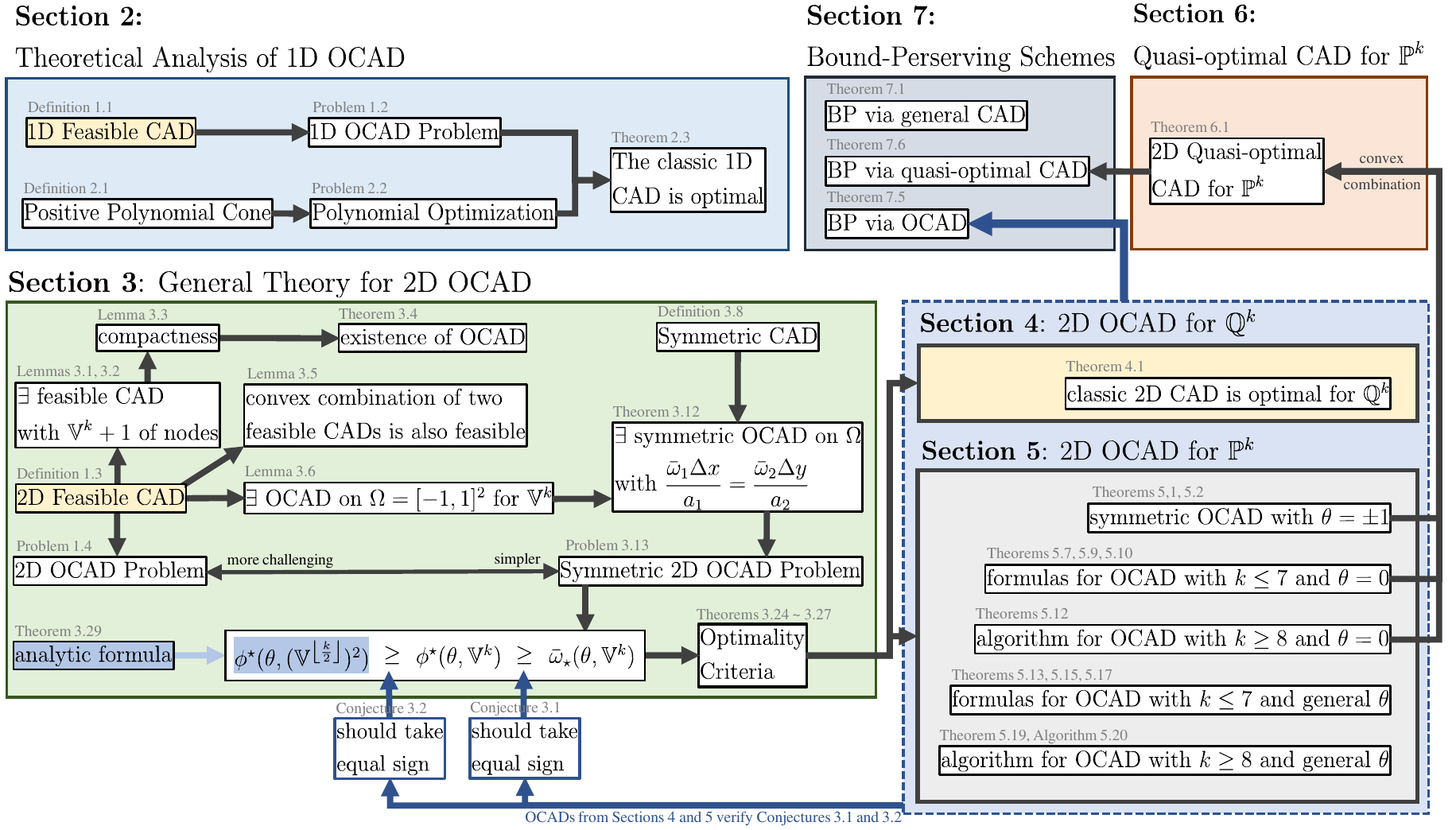}
	\caption{Structure of the present paper}\label{fig:paper}
\end{figure}
 


 




\section{Theoretical analysis of 1D OCAD}\label{sec:1D_OCAD}
In this section, we rigorously prove that the classic 1D CAD \eqref{eq:GL1D} is optimal for general polynomial spaces $\mathbb P^k$ of an arbitrary degree $k \in \mathbb N_+$. 

\begin{definition}[Positive Polynomial Cone]
	Let $\mathbb P^k_+(\Omega_j)$ denote the set of 1D polynomials of degree at most $k$ and nonnegative on $\Omega_j$, namely,     
	\begin{equation}\label{eq:PPC}
		\mathbb P^k_+(\Omega_j):= \left\{ p \in \mathbb P^k:~p(x) \ge 0~\forall x\in \Omega_j  \right\}.
	\end{equation}
	Since the set $\mathbb P^k_+(\Omega_j)$ is a closed convex cone satisfying  
	$$
	t_1 p_1 + t_2 p_2 \in  \mathbb P^k_+(\Omega_j), \qquad \forall p_1,p_2 \in \mathbb P^k_+(\Omega_j),~\forall t_1,t_2\ge 0, 
	$$
	we call $\mathbb P^k_+(\Omega_j)$ a {\em positive polynomial cone} on $\Omega_j$. 
\end{definition}



We observe that the 1D OCAD problem is closely related to the following optimization problem, as shown in \Cref{thm:1D}. 

\begin{problem}\label{prob:1DOCAD-functional}\label{prob:1Dnew} 
	For a given $k \in \mathbb N_+$, 
	 maximize ${\mathcal G}_1 ( \omega^-, \omega^+ )$ 
	subject to the constraint 
	$$
	\langle p \rangle_{ \Omega_j }  - 
	\omega^- p( x_{j-\frac12} ) - \omega^+ p( x_{j+\frac12} ) \ge 0, \qquad \forall p \in \mathbb P^k_+(\Omega_j). 
	$$
\end{problem}

\begin{theorem}\label{thm:1D}
  \Cref{prob:1Dnew} has a unique optimal solution.   
	The boundary weights in the 1D OCAD, denoted by $(\omega^-_\star, \omega^+_\star)$, are unique and given by 
		$$ \omega^-_\star = \omega^+_\star = \omega_1^{{\tt GL}} = \frac{ 1 }{ L(L-1) }, \qquad  L = \left \lceil \frac{k+3}2 \right \rceil,  $$  
	which is also the optimal solution to \Cref{prob:1Dnew}. This means the classic CAD \eqref{eq:GL1D} based on the $L$-point Gauss--Lobatto quadrature is the OCAD in the 1D case.
\end{theorem}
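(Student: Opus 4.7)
The plan is to prove the theorem by relating the OCAD problem to its relaxation \Cref{prob:1Dnew}, and then exhibiting a single nonnegative test polynomial that pins down the optimal boundary weights. First, I would observe that every feasible CAD gives a feasible point of \Cref{prob:1Dnew}: for any $p \in \mathbb{P}^k_+(\Omega_j)$, since the internal nodes lie in $\Omega_j$, one has $p(x_j^{(s)}) \ge 0$, so
\[
\langle p \rangle_{\Omega_j} - \omega^- p(x_{j-\frac12}) - \omega^+ p(x_{j+\frac12}) = \sum_{s=1}^{S} \omega_s p\bigl(x_j^{(s)}\bigr) \ge 0.
\]
Consequently, any upper bound on $\mathcal{G}_1(\omega^-,\omega^+)$ valid for \Cref{prob:1Dnew} automatically bounds the original OCAD problem from above.

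The crucial step is constructing the right test polynomial. I would take
\[
q(x) := \prod_{\ell=2}^{L-1}\bigl(x - x_{j,\ell}^{{\tt GL}}\bigr)^2,
\]
which is nonnegative on $\Omega_j$ and vanishes at every internal Gauss--Lobatto node. A short degree check splitting on the parity of $k$ gives $\deg q = 2(L-2) \le k$ for $L = \lceil (k+3)/2 \rceil$, so $q \in \mathbb{P}^k_+(\Omega_j)$ and the $L$-point Gauss--Lobatto quadrature remains exact on $q$. By the symmetry of the Gauss--Lobatto nodes about the cell midpoint, $Q := q(x_{j-\frac12}) = q(x_{j+\frac12}) > 0$, and exactness collapses the integral to $\langle q \rangle_{\Omega_j} = 2\omega_1^{{\tt GL}} Q$. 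Substituting $q$ into the constraint of \Cref{prob:1Dnew} and dividing by $Q$ yields $\omega^- + \omega^+ \le 2\omega_1^{{\tt GL}}$, hence $\min(\omega^-,\omega^+) \le \tfrac{1}{2}(\omega^- + \omega^+) \le \omega_1^{{\tt GL}}$.

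To finish, the classic CAD \eqref{eq:GL1D} itself attains $\omega^- = \omega^+ = \omega_1^{{\tt GL}}$, so it is optimal for both \Cref{prob:1Dnew} and the OCAD problem. For uniqueness of the optimal boundary weights, if $\min(\omega^-,\omega^+) = \omega_1^{{\tt GL}}$ then both $\omega^\pm \ge \omega_1^{{\tt GL}}$, and combined with $\omega^- + \omega^+ \le 2\omega_1^{{\tt GL}}$ this forces $\omega^- = \omega^+ = \omega_1^{{\tt GL}}$.

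The main obstacle is precisely the construction of $q$: it must simultaneously lie in $\mathbb{P}^k$, be globally nonnegative on $\Omega_j$, and vanish at all internal Gauss--Lobatto nodes. Taking a squared product of node factors is the natural way to secure nonnegativity together with the required vanishing, but it immediately forces $\deg q = 2(L-2)$, and admissibility in $\mathbb{P}^k$ then hinges on the precise formula $L = \lceil (k+3)/2 \rceil$ together with a parity case split on $k$. Once this polynomial is in hand, the rest of the proof reduces to a substitution into the relaxed constraint and an averaging inequality.
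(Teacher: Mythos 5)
Your proposal is correct and follows essentially the same route as the paper: relate the OCAD problem to the relaxed Problem~\ref{prob:1Dnew} and test the constraint with the squared nodal polynomial $p^\star(x)=\prod_{\ell=2}^{L-1}(x-x_{j,\ell}^{{\tt GL}})^2$, whose degree $2(L-2)\le k$ keeps it in $\mathbb P^k_+(\Omega_j)$. The only (harmless) difference is that you invoke the symmetry of the Gauss--Lobatto nodes to get $p^\star(x_{j-\frac12})=p^\star(x_{j+\frac12})$ and hence the sum bound $\omega^-+\omega^+\le 2\omega_1^{{\tt GL}}$, which streamlines both the optimality and the uniqueness step, whereas the paper works directly with the weighted combination without using symmetry.
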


\begin{proof}
	Recall the Gauss--Lobatto quadrature weights satisfy $ \omega_1^{{\tt GL}} =  \omega_L^{{\tt GL}} = \frac{ 1 }{ L(L-1) }$. 
	Since the CAD \eqref{eq:GL1D} is feasible, we have 
\begin{equation}\label{eq:wkl31211}
		 \omega_1^{{\tt GL}} =  \omega_L^{{\tt GL}}  =   {\mathcal G}_1 (  \omega_1^{{\tt GL}} , \omega_L^{{\tt GL}} ) 
	\le {\mathcal G}_1 (\omega^-_\star, \omega^+_\star). 
\end{equation}
	Let $(\omega^-_{\square}, \omega^+_{\square} )$ be an optimal solution to \Cref{prob:1Dnew} and satisfy 
	 	\begin{equation}\label{eq:wkl23s1}
	 		\langle p \rangle_{ \Omega_j }  - 
	 		\omega^-_{\square} p( x_{j-\frac12} ) - \omega^+_{\square} p( x_{j+\frac12} ) \ge 0 \qquad   \forall p \in \mathbb P^k_+(\Omega_j). 
	\end{equation}
	Using \eqref{1Ddecomp} gives 
	$
	\langle p \rangle_{ \Omega_j }  - 
	\omega^-_\star p( x_{j-\frac12} ) - \omega^+_\star p( x_{j+\frac12} )  =  \sum_{s=1}^S {\omega}_s p(  x_{j}^{(s)} ) \ge 0$ for all $p\in \mathbb P^k_+(\Omega_j),   
	$
	which implies that $(\omega^-_\star, \omega^+_\star)$ is a feasible solution to \Cref{prob:1Dnew}. This yields 
\begin{equation}\label{eq:wkl31210}
		{\mathcal G}_1 (\omega^-_\star, \omega^+_\star) \le {\mathcal G}_1 (\omega^-_{\square}, \omega^+_{\square} ). 
\end{equation} 
	Define 
	\begin{equation}\label{eq:Ps1D}
	p^\star (x) := \prod_{\ell = 2}^{L-1} \left( x -  x_{j,\ell} ^{{\tt GL}}  \right)^2,
	\end{equation}
	where $\{ x_{j,\ell}^{{\tt GL}} \}$ are the Gauss--Lobatto nodes.  Clearly, $p^\star (x) \ge 0$ for all $x$, and the degree of $p^\star$ is 
	$
	2(L-2) = 2 \left(  \left \lceil \frac{k+3}2 \right \rceil -2 \right) \le k. 
	$ 
	Hence $p^\star \in \mathbb P_+^k$. 
	Noting that 
	$$
	p^\star( x_{j,\ell}^{{\tt GL}}  )=0, \quad 2 \le \ell \le L-1; \qquad  p^\star( x_{j,1}^{{\tt GL}}  ) > 0, \quad  p^\star( x_{j,L}^{{\tt GL}}  ) > 0, 
	$$
	and using \eqref{eq:wkl23s1} for $p=p^\star$, we obtain  
	\begin{align} \nonumber
		0 & \le  
		\langle p^\star \rangle_{ \Omega_j }  - 
		\omega^-_{\square} p^\star( x_{j-\frac12} ) - \omega^+_{\square} p^\star( x_{j+\frac12} )
		\\ \nonumber
		& = \sum_{\ell =1}^L   \omega_\ell^{{\tt GL}}   p^\star ( x_{j,\ell}^{{\tt GL}}   ) 
		- 
		\omega^-_{\square} p^\star( x_{j-\frac12} ) - \omega^+_{\square} p^\star( x_{j+\frac12} ) 
		\\ \label{wkle101}
		& = (   \omega_1^{{\tt GL}} - \omega^-_{\square}  ) p^\star( x_{j-\frac12} ) 
		+ (   \omega_1^{{\tt GL}} - \omega^+_{\square} ) p^\star( x_{j+\frac12} ) 
		\\ \nonumber
		& \le \left(   \omega_1^{{\tt GL}} - \min\{ \omega^-_{\square}, \omega^+_{\square}\}  \right) \left( p^\star( x_{j-\frac12} ) + p^\star( x_{j+\frac12} ) \right).
	\end{align}
Since $p^\star( x_{j-\frac12} ) + p^\star( x_{j+\frac12} )>0$, we therefore have 
\begin{equation}\label{eq:wkl3121}
	 \omega_1^{{\tt GL}} \ge \min\{ \omega^-_{\square}, \omega^+_{\square}\} = {\mathcal G}_1  (\omega^-_{\square}, \omega^+_{\square} ). 
\end{equation}
	Combining \eqref{eq:wkl31211}--\eqref{eq:wkl31210} with \eqref{eq:wkl3121} gives 
	${\mathcal G}_1 (\omega^-_\star, \omega^+_\star) = {\mathcal G}_1 (\omega^-_{\square}, \omega^+_{\square} )=  \omega_1^{{\tt GL}}$. 
	Without loss of generality, assume that $\omega^-_{\square} \ge \omega^+_{\square}$, then ${\mathcal G}_1 (\omega^-_{\square}, \omega^+_{\square} )=\omega^+_{\square} = \omega_1^{{\tt GL}}$ and using \eqref{wkle101} gives 
	$(  \omega_1^{{\tt GL}} - \omega^-_{\square}  ) p^\star( x_{j-\frac12} ) \ge 0$. 
	This leads to 
	$ \omega_1^{{\tt GL}} \ge \omega^-_{\square} \ge  \omega^+_{\square} =   \omega_1^{{\tt GL}}$. 
	Therefore, $\omega^-_{\square} = \omega^+_{\square} =  \omega_1^{{\tt GL}}$, which demonstrates the uniqueness of the optimal solution to \Cref{prob:1Dnew}. 
	 Because ${\mathcal G}_1 (\omega^-_\star, \omega^+_\star) = {\mathcal G}_1 (\omega^-_{\square}, \omega^+_{\square} )$, 
	 we have that $(\omega^-_\star, \omega^+_\star)$ is an optimal solution to \Cref{prob:1Dnew}. 
	 By the uniqueness, we have 
	$\omega^-_\star = \omega^+_\star = \omega^-_{\square} = \omega^+_{\square} =  \omega_1^{{\tt GL}}$. 
	This completes the proof. 
\end{proof}

As proved in \Cref{thm:1D},  the classic 1D CAD \eqref{eq:GL1D} proposed in \cite{zhang2010} is optimal for all $k \in \mathbb N_+$.   
This confirms 
the conjecture of Zhang and Shu in \cite[Remark 2.7]{zhang2010}.



\section{General theory for 2D OCAD}\label{sec:2Dtheory}

In this section, we establish the general theory for studying the 2D OCAD problem (\Cref{prob:2DOCAD})  
on an arbitrary 
rectangular cell $\Omega_{ij}:=[x_{i-\frac12},x_{i+\frac12}] \times [y_{j-\frac12},y_{j+\frac12}]$ for 2D polynomial space $\mathbb V^k$, which can be $\mathbb P^k$ or $\mathbb Q^k$ or other suitable subspaces of $\mathbb Q^k$.

\subsection{Existence}
We first prove the existence of the 2D OCAD, i.e., \Cref{prob:2DOCAD} always has at least one OCAD solution. 

For convenience, we introduce the following notations 
\[
\langle p \rangle_{\Omega_{ij}}^{\pm x}:=
\frac{1}{\Delta y} \int_{y_{j-\frac12}}^{y_{j+\frac12}} p(x_{i\pm\frac12},y) {\rm d}y
\quad
\text{and}
\quad
\langle p \rangle_{\Omega_{ij}}^{\pm y}:=
\frac{1}{\Delta x} \int_{x_{i-\frac12}}^{x_{i+\frac12}}
p(x,y_{j\pm\frac12}) {\rm d}x.
\]
Then a 2D feasible CAD \eqref{2Ddecom} can be written as 
\begin{equation}\label{eq:6521}
	\langle p \rangle_{\Omega_{ij}} = 
	\omega_1^{-} \langle p \rangle^{-x}_{\Omega_{ij}}+
	\omega_1^{+} \langle p \rangle^{+x}_{\Omega_{ij}}+
	\omega_2^{-} \langle p \rangle^{-y}_{\Omega_{ij}}+
	\omega_2^{+} \langle p \rangle^{+y}_{\Omega_{ij}}+
	\sum_{s=1}^S  {\omega}_s p( {x}_{ij}^{(s)}, {y}_{ij}^{(s)} ).
\end{equation}
For ease of the following discussions in this subsection, we may relax the condition (ii) in \Cref{def:2D_FCAD} to a milder one---``the weights $\{{\omega}_1^\pm, {\omega}_2^\pm, {\omega}_s\}$ are all {\em nonnegative}''. Note that such a relaxation does not affect the OCAD problem, because the zero boundary weights lead to $\mathcal G_2 =0$ and obviously do not correspond to an OCAD, while the zero weights in $\{{\omega}_s\}$ can be safely removed in the CAD.

Notice that only the boundary weights $(\omega_1^{-},\omega_1^{+},\omega_2^{-},\omega_2^{+})$ are  involved in the objective function  
${\mathcal G}_2( \omega_1^-, \omega_1^+, \omega_2^-, \omega_2^+  )$. 
Let $\mathbb A_\omega \subset \mathbb R^4$ denote the set of the boundary weights of all feasible CADs. Evidently, the set $\mathbb A_\omega \subset [0,1]^4$ is bounded. 
We would like to show that the 
set $\mathbb A_\omega$ is both {\em compact} and {\em convex} in $\mathbb R^4$.


\begin{lemma}[]\label{lem:node_bnd}
	Given a feasible CAD \eqref{eq:6521} for $\mathbb V^k$, 
	there always exists a ``conciser'' feasible CAD 
	with the same boundary weights as \eqref{eq:6521} and with only $(\dim \mathbb{V}^k+1)$ or less internal nodes, namely, there exists a feasible CAD
	\begin{equation}\label{613}
		\langle p \rangle_{\Omega_{ij}} = 
		\omega_1^{-} \langle p \rangle^{-x}_{\Omega_{ij}}+
		\omega_1^{+} \langle p \rangle^{+x}_{\Omega_{ij}}+
		\omega_2^{-} \langle p \rangle^{-y}_{\Omega_{ij}}+
		\omega_2^{+} \langle p \rangle^{+y}_{\Omega_{ij}}+
		\sum_{s\in\mathbb{S}}  \widetilde{\omega}_s p( {x}_{ij}^{(s)}, {y}_{ij}^{(s)} )
	\end{equation}
	with 
	\begin{equation}\label{617}
		\mathbb{S} \subseteq\{1,2,\dots,S\}\quad \text{and} \quad
		\# \mathbb{S} \le \dim \mathbb{V}^k+1.
	\end{equation}
\end{lemma}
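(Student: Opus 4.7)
The plan is to reformulate the CAD \eqref{eq:6521} as an identity between linear functionals on the finite-dimensional space $\mathbb{V}^k$, and then reduce the number of evaluation functionals using Carath\'eodory's theorem.

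First I would introduce the linear functional $L \colon \mathbb{V}^k \to \mathbb{R}$ defined by
\[
L(p) := \langle p \rangle_{\Omega_{ij}} - \omega_1^{-} \langle p \rangle^{-x}_{\Omega_{ij}} - \omega_1^{+} \langle p \rangle^{+x}_{\Omega_{ij}} - \omega_2^{-} \langle p \rangle^{-y}_{\Omega_{ij}} - \omega_2^{+} \langle p \rangle^{+y}_{\Omega_{ij}}.
\]
By the feasibility of the given CAD, $L$ is an element of the dual space $(\mathbb{V}^k)^*$ and satisfies
\[
L = \sum_{s=1}^{S} \omega_s \, \delta_{P_s}\bigr|_{\mathbb{V}^k}, \qquad P_s := (x_{ij}^{(s)}, y_{ij}^{(s)}),
\]
where $\delta_{P_s}$ denotes the point-evaluation functional at $P_s$. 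Setting $p\equiv 1$ gives $\sigma := \sum_{s=1}^S \omega_s = 1 - \omega_1^- - \omega_1^+ - \omega_2^- - \omega_2^+ \ge 0$. The degenerate case $\sigma = 0$ forces $L = 0$ and the conclusion holds trivially with $\mathbb{S} = \emptyset$, so I would assume $\sigma > 0$.

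Next, I would exhibit the normalized functional $\sigma^{-1} L$ as a convex combination of the $\delta_{P_s}$'s inside the finite-dimensional vector space $(\mathbb{V}^k)^*$, whose dimension equals $d := \dim \mathbb{V}^k$. Carath\'eodory's theorem then asserts that any point in the convex hull of a subset of a $d$-dimensional real vector space lies in the convex hull of at most $d+1$ of its elements. Applying this theorem to the finite set $\{\delta_{P_s} \colon 1 \le s \le S\} \subset (\mathbb{V}^k)^*$ yields an index subset $\mathbb{S} \subseteq \{1,\dots,S\}$ with $\#\mathbb{S} \le d+1$ and nonnegative coefficients $\{\lambda_s\}_{s \in \mathbb{S}}$ summing to $1$ such that $\sigma^{-1} L = \sum_{s \in \mathbb{S}} \lambda_s\, \delta_{P_s}\bigr|_{\mathbb{V}^k}$. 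Defining $\widetilde{\omega}_s := \sigma \lambda_s \ge 0$ and rearranging recovers \eqref{613}, while by construction $\mathbb{S}$ inherits the nodes from the original CAD and satisfies \eqref{617}; the boundary weights are untouched.

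I do not anticipate a serious obstacle. The only subtle point is making sure that the reduction is applied to the functional identity in the dual space (so the ambient dimension is $\dim \mathbb{V}^k$, not the dimension of the plane $\mathbb{R}^2$ containing the nodes), and that the convex (rather than conical) version of Carath\'eodory is the right tool once we normalize by $\sigma$; this explains the ``$+1$'' in the bound $\dim \mathbb{V}^k + 1$. If one preferred, a conical version of Carath\'eodory could be used directly on the non-normalized identity, giving the slightly sharper bound $\dim \mathbb{V}^k$ whenever the $\delta_{P_s}$'s are in general position, but for the purposes of this lemma the convex form is cleanest and yields exactly \eqref{617}.
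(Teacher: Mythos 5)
Your proposal is correct and is essentially the paper's own argument: choosing a basis $\{b_i\}$ of $\mathbb{V}^k$ identifies your evaluation functionals $\delta_{P_s}\bigl|_{\mathbb{V}^k}$ with the vectors $\bm b(x_{ij}^{(s)},y_{ij}^{(s)})\in\mathbb{R}^{\dim\mathbb{V}^k}$ that the paper feeds into Carath\'eodory's theorem, and you handle the degenerate case $\sigma=0$ and the normalization by $\sigma$ exactly as the paper does with $\omega_0$. The only cosmetic difference is that you phrase the reduction coordinate-free in the dual space rather than with explicit coordinate vectors.
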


\begin{proof}
	If $S \le \dim \mathbb{V}^k+1$, the claim (\ref{617}) is obviously true with $\mathbb{S} = \{1,2,\dots,S\}$. 
	Let 
	\[
	{\omega}_0 
	:= 1-\omega_1^{-}-\omega_1^{+}-\omega_2^{-}-\omega_2^{+} 
	= \sum_{s=1}^S {\omega}_s \in [0,1].
	\]
	If ${\omega}_0 =0$, then we know $\omega_s = 0$ for all $s = 1,\dots,S$. In this case, we can safely remove all the internal nodes in the CAD \eqref{eq:6521} without affecting its feasibility, resulting in a feasible CAD with $\mathbb{S} = \emptyset$ and $\#\mathbb{S} = 0$, implying the claim (\ref{617}) is true.
	
	Now, we consider the case $ S > \dim \mathbb{V}^k+1$ and ${\omega}_0>0$. Let $D = \dim \mathbb{V}^k$ and $\{b_1\equiv 1, b_2, \dots,b_D\}$ be a basis for $\mathbb{V}^k$. Define the vector 
	\[
	\bm{m} := (m_1,m_2,\dots,m_d)^\top \in \mathbb{R}_+^D
	\]	
	with $$	m_i := \langle b_i \rangle_{\Omega_{ij}} -
	\omega_1^{-} \langle b_i \rangle^{-x}_{\Omega_{ij}}-
	\omega_1^{+} \langle b_i \rangle^{+x}_{\Omega_{ij}}-
	\omega_2^{-} \langle b_i \rangle^{-y}_{\Omega_{ij}}-
	\omega_2^{+} \langle b_i \rangle^{+y}_{\Omega_{ij}},$$ 
	and define the following point set
	$$
		\mathcal{B} := \left\{{\omega}_0 \,\bm{b}(x_{ij}^{(1)},y_{ij}^{(1)}) ,\dots,{\omega}_0\,\bm{b}(x_{ij}^{(S)},y_{ij}^{(S)} )\right\} \subseteq \mathbb{R}^D
	$$ 
	with $\bm{b}  :=  ( b_1,b_2,\dots,b_D )^\top$. 
	The feasibility condition (i) of CAD \eqref{eq:6521} implies
	\[
	\bm{m} 
	= \sum_{s=1}^S \omega_s \bm{b}(x_{ij}^{(s)},y_{ij}^{(s)})
	= \sum_{s=1}^S \frac{\omega_s}{{\omega}_0} \cdot 
	{\omega}_0 \, \bm{b}(x_{ij}^{(s)},y_{ij}^{(s)}).
	\]
	Notice that $\bm{m}$ lies in the {\em convex hull} of the set $\mathcal{B}$, because $\sum_{s=1}^S \frac{\omega_s}{{\omega}_0} = 1$ and $\frac{\omega_s}{ {\omega}_0 }>0$. According to {\em Carath\'{e}odory's theorem} \cite{Caratheodory1911}, the vector $\bm{m}$ can be written as the convex combination of at most $(D+1)$ points in $\mathcal{B}$, namely, 
	\begin{equation}\label{650}
		\bm{m} 
		= \sum_{s\in \mathbb{S}} \widehat{\omega}_s \, {\omega}_0 \,\bm{b}(x_{ij}^{(s)},y_{ij}^{(s)}) 
		= \sum_{s\in \mathbb{S}} \widetilde{\omega}_s \, \bm{b} (x_{ij}^{(s)},y_{ij}^{(s)})
	\end{equation}
	with 	
\begin{align*}
		&\widehat{\omega}_s > 0, \quad
	\widetilde{\omega}_s = \widehat{\omega}_s \, {\omega}_0 > 0 \quad \forall s \in 
	\mathbb{S} \subseteq \{1,2,\dots,S\}, 
	\\
	&
	 \sum_{s\in\mathbb{S}} \widehat{\omega}_s = 1,
	\quad
	\sum_{s\in\mathbb{S}} \widetilde{\omega}_s = {\omega}_0, \quad
	\#\mathbb{S} \le D +1.
\end{align*}	
	The equality \eqref{650} implies that the CAD \eqref{613} holds for all the basis functions $\{b_i\}_{i=1}^D$ of $\mathbb V^k$, thus for any $p \in \mathbb V^k$. 
	The feasibility condition (i) is verified. 
	Notice that $\widetilde{\omega}_s > 0$ for all $s \in \mathbb{S}$ and $\omega_1^{-}+\omega_1^{+}+\omega_2^{-}+\omega_2^{+}+\sum_{s\in\mathbb{S}} \widetilde{\omega}_s = 1$, 
	the feasibility condition (ii) is verified.
	Finally, the feasibility condition (iii) is true, because $( {x}_{ij}^{(s)}, {y}_{ij}^{(s)} ) \in \Omega_{ij} \; \forall s \in \mathbb{S}$.
	To conclude, \eqref{613} is a 2D feasible CAD for the space $\mathbb V^k$. The proof is completed. 
\end{proof}

As a direct consequence of \Cref{lem:node_bnd}, we have the following result. 

\begin{lemma}\label{lem:S=dim+1}
	For any  $(\omega_1^{-},\omega_1^{+},\omega_2^{-},\omega_2^{+}) \in \mathbb A_\omega$, 
	there always exists a feasible CAD in the form \eqref{eq:6521}  with 
	$(\omega_1^{-},\omega_1^{+},\omega_2^{-},\omega_2^{+})$ as its
	boundary weights and with $S = \dim \mathbb{V}^k+1$. 
\end{lemma}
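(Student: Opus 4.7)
The plan is to invoke \Cref{lem:node_bnd} and then pad the resulting CAD with weight-zero ``dummy'' internal nodes so that the total internal-node count becomes exactly $\dim\mathbb{V}^k+1$. This padding is permissible precisely because the present subsection explicitly relaxes condition~(ii) of \Cref{def:2D_FCAD} to allow nonnegative (rather than strictly positive) internal weights.

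Concretely, first I would apply \Cref{lem:node_bnd} to the given boundary-weight vector $(\omega_1^-,\omega_1^+,\omega_2^-,\omega_2^+)\in\mathbb{A}_\omega$ (together with any feasible CAD realizing these boundary weights, the existence of which follows from the very definition of $\mathbb{A}_\omega$). This yields a feasible CAD of the form \eqref{613} with the same boundary weights and with an internal-node index set $\mathbb{S}$ satisfying $\#\mathbb{S}\le \dim\mathbb{V}^k+1$. If $\#\mathbb{S}=\dim\mathbb{V}^k+1$, there is nothing left to do. Otherwise, I would select $\dim\mathbb{V}^k+1-\#\mathbb{S}$ arbitrary additional points in $\Omega_{ij}$ (for instance, the cell center, repeated as many times as needed, or any other convenient locations) and append them to the decomposition, each endowed with internal weight zero.

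The augmented decomposition then has exactly $S=\dim\mathbb{V}^k+1$ internal nodes. It preserves the prescribed boundary weights, and because the appended zero-weighted terms contribute nothing to the right-hand side of \eqref{eq:6521}, it still reproduces $\langle p\rangle_{\Omega_{ij}}$ exactly for every $p\in\mathbb{V}^k$; all internal weights remain nonnegative (the original weights stay positive and the appended ones are zero); and all appended nodes lie in $\Omega_{ij}$ by choice. Hence the relaxed conditions (i)--(iii) are all verified.

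\textbf{Main obstacle.} The only subtle point worth flagging is that this padding argument genuinely relies on the nonnegativity relaxation in force throughout this subsection; under strict positivity of internal weights, one could not merely append zero-weighted nodes and would instead have to split an existing positively-weighted node into several coincident (or slightly perturbed) copies whose positive weights sum to the original weight---a technically more delicate but still elementary construction. Under the convention adopted here, no such obstacle arises, and the utility of the lemma is that it normalizes the ambient dimension of the parameter space of feasible CADs, enabling $\mathbb{A}_\omega$ to be identified with the image of a fixed compact convex set, which will be essential for the forthcoming compactness and convexity arguments.
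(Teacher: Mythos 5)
Your proposal is correct and follows essentially the same route as the paper: invoke \Cref{lem:node_bnd} to reduce to at most $\dim\mathbb{V}^k+1$ internal nodes, then pad with zero-weight nodes, which is legitimate precisely because of the stated relaxation of condition (ii). No gaps.
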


\begin{proof}
	Due to \Cref{lem:node_bnd}, there exists a feasible CAD like \eqref{eq:6521} with boundary weights $(\omega_1^{-},\omega_1^{+},\omega_2^{-},\omega_2^{+})$ and 	$S \le \dim \mathbb{V}^k+1$. 
	Since we have relaxed the condition (ii) in \Cref{def:2D_FCAD} to allow $\{{\omega}_s\}$ being zero, we can add some nodes with zero weight such that $S = \dim \mathbb{V}^k+1$. 
\end{proof}

\begin{lemma}\label{lem:compact}
The set $\mathbb A_\omega$ is compact.
\end{lemma}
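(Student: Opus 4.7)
The plan is to show compactness of $\mathbb{A}_\omega \subset \mathbb{R}^4$ by proving that it is closed, since boundedness is already clear from $\mathbb{A}_\omega \subseteq [0,1]^4$. I would establish closedness via a standard sequential argument combined with the finite-node normalization provided by \Cref{lem:S=dim+1}.

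Concretely, I would take an arbitrary sequence $\{\bm{\omega}^{(n)}\}_{n \ge 1} \subset \mathbb{A}_\omega$, where $\bm{\omega}^{(n)} = (\omega_1^{-,(n)},\omega_1^{+,(n)},\omega_2^{-,(n)},\omega_2^{+,(n)})$, converging to some limit $\bm{\omega}^{\star} \in [0,1]^4$, and argue that $\bm{\omega}^{\star} \in \mathbb{A}_\omega$. By \Cref{lem:S=dim+1}, for each $n$ there exists a feasible CAD of the form \eqref{eq:6521} with boundary weights $\bm{\omega}^{(n)}$, with exactly $S = \dim \mathbb{V}^k + 1$ internal nodes $\{(x_{ij}^{(s),(n)}, y_{ij}^{(s),(n)})\}_{s=1}^S \subset \Omega_{ij}$ and nonnegative internal weights $\{\widetilde\omega_s^{(n)}\}_{s=1}^S$. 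Since $\Omega_{ij}$ is compact and each internal weight lies in $[0,1]$, the tuple of internal nodes and weights lies in the compact set $\Omega_{ij}^S \times [0,1]^S$. Applying the Bolzano--Weierstrass theorem, I can extract a subsequence (still indexed by $n$ for brevity) along which all internal nodes converge to some $(x_{ij}^{(s),\star}, y_{ij}^{(s),\star}) \in \Omega_{ij}$ and all internal weights converge to some $\widetilde\omega_s^{\star} \ge 0$.

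It then remains to verify that the limiting data produce a feasible CAD with boundary weights $\bm{\omega}^{\star}$. For any fixed $p \in \mathbb{V}^k$, passing to the limit $n \to \infty$ in the CAD identity \eqref{eq:6521}---which is legitimate since $p$ is continuous on $\Omega_{ij}$ and the sum involves only finitely many terms---yields
\[
\langle p \rangle_{\Omega_{ij}} =
\omega_1^{-,\star} \langle p \rangle^{-x}_{\Omega_{ij}}+
\omega_1^{+,\star} \langle p \rangle^{+x}_{\Omega_{ij}}+
\omega_2^{-,\star} \langle p \rangle^{-y}_{\Omega_{ij}}+
\omega_2^{+,\star} \langle p \rangle^{+y}_{\Omega_{ij}}+
\sum_{s=1}^S \widetilde\omega_s^{\star}\, p\bigl(x_{ij}^{(s),\star}, y_{ij}^{(s),\star}\bigr),
\]
so feasibility condition (i) holds. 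Condition (ii) (in the relaxed form allowing zero weights) holds because nonnegativity is preserved under limits and the weights still sum to one, while condition (iii) holds because $\Omega_{ij}$ is closed. Therefore $\bm{\omega}^{\star} \in \mathbb{A}_\omega$, proving $\mathbb{A}_\omega$ is closed and hence compact.

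I do not expect any serious obstacle here: the only subtlety is the use of \Cref{lem:S=dim+1} to fix the number of internal nodes uniformly in $n$, which avoids having to pass to the limit in a sequence of CADs with a varying (and possibly unbounded) number of nodes. Without that uniform bound, one would need a more delicate argument based on measure-theoretic weak-$*$ convergence of discrete measures, but Carath\'eodory's theorem (via \Cref{lem:node_bnd}) makes this unnecessary.
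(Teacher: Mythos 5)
Your proof is correct and follows essentially the same route as the paper's: bound the set by $[0,1]^4$, reduce closedness to a sequential argument, invoke \Cref{lem:S=dim+1} to normalize the number of internal nodes to $\dim\mathbb{V}^k+1$, extract a convergent subsequence of nodes and weights by Bolzano--Weierstrass, and pass to the limit in the CAD identity. No gaps.
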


\begin{proof}
	Since $\mathbb A_\omega \subset [0,1]^4$ is bounded, it remains to prove that $\mathbb A_\omega$ is closed, namely, 
	 the limit of every convergent sequence contained in $\mathbb A_\omega$ is also an element of $\mathbb A_\omega$. 
	 Assume that $\{ {\bm \omega}_n 
	 := (\omega_{1,n}^{-},\omega_{1,n}^{+},\omega_{2,n}^{-},\omega_{2,n}^{+}) \}_{n \ge 1}$ is an arbitrary convergent sequence in $\mathbb A_\omega$, and denote 
	 $$
	 \lim_{n \to \infty} {\bm \omega}_n = {\bm \omega} =: (\omega_1^{-},\omega_1^{+},\omega_2^{-},\omega_2^{+}).
	 $$
	It suffices to show the limit ${\bm \omega} \in \mathbb A_\omega$. 
	For every ${\bm \omega}_n \in \mathbb A_\omega $, \Cref{lem:S=dim+1} tells us that there exists a feasible CAD with ${\bm \omega}_n$ as its boundary weights in the following form 
	\begin{equation}\label{6131}
	\langle p \rangle_{\Omega_{ij}} = 
	\omega_{1,n}^{-} \langle p \rangle^{-x}_{\Omega_{ij}}+
	\omega_{1,n}^{+} \langle p \rangle^{+x}_{\Omega_{ij}}+
	\omega_{2,n}^{-} \langle p \rangle^{-y}_{\Omega_{ij}}+
	\omega_{2,n}^{+} \langle p \rangle^{+y}_{\Omega_{ij}}+
	\sum_{s=1}^S  {\omega}_{s,n} p\left( {x}_{ij}^{(s,n)}, {y}_{ij}^{(s,n)} \right)
\end{equation}	
with $S = \dim \mathbb V^k + 1$. Note that for all $n\ge 1$, 
${\omega}_{s,n}\in [0,1]$ and $( {x}_{ij}^{(s,n)}, {y}_{ij}^{(s,n)} ) \in \Omega_{ij}$. According to the {\em Bolzano–Weierstrass theorem}, 
the bounded sequence 
$$
	\left \{ ({\omega}_{1,n},\cdots,{\omega}_{S,n}, {x}_{ij}^{(1,n)}, \cdots, {x}_{ij}^{(S,n)}, {y}_{ij}^{(1,n)}, \cdots, {y}_{ij}^{(S,n)}) \right\}_{n\ge 1} 
	\subseteq [0,1]^S \times [x_{i-\frac12},x_{i+\frac12}]^S \times [y_{j-\frac12},y_{j+\frac12}]^S
$$
has a convergent subsequence, denoted by 
\begin{equation}\label{363}
	\lim_{\ell \to \infty} {\omega}_{s,n_{\ell}} = {\omega}_{s} \in [0,1], \quad 
	\lim_{\ell \to \infty}  \left( {x}_{ij}^{(s,n_\ell)}, {y}_{ij}^{(s,n_\ell) } \right)
	= \left( {x}_{ij}^{(s)}, {y}_{ij}^{(s)} \right) \in \Omega_{ij}, \quad 
	 s=1,2,\dots, S. 
\end{equation}
Taking $n=n_\ell$ in \eqref{6131} and letting $\ell \to +\infty$, we obtain for any $p \in \mathbb V^k$ that 
 	\begin{equation}\label{6132}
 	\langle p \rangle_{\Omega_{ij}} = 
 	\omega_{1}^{-} \langle p \rangle^{-x}_{\Omega_{ij}}+
 	\omega_{1}^{+} \langle p \rangle^{+x}_{\Omega_{ij}}+
 	\omega_{2}^{-} \langle p \rangle^{-y}_{\Omega_{ij}}+
 	\omega_{2}^{+} \langle p \rangle^{+y}_{\Omega_{ij}}+
 	\sum_{s=1}^S  {\omega}_{s} p\left( {x}_{ij}^{(s)}, {y}_{ij}^{(s)} \right),
 \end{equation}	
which is a feasible CAD, because $\omega_{1}^{\pm}\ge 0$, $\omega_{2}^{\pm} \ge 0$, ${\omega}_{s} \ge 0$, and $ ( {x}_{ij}^{(s)}, {y}_{ij}^{(s)} ) \in \Omega_{ij}$. 
Therefore, ${\bm \omega} = (\omega_1^{-},\omega_1^{+},\omega_2^{-},\omega_2^{+}) \in \mathbb A_\omega$. 
In conclusion, the set $\mathbb A_\omega$ is closed and thus compact. 
\end{proof}

\begin{theorem}\label{lem:existence2D}
	The 2D OCAD always exists, i.e., \Cref{prob:2DOCAD} has at least one OCAD solution.  
	Moreover, there exists an OCAD whose boundary weights $( \omega_{1,\star}^-, \omega_{1,\star}^+, \omega_{2,\star}^-, \omega_{2,\star}^+ )$
	maximize  ${\mathcal G}_2$ and also satisfy 
	\begin{equation}\label{eq:2123}
		\frac{\omega_{1,\star}^- \Delta x}{a_1} = \frac{\omega_{1,\star}^+ \Delta x}{a_1} = \frac{\omega_{2,\star}^- \Delta y}{a_2}= \frac{\omega_{2,\star}^+ \Delta y}{a_2} = \max {\mathcal G}_2.
	\end{equation} 
\end{theorem}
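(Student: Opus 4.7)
The plan is to combine a compactness/continuity argument for existence with a two-stage refinement: first use the reflection symmetries of $\Omega_{ij}$ together with convexity of $\mathbb A_\omega$ to produce an OCAD satisfying $\omega_1^-=\omega_1^+$ and $\omega_2^-=\omega_2^+$, and then redistribute any excess boundary mass onto boundary Gauss nodes so that all four ratios in \eqref{eq:2123} equal $M:=\max\mathcal G_2$.

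\textbf{Step 1 (some OCAD exists).} The functional $\mathcal G_2$ is continuous on $\mathbb R^4$ as the minimum of four linear functions, and by \Cref{lem:compact} the set $\mathbb A_\omega$ is compact. The Weierstrass extreme value theorem then gives a maximizer. Because the classic CAD \eqref{eq:U2Dsplit} is feasible and produces a strictly positive $\mathcal G_2$, we have $M>0$, and every OCAD has all four boundary weights strictly positive.

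\textbf{Step 2 (symmetrization).} I would first observe that $\mathbb A_\omega$ is convex: given two feasible CADs, any convex combination of their boundary weights, with the two sets of internal nodes concatenated and weighted accordingly, is again a feasible CAD. Next, let $R_x$ and $R_y$ denote the reflections about the vertical and horizontal midlines of $\Omega_{ij}$. Both $\mathbb P^k$ and $\mathbb Q^k$ are invariant under composition with $R_x$ and $R_y$, so substituting $p\circ R_x$ for $p$ in any feasible CAD yields a new feasible CAD in which $(\omega_1^-,\omega_1^+)$ are swapped and the internal nodes are $R_x$-reflected; similarly for $R_y$. Since $\mathcal G_2$ is invariant under these swaps, the reflected CADs are again OCADs. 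Averaging a given OCAD with its $R_x$- and $R_y$-reflected versions (using convexity of $\mathbb A_\omega$ and concavity of $\mathcal G_2$) produces an OCAD with $\omega_1^-=\omega_1^+=:\omega_1$ and $\omega_2^-=\omega_2^+=:\omega_2$.

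\textbf{Step 3 (balancing).} For such a symmetric OCAD we have $M=\min\{\omega_1\Delta x/a_1,\;\omega_2\Delta y/a_2\}$. Assume without loss of generality that $\omega_1\Delta x/a_1\le\omega_2\Delta y/a_2$, so $M=\omega_1\Delta x/a_1$ and the ``excess'' is $\omega_2-\omega_2^\star\ge 0$ where $\omega_2^\star:=\omega_1 a_2\Delta x/(a_1\Delta y)$. Using the $Q$-point 1D Gauss quadrature with $Q=\lceil(k+1)/2\rceil$, which is exact for polynomials of degree up to $k$, the restriction of any $p\in\mathbb V^k$ to an edge $y=y_{j\pm\frac12}$ satisfies
\[
\langle p\rangle^{\pm y}_{\Omega_{ij}}=\sum_{q=1}^Q \omega_q^{{\tt G}}\, p\bigl(x_{i,q}^{{\tt G}},\,y_{j\pm\frac12}\bigr).
\]
Splitting $\omega_2\langle p\rangle^{\pm y}=\omega_2^\star\langle p\rangle^{\pm y}+(\omega_2-\omega_2^\star)\sum_{q}\omega_q^{{\tt G}}\, p(x_{i,q}^{{\tt G}},y_{j\pm\frac12})$ and absorbing the second piece as additional internal-node evaluations yields a feasible CAD with boundary weights $(\omega_1,\omega_1,\omega_2^\star,\omega_2^\star)$: the added weights $(\omega_2-\omega_2^\star)\omega_q^{{\tt G}}$ are non-negative and the boundary Gauss nodes lie in $\Omega_{ij}$, so conditions (i)--(iii) of \Cref{def:2D_FCAD} are preserved. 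By construction $\omega_1\Delta x/a_1=\omega_2^\star\Delta y/a_2=M$, so this CAD still maximizes $\mathcal G_2$ and satisfies \eqref{eq:2123}. If instead $\omega_2\Delta y/a_2<\omega_1\Delta x/a_1$, the symmetric redistribution on the left-right edges handles that case.

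\textbf{Main obstacle.} The genuine difficulty lies in Step~3: the symmetrized OCAD from Step~2 need not automatically balance the $x$- and $y$-ratios, and one must show that the ``surplus'' boundary weight can be moved without breaking feasibility or decreasing $\mathcal G_2$. The key realization is that exactness of the boundary Gauss rule on $\mathbb V^k$ lets us rewrite any fraction of a boundary line integral as a positive combination of point evaluations at nodes lying in $\Omega_{ij}$, giving a clean ``trade'' that leaves $\omega_1$ untouched while trimming $\omega_2$ down to the balanced value.
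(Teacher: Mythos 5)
Your proposal is correct and follows essentially the same route as the paper: Weierstrass on the compact set $\mathbb A_\omega$ for existence, followed by the key observation that exactness of the boundary Gauss rule on $\mathbb V^k$ lets any excess boundary weight be reabsorbed as nonnegative point evaluations at the boundary Gauss nodes, which lie in $\Omega_{ij}$ and so count as admissible internal nodes. The only difference is that your Step 2 (reflection symmetrization) is an unnecessary detour: the paper applies the same redistribution trick simultaneously to all four boundary weights of an arbitrary OCAD, trimming each down to the common target $a_1 \max{\mathcal G}_2/\Delta x$ (resp. $a_2 \max{\mathcal G}_2/\Delta y$), which handles asymmetric weights directly without any prior averaging.
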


\begin{proof}
	According to \Cref{lem:compact}, the feasible region $\mathbb A_\omega$ is compact. 
	Since the objective function $\mathcal{G}_2$ is continuous, 
	by the 
	{\em Weierstrass extreme value theorem}, 
	\Cref{prob:2DOCAD} has at least one OCAD solution. 

	

	Consider an OCAD in the form \eqref{eq:6521} which attains   
	$\max {\mathcal G}_2$. Define 
	$$
	\omega_{1,\star}^- = \omega_{1,\star}^+= a_1  \max {\mathcal G}_2 /\Delta x,  \qquad 
	\omega_{2,\star}^- = \omega_{2,\star}^+= a_2 \max {\mathcal G}_2/\Delta y,
	$$ 
	which satisfy \eqref{eq:2123}. From the definition of ${\mathcal G}_2$, one can observe that 
	$$
	\omega_{1}^\pm - \omega_{1,\star}^\pm \ge 0 , \qquad \omega_{2}^\pm - \omega_{2,\star}^\pm \ge 0. 
	$$
	It follows that 
	$$
	\begin{aligned}
		\langle p \rangle_{\Omega_{ij}} &= 
		\omega_{1,\star}^- \langle p \rangle^{-x}_{\Omega_{ij}}+
		\omega_{1,\star}^+ \langle p \rangle^{+x}_{\Omega_{ij}}+
		\omega_{2,\star}^- \langle p \rangle^{-y}_{\Omega_{ij}}+
		\omega_{2,\star}^+ \langle p \rangle^{+y}_{\Omega_{ij}}+
		\sum_{s=1}^S  {\omega}_s p({x}_{ij}^{(s)}, {y}_{ij}^{(s)}) 
		\\
		& \quad + 
		\sum_{q=1}^Q  \omega_{q}^{{\tt G}} 
		\Big( (\omega_1^--\omega_{1,\star}^-)  p(x_{i-\frac12}, y_{j,q}^{{\tt G}}  ) + (\omega_1^+-\omega_{1,\star}^+)  p(x_{i+\frac12},   y_{j,q}^{{\tt G}}  )   \Big) 
		\\
		& \quad + 
		\sum_{q=1}^Q   \omega_{q}^{{\tt G}}  
		\Big(	(\omega_2^- - \omega_{1,\star}^-) p(  x_{i,q}^{{\tt G}}  ,y_{j-\frac12}) + (\omega_2^+ - \omega_{1,\star}^+) p(  x_{i,q}^{{\tt G}}, y_{j+\frac12})   \Big),
	\end{aligned}
	$$
	which is also an OCAD and satisfies \eqref{eq:2123}. The proof is completed. 
\end{proof}

\subsection{Convexity} 

\begin{lemma}\label{lem:convex}
	A convex combination of any two feasible CADs is also a feasible CAD. Furthermore,  $\mathbb A_\omega$ is a  convex set. 
\end{lemma}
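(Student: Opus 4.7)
The plan is to verify the three feasibility conditions of \Cref{def:2D_FCAD} directly for a convex combination of two feasible CADs, and then read off the convexity of $\mathbb{A}_\omega$ as an immediate consequence. Since the relaxation in the current subsection already allows zero weights, I will work throughout with the milder nonnegativity condition.

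First, I would fix two feasible CADs for $\mathbb{V}^k$, written in the form \eqref{eq:6521}, with boundary weights $(\omega_1^{-,(r)},\omega_1^{+,(r)},\omega_2^{-,(r)},\omega_2^{+,(r)})$, internal weights $\{\omega_s^{(r)}\}_{s=1}^{S_r}$, and internal nodes $\{(x_{ij}^{(s,r)},y_{ij}^{(s,r)})\}_{s=1}^{S_r}\subseteq\Omega_{ij}$ for $r=1,2$. For any $\theta\in[0,1]$, I would multiply the $r=1$ identity by $\theta$, the $r=2$ identity by $1-\theta$, and add. Because both right-hand sides evaluate to $\langle p\rangle_{\Omega_{ij}}$ for every $p\in\mathbb{V}^k$, so does the sum, which yields a new identity of the form \eqref{eq:6521} with boundary weights
\[
\bigl(\theta\omega_1^{-,(1)}+(1-\theta)\omega_1^{-,(2)},\ \theta\omega_1^{+,(1)}+(1-\theta)\omega_1^{+,(2)},\ \theta\omega_2^{-,(1)}+(1-\theta)\omega_2^{-,(2)},\ \theta\omega_2^{+,(1)}+(1-\theta)\omega_2^{+,(2)}\bigr),
\]
internal weights given by the concatenation $\{\theta\omega_s^{(1)}\}_{s=1}^{S_1}\cup\{(1-\theta)\omega_s^{(2)}\}_{s=1}^{S_2}$, and internal nodes given by the union of the two original node sets. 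This verifies condition (i).

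Next I would check conditions (ii) and (iii). All boundary and internal weights in the combined decomposition are nonnegative convex-combination coefficients of nonnegative numbers. Their total sums to $\theta\cdot 1+(1-\theta)\cdot 1=1$, because each of the two original CADs already sums to one. The combined internal nodes all lie in $\Omega_{ij}$ since both original node sets do. This settles the first assertion: the resulting decomposition is a feasible CAD.

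Finally, for the convexity of $\mathbb{A}_\omega$, I would take any two points $\bm{\omega}^{(1)},\bm{\omega}^{(2)}\in\mathbb{A}_\omega$, each arising as the boundary-weight tuple of some feasible CAD. The construction above produces, for every $\theta\in[0,1]$, a feasible CAD whose boundary-weight tuple is precisely $\theta\bm{\omega}^{(1)}+(1-\theta)\bm{\omega}^{(2)}$, so this tuple belongs to $\mathbb{A}_\omega$. I do not anticipate any real obstacle here: the argument is a routine linearity check, and the only mild subtlety is to allow merged internal nodes and possibly zero weights, which is harmless under the relaxed feasibility convention adopted in this subsection.
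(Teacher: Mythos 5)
Your proposal is correct and follows essentially the same route as the paper: form the convex combination of the two decompositions, observe that the weights remain nonnegative and sum to one, that the internal nodes remain in $\Omega_{ij}$, and that the identity still holds for all $p\in\mathbb{V}^k$, then read off the convexity of $\mathbb{A}_\omega$. The only difference is that you spell out the three feasibility conditions explicitly where the paper simply displays the combined decomposition and asserts its feasibility.
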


\begin{proof}
Consider two arbitrary 2D feasible CADs 
\begin{align}\label{931}
	\langle p \rangle_{\Omega_{ij}} &= 
	\omega_1^{-} \langle p \rangle^{-x}_{\Omega_{ij}}+
	\omega_1^{+} \langle p \rangle^{+x}_{\Omega_{ij}}+
	\omega_2^{-} \langle p \rangle^{-y}_{\Omega_{ij}}+
	\omega_2^{+} \langle p \rangle^{+y}_{\Omega_{ij}}+
	\sum_{s=1}^S  {\omega}_s p( {x}_{ij}^{(s)}, {y}_{ij}^{(s)} ),
	\\ \label{939}
	\langle p \rangle_{\Omega_{ij}} &= 
	\widehat{\omega}_1^{-} \langle p \rangle^{-x}_{\Omega_{ij}}+
	\widehat{\omega}_1^{+} \langle p \rangle^{+x}_{\Omega_{ij}}+
	\widehat{\omega}_2^{-} \langle p \rangle^{-y}_{\Omega_{ij}}+
	\widehat{\omega}_2^{+} \langle p \rangle^{+y}_{\Omega_{ij}}+
	\sum_{s=1}^{\widehat{S}}  \widehat{\omega}_s p( \widehat{x}_{ij}^{(s)}, \widehat{y}_{ij}^{(s)} ).
\end{align}
Their convex combination
\begin{equation}\label{948}
	\begin{aligned}
		\langle p \rangle_{\Omega_{ij}} & = 
		\big[ \lambda \, \omega_1^{-} + (1-\lambda) \, \widehat{\omega}_1^{-}\big]
		\langle p \rangle^{-x}_{\Omega_{ij}} 
		+ \big[ \lambda \, \omega_1^{+} + (1-\lambda) \, \widehat{\omega}_1^{+}\big]
		\langle p \rangle^{+x}_{\Omega_{ij}} \\
		& + \big[ \lambda \, \omega_2^{-} + (1-\lambda) \, \widehat{\omega}_2^{-}\big]
		\langle p \rangle^{-y}_{\Omega_{ij}} 
		+ \big[ \lambda \, \omega_2^{+} + (1-\lambda) \, \widehat{\omega}_2^{+}\big]
		\langle p \rangle^{+y}_{\Omega_{ij}} \\
		& + \lambda \, \sum_{s=1}^S  {\omega}_s p( {x}_{ij}^{(s)}, {y}_{ij}^{(s)} )
		+ (1-\lambda) \, \sum_{s=1}^{\widehat{S}}  \widehat{\omega}_s p ( \widehat{x}_{ij}^{(s)}, \widehat{y}_{ij}^{(s)} )
	\end{aligned}
\end{equation}
is also a 2D feasible CAD for any $\lambda \in [0,1]$. Thus, the set $\mathbb A_\omega$ is convex.
\end{proof}

\subsection{Transformation to a reference cell $\Omega = [-1,1]^2$}
For convenience, we propose to transform the 2D OCAD problem on an arbitrary rectangular cell $\Omega_{ij}$ into the OCAD problem on a reference cell $\Omega = [-1,1]^2$. 
It should be noted that 
the objective function ${\mathcal G}_2( \omega_1^-, \omega_1^+, \omega_2^-, \omega_2^+  )$ depends on the cell size $\{\Delta x,\Delta y\}$. Hence, 
such a transformation is {\bf not} exactly equivalent to directly considering the OCAD problem on the reference cell $\Omega = [-1,1]^2$, because $\Delta x$ and $\Delta y$ in ${\mathcal G}_2$ are generally not equal to $2$.  

\begin{lemma}\label{lem:C2}
	The existence of a feasible CAD on 
	$\Omega_{ij} = [x_{i-\frac12},x_{i+\frac12}]\times[y_{j-\frac12},y_{j+\frac12}]$ for
	$\mathbb{V}^k$ of the form 
	\begin{equation}\label{eq:652}
		\langle p \rangle_{\Omega_{ij}} = 
		\omega_1^{-} \langle p \rangle^{-x}_{\Omega_{ij}}+
		\omega_1^{+} \langle p \rangle^{+x}_{\Omega_{ij}}+
		\omega_2^{-} \langle p \rangle^{-y}_{\Omega_{ij}}+
		\omega_2^{+} \langle p \rangle^{+y}_{\Omega_{ij}}+
		\sum_{s=1}^S  {\omega}_s p({x}_{ij}^{(s)}, {y}_{ij}^{(s)}) \quad \forall p \in \mathbb{V}^k
	\end{equation}
	is equivalent to the existence of a feasible CAD on $\Omega = [-1,1]^2$ for
	$\mathbb{V}^k$ of the form
	\begin{equation}\label{eq:662}
		\langle q \rangle_{ \Omega } = 
		\omega_1^{-} \langle q \rangle^{-x}_{ \Omega }+
		\omega_1^{+} \langle q \rangle^{+x}_{ \Omega }+
		\omega_2^{-} \langle q \rangle^{-y}_{ \Omega }+
		\omega_2^{+} \langle q \rangle^{+y}_{ \Omega }+
		\sum_{s=1}^S {\omega}_s p({x}^{(s)}, {y}^{(s)}), \quad \forall q \in \mathbb{V}^k,
	\end{equation}
	where
	\begin{equation}\label{eq:trans}
		{x}^{(s)} = \frac{ {x}_{ij}^{(s)}-x_i}{\Delta x/2}, \quad
		{y}^{(s)} = \frac{ {y}_{ij}^{(s)}-y_j}{\Delta y/2},  \qquad  x_i:=\frac{x_{i-\frac12}+x_{i+\frac12}}2, \quad y_j:=\frac{ y_{j-\frac12} + y_{j+\frac12} }2. 
	\end{equation}
\end{lemma}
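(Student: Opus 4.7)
The plan is to set up the natural affine bijection $\phi : \Omega \to \Omega_{ij}$ defined by
$$
\phi(\xi,\eta) = \left( x_i + \tfrac{\Delta x}{2}\,\xi, \; y_j + \tfrac{\Delta y}{2}\,\eta \right),
$$
and to use the pullback $p \mapsto q := p \circ \phi$ to translate every ingredient of the CAD \eqref{eq:652} into the corresponding ingredient of \eqref{eq:662}. Since $\phi$ is affine and acts by independent scaling and translation in each variable, and since both $\mathbb{P}^k$ and $\mathbb{Q}^k$ are invariant under such transformations, the pullback is a linear bijection $\mathbb{V}^k(\Omega_{ij}) \to \mathbb{V}^k(\Omega)$. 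The inverse transformation $(x_{ij}^{(s)},y_{ij}^{(s)}) \mapsto (x^{(s)},y^{(s)})$ given by \eqref{eq:trans} is exactly $\phi^{-1}$, so internal nodes lie in $\Omega_{ij}$ if and only if their preimages lie in $\Omega=[-1,1]^2$.

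Next I would verify that the five functionals appearing in the decompositions transform consistently. By the change of variables $x = x_i + \frac{\Delta x}{2}\xi$, $y=y_j+\frac{\Delta y}{2}\eta$, the Jacobian $\frac{\Delta x\,\Delta y}{4}$ cancels against the normalization $\frac{1}{\Delta x\,\Delta y}$ in the cell average, giving $\langle p\rangle_{\Omega_{ij}} = \frac{1}{4}\int_{-1}^1\!\!\int_{-1}^1 q(\xi,\eta)\,{\rm d}\xi\,{\rm d}\eta = \langle q\rangle_{\Omega}$. The same cancellation on the four edges yields $\langle p\rangle^{\pm x}_{\Omega_{ij}} = \langle q\rangle^{\pm x}_{\Omega}$ and $\langle p\rangle^{\pm y}_{\Omega_{ij}} = \langle q\rangle^{\pm y}_{\Omega}$, while $p(x_{ij}^{(s)},y_{ij}^{(s)}) = q(x^{(s)},y^{(s)})$ by definition of $q$.

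With these identities in hand, plugging $p = q\circ\phi^{-1}$ into \eqref{eq:652} produces \eqref{eq:662} for the corresponding $q$, with identical weights $\{\omega_1^\pm,\omega_2^\pm,\omega_s\}$; conversely, plugging $q = p\circ\phi$ into \eqref{eq:662} recovers \eqref{eq:652}. Because the bijection $p \leftrightarrow q$ is surjective onto $\mathbb{V}^k(\Omega_{ij})$ and $\mathbb{V}^k(\Omega)$, the ``$\forall p\in\mathbb{V}^k$'' quantifier on one side is equivalent to the ``$\forall q\in\mathbb{V}^k$'' quantifier on the other. Feasibility conditions (i)--(iii) of \Cref{def:2D_FCAD} are preserved under this correspondence: (i) is the identity just established; (ii) holds trivially since the weights are unchanged; and (iii) follows since $\phi$ maps $\Omega$ bijectively onto $\Omega_{ij}$.

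There is no substantive obstacle here; the only care needed is in recording that $\mathbb{V}^k \in \{\mathbb{P}^k,\mathbb{Q}^k\}$ is closed under the separable affine change of variables $\phi$, so that the pullback is a well-defined bijection of the polynomial space. Everything else is bookkeeping with the Jacobian. Note also that the reference-cell CAD \eqref{eq:662} is independent of $\{\Delta x,\Delta y,a_1,a_2\}$ as a \emph{feasibility} statement, which is precisely why the lemma is stated at the level of feasibility only; the objective function $\mathcal G_2$ still carries the physical cell size and wave speeds, as emphasized in the paragraph preceding the lemma.
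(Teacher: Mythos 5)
Your proposal is correct and follows essentially the same route as the paper's proof: the affine change of variables $q = p\circ\phi$, the resulting identities between the cell/edge averages and point values on $\Omega_{ij}$ and $\Omega$, and the observation that the correspondence is a bijection of $\mathbb{V}^k$ preserving the weights and nodes. Your write-up is merely more explicit about checking feasibility conditions (ii)--(iii), which the paper leaves implicit.
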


\begin{proof}
	For any $p \in \mathbb{V}^k$, we define another polynomial $q \in \mathbb{V}^k$ as 
	\begin{equation}\label{eq:680}
		q(x,y) := p\left(
		x_i+\frac{\dx}{2} x ,~ y_j+\frac{\dy}{2} y \right).
	\end{equation}
	The polynomials $p$ and $q$ have the following connections
	\begin{equation}\label{eq:685}
		\langle p \rangle_{\Omega_{ij}} = 
		\langle q \rangle_{\Omega}, \quad
		\langle p \rangle_{\Omega_{ij}}^{\pm x} = 
		\langle q \rangle_{\Omega}^{\pm x}, \quad
		\langle p \rangle_{\Omega_{ij}}^{\pm y} = 
		\langle q \rangle_{\Omega}^{\pm y}, \quad
		p({x}^{(s)}_{ij},{y}^{(s)}_{ij}) = 
		q( {x}^{(s)}, {y}^{(s)}) ~~ \forall s.
	\end{equation}
	Thus, (\ref{eq:662}) implies (\ref{eq:652}). 	
	Conversely, given an arbitrary $q \in \mathbb{V}^k$, we can construct a polynomial $p \in \mathbb{V}^k$ similar to (\ref{eq:680}), which satisfies the relations in (\ref{eq:685}). It immediately follows that (\ref{eq:652}) implies (\ref{eq:662}). 
\end{proof}

\subsection{Symmetric CAD}

The reference cell $\Omega=[-1,1]^2$ is symmetric with respect to $x$- and $y$-axes. 
It is natural to seek a feasible CAD with the same symmetry.  

In order to precisely describe such symmetric structures, we can 
invoke the concept of invariance \cite[Section 1.3]{sturmfels2008algorithms} with respect to the following symmetric group of transformations: 
\begin{equation}\label{eq:Gs}
	{\mathscr G}_{s} := 
\left\{
	(x,y)\mapsto( x, y),~
	(x,y)\mapsto(-x, y),~
	(x,y)\mapsto( x,-y),~
	(x,y)\mapsto(-x,-y)
\right\}.
\end{equation}
Clearly, the reference cell $\Omega=[-1,1]^2$ is ${\mathscr G}_{s}$-invariant, namely, 
$g(\Omega) = \Omega$ for all $g \in {\mathscr G}_{s}$. 
In addition, the space 
$\mathbb{V}^k$ (either $\mathbb{P}^k$ or $\mathbb{Q}^k$) is 
${\mathscr G}_{s}$-invariant, namely, 
\begin{equation}\label{eq:Vk-invar}
g(p):=p(g(x,y)) \in \mathbb{V}^k \qquad \forall g \in {\mathscr G}_{s}.	
\end{equation}

\begin{definition}(${\mathscr G}_s$-invariant subspace)
	The polynomial space 
	$$\mathbb{V}^k({\mathscr G}_{s}) := \{p \in \mathbb{V}^k : 
	g(p) = p ~~~~~ \forall g \in {\mathscr G}_{s} \}
	$$ 
	is called the ${\mathscr G}_s$-invariant subspace of $\mathbb{V}^k$.  
\end{definition}


\begin{definition}[Symmetric CAD]\label{def:310}
	A feasible CAD on $\Omega$ is called a symmetric CAD, if it can be written as the following form
	\begin{equation}\label{eq:715}
		\begin{aligned}
			\langle p \rangle_{\Omega} = 
			2\overline \omega_1 \langle p \rangle_{\Omega}^x+
			2\overline \omega_2 \langle p \rangle_{\Omega}^y+
			\sum_{s=1}^S \omega_s  \overline{ p( x^{(s)}, y^{(s)}) } \qquad \forall p \in \mathbb V^k, 
		\end{aligned}
	\end{equation}
	where $\overline \omega_1 > 0$, $\overline \omega_2 > 0$, $\omega_s > 0$, and $( x^{(s)}, y^{(s)}) \in [0,1]^2$ for all $s$, and 
	\begin{align}
		&\langle p \rangle_{ \Omega  }^x := \frac{1}{2} 
		(\langle p \rangle_{ \Omega }^{-x} + \langle p \rangle_{\Omega}^{+x}), 
		\qquad 
		\langle p \rangle_{\Omega}^y := \frac{1}{2} 
		(\langle p \rangle_{\Omega}^{-y} + \langle p \rangle_{\Omega}^{+y})
		\\ \label{eq:1270}
		& \overline{ p( x^{(s)}, y^{(s)}) } := \frac{1}{4}
		\Big[
		p( x^{(s)}, y^{(s)})+
		p(-x^{(s)}, y^{(s)})+
		p( x^{(s)},-y^{(s)})+
		p(-x^{(s)},-y^{(s)})
		\Big].
	\end{align}
\end{definition}


\begin{remark}
	All the internal nodes involved in the classic CAD (\ref{eq:U2Dsplit}) are symmetrically distributed in $\Omega_{ij}$, and the weights associated symmetric nodes 
	are equal. Hence the 
	classic CAD (\ref{eq:U2Dsplit}) is symmetric. 
\end{remark}

The symmetry of a CAD is helpful for relaxing the feasibility requirement in condition (i) of \Cref{def:2D_FCAD}, as shown in the following lemma. 


\begin{lemma}\label{lem:Gs-invarint}
	If a symmetric CAD \eqref{eq:715} is feasible for the ${\mathscr G}_s$-invariant subspace $\mathbb{V}^k({\mathscr G}_{s})$, then it is feasible for $\mathbb{V}^k$. 
\end{lemma}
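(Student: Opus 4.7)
The plan is to invoke the Reynolds-operator trick from invariant theory. For any $p\in\mathbb{V}^k$, define its $\mathscr{G}_s$-symmetrization
$$\bar p(x,y) := \frac{1}{|\mathscr{G}_s|}\sum_{g\in\mathscr{G}_s} g(p) = \frac{1}{4}\big[p(x,y)+p(-x,y)+p(x,-y)+p(-x,-y)\big].$$
Because $\mathbb{V}^k$ is $\mathscr{G}_s$-invariant by \eqref{eq:Vk-invar}, each $g(p)\in\mathbb{V}^k$ and hence $\bar p\in\mathbb{V}^k$; since $\bar p$ is visibly fixed by every $g\in\mathscr{G}_s$, we further have $\bar p\in\mathbb{V}^k(\mathscr{G}_s)$. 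The strategy is then: verify that both sides of \eqref{eq:715}, viewed as linear functionals of $p$, are $\mathscr{G}_s$-invariant; conclude that the identity for $p$ is equivalent to the identity for $\bar p$; and finally apply the hypothesis to $\bar p\in\mathbb{V}^k(\mathscr{G}_s)$.

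The $\mathscr{G}_s$-invariance of each functional follows by routine change of variables together with the $\mathscr{G}_s$-invariance of $\Omega$ and $\partial\Omega$. For $\langle p\rangle_\Omega$, one has $\langle p\circ g\rangle_\Omega=\langle p\rangle_{g(\Omega)}=\langle p\rangle_\Omega$ since each $g\in\mathscr{G}_s$ has Jacobian $\pm 1$ and maps $\Omega$ to itself. For $\langle p\rangle_\Omega^x$, the two reflections $(x,y)\mapsto(-x,y)$ and $(x,y)\mapsto(x,-y)$ either swap the edges $\{x=\pm 1\}$ or fix them while preserving the line measure, and since \eqref{eq:715} uses the symmetric average $\langle p\rangle_\Omega^x=\frac12(\langle p\rangle_\Omega^{-x}+\langle p\rangle_\Omega^{+x})$ rather than the two one-sided boundary integrals separately, invariance follows; the $y$-edge case is identical. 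Finally, the orbit-averaged evaluation $\overline{p(x^{(s)},y^{(s)})}$ defined in \eqref{eq:1270} is, by construction, $\mathscr{G}_s$-invariant in $p$.

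Averaging both sides of \eqref{eq:715} over $\mathscr{G}_s$ therefore leaves them unchanged; equivalently, the identity for $p$ coincides with the identity for $\bar p$. Since $\bar p\in\mathbb{V}^k(\mathscr{G}_s)$ and \eqref{eq:715} is assumed feasible on $\mathbb{V}^k(\mathscr{G}_s)$, the identity holds at $\bar p$, and hence at $p$. This verifies condition (i) of \Cref{def:2D_FCAD} for all of $\mathbb{V}^k$, while conditions (ii) and (iii) are inherent to \Cref{def:310}: expanding $\langle \cdot\rangle^x_\Omega$, $\langle \cdot\rangle^y_\Omega$ and the orbit averages $\overline{p(x^{(s)},y^{(s)})}$ into the standard form \eqref{2Ddecom} yields boundary weights $\omega_1^\pm=\bar\omega_1>0$ and $\omega_2^\pm=\bar\omega_2>0$, internal weights $\omega_s/4>0$ (or $\omega_s/2$ if $x^{(s)}=0$ or $y^{(s)}=0$), and internal nodes in $\Omega$ because $(x^{(s)},y^{(s)})\in[0,1]^2\subset\Omega$.

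I do not expect a serious obstacle: the whole argument is a transparent application of the Reynolds operator, and the only calculations needed are the one-line change-of-variables verifications listed above. The one conceptual point worth emphasizing is that the symmetric averages $\langle p\rangle_\Omega^x$ and $\langle p\rangle_\Omega^y$ (rather than the four one-sided boundary integrals) are precisely what is required for each reflection in $\mathscr{G}_s$ to act trivially on the corresponding term of the RHS; this is exactly why \Cref{def:310} is stated using these symmetric combinations in the first place.
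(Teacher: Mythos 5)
Your argument is correct and is essentially the paper's own proof: both rest on the $\mathscr{G}_s$-invariance of each term of the associated linear functional and on the fact that the group-averaged polynomial $\sum_{g}g(p)$ lands in $\mathbb{V}^k(\mathscr{G}_s)$. The only difference is presentational — you apply the Reynolds operator directly, whereas the paper runs the same averaging inside a proof by contradiction.
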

\begin{proof}
	The symmetric CAD \eqref{eq:715} is associated with the following linear functional of $p$ on $\mathbb{V}^k$:
	\[
	\mathscr{F} (p ) = \langle p \rangle_{\Omega} - 
	2\overline \omega_1 \langle p \rangle_{\Omega}^x-
	2\overline \omega_2 \langle p \rangle_{\Omega}^y-
	\sum_{s=1}^S \omega_s  \overline{ p( x^{(s)}, y^{(s)}) }.
	\]
	Since the CAD \eqref{eq:715} is feasible for $\mathbb{V}^k({\mathscr G}_{s})$, 
	we have 
	\begin{equation}\label{eq:3432}
		\mathscr{F} (p ) =0  \quad \forall p \in \mathbb{V}^k({\mathscr G}_{s}).
	\end{equation}
	We aim to show that $\mathscr{F} (p ) =0$ for all $p \in \mathbb{V}^k$. 
	Let us prove this by contradiction. 
	Assume that there exists a $p_0 \in \mathbb{V}^k$ such that $\mathscr{F} (p_0 ) \neq 0$. 
	Notice that for every $g \in {\mathscr G}_s$, 
	$$
	\langle g(p_0) \rangle_{\Omega} = \langle p_0 \rangle_{\Omega}, \quad 
	\langle g( p_0) \rangle_{\Omega}^x =  \langle p_0 \rangle_{\Omega}^x, \quad 
	\langle g( p_0) \rangle_{\Omega}^y =  \langle p_0 \rangle_{\Omega}^y, \quad 
	\overline{ p_0( g( x^{(s)}, y^{(s)} )) } = \overline{ p_0( x^{(s)}, y^{(s)}) },
	$$
	which imply 
	\begin{equation}\label{eq:8331}
		\mathscr{F} ( g ( p_0 ) ) = \mathscr{F} (p_0 ) \neq 0 \qquad \forall g \in {\mathscr G}_s.
	\end{equation}
	Then for every $h \in {\mathscr G}_s$, we have
	$$
	h\left(\sum_{g \in {\mathscr G}_s} g(p_0)\right)=\sum_{g \in {\mathscr G}_s} h(g(p_0))=\sum_{h g \in {\mathscr G}_s} h(g(p_0))=\sum_{g \in {\mathscr G}_s} g(p_0),
	$$
	Thus, $p_1:= \sum_{g\in {\mathscr G}_s} g(p_0) \in \mathbb{V}^k({\mathscr G}_s)$. However,
	\[
	\mathscr{F} (p_1 ) = \mathscr{F} \left( \sum_{g\in {\mathscr G}_s } g(p_0) \right) = \sum_{g\in {\mathscr G}_s} \mathscr{F} (g(p_0) )  \overset{\eqref{eq:8331}}{=} \sum_{g\in {\mathscr G}_s} \mathscr{F} ( p_0 ) = \ |{\mathscr G}_s| \, \mathscr{F} ( p_0 )   \neq 0,
	\]
	which contradicts \eqref{eq:3432}. Hence the assumption is incorrect. We have proved that 
	$\mathscr{F} (p ) =0$ for all $p \in \mathbb{V}^k$, which means the symmetric CAD \eqref{eq:715} is feasible for $\mathbb{V}^k$. 
\end{proof}

\begin{remark}
	\Cref{lem:Gs-invarint} 
	can be considered as a corollary of the Sobolev theorem \cite{Sobolev1962}. 
	Note that 
	the dimension of the subspace $\mathbb{V}^k({\mathscr G}_{s})$ is typically much smaller than the dimension of $\mathbb{V}^k$. 
	For example, for $\mathbb{P}^3 = \operatorname{span}\{1,x,y,x^2,xy,y^2,x^3,x^2y,xy^2,y^3\}$, we have  $\mathbb{P}^3({\mathscr G}_{s}) = \operatorname{span}\{1,x^2,y^2\}$, 
	and $\dim \mathbb{P}^3({\mathscr G}_{s}) =3 \ll  \dim \mathbb{P}^3 = 10$. 
	Therefore, \Cref{lem:Gs-invarint} helps to greatly simplify the feasibility requirement and  
	will be very useful for seeking OCADs in \Cref{sec:2DPk}.
\end{remark}

\subsection{Symmetric OCAD problem} 
After careful observation, we find that there always exists an OCAD that is symmetric. 
This new insight gives significant simplification of the 2D OCAD problem, motivating us to consider 
the simplified symmetric OCAD problem in the next subsection. 

\begin{theorem}\label{thm:2D:symOCAD}
	There exists a symmetric OCAD on $\Omega$ with 
	\begin{equation}\label{eq:weighteq2}
		\frac{ \overline \omega_1 \Delta x}{a_1} = \frac{ \overline \omega_2 \Delta y}{a_2}.
	\end{equation}
\end{theorem}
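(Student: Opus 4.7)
The plan is to take an OCAD supplied by \Cref{lem:existence2D}, transport it to the reference cell using \Cref{lem:C2}, and then symmetrise it by a group-averaging procedure over $\mathscr{G}_s$; every ingredient has already been established earlier in the excerpt.

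First I would invoke \Cref{lem:existence2D} to obtain an OCAD on $\Omega_{ij}$ whose boundary weights satisfy \eqref{eq:2123}. Writing $\overline{\omega}_1 := \omega_{1,\star}^- = \omega_{1,\star}^+$ and $\overline{\omega}_2 := \omega_{2,\star}^- = \omega_{2,\star}^+$, the identity \eqref{eq:weighteq2} is then automatic. By \Cref{lem:C2}, the same weights produce a feasible CAD on $\Omega = [-1,1]^2$ of the form
\begin{equation*}
\langle p \rangle_\Omega = \overline{\omega}_1 \bigl(\langle p \rangle^{-x}_\Omega + \langle p \rangle^{+x}_\Omega\bigr) + \overline{\omega}_2 \bigl(\langle p \rangle^{-y}_\Omega + \langle p \rangle^{+y}_\Omega\bigr) + \sum_{s=1}^S \omega_s\, p(x^{(s)}, y^{(s)}) \qquad \forall p \in \mathbb{V}^k.
\end{equation*}

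For the symmetrisation, I would apply this identity to the polynomial $g(p) \in \mathbb{V}^k$ for each $g \in \mathscr{G}_s$, exploiting \eqref{eq:Vk-invar}. Since $g(\Omega) = \Omega$, a change of variables yields $\langle g(p) \rangle_\Omega = \langle p \rangle_\Omega$; moreover $g$ either fixes or swaps the pair $\langle p \rangle^{-x}_\Omega \leftrightarrow \langle p \rangle^{+x}_\Omega$ and similarly for the $y$-interface averages. Crucially, the coefficients $\omega_1^- = \omega_1^+$ and $\omega_2^- = \omega_2^+$ are already equal, so these swaps leave the boundary part of the identity intact. The internal nodes, however, are moved to $g(x^{(s)}, y^{(s)})$, yielding four feasible CADs sharing the boundary weights $(\overline{\omega}_1, \overline{\omega}_2)$. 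Averaging these four CADs with equal weights $1/4$ and iterating \Cref{lem:convex} produces a further feasible CAD with untouched boundary weights in which each original internal node contributes
\begin{equation*}
\tfrac{\omega_s}{4}\bigl[p(x^{(s)}, y^{(s)}) + p(-x^{(s)}, y^{(s)}) + p(x^{(s)}, -y^{(s)}) + p(-x^{(s)}, -y^{(s)})\bigr] = \omega_s\, \overline{p(x^{(s)}, y^{(s)})}
\end{equation*}
in the notation of \eqref{eq:1270}. Choosing the representative of each $\mathscr{G}_s$-orbit that lies in $[0,1]^2$ (and merging the weights of coincident representatives) casts the CAD in the symmetric form \eqref{eq:715}; since the boundary weights remain optimal, the construction is a symmetric OCAD fulfilling \eqref{eq:weighteq2}.

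The only delicate point, which I view as bookkeeping rather than a genuine obstacle, is the degenerate case of an internal node lying on a symmetry axis ($x^{(s)}=0$ or $y^{(s)}=0$); the $\mathscr{G}_s$-orbit then has fewer than four distinct elements, so repeated terms in the averaged expression must be collapsed into a single positive weight attached to the unique representative in $[0,1]^2$. Provided this merger is done carefully, both the feasibility conditions of \Cref{def:2D_FCAD} and the symmetric form of \Cref{def:310} are preserved, completing the argument.
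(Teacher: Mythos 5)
Your proposal is correct and follows essentially the same route as the paper: start from the OCAD with equalized boundary weights furnished by \Cref{lem:existence2D}, reflect it under the four elements of $\mathscr{G}_s$ to get four feasible CADs, and average them to obtain a symmetric CAD whose boundary weights (and hence $\mathcal{G}_2$) are not decreased, with \eqref{eq:weighteq2} inherited from \eqref{eq:2123}. The degenerate-orbit bookkeeping you flag is not even needed, since the notation $\overline{p(x^{(s)},y^{(s)})}$ in \eqref{eq:1270} already handles coincident reflected nodes without any merging.
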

\begin{proof}
	According to \Cref{lem:existence2D}, there exists an OCAD on $\Omega$ of the form
	\begin{equation}\label{eq:735}
		\langle p \rangle_{\Omega} = 
		\omega_1^{-} \langle p \rangle^{-x}_{\Omega}+
		\omega_1^{+} \langle p \rangle^{+x}_{\Omega}+
		\omega_2^{-} \langle p \rangle^{-y}_{\Omega}+
		\omega_2^{+} \langle p \rangle^{+y}_{\Omega}+
		\sum_{s=1}^S  {\omega}_s p( x^{(s)}, y^{(s)}),
	\end{equation}
	which maximizes ${\mathcal G}_2( \omega_1^-, \omega_1^+, \omega_2^-, \omega_2^+  )$ and satisfies
	\begin{equation}\label{eq:3113}
		\frac{\omega_{1}^- \Delta x}{a_1} = \frac{\omega_{1}^+ \Delta x}{a_1} = \frac{\omega_{2}^- \Delta y}{a_2}= \frac{\omega_{2}^+ \Delta y}{a_2}.
	\end{equation}
	For any $p \in \mathbb{V}^k$, we define $q(x,y) := p(-x,y)$, then $q \in \mathbb V^k$ due to \eqref{eq:Vk-invar} and moreover,  
	\[
	\langle q \rangle_{\Omega} = \langle p \rangle_{\Omega}, \quad 
	\langle q \rangle^{\pm x}_{\Omega}
	= 
	\langle p \rangle^{\mp x}_{\Omega}, \quad
	\langle q \rangle^{\pm y}_{\Omega},
	=
	\langle p \rangle^{\pm y}_{\Omega}, \quad
	q( x^{(s)}, y^{(s)})
	= 
	p( -x^{(s)}, y^{(s)}).
	\]
	Applying the OCAD (\ref{eq:735}) to $q$, we find that the following CAD is feasible:
	\begin{equation}\label{eq:762}
		\langle p \rangle_{\Omega} = 
		\omega_1^{+} \langle p \rangle^{-x}_{\Omega}+
		\omega_1^{-} \langle p \rangle^{+x}_{\Omega}+
		\omega_2^{-} \langle p \rangle^{-y}_{\Omega}+
		\omega_2^{+} \langle p \rangle^{+y}_{\Omega}+
		\sum_{s=1}^S  {\omega}_s p( - x^{(s)}, y^{(s)}).
	\end{equation}
	Similarly, we obtain the following two feasible CADs:
	\begin{align}\label{eq:771}
		\langle p \rangle_{\Omega} &= 
		\omega_1^{-} \langle p \rangle^{-x}_{\Omega}+
		\omega_1^{+} \langle p \rangle^{+x}_{\Omega}+
		\omega_2^{+} \langle p \rangle^{-y}_{\Omega}+
		\omega_2^{-} \langle p \rangle^{+y}_{\Omega}+
		\sum_{s=1}^S  {\omega}_s p( x^{(s)}, -y^{(s)}),\\
		\label{eq:780}
		\langle p \rangle_{\Omega} &= 
		\omega_1^{+} \langle p \rangle^{-x}_{\Omega}+
		\omega_1^{-} \langle p \rangle^{+x}_{\Omega}+
		\omega_2^{+} \langle p \rangle^{-y}_{\Omega}+
		\omega_2^{-} \langle p \rangle^{+y}_{\Omega}+
		\sum_{s=1}^S  {\omega}_s p( -x^{(s)}, -y^{(s)}).
	\end{align}
	Taking an average of the four feasible CADs (\ref{eq:735}) and \eqref{eq:762}--(\ref{eq:780}), we obtain the following feasible CAD
	\begin{equation}\label{eq:789}
		\begin{aligned}
			\langle p \rangle_{\Omega} & = 
			\frac{\omega_1^{-}+\omega_1^{+}}{2} 
			\Big[ \langle p \rangle^{+x}_{\Omega} +
			\langle p \rangle^{-x}_{\Omega} \Big]
			+
			\frac{\omega_2^{-}+\omega_2^{+}}{2}
			\Big[ \langle p \rangle^{+y}_{\Omega} +
			\langle p \rangle^{-y}_{\Omega} \Big] + \sum_{s=1}^S \omega_s  \overline{ p( x^{(s)}, y^{(s)}) }.
		\end{aligned}
	\end{equation}
	or equivalently,
	\begin{equation}\label{eq:810}
		\begin{aligned}
			\langle p \rangle_{\Omega} & = 
			\left({\omega_1^{-}+\omega_1^{+}}\right) 
			\langle p \rangle^{x}_{\Omega} 
			+
			\left( {\omega_2^{-}+\omega_2^{+}} \right)
			\langle p \rangle^{y}_{\Omega} + \sum_{s=1}^S \omega_s  \overline{ p( |x^{(s)}|, |y^{(s)}|) },
		\end{aligned}
	\end{equation}
	which is a symmetric CAD with $( |x^{(s)}|, |y^{(s)}|) \in [0,1]^2$ for all $s$. 
	Moreover, because
	\[
	\overline \omega_1:= \frac{\omega_1^{-}+\omega_1^{+}}{2} \ge 
	\min \big\{\omega_1^{-},\omega_1^{+}\big\}
	\quad
	\text{and}
	\quad
	\overline \omega_2:= \frac{\omega_2^{-}+\omega_2^{+}}{2} \ge 
	\min \big\{\omega_2^{-},\omega_2^{+}\big\},
	\]
	we have	
	\begin{align*}
		& {\mathcal G}_2( \overline \omega_1, \overline \omega_1, \overline \omega_2,  \overline \omega_2  )
		= \min 
		\left\{
		\frac{ \overline \omega_1  \dx}{a_1},
		\frac{ \overline \omega_2 \dy}{a_2}
		\right\}
		\\
		& \quad 
		\ge
		\min \left\{ \frac{\omega_1^- \Delta x}{a_1}, \frac{\omega_1^+ \Delta x}{a_1} , \frac{\omega_2^- \Delta y}{a_2}, \frac{\omega_2^+ \Delta y}{a_2} \right\} ={\mathcal G}_2( \omega_1^-, \omega_1^+, \omega_2^-, \omega_2^+  ),
	\end{align*}
	which implies the symmetric CAD (\ref{eq:810}) is also an OCAD. Moreover, \eqref{eq:3113} implies \eqref{eq:weighteq2}. 
\end{proof}

Let $\mathbb B_\omega \subset \mathbb R^2$ denote the set of the boundary weights $(\overline \omega_1, \overline \omega_2)$ for all feasible symmetric CADs, namely, 
\begin{equation}\label{def:Bw}
	\mathbb{B}_\omega := \left \{( \overline \omega_1, \overline \omega_2) \in [0,1]^2:~ \exists \textrm{ symmetric CAD } 
\langle p \rangle_\Omega = 2\overline \omega_1 \langle p \rangle_{\Omega}^x+
2\overline \omega_2 \langle p \rangle_{\Omega}^y+
\sum_{s} \omega_s  \overline{ p( x^{(s)}, y^{(s)}) } \right\}.
\end{equation}
In fact, the set $\mathbb B_\omega $ is a projection of the set $\mathbb A_\omega$. 
Since $\mathbb A_\omega \subset [0,1]^4$ is compact (\Cref{lem:compact}) and convex (\Cref{lem:convex}), we know that 
$\mathbb B_\omega \subset [0,1]^2$ is also compact and convex. 
\Cref{fig:1363} illustrates the region $\mathbb B_\omega$ (shaded in green) for 
$\mathbb V^k=\mathbb{Q}^2$ or $\mathbb{Q}^3$ and $\mathbb V^k=\mathbb{P}^2$ or $\mathbb{P}^3$, respectively.

\begin{figure}[h!]
	\centerline{
		\begin{subfigure}[t][][t]{0.5\textwidth}
			\centering
			\includegraphics[width=0.96\textwidth]{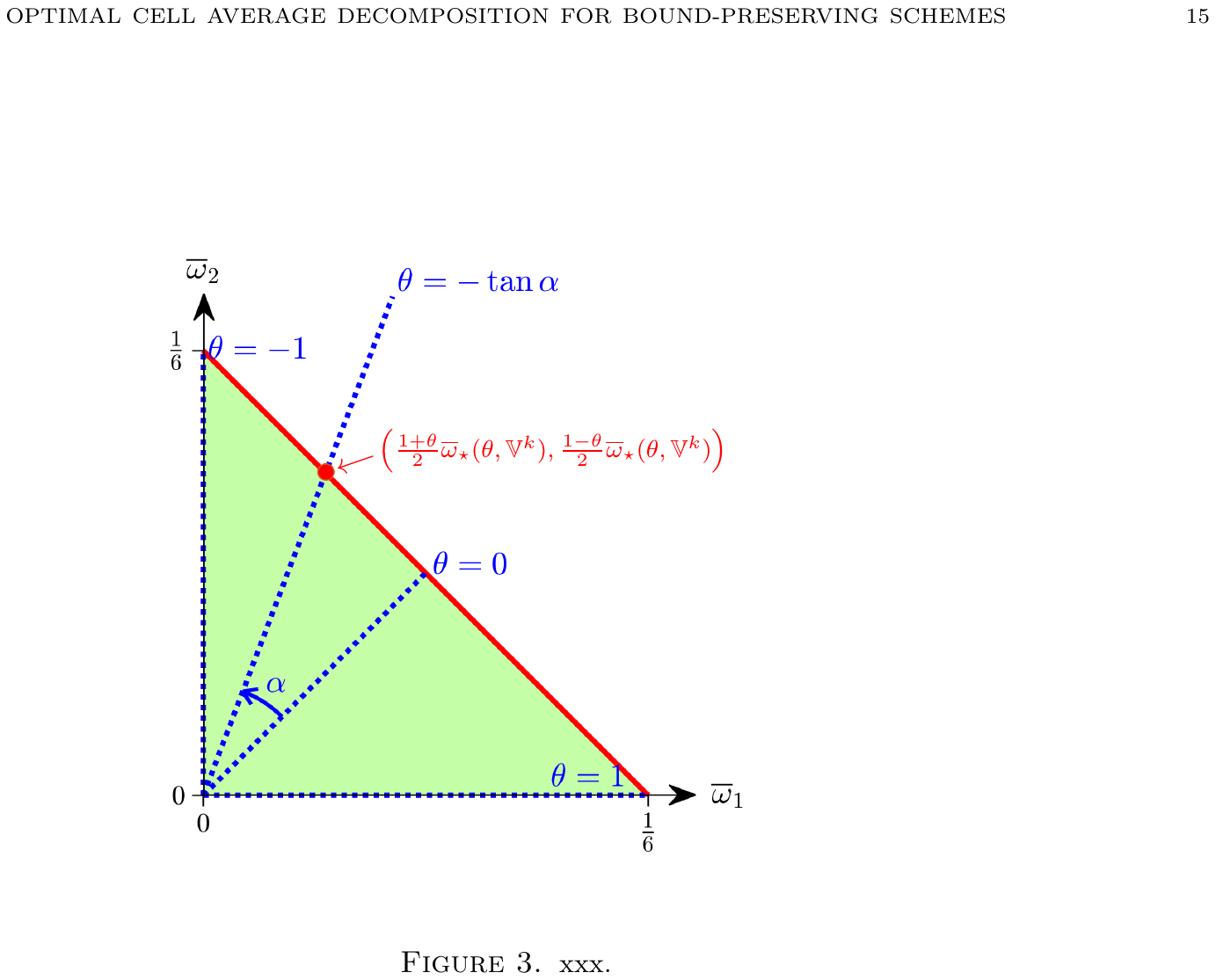}
			\caption{$\mathbb V^k=\mathbb{Q}^2$ or $\mathbb{Q}^3$.}
		\end{subfigure}
		\begin{subfigure}[t][][t]{0.5\textwidth}
			\centering
			\includegraphics[width=0.96\textwidth]{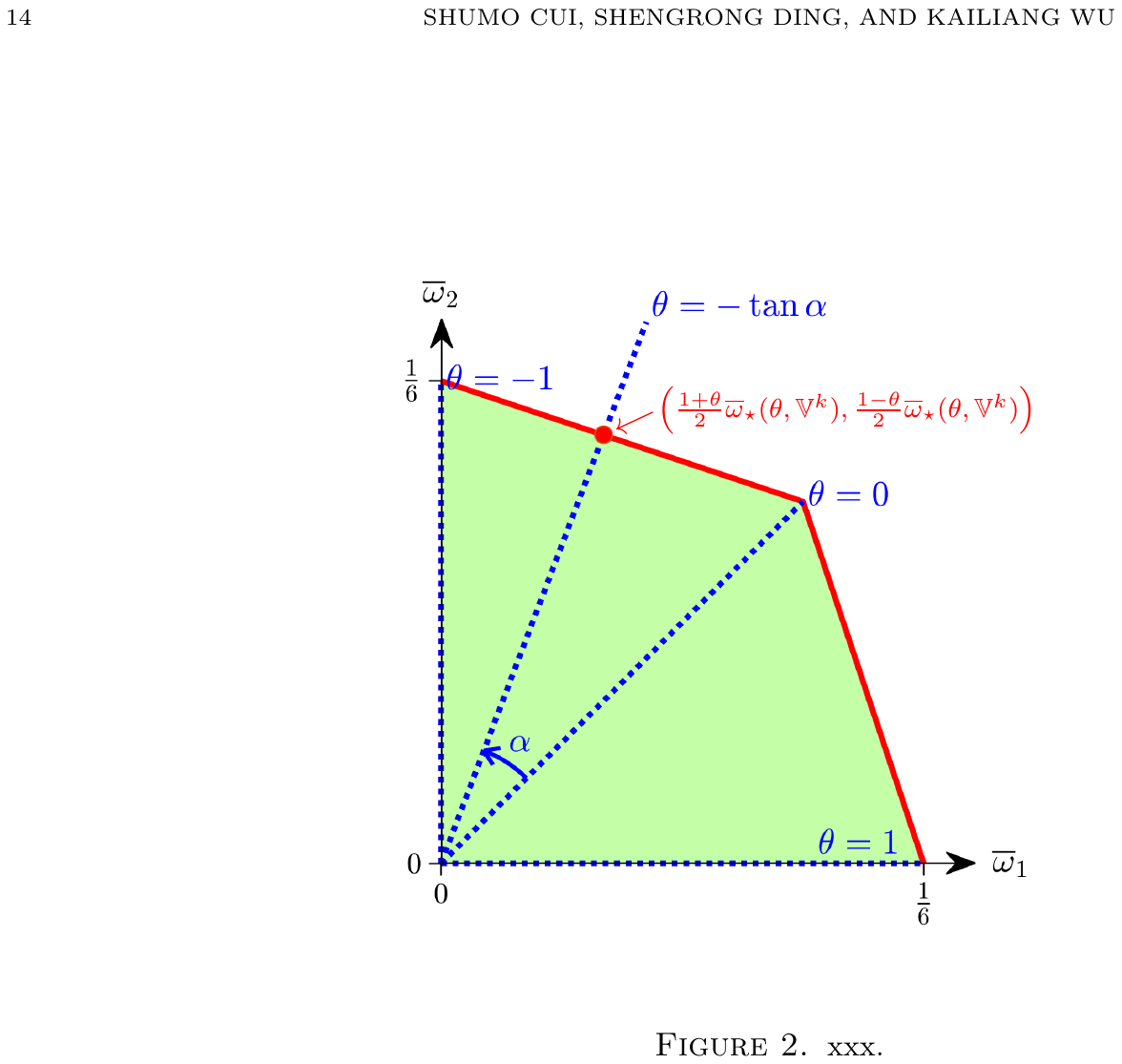}
			\caption{$\mathbb V^k=\mathbb{P}^2$ or $\mathbb{P}^3$.}
			\label{fig:P2P3}
		\end{subfigure}
	}
\caption{The region $\mathbb B_\omega$ (shaded in green) and the geometric interpretation of 
	the transformation \eqref{key112} for \Cref{prb:4.4}.}\label{fig:1363}
\end{figure}

Thanks to \Cref{lem:C2} and \Cref{thm:2D:symOCAD}, we only need to seek the symmetric OCAD on the reference cell $\Omega$ that maximizes 
\begin{equation}\label{obj12}
		\min 
	\left\{
	\frac{ \overline \omega_1  \dx}{a_1},
	\frac{ \overline \omega_2 \dy}{a_2}
	\right\}  \qquad  \mbox{with} \quad \frac{ \overline \omega_1 \Delta x}{a_1} = \frac{ \overline \omega_2 \Delta y}{a_2}.
\end{equation}	
Define 	
\begin{equation}\label{eq:eta}
\theta := \frac{a_1/\Delta x - a_2 / \Delta y }{a_1 / \Delta x + a_2 / \Delta y} \in [-1,1], 
\end{equation}
and introduce a new quantity $\Wmu$ to automatically meet the constraint $\frac{ \overline \omega_1 \Delta x}{a_1} = \frac{ \overline \omega_2 \Delta y}{a_2}$  as follows: 
\begin{equation}\label{key112}
	\overline \omega_1 = \frac12 \Wmu ( 1+ \theta ), \qquad \overline \omega_2 = \frac12 \Wmu (1-\theta). 
\end{equation}
See \Cref{fig:1363} for the geometric interpretation of the transformation \eqref{key112}.  
Then a symmetric CAD \eqref{eq:715} can be rewritten as 
\begin{equation}\label{eq:715b}
	\begin{aligned}
		\langle p \rangle_{\Omega} = 
		\Wmu \left[ ( 1+ \theta ) \langle p \rangle_{\Omega}^x+
	 (1-\theta) \langle p \rangle_{\Omega}^y \right] +
		\sum_{s=1}^S \omega_s  \overline{ p( x^{(s)}, y^{(s)}) } \qquad \forall p \in \mathbb V^k, 
	\end{aligned}
\end{equation}
and 
the objective function \eqref{obj12} is simplified to $ \Wmu / (a_1 / \Delta x + a_2 / \Delta y)$. 
	This procedure greatly simplifies the 2D OCAD problem to the following \Cref{prb:4.4}.

\begin{problem}[Symmetric 2D OCAD Problem]\label{prb:4.4}
	For $k \in \mathbb N_+$ and any given $\theta\in [-1,1]$, 
	find the symmetric OCAD on the reference cell $\Omega$ with the maximum boundary weight $\Wmu_\star(\theta,\mathbb{V}^k)$:
	\begin{equation}\label{eq:C2_CAD}
\begin{aligned}
			\langle p \rangle_{\Omega} = 
		\Wmu_\star(\theta,\mathbb{V}^k) \left[ ( 1+ \theta ) \langle p \rangle_{\Omega}^x+
		(1-\theta) \langle p \rangle_{\Omega}^y \right] +
		\sum_{s=1}^S \omega_s  \overline{ p( x^{(s)}, y^{(s)}) }
		\qquad \forall p \in \mathbb V^k,
\end{aligned}
	\end{equation}
	where $\omega_s > 0$ and $( x^{(s)}, y^{(s)}) \in [0,1]^2$ for all $s$. 
\end{problem}

In the following, we will mainly focus on the simplified \Cref{prb:4.4} on the reference cell $\Omega$. The resulting optimal CFL condition will be
\begin{equation}\label{eq:newCFL2Dsys}
	\Delta t \le c_0 \max{\mathcal G}_2 = \frac{ c_0 \Wmu_\star }{ a_1 / \Delta x + a_2 / \Delta y }. 
\end{equation}
Once \Cref{prb:4.4} is solved, we immediately obtain an OCAD on the rectangular cell $\Omega_{ij}$ with  
the corresponding nodes $\{( x_{ij}^{(s)}, y_{ij}^{(s)} )\}$ given by the inverse transformation of (\ref{eq:trans}).

\begin{lemma}[Monotonicity]\label{lem:monotonic}
	The optimal weight $\Wmu_\star(\theta,\mathbb{V}^{k})$ is monotonically non-increasing with respect to $k$, namely,  
	\begin{equation}\label{eq:monotonic}
		\Wmu_\star(\theta,\mathbb{V}^{k_1}) \ge \Wmu_\star(\theta,\mathbb{V}^{k_2}) \qquad \mbox{if} \quad k_1 < k_2.
	\end{equation} 
\end{lemma}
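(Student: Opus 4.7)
The plan is to exploit the nesting of polynomial spaces. Since $\mathbb{V}^k$ stands for either $\mathbb{P}^k$ or $\mathbb{Q}^k$, the inclusion $\mathbb{V}^{k_1}\subseteq \mathbb{V}^{k_2}$ holds whenever $k_1<k_2$. The key observation is then almost tautological: a symmetric CAD that is feasible on the larger space $\mathbb{V}^{k_2}$ automatically remains feasible on the smaller space $\mathbb{V}^{k_1}$, because the defining identity \eqref{eq:C2_CAD} simply has to hold for a strictly smaller set of test polynomials (the positivity of weights and the location of nodes in $[0,1]^2$ are unchanged).

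First, I would invoke \Cref{thm:2D:symOCAD} (combined with \Cref{lem:existence2D}) to guarantee that a symmetric OCAD realizing $\Wmu_\star(\theta,\mathbb{V}^{k_2})$ actually exists; call it
\begin{equation*}
\langle p\rangle_\Omega = \Wmu_\star(\theta,\mathbb{V}^{k_2})\bigl[(1+\theta)\langle p\rangle_\Omega^x+(1-\theta)\langle p\rangle_\Omega^y\bigr] + \sum_{s=1}^S \omega_s\,\overline{p(x^{(s)},y^{(s)})} \qquad \forall p\in\mathbb{V}^{k_2}.
\end{equation*}
Restricting this identity from $p\in\mathbb{V}^{k_2}$ to $p\in\mathbb{V}^{k_1}$ yields a symmetric CAD that is feasible for $\mathbb{V}^{k_1}$ in the sense of \Cref{prb:4.4}, with the same boundary weight $\Wmu_\star(\theta,\mathbb{V}^{k_2})$.

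Since $\Wmu_\star(\theta,\mathbb{V}^{k_1})$ is by definition the maximum of $\Wmu$ over all symmetric CADs feasible for $\mathbb{V}^{k_1}$, this restriction gives the inequality
\begin{equation*}
\Wmu_\star(\theta,\mathbb{V}^{k_1}) \ge \Wmu_\star(\theta,\mathbb{V}^{k_2}),
\end{equation*}
which is exactly the monotonicity claim. There is no real obstacle here; the only thing worth double-checking is the space inclusion, which is immediate for $\mathbb{P}^{k_1}\subset\mathbb{P}^{k_2}$ and for $\mathbb{Q}^{k_1}\subset\mathbb{Q}^{k_2}$, together with the fact that the feasibility conditions on weights and nodes do not depend on $k$, so they transfer verbatim from $\mathbb{V}^{k_2}$ to $\mathbb{V}^{k_1}$. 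This essentially converts the lemma into a one-line ``larger feasible set, larger maximum'' argument.
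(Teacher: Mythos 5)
Your argument is correct and is essentially identical to the paper's own proof: both rely on the inclusion $\mathbb{V}^{k_1}\subseteq\mathbb{V}^{k_2}$, so that an OCAD for $\mathbb{V}^{k_2}$ restricts to a feasible CAD for $\mathbb{V}^{k_1}$ with the same boundary weight, whence the inequality. Your added care about existence and about the weight/node conditions being independent of $k$ is fine but not a departure from the paper's route.
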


\begin{proof}
	If $k_1 < k_2$, then $\mathbb{V}^{k_1} \subseteq \mathbb{V}^{k_2}$. Any OCAD for $\mathbb{V}^{k_2}$ is a feasible CAD for $\mathbb{V}^{k_1}$. Hence we have \eqref{eq:monotonic}.
\end{proof}

\begin{lemma}\label{lem:2k2kp1}
	For any $k \in \mathbb N_+$, 
	the symmetric OCAD on $\Omega$ for the space $\mathbb{V}^{2k}$ is also a symmetric OCAD for the space $\mathbb{V}^{2k+1}$. Furthermore, 
\begin{equation}\label{eq:2k=2kp1}
		\Wmu_\star(\theta,\mathbb{V}^{2k+1}) = \Wmu_\star(\theta,\mathbb{V}^{2k}) \qquad \forall \theta \in [-1,1].
\end{equation}
\end{lemma}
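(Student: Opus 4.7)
The plan is to exploit the symmetry reduction in \Cref{lem:Gs-invarint}, combined with the monotonicity from \Cref{lem:monotonic}. The key observation is that the $\mathscr{G}_s$-invariant subspaces of $\mathbb{V}^{2k}$ and $\mathbb{V}^{2k+1}$ coincide, because every $\mathscr{G}_s$-invariant polynomial is an even polynomial in both $x$ and $y$ (its only surviving monomials are $x^{2i}y^{2j}$, for which the total degree $2(i+j)$ and the partial degrees $2i, 2j$ are automatically even). Consequently, the feasibility of a symmetric CAD cannot distinguish between these two spaces.

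Concretely, I would proceed as follows. First, I would verify the identity $\mathbb{V}^{2k}(\mathscr{G}_s) = \mathbb{V}^{2k+1}(\mathscr{G}_s)$ by a direct monomial argument: for $\mathbb{V} = \mathbb{P}$, the constraint $2i+2j \le 2k+1$ is equivalent to $2i+2j \le 2k$ since $2i+2j$ is even; similarly, for $\mathbb{V} = \mathbb{Q}$, the constraints $2i \le 2k+1$ and $2j \le 2k+1$ reduce to $2i \le 2k$ and $2j \le 2k$. Second, I would take any symmetric OCAD for $\mathbb{V}^{2k}$, say
\begin{equation*}
	\langle p \rangle_{\Omega} =
		\Wmu_\star(\theta,\mathbb{V}^{2k}) \left[ ( 1+ \theta ) \langle p \rangle_{\Omega}^x+
		(1-\theta) \langle p \rangle_{\Omega}^y \right] +
		\sum_{s=1}^S \omega_s  \overline{ p( x^{(s)}, y^{(s)}) },
\end{equation*}
which is feasible for $\mathbb{V}^{2k}$ and hence, by \Cref{lem:Gs-invarint} applied in reverse, feasible for $\mathbb{V}^{2k}(\mathscr{G}_s)$. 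By the previous step this equals $\mathbb{V}^{2k+1}(\mathscr{G}_s)$, so the same identity holds on $\mathbb{V}^{2k+1}(\mathscr{G}_s)$. Applying \Cref{lem:Gs-invarint} in the forward direction then upgrades the feasibility to the full space $\mathbb{V}^{2k+1}$.

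Third, I would combine this with \Cref{lem:monotonic}. The upgraded CAD is a feasible symmetric CAD for $\mathbb{V}^{2k+1}$ with boundary-weight parameter $\Wmu_\star(\theta,\mathbb{V}^{2k})$, which gives $\Wmu_\star(\theta,\mathbb{V}^{2k+1}) \ge \Wmu_\star(\theta,\mathbb{V}^{2k})$. The reverse inequality $\Wmu_\star(\theta,\mathbb{V}^{2k+1}) \le \Wmu_\star(\theta,\mathbb{V}^{2k})$ is exactly \Cref{lem:monotonic} since $\mathbb{V}^{2k} \subseteq \mathbb{V}^{2k+1}$. Together these establish the equality \eqref{eq:2k=2kp1} and show that the chosen symmetric OCAD for $\mathbb{V}^{2k}$ remains optimal for $\mathbb{V}^{2k+1}$.

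There is no substantial obstacle in this proof; the only thing to be careful about is the parity-based equality of invariant subspaces, which must be checked separately for $\mathbb{P}^k$ and $\mathbb{Q}^k$ (but in both cases it reduces to the elementary fact that a sum/maximum of even non-negative integers bounded by an odd number $2k+1$ is automatically bounded by $2k$). The whole argument is really a statement about how the symmetry group $\mathscr{G}_s$ ``eats'' one degree of freedom of the polynomial space, and \Cref{lem:Gs-invarint} is the mechanism that lets us translate this structural fact into an equality of optimal CAD weights.
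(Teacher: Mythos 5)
Your proof is correct, and it takes a genuinely different route from the paper's. The paper argues directly at the level of the full polynomial space: given $q \in \mathbb{V}^{2k+1}$, it forms the truncation $p \in \mathbb{V}^{2k}$ obtained by deleting the degree-$(2k+1)$ terms, observes that each monomial of $q-p$ is odd in $x$ or in $y$, and concludes that $\langle q-p \rangle_{\Omega} = \langle q-p \rangle_{\Omega}^{x} = \langle q-p \rangle_{\Omega}^{y} = \overline{(q-p)(x^{(s)},y^{(s)})} = 0$, so the CAD identity for $p$ transfers verbatim to $q$; monotonicity then closes the equality exactly as you do. You instead abstract the same parity fact into the statement $\mathbb{V}^{2k}(\mathscr{G}_s) = \mathbb{V}^{2k+1}(\mathscr{G}_s)$ and let \Cref{lem:Gs-invarint} do the lifting from the invariant subspace to the full space. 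Both proofs ultimately rest on the observation that the symmetric functionals annihilate anything of odd parity; the paper's version is self-contained and does not need \Cref{lem:Gs-invarint}, while yours is more structural, makes the mechanism (``the symmetry group eats the odd top degree'') explicit, and reuses a lemma the paper has already established at that point, so there is no circularity. Your reduction of feasibility to condition~(i) alone is also sound, since conditions~(ii) and~(iii) of \Cref{def:2D_FCAD} do not depend on the polynomial space. The only cosmetic quibble is the phrase ``applied in reverse'': restricting a valid identity from $\mathbb{V}^{2k}$ to its subspace $\mathbb{V}^{2k}(\mathscr{G}_s)$ is trivial and is not really the converse of \Cref{lem:Gs-invarint}.
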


\begin{proof}
	Let 
\begin{equation}\label{eq:sCAD2k}
		\langle p \rangle_{\Omega} = 
\Wmu_\star(\theta,\mathbb{V}^{2k}) \left[ ( 1+ \theta ) \langle p \rangle_{\Omega}^x+
(1-\theta) \langle p \rangle_{\Omega}^y \right] +  \sum_{s=1}^S \omega_s  \overline{ p( x^{(s)}, y^{(s)}) } 
\end{equation}
be a symmetric OCAD on the reference cell $\Omega$ for the space $\mathbb{V}^{2k}$. 
For any polynomial $q(x,y) = \sum_{ \| {\bf i} \| \le 2k+1} a_{i_1 i_2} x^{i_1} y^{i_2} \in \mathbb{V}^{2k+1}$, define the truncated polynomial $p(x,y) := \sum_{ \| {\bf i} \| \le 2k} a_{i_1 i_2} x^{i_1} y^{i_2} \in \mathbb{V}^{2k}$, where the norm $\| {\bf i} \| = i_1 + i_2$   for $\mathbb{V}^{2k}=\mathbb{P}^{2k}$ and $\| {\bf i} \| = \max\{ i_1, i_2\}$ for $\mathbb{V}^{2k}=\mathbb{Q}^{2k}$. Note that the polynomial $q(x,y)-p(x,y)$ is odd with respect to either $x$ or $y$. 
Thanks to the symmetry, we have  
$$
\langle q-p \rangle_{\Omega} = \langle  q-p \rangle_{\Omega}^x = \langle  q-p \rangle_{\Omega}^y = \overline{  (q-p) ( x^{(s)}, y^{(s)}) } = 0.  
$$
It then follows from \eqref{eq:sCAD2k} that  
\begin{equation}\label{eq:sCAD2kp1}
	\langle q \rangle_{\Omega} = 
	\Wmu_\star(\theta,\mathbb{V}^{2k}) \left[ ( 1+ \theta ) \langle q \rangle_{\Omega}^x+
	(1-\theta) \langle q \rangle_{\Omega}^y \right]+  \sum_{s=1}^S \omega_s  \overline{ q( x^{(s)}, y^{(s)}) } \quad \forall q \in \mathbb V^{2k+1}.
\end{equation}
This means \eqref{eq:sCAD2kp1} is a symmetric feasible CAD for $\mathbb{V}^{2k+1}$, and thus  $\Wmu_\star(\theta,\mathbb{V}^{2k}) \le \Wmu_\star(\theta,\mathbb{V}^{2k+1})$. 
On the other hand, \Cref{lem:monotonic} yields 
$
\Wmu_\star(\theta,\mathbb{V}^{2k}) \ge \Wmu_\star(\theta,\mathbb{V}^{2k+1}). 
$
Therefore, $\Wmu_\star(\theta,\mathbb{V}^{2k}) = \Wmu_\star(\theta,\mathbb{V}^{2k+1})$, and \eqref{eq:sCAD2kp1} is also a symmetric OCAD for $\mathbb{V}^{2k+1}$. The proof is completed. 
\end{proof}

\begin{remark}
	Thanks to \Cref{lem:2k2kp1}, we only need to seek the symmetric OCAD for $\mathbb{V}^{2k}$ of even degrees. 
\end{remark}

\begin{lemma}\label{lem:1525}
The symmetric CAD
\begin{equation}\label{eq:1527}
	\langle p \rangle_{\Omega} = \Wmu \left[ (1+\theta) \langle p \rangle_{\Omega}^x + (1-\theta) \langle p \rangle_{\Omega}^y \right] + \sum_{s=1}^S \omega_s \overline{p(x^{(s)},y^{(s)})}
\end{equation}
is feasible for $\mathbb{V}^k$ if and only if the CAD
\begin{equation}\label{eq:1530}
	\langle p \rangle_{\Omega} = \Wmu \left[ (1-\theta) \langle p \rangle_{\Omega}^x + (1+\theta) \langle p \rangle_{\Omega}^y \right] + \sum_{s=1}^S \omega_s \overline{p(y^{(s)},x^{(s)})}
\end{equation}
is feasible for $\mathbb{V}^k$. This implies the region $\mathbb B_\omega$ is 
symmetric with respect to the line $\overline \omega_1 = \overline \omega_2$.
\end{lemma}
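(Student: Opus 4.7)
The plan is to exploit the $(x,y)\mapsto(y,x)$ symmetry of the reference cell $\Omega=[-1,1]^2$ together with the fact that both $\mathbb P^k$ and $\mathbb Q^k$ are invariant under swapping the two variables. In this way, a feasibility identity of the form \eqref{eq:1527} can be transformed to one of the form \eqref{eq:1530}, and vice versa, by a single change of test polynomial.

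Concretely, suppose \eqref{eq:1527} holds for every $p\in\mathbb V^k$. Given an arbitrary $q\in\mathbb V^k$, define $p(x,y):=q(y,x)$; since $\mathbb V^k$ (be it $\mathbb P^k$ or $\mathbb Q^k$) is closed under the swap of coordinates, one has $p\in\mathbb V^k$, so \eqref{eq:1527} applies to this particular $p$. The next step is to verify the four elementary identities
\[
\langle p\rangle_\Omega=\langle q\rangle_\Omega,\quad
\langle p\rangle_\Omega^{x}=\langle q\rangle_\Omega^{y},\quad
\langle p\rangle_\Omega^{y}=\langle q\rangle_\Omega^{x},\quad
\overline{p(x^{(s)},y^{(s)})}=\overline{q(y^{(s)},x^{(s)})},
\]
which follow directly from the change of variables $(x,y)\leftrightarrow(y,x)$ in the cell and edge integrals, and from the definition \eqref{eq:1270} of the symmetrized point evaluation. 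Substituting these identities into \eqref{eq:1527} yields exactly \eqref{eq:1530} for the chosen $q$; as $q\in\mathbb V^k$ was arbitrary, \eqref{eq:1530} is a feasible CAD for $\mathbb V^k$. The reverse implication is proved by the same argument applied in the opposite direction (i.e. replacing the roles of \eqref{eq:1527} and \eqref{eq:1530} and using $q(x,y):=p(y,x)$).

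For the geometric consequence, note that \eqref{eq:1527} has boundary weights $(\overline\omega_1,\overline\omega_2)=\bigl(\tfrac12\Wmu(1+\theta),\tfrac12\Wmu(1-\theta)\bigr)$, whereas \eqref{eq:1530} has the swapped pair $\bigl(\tfrac12\Wmu(1-\theta),\tfrac12\Wmu(1+\theta)\bigr)$. Hence the constructed bijection sends a symmetric feasible CAD with boundary weights $(\overline\omega_1,\overline\omega_2)$ to a symmetric feasible CAD with boundary weights $(\overline\omega_2,\overline\omega_1)$. Invoking the definition \eqref{def:Bw} of $\mathbb B_\omega$, this yields $(\overline\omega_1,\overline\omega_2)\in\mathbb B_\omega \Longleftrightarrow (\overline\omega_2,\overline\omega_1)\in\mathbb B_\omega$, i.e.\ $\mathbb B_\omega$ is symmetric about the line $\overline\omega_1=\overline\omega_2$.

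The only nontrivial step is verifying the four change-of-variable identities above, and among these the identity for the symmetrized point evaluation is the one that most deserves to be written out, since $\overline{\,\cdot\,}$ is a fourfold average over the sign-flip group; however this is just a routine relabeling of the four summands in \eqref{eq:1270} and so does not constitute a real obstacle. No new machinery beyond the invariance of $\mathbb V^k$ under $(x,y)\mapsto(y,x)$ is required.
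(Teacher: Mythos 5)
Your proposal is correct and follows essentially the same route as the paper: both reduce the equivalence to the change of test polynomial $p(x,y)\leftrightarrow q(y,x)$ together with the four swap identities for $\langle\cdot\rangle_\Omega$, $\langle\cdot\rangle_\Omega^{x}$, $\langle\cdot\rangle_\Omega^{y}$ and the symmetrized point evaluation. The only (cosmetic) difference is that the paper also explicitly remarks that feasibility conditions (ii) and (iii) of \Cref{def:2D_FCAD} transfer trivially (same weights, swapped nodes still in $\Omega$), which your write-up leaves implicit.
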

\begin{proof}
The feasibility of a CAD, as defined in \Cref{def:2D_FCAD}, requires three conditions. The satisfaction of condition (ii) for CADs \eqref{eq:1527} and \eqref{eq:1530} is equivalent. 
The satisfaction of condition (iii) is also equivalent for both CADs, due to the geometric symmetry of $\Omega$. 

Let us verify the equivalence for condition (i). Assume the condition (i) of \eqref{eq:1527} is satisfied. For any $p(x,y) \in \mathbb{V}^k$, $q(x,y) := p(y,x)$ is also a polynomial in $\mathbb{V}^k$. Note that
\[
	\langle p \rangle_{\Omega} = \langle q \rangle_{\Omega}, 		\quad
	\langle p \rangle_{\Omega}^x = \langle q \rangle_{\Omega}^y,		\quad
	\langle p \rangle_{\Omega}^y = \langle q \rangle_{\Omega}^x,		\quad
	\overline{p(y^{(s)},x^{(s)})} = \overline{q(x^{(s)},y^{(s)})},
\]
we have
\begin{align*}
	& \langle p \rangle_{\Omega} - \Wmu \left[ (1-\theta) \langle p \rangle_{\Omega}^x + (1+\theta) \langle p \rangle_{\Omega}^y \right] - \sum_{s=1}^S \omega_s \overline{p(y^{(s)},x^{(s)})} 
	\\
	& \quad = 
	\langle q \rangle_{\Omega} - \Wmu \left[ (1-\theta) \langle q \rangle_{\Omega}^y + (1+\theta)  \langle q \rangle_{\Omega}^x \right] - \sum_{s=1}^S \omega_s \overline{q(x^{(s)},y^{(s)})} = 0.
\end{align*}
Thus, the condition (i) of \eqref{eq:1527} is a sufficient condition of the condition (i) of \eqref{eq:1530}. The necessity can be similarly proved, and the proof of \Cref{lem:1525} is completed.
\end{proof}

\begin{lemma}\label{thm:sym_theta}
The function $\Wmu_\star(\theta,\mathbb{V}^k)$ is even with $\theta$, namely, 
$$
	\Wmu_\star(-\theta,\mathbb{V}^k) = \Wmu_\star(\theta,\mathbb{V}^k) \quad \theta \in [-1,1].
$$
\end{lemma}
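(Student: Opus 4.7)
The plan is to derive this as a direct corollary of \Cref{lem:1525}. The key observation is that swapping the coefficients $(1+\theta)$ and $(1-\theta)$ in the symmetric CAD \eqref{eq:C2_CAD} is precisely the same as replacing $\theta$ by $-\theta$, so \Cref{lem:1525} already pairs up feasible CADs at $\theta$ with feasible CADs at $-\theta$ using the same value of $\Wmu$.

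First I would take an OCAD for $\mathbb{V}^k$ at parameter $\theta$, guaranteed to exist by \Cref{thm:2D:symOCAD} and \Cref{prb:4.4}, namely
\begin{equation*}
	\langle p \rangle_{\Omega} = \Wmu_\star(\theta,\mathbb{V}^k) \left[ (1+\theta) \langle p \rangle_{\Omega}^x + (1-\theta) \langle p \rangle_{\Omega}^y \right] + \sum_{s=1}^S \omega_s \overline{p(x^{(s)},y^{(s)})}.
\end{equation*}
Applying \Cref{lem:1525} (with $\Wmu=\Wmu_\star(\theta,\mathbb{V}^k)$) yields that
\begin{equation*}
	\langle p \rangle_{\Omega} = \Wmu_\star(\theta,\mathbb{V}^k) \left[ (1-\theta) \langle p \rangle_{\Omega}^x + (1+\theta) \langle p \rangle_{\Omega}^y \right] + \sum_{s=1}^S \omega_s \overline{p(y^{(s)},x^{(s)})}
\end{equation*}
is also a feasible symmetric CAD. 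Rewriting the bracket as $[(1+(-\theta))\langle p \rangle_{\Omega}^x + (1-(-\theta))\langle p \rangle_{\Omega}^y]$ exhibits this as a feasible symmetric CAD for \Cref{prb:4.4} at parameter $-\theta$ with weight $\Wmu_\star(\theta,\mathbb{V}^k)$. By maximality of $\Wmu_\star(-\theta,\mathbb{V}^k)$, we therefore conclude $\Wmu_\star(-\theta,\mathbb{V}^k)\ge \Wmu_\star(\theta,\mathbb{V}^k)$.

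The reverse inequality follows by exactly the same argument with the roles of $\theta$ and $-\theta$ exchanged (start from an OCAD at $-\theta$ and apply \Cref{lem:1525} to produce a feasible symmetric CAD at $\theta$ with the same weight). Combining both inequalities gives the desired equality $\Wmu_\star(-\theta,\mathbb{V}^k)=\Wmu_\star(\theta,\mathbb{V}^k)$ for all $\theta\in[-1,1]$. There is no real obstacle here: the entire argument is essentially a bookkeeping consequence of \Cref{lem:1525}, reflecting the geometric fact already noted there that $\mathbb{B}_\omega$ is symmetric about the line $\overline{\omega}_1=\overline{\omega}_2$, while the parametrization \eqref{key112} turns reflection across this line into the sign flip $\theta\mapsto -\theta$.
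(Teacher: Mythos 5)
Your proposal is correct and follows essentially the same route as the paper's own proof: start from a symmetric OCAD at $\theta$, apply \Cref{lem:1525} to obtain a feasible symmetric CAD at $-\theta$ with the same boundary weight, deduce $\Wmu_\star(-\theta,\mathbb{V}^k)\ge\Wmu_\star(\theta,\mathbb{V}^k)$, and conclude by symmetry. The only difference is expository (your explicit remark about the $(1+\theta)\leftrightarrow(1-\theta)$ swap corresponding to $\theta\mapsto-\theta$, which the paper leaves implicit).
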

\begin{proof}
Consider a symmetric OCAD \eqref{eq:C2_CAD} for $\mathbb{V}^k$. 
By \Cref{lem:1525}, the following symmetric CAD is also feasible for $\mathbb{V}^k$: 
\begin{equation}\label{eq:nTheta}
		\langle p \rangle_{\Omega} = \Wmu_\star(\theta,\mathbb{V}^k) \left[ (1-\theta) \langle p \rangle_{\Omega}^x + (1+\theta) \langle p \rangle_{\Omega}^y \right] + \sum_{s=1}^S \omega_{s} \overline{p(y^{(s)},x^{(s)})},
\end{equation}
which implies $\Wmu_\star(-\theta,\mathbb{V}^k) \ge \Wmu_\star(\theta,\mathbb{V}^k)$. 
Similar, one has $\Wmu_\star(\theta,\mathbb{V}^k) \ge \Wmu_\star(-\theta,\mathbb{V}^k)$. 
Hence $
\Wmu_\star(-\theta,\mathbb{V}^k) = \Wmu_\star(\theta,\mathbb{V}^k)
$, and \eqref{eq:nTheta} is a symmetric OCAD for $(-\theta,\mathbb{V}^k)$.
\end{proof}


\begin{remark}\label{cor:1575}
Thanks to \Cref{thm:sym_theta}, we only need to investigate the symmetric 2D OCAD problem (\Cref{prb:4.4}) for $\theta \in [-1,0]$. For $\theta \in [0,1]$, the corresponding symmetric OCAD can then be directly obtained by 
\eqref{eq:nTheta}.
\end{remark}

\subsection{Optimality criteria}

Let $\mathbb{V}_+^k$ denote the set of all the polynomials in $\mathbb{V}^k$ that is non-negative over $\Omega$. The following lemma establishes the close connection between $\Wmu_\star(\theta,\mathbb{V}^k)$ and $\mathbb{V}_+^k$.
\begin{lemma}\label{lem:430}
	Given $k\in\mathbb{N}_+$, for any $p \in \mathbb{V}_+^k$, define
	\[
	\phi (p;\theta):=
	\begin{dcases}
		\frac{  \langle p\rangle_{\Omega}}
		{ (1+\theta) \langle p \rangle^x_{\Omega}+
			(1-\theta) \langle p \rangle^y_{\Omega}} \quad 
		& \text{ if } (1+\theta) \langle p \rangle^x_{\Omega}+
		(1-\theta) \langle p \rangle^y_{\Omega}\neq 0, \\
		+ \infty \quad 
		& \text{ if }  (1+\theta) \langle p \rangle^x_{\Omega}+
		(1-\theta) \langle p \rangle^y_{\Omega}= 0,
	\end{dcases}
	\]
	then  
	\begin{equation}\label{eq:1579} 
	 \phi^\star (\theta,\mathbb{V}^k_{+}) :=  \inf_{ p \in \mathbb{V}_+^k  }\phi (p;\theta) \ge   \Wmu_\star(\theta,\mathbb{V}^k).
	\end{equation}
\end{lemma}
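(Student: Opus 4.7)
The plan is to deduce the inequality directly from the existence of a symmetric OCAD (guaranteed by Theorem~\ref{thm:2D:symOCAD} and the equivalent formulation in Problem~\ref{prb:4.4}) applied pointwise to an arbitrary nonnegative polynomial $p \in \mathbb{V}^k_+$.

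First I would fix a symmetric OCAD for $\mathbb{V}^k$ on the reference cell $\Omega$, writing it in the form
\begin{equation*}
	\langle p \rangle_{\Omega} = \Wmu_\star(\theta,\mathbb{V}^k)\bigl[ (1+\theta) \langle p \rangle_{\Omega}^x + (1-\theta) \langle p \rangle_{\Omega}^y \bigr] + \sum_{s=1}^S \omega_s \, \overline{p(x^{(s)}, y^{(s)})} \qquad \forall p \in \mathbb{V}^k,
\end{equation*}
with $\omega_s > 0$ and $(x^{(s)}, y^{(s)}) \in [0,1]^2 \subset \Omega$. Next I would apply this identity to an arbitrary $p \in \mathbb{V}^k_+$ and exploit three nonnegativity facts: (i) since $p \ge 0$ on $\Omega$ and each of the four reflected points $(\pm x^{(s)}, \pm y^{(s)})$ lies in $\Omega$, the symmetrized values $\overline{p(x^{(s)}, y^{(s)})}$ defined in \eqref{eq:1270} are nonnegative; (ii) the boundary averages $\langle p \rangle_{\Omega}^{\pm x}$, $\langle p \rangle_{\Omega}^{\pm y}$ are integrals of $p \ge 0$ along edges of $\Omega$, hence $\langle p \rangle_{\Omega}^x \ge 0$ and $\langle p \rangle_{\Omega}^y \ge 0$; (iii) $\theta \in [-1,1]$ makes $(1 \pm \theta) \ge 0$. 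Dropping the nonnegative internal contribution yields
\begin{equation*}
	\langle p \rangle_{\Omega} \ \ge \ \Wmu_\star(\theta,\mathbb{V}^k)\bigl[ (1+\theta) \langle p \rangle_{\Omega}^x + (1-\theta) \langle p \rangle_{\Omega}^y \bigr].
\end{equation*}

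Finally I would split on the value of the denominator in the definition of $\phi(p;\theta)$. If $(1+\theta)\langle p\rangle_\Omega^x + (1-\theta)\langle p\rangle_\Omega^y > 0$, dividing through gives $\phi(p;\theta) \ge \Wmu_\star(\theta,\mathbb{V}^k)$; if the denominator equals $0$, then $\phi(p;\theta) = +\infty \ge \Wmu_\star(\theta,\mathbb{V}^k)$ trivially. Since $p \in \mathbb{V}^k_+$ was arbitrary, taking infimum over $\mathbb{V}^k_+$ yields $\phi^\star(\theta,\mathbb{V}^k_+) \ge \Wmu_\star(\theta,\mathbb{V}^k)$, which is \eqref{eq:1579}.

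There is no real obstacle here; the entire argument rests on the fact that a symmetric OCAD exists and that, when tested against a nonnegative polynomial, the internal-node weights act as ``slack'' that can only decrease the inferred ratio. The only point that requires mild care is the case distinction handling a vanishing denominator, which is disposed of by the convention $\phi(p;\theta) = +\infty$ built into the definition.
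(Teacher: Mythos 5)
Your proposal is correct and follows essentially the same route as the paper's proof: apply the symmetric OCAD identity to an arbitrary $p \in \mathbb{V}^k_+$, drop the nonnegative internal-node sum to get $\langle p\rangle_\Omega \ge \Wmu_\star(\theta,\mathbb{V}^k)\bigl[(1+\theta)\langle p\rangle_\Omega^x + (1-\theta)\langle p\rangle_\Omega^y\bigr]$, and take the infimum. The only difference is that you spell out the nonnegativity of each term and the vanishing-denominator case explicitly, which the paper leaves implicit.
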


\begin{proof}
	For a symmetric OCAD \eqref{eq:C2_CAD} on $\Omega$, we have for all $p \in \mathbb{V}_+^k \setminus \{0\}$ that 
	\begin{align*}
		\langle p \rangle_{\Omega} &= 
	\Wmu_\star(\theta,\mathbb{V}^k) \left[ ( 1+ \theta ) \langle p \rangle_{\Omega}^x+
	(1-\theta) \langle p \rangle_{\Omega}^y \right] +  \sum_{s=1}^S \omega_s  \overline{ p( x^{(s)}, y^{(s)}) }
	\\
	& \ge 
	\Wmu_\star(\theta,\mathbb{V}^k) \left[ ( 1+ \theta ) \langle p \rangle_{\Omega}^x+
	(1-\theta) \langle p \rangle_{\Omega}^y \right],
	\end{align*}
	which implies 
	$$
	 \phi (p;\theta) \ge \Wmu_\star(\theta,\mathbb{V}^k) \qquad \forall p \in \mathbb{V}_+^k. 
	$$
	Taking the infimum for $p$ on the above inequality completes the proof.  
\end{proof}

For any $p \in \mathbb{V}_+^k$, $ \phi (p;\theta)$ is an upper bound of $\Wmu_\star(\theta,\mathbb{V}^k)$.
As suggested by \Cref{lem:430}, one may minimize $\phi(p;\theta)$ in $\mathbb{V}_+^k$ and obtain $ \phi^\star (\theta,\mathbb{V}^k_{+})$ as a sharp upper bound for $\Wmu_\star(\theta,\mathbb{V}^k)$. Very interestingly, for all the 2D OCADs found in this paper, we discover that 
$ \phi^\star (\theta,\mathbb{V}^k_{+}) = \Wmu_\star(\theta,\mathbb{V}^k)$, which will be further discussed in \Cref{con:2070}. This motivates us to consider the following problem. 

\begin{problem}\label{prb:2}
	Given $k\in\mathbb{N}^+$ and $\theta \in [-1,1]$, minimize $\phi(p;\theta)$ for $p \in \mathbb{V}_+^k$.
\end{problem}

For given $\theta \in [-1,1]$ and $\mathbb{V}_+^k$, if there exists a $p^\star \in \mathbb{V}_+^k \setminus \{0\}$ such that 
\begin{equation}\label{eq:1650}
p^\star:= \mathop{\arg\min}\limits_{p\in \mathbb{V}_+^k} \phi(p;\theta),
\end{equation}
then we call $p^\star$ a \textit{critical positive polynomial} for $\phi^\star(\theta,\mathbb{V}^k_+)$.

\begin{remark}
It is worth noting that, for given $\theta \in [-1,1]$ and $k\in\mathbb{N}^+$, 
the critical positive polynomial for $\phi^\star(\theta,\mathbb{V}^k_+)$ and the symmetric OCAD for $\mathbb V^k$ can be possibly not unique. 
\end{remark}

\begin{lemma}\label{lem:notUnique}
If two different nonzero nonnegative polynomials, $p_1^\star,p_2^\star  \in \mathbb{V}_+^k \setminus \{0\}$, are both critical positive polynomials for $\phi^\star(\theta,\mathbb{V}^k_+)$, namely, 
\begin{equation}\label{eq:phi366}
	\phi^\star (\theta,\mathbb{V}^k_{+}) =  \phi (p_1^\star;\theta) =  \phi (p_2^\star;\theta), 
\end{equation} 
then for any $\alpha_1,\alpha_2\in \mathbb R_+$, the positively combined polynomial 
$$
\alpha_1 p_1^\star + \alpha_2 p_2^\star  \in \mathbb{V}_+^k \setminus \{0\}
$$
is also a critical positive polynomial for $\phi^\star(\theta,\mathbb{V}^k_+)$.
\end{lemma}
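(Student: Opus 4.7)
The plan is to exploit the fact that $\phi(p;\theta)$ is a ratio of two linear functionals of $p$, so the ``equality case'' of the minimization is preserved by positive linear combinations, once one rules out division by zero.

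First, I would introduce the two linear functionals on $\mathbb{V}^k$:
\[
N(p) := \langle p \rangle_{\Omega}, \qquad D_\theta(p) := (1+\theta)\langle p\rangle_{\Omega}^x + (1-\theta)\langle p\rangle_{\Omega}^y,
\]
so that $\phi(p;\theta) = N(p)/D_\theta(p)$ whenever $D_\theta(p)\neq 0$. Both $N$ and $D_\theta$ are nonnegative on $\mathbb{V}_+^k$ (they are averages of a nonnegative polynomial over $\Omega$ or over edges of $\Omega$). The criticality hypothesis \eqref{eq:phi366} then reads
\[
N(p_i^\star) = \phi^\star(\theta,\mathbb{V}^k_+) \, D_\theta(p_i^\star), \qquad i=1,2,
\]
with the proviso that $D_\theta(p_i^\star)\neq 0$; otherwise $\phi(p_i^\star;\theta)=+\infty$, contradicting finiteness of $\phi^\star$ (which is finite because, for instance, $\phi(1;\theta)=1/2$). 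Hence $D_\theta(p_i^\star)>0$ for $i=1,2$.

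Next, for $\alpha_1,\alpha_2\in\mathbb{R}_+$ (not both zero, to obtain a nonzero element; if $\mathbb{R}_+$ is taken in the strict sense this is automatic), set $p := \alpha_1 p_1^\star + \alpha_2 p_2^\star$. Since $p_1^\star,p_2^\star\in\mathbb{V}_+^k$ and $\mathbb{V}^k$ is a vector space, $p\in\mathbb{V}^k$ and $p\ge 0$ on $\Omega$; moreover $p\not\equiv 0$, because $p(x,y)>0$ at any point where either $p_i^\star$ is positive and the corresponding $\alpha_i$ is positive. Thus $p\in\mathbb{V}_+^k\setminus\{0\}$, establishing the first part of the claim.

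Finally, linearity of $N$ and $D_\theta$ gives
\[
N(p) = \alpha_1 N(p_1^\star) + \alpha_2 N(p_2^\star) = \phi^\star(\theta,\mathbb{V}^k_+)\bigl[\alpha_1 D_\theta(p_1^\star) + \alpha_2 D_\theta(p_2^\star)\bigr] = \phi^\star(\theta,\mathbb{V}^k_+)\, D_\theta(p),
\]
and $D_\theta(p) = \alpha_1 D_\theta(p_1^\star) + \alpha_2 D_\theta(p_2^\star) > 0$ since each $D_\theta(p_i^\star)>0$ and at least one $\alpha_i$ is positive. Therefore $\phi(p;\theta) = N(p)/D_\theta(p) = \phi^\star(\theta,\mathbb{V}^k_+)$, so $p$ is also a critical positive polynomial. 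There is no substantial obstacle here; the only delicate point is verifying $D_\theta(p_i^\star)>0$, which follows from the finiteness of $\phi^\star$. Everything else is a one-line consequence of the bilinearity (linearity in $p$ of both $N$ and $D_\theta$).
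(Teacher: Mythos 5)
Your proof is correct and takes essentially the same approach as the paper: both arguments rest on the linearity in $p$ of the numerator $\langle p\rangle_\Omega$ and the denominator $(1+\theta)\langle p\rangle_\Omega^x+(1-\theta)\langle p\rangle_\Omega^y$, the paper phrasing it as a convex combination of the reciprocals $1/\phi(p_i^\star;\theta)$ and you phrasing it as the linear identity $N(p)=\phi^\star D_\theta(p)$. Your extra check that $D_\theta(p_i^\star)>0$ (via finiteness of $\phi^\star$) is a welcome bit of care that the paper sidesteps by working with reciprocals, whose denominators $\langle p_i^\star\rangle_\Omega$ are automatically positive.
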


\begin{proof}
We observe for any $\alpha_1,\alpha_2\in \mathbb R_+$ that 
\begin{align*}
	\frac1{ \phi (\alpha_1 p_1^\star + \alpha_2 p_2^\star;\theta) }
	&= \frac
	{ (1+\theta) \langle \alpha_1 p_1^\star + \alpha_2 p_2^\star \rangle^x_{\Omega}+
		(1-\theta) \langle \alpha_1 p_1^\star + \alpha_2 p_2^\star \rangle^y_{\Omega}}{  \langle \alpha_1 p_1^\star + \alpha_2 p_2^\star  \rangle_{\Omega}}
	\\
	&= \frac { \alpha_1  \langle  p_1^\star  \rangle_{\Omega} }  { \alpha_1  \langle  p_1^\star  \rangle_{\Omega} +   \alpha_2 \langle   p_2^\star  \rangle_{\Omega}   }  
	\frac1{ \phi ( p_1^\star;\theta) } 
	+ \frac { \alpha_2  \langle  p_1^\star  \rangle_{\Omega} }  { \alpha_1  \langle  p_1^\star  \rangle_{\Omega} +   \alpha_2 \langle   p_2^\star  \rangle_{\Omega}   }  
	\frac1{ \phi ( p_2^\star;\theta) }
	\\
	& \overset{\eqref{eq:phi366}}{=} \frac1{ \phi^\star (\theta,\mathbb{V}^k_{+}) },
\end{align*}
which yields $\phi (\alpha_1 p_1^\star + \alpha_2 p_2^\star;\theta) = \phi^\star (\theta,\mathbb{V}^k_{+}) $. Hence $\alpha_1 p_1^\star + \alpha_2 p_2^\star$ is also a critical positive polynomial.
\end{proof}


%
%
%
%
%
%
%

We discover that the symmetric OCAD problem (\Cref{prb:4.4}) is strongly connected with 
\Cref{prb:2}. Their connection leads to the following optimality criteria, 
which are very useful for examining the optimality of a feasible symmetric CAD: 
\begin{equation}\label{eq:1652}
	\langle p \rangle_{\Omega} = 
	\Wmu \left[ (1+\theta) \langle p \rangle_{\Omega}^x + (1-\theta) \langle p \rangle_{\Omega}^y \right] +  \sum_{s=1}^S \omega_s   \overline{ p( x^{(s)}, y^{(s)}) } \qquad \forall p \in \mathbb{V}^k.
\end{equation}


\begin{theorem}[Optimality Criterion \#1]\label{thm:1648}
	If $\Wmu =  \phi^\star(\theta,\mathbb{V}^k_+)$, then 
	$\Wmu_\star(\theta,\mathbb{V}^k) =  \phi^\star(\theta,\mathbb{V}^k_+)$ and 
	the CAD \eqref{eq:1652} is a symmetric OCAD for $\mathbb{V}^k$.
\end{theorem}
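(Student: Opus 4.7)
The plan is to prove Theorem \ref{thm:1648} by a short sandwich argument using Lemma \ref{lem:430} together with the feasibility that is implicit in the theorem's context (the CAD \eqref{eq:1652} being the candidate whose optimality we are testing).

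First, I would observe that since \eqref{eq:1652} is, by assumption, a feasible symmetric CAD for $\mathbb{V}^k$ with boundary weight $\Wmu$, the definition of $\Wmu_\star(\theta,\mathbb{V}^k)$ as the \emph{maximum} boundary weight over all feasible symmetric CADs (Problem \ref{prb:4.4}) immediately yields
$$
\Wmu \le \Wmu_\star(\theta,\mathbb{V}^k).
$$
Second, Lemma \ref{lem:430} supplies the matching upper bound $\Wmu_\star(\theta,\mathbb{V}^k) \le \phi^\star(\theta,\mathbb{V}^k_+)$. Chaining these two inequalities with the hypothesis $\Wmu = \phi^\star(\theta,\mathbb{V}^k_+)$ gives the sandwich
$$
\phi^\star(\theta,\mathbb{V}^k_+) \;=\; \Wmu \;\le\; \Wmu_\star(\theta,\mathbb{V}^k) \;\le\; \phi^\star(\theta,\mathbb{V}^k_+),
$$
forcing all three quantities to coincide. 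In particular, $\Wmu_\star(\theta,\mathbb{V}^k) = \phi^\star(\theta,\mathbb{V}^k_+)$; and the boundary weight $\Wmu$ of the CAD \eqref{eq:1652} attains this maximum, so \eqref{eq:1652} solves Problem \ref{prb:4.4}, i.e., is a symmetric OCAD for $\mathbb{V}^k$.

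There is essentially no obstacle in this proof: all of the real work has already been invested in Lemma \ref{lem:430}, whose proof exploits nonnegativity of $\sum_s \omega_s \overline{p(x^{(s)},y^{(s)})}$ on the positive polynomial cone to transfer between the CAD identity and the scalar functional $\phi$. The value of this criterion lies not in the depth of its argument but in its practical power: it converts the optimality question for the CAD \eqref{eq:1652}, which \emph{a priori} involves unknown internal nodes and weights in $\mathbb{V}^k$, into the purely scalar task of checking the single equality $\Wmu = \phi^\star(\theta,\mathbb{V}^k_+)$. This sets the stage for the subsequent criteria to attack this equality by exhibiting explicit critical positive polynomials in the sense of \eqref{eq:1650}.
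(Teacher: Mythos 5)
Your proof is correct and is essentially identical to the paper's: both establish the sandwich $\Wmu \le \Wmu_\star(\theta,\mathbb{V}^k) \le \phi^\star(\theta,\mathbb{V}^k_+)$ (the first inequality from feasibility of the CAD \eqref{eq:1652}, the second from \Cref{lem:430}) and close it with the hypothesis $\Wmu = \phi^\star(\theta,\mathbb{V}^k_+)$. Your write-up merely makes the feasibility step slightly more explicit than the paper does.
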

\begin{proof}
	\Cref{lem:430} tells us that 
		$$
	\phi^\star(\theta,\mathbb{V}^k_+) \ge    \Wmu_\star(\theta,\mathbb{V}^k) \ge \Wmu. 
	$$
	If $\Wmu = \phi^\star(\theta,\mathbb{V}^k)$, then $\Wmu = \Wmu_\star(\theta,\mathbb{V}^k) =  \phi^\star(\theta,\mathbb{V}^k)$ and 
	the symmetric CAD \eqref{eq:1652} is optimal for $\mathbb{V}^k$.
\end{proof}

\begin{theorem}[Optimality Criterion \#2]\label{lem:511}
	For a given $\theta \in [-1,1]$, if there exists a polynomial $p^\star \in \mathbb{V}_+^k \setminus \{0\}$ such that 
	$p^\star ( 
	  x^{(s)}, y^{(s)})=0$ for all $s$, 
	then we have: 
	\begin{itemize}
		\item $\Wmu_\star(\theta,\mathbb{V}^k) =  \phi^\star(\theta,\mathbb{V}^k_+)$;
		\item the CAD \eqref{eq:1652} is a symmetric OCAD for $\mathbb{V}^k$;
		\item the polynomial $p^\star$ is a critical positive polynomial for $\phi^\star(\theta,\mathbb{V}^k_+)$.
	\end{itemize}
\end{theorem}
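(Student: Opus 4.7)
The plan is to substitute $p^\star$ into the feasible CAD \eqref{eq:1652}, exploit nonnegativity to kill the internal-node sum, and then chain the resulting identity with \Cref{lem:430} and the trivial bound $\Wmu_\star(\theta,\mathbb{V}^k)\ge \Wmu$ coming from the feasibility of \eqref{eq:1652}.

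First, apply the identity \eqref{eq:1652} to $p = p^\star$. Since each orbit point $(\pm x^{(s)},\pm y^{(s)})$ lies in $\Omega$ and $p^\star \in \mathbb{V}_+^k$, every averaged value $\overline{p^\star(x^{(s)},y^{(s)})}$ is a sum of nonnegative numbers. The hypothesis $p^\star(x^{(s)},y^{(s)})=0$ (extended to the full $\mathscr{G}_s$-orbit, either by reading the statement that way or by first replacing $p^\star$ with its symmetrization, which still belongs to $\mathbb{V}_+^k$ and still vanishes at the listed nodes) then forces each of these averages to vanish, reducing the CAD to
\[
\langle p^\star\rangle_\Omega = \Wmu\,\bigl[(1+\theta)\langle p^\star\rangle_\Omega^x + (1-\theta)\langle p^\star\rangle_\Omega^y\bigr].
\]
The bracketed quantity is strictly positive: otherwise $\langle p^\star\rangle_\Omega = 0$ combined with $p^\star\ge 0$ on $\Omega$ would force $p^\star\equiv 0$, contradicting $p^\star\ne 0$. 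Consequently $\phi(p^\star;\theta)=\Wmu$, and by definition of the infimum $\phi^\star(\theta,\mathbb{V}^k_+)\le \Wmu$.

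Second, chain the three inequalities: $\phi^\star(\theta,\mathbb{V}^k_+)\le \Wmu$ from the previous step, $\Wmu \le \Wmu_\star(\theta,\mathbb{V}^k)$ from feasibility of \eqref{eq:1652}, and $\Wmu_\star(\theta,\mathbb{V}^k)\le \phi^\star(\theta,\mathbb{V}^k_+)$ from \Cref{lem:430}. This gives
\[
\phi^\star(\theta,\mathbb{V}^k_+) \;\le\; \Wmu \;\le\; \Wmu_\star(\theta,\mathbb{V}^k) \;\le\; \phi^\star(\theta,\mathbb{V}^k_+),
\]
forcing equality throughout. The equality $\Wmu_\star = \phi^\star$ is the first bullet; since \eqref{eq:1652} attains $\Wmu = \Wmu_\star$ it is a symmetric OCAD (second bullet); since $\phi(p^\star;\theta)=\phi^\star$, the polynomial $p^\star$ achieves the infimum in \Cref{prb:2} and is therefore a critical positive polynomial (third bullet).

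The main obstacle is making the first step fully rigorous: a literal reading of the hypothesis only pins down $p^\star$ at the first-quadrant representatives $(x^{(s)},y^{(s)})\in [0,1]^2$ and not at the other three symmetric points in each orbit, so it does not directly yield $\overline{p^\star(x^{(s)},y^{(s)})}=0$. The $\mathbb{V}^k_+$ nonnegativity, however, lets one either reinterpret the hypothesis as orbit vanishing or pass to the symmetrization $\sum_{g\in\mathscr{G}_s}g(p^\star)\in \mathbb{V}_+^k\cap \mathbb{V}^k(\mathscr{G}_s)$ without losing the vanishing property; once that step is settled, the remainder of the proof is a clean bookkeeping of the three inequalities above.
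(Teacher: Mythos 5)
Your argument is essentially identical to the paper's proof: substitute $p^\star$ into the feasible CAD \eqref{eq:1652}, observe that the internal sum vanishes so that $\phi(p^\star;\theta)=\Wmu$, and then close the chain $\phi^\star(\theta,\mathbb{V}^k_+)\le\Wmu\le\Wmu_\star(\theta,\mathbb{V}^k)\le\phi^\star(\theta,\mathbb{V}^k_+)$ using feasibility and \Cref{lem:430}. That is exactly what the paper does, and your bookkeeping of the three inequalities is correct.

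One caveat on the side remark you add about the orbit-vanishing issue. You are right that the hypothesis must be read as $p^\star$ vanishing on the full $\mathscr{G}_s$-orbit of each internal node (this is how the theorem is invoked everywhere in the paper, where the critical polynomials are stated to ``vanish at all the internal nodes''), and nonnegativity alone does not rescue a literal first-quadrant reading. However, your proposed alternative fix via symmetrization does not work: for $\tilde p:=\sum_{g\in\mathscr{G}_s}g(p^\star)$ one has $\tilde p(x^{(s)},y^{(s)})=\sum_{g}p^\star(g(x^{(s)},y^{(s)}))$, and the three reflected terms are only known to be nonnegative, not zero; moreover $\overline{\tilde p(x^{(s)},y^{(s)})}$ equals $|\mathscr{G}_s|\,\overline{p^\star(x^{(s)},y^{(s)})}$ up to normalization, so symmetrizing changes nothing in the CAD identity. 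The orbit reading of the hypothesis is therefore not optional but the only correct interpretation; with it, your main argument is complete.
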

\begin{proof}
	Taking $p=p^\star$ in \eqref{eq:1652} gives   
	$$
	\langle p^\star \rangle_{\Omega} = 	\Wmu \left[ (1+\theta) \langle p^\star \rangle_{\Omega}^x + (1-\theta) \langle p^\star \rangle_{\Omega}^y \right].
	$$
	Since $p^\star\not\equiv 0$, we have $\langle p^\star \rangle_\Omega \neq 0$ and $\phi(p^\star;\theta) = \Wmu$. 
	Thanks to \Cref{lem:430}, we obtain 
	$$
	\Wmu =  \phi(p^\star;\theta) \ge  \inf_{  p \in \mathbb{V}_+^k  }\phi (p;\theta) \ge   \Wmu_\star(\theta,\mathbb{V}^k) \ge \Wmu, 
	$$
	which implies $\Wmu = \Wmu_\star(\theta,\mathbb{V}^k)=\phi(p^\star;\theta) = \min\limits_{ p \in \mathbb{V}_+^k  }\phi (p;\theta) = \phi^\star(\theta,\mathbb{V}^k_+)$. Hence the CAD \eqref{eq:1652} is optimal, and  $p^\star$ is a critical positive polynomial. 
\end{proof}

\begin{theorem}[Optimality Criterion \#3]\label{lem:512}
	For a given $\theta \in [-1,1]$, if there exists a polynomial $q_\star \in \mathbb{V}^{ \left \lfloor \frac{k}2 \right \rfloor } \setminus \{0\}$ such that $q_\star ( 
	x^{(s)}, y^{(s)})=0$ for all $s$,  
	then we have: 
	\begin{itemize}
		\item $\Wmu_\star(\theta,\mathbb{V}^k) =  \phi^\star(\theta,\mathbb{V}^k_+) = \phi(q_\star^2;\theta) =   \inf_{q \in \mathbb{V}^{  \lfloor \frac{k}2  \rfloor } } \phi(q^2;\theta) $;
		\item the CAD \eqref{eq:1652} is a symmetric OCAD for $\mathbb{V}^k$;
		\item the polynomial $p^\star:=q_\star^2$ is a critical positive polynomial for $\phi^\star(\theta,\mathbb{V}^k_+)$.
	\end{itemize}
\end{theorem}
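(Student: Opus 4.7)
The plan is to reduce Optimality Criterion \#3 directly to Optimality Criterion \#2 (\Cref{lem:511}) by exhibiting $p^\star := q_\star^2$ as the required nonzero nonnegative polynomial in $\mathbb{V}^k_+$ that vanishes at all internal nodes.

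First, I would verify the basic membership and vanishing properties of $p^\star = q_\star^2$. Since $q_\star \in \mathbb{V}^{\lfloor k/2 \rfloor} \setminus \{0\}$, squaring it yields a polynomial whose total degree (in the $\mathbb{P}^k$ case) or maximum coordinate degree (in the $\mathbb{Q}^k$ case) is at most $2\lfloor k/2 \rfloor \le k$, so $p^\star \in \mathbb{V}^k$. Clearly $p^\star(x,y) = q_\star(x,y)^2 \ge 0$ on $\Omega$, so $p^\star \in \mathbb{V}^k_+ \setminus \{0\}$. Moreover, $q_\star(x^{(s)}, y^{(s)}) = 0$ for every $s$ immediately gives $p^\star(x^{(s)}, y^{(s)}) = 0$ for every $s$. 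Hence the hypotheses of \Cref{lem:511} are satisfied with this choice of $p^\star$.

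Applying \Cref{lem:511} then yields $\Wmu_\star(\theta,\mathbb{V}^k) = \phi^\star(\theta,\mathbb{V}^k_+) = \phi(q_\star^2;\theta)$, that the CAD \eqref{eq:1652} is a symmetric OCAD for $\mathbb{V}^k$, and that $q_\star^2$ is a critical positive polynomial. What remains is the additional identity $\phi^\star(\theta,\mathbb{V}^k_+) = \inf_{q \in \mathbb{V}^{\lfloor k/2\rfloor}} \phi(q^2;\theta)$. The inclusion $\{q^2 : q \in \mathbb{V}^{\lfloor k/2\rfloor}\} \subseteq \mathbb{V}^k_+$ gives
\[
\inf_{q \in \mathbb{V}^{\lfloor k/2\rfloor}} \phi(q^2;\theta) \ge \inf_{p \in \mathbb{V}^k_+} \phi(p;\theta) = \phi^\star(\theta,\mathbb{V}^k_+),
\]
while the specific choice $q = q_\star$ attains the value $\phi(q_\star^2;\theta) = \phi^\star(\theta,\mathbb{V}^k_+)$, pinning the infimum to equality.

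There is essentially no obstacle here: the theorem is a clean specialization of \Cref{lem:511} exploiting the fact that squaring a polynomial of degree $\lfloor k/2\rfloor$ produces a nonnegative polynomial in $\mathbb{V}^k$. The only small point requiring care is the degree bookkeeping, which must be argued separately (but identically) for the $\mathbb{P}^k$ and $\mathbb{Q}^k$ cases so that $q_\star^2 \in \mathbb{V}^k$ in either setting. Everything else is a direct invocation of the preceding criterion and a one-line infimum comparison.
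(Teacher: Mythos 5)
Your proposal is correct and follows essentially the same route as the paper: both reduce to Optimality Criterion \#2 by observing that $p^\star = q_\star^2 \in \mathbb{V}^k_+\setminus\{0\}$ vanishes at all internal nodes, and then pin down the extra infimum identity via the sandwich $\phi(q_\star^2;\theta) \ge \inf_{q}\phi(q^2;\theta) \ge \phi^\star(\theta,\mathbb{V}^k_+) = \phi(q_\star^2;\theta)$ coming from the inclusion $(\mathbb{V}^{\lfloor k/2\rfloor})^2 \subset \mathbb{V}^k_+$. No gaps.
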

\begin{proof}
	Since $q_\star \in \mathbb{V}^{ \left \lfloor \frac{k}2 \right \rfloor } \setminus \{0\}$, we have $p^\star:=q_\star^2 \in \mathbb{V}_+^k \setminus \{0\}$. 
	Moreover, $p^\star ( 
	x^{(s)}, y^{(s)})=0$ for all $s$. 
	By \Cref{lem:511}, we have $\Wmu_\star(\theta,\mathbb{V}^k) =  \phi^\star(\theta,\mathbb{V}^k_+) = \phi(q_\star^2;\theta) $, 
	 \eqref{eq:1652} is a symmetric OCAD for $\mathbb{V}^{k}$, and $p^\star$ is a critical positive polynomial. 
	 Noting $q^2 \in \mathbb{V}_+^{k}$ for all $q \in \mathbb{V}^{ \left \lfloor \frac{k}2 \right \rfloor } $, we have 
	 $(\mathbb{V}^{ \left \lfloor \frac{k}2 \right \rfloor } )^2 \subset  \mathbb{V}_+^{k}$, which implies 
	 $$ \phi(q_\star^2;\theta) \ge \inf_{q \in \mathbb{V}^{  \lfloor \frac{k}2  \rfloor } } \phi(q^2,\theta) \ge 
	 \inf_{p \in \mathbb{V}_+^{k}} \phi(p;\theta) = \phi^\star(\theta,\mathbb{V}^k_+).
	 $$
	 Therefore, $\phi(q_\star^2;\theta) = \inf_{q \in \mathbb{V}^{  \lfloor \frac{k}2  \rfloor } } \phi(q^2;\theta) = \phi^\star(\theta,\mathbb{V}^k_+) $. The proof is completed. 
\end{proof}

\begin{theorem}[Optimality Criterion \#4]\label{lem:513}
	Define $\phi^\star(\theta,(\mathbb{V}^{ \left \lfloor \frac{k}2 \right \rfloor })^2) := \inf_{q \in \mathbb{V}^{  \lfloor \frac{k}2  \rfloor } } \phi(q^2;\theta)$. 
	If there is a feasible CAD \eqref{eq:1652} with $\Wmu =  \phi^\star(\theta,(\mathbb{V}^{ \left \lfloor \frac{k}2 \right \rfloor })^2)$,  
	then we have: 
	\begin{itemize}
		\item $\Wmu_\star(\theta,\mathbb{V}^k) =  \phi^\star(\theta,\mathbb{V}^k_+) =   \phi^\star(\theta,(\mathbb{V}^{ \left \lfloor \frac{k}2 \right \rfloor })^2)$;
		\item the CAD \eqref{eq:1652} is a symmetric OCAD for $\mathbb{V}^k$.
	\end{itemize}
\end{theorem}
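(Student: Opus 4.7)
The plan is to sandwich the four quantities $\Wmu$, $\Wmu_\star(\theta,\mathbb{V}^k)$, $\phi^\star(\theta,\mathbb{V}^k_+)$, and $\phi^\star(\theta,(\mathbb{V}^{\lfloor k/2 \rfloor})^2)$ by a short chain of inequalities, and then use the hypothesis that the two endpoints coincide to collapse all four values to a single number. Once the collapse is in hand, the first bullet is immediate, and the identity $\Wmu = \Wmu_\star(\theta,\mathbb{V}^k)$ coming out of the collapse is exactly what is needed to conclude that the prescribed feasible CAD is a symmetric OCAD.

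Concretely, I would assemble three ingredients in order. First, since the CAD in \eqref{eq:1652} is feasible by hypothesis and $\Wmu_\star(\theta,\mathbb{V}^k)$ is the supremum of the boundary weights over all feasible symmetric CADs, we have $\Wmu_\star(\theta,\mathbb{V}^k) \ge \Wmu$. Second, \Cref{lem:430} directly supplies $\phi^\star(\theta,\mathbb{V}^k_+) \ge \Wmu_\star(\theta,\mathbb{V}^k)$. Third, the inclusion $(\mathbb{V}^{\lfloor k/2 \rfloor})^2 \subseteq \mathbb{V}^k_+$, already used in the proof of \Cref{lem:512}, forces $\phi^\star(\theta,(\mathbb{V}^{\lfloor k/2 \rfloor})^2) \ge \phi^\star(\theta,\mathbb{V}^k_+)$ since taking an infimum over a smaller set can only make it larger. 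Chaining these,
\[
\phi^\star(\theta,(\mathbb{V}^{\lfloor k/2 \rfloor})^2) \;\ge\; \phi^\star(\theta,\mathbb{V}^k_+) \;\ge\; \Wmu_\star(\theta,\mathbb{V}^k) \;\ge\; \Wmu,
\]
the hypothesis $\Wmu = \phi^\star(\theta,(\mathbb{V}^{\lfloor k/2 \rfloor})^2)$ forces the four-fold equality, which is precisely the first bullet. The resulting equality $\Wmu = \Wmu_\star(\theta,\mathbb{V}^k)$ then tells us the feasible CAD attains the optimal boundary weight, giving the second bullet.

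No serious obstacle is anticipated: the argument is a clean composition of \Cref{lem:430} and the elementary squaring inclusion. The only step meriting a line of verification is $(\mathbb{V}^{\lfloor k/2 \rfloor})^2 \subseteq \mathbb{V}^k_+$, which holds because a square is nonnegative and $2\lfloor k/2 \rfloor \le k$ both in the total-degree setting ($\mathbb{V}^k = \mathbb{P}^k$) and in the tensor-product setting ($\mathbb{V}^k = \mathbb{Q}^k$). Conceptually, this criterion is a converse companion to \Cref{lem:512}: rather than exhibiting a critical positive polynomial that vanishes on the internal nodes, one certifies optimality by matching $\Wmu$ against the infimum of $\phi$ restricted to polynomial squares, a quantity that is often more tractable in practice via sum-of-squares techniques.
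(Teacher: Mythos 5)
Your proposal is correct and follows essentially the same route as the paper: both establish the chain $\Wmu \le \Wmu_\star(\theta,\mathbb{V}^k) \le \phi^\star(\theta,\mathbb{V}^k_+) \le \phi^\star(\theta,(\mathbb{V}^{\lfloor k/2 \rfloor})^2)$ via feasibility, \Cref{lem:430}, and the inclusion $(\mathbb{V}^{\lfloor k/2 \rfloor})^2 \subset \mathbb{V}^k_+$, then collapse it using the hypothesis. No gaps.
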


\begin{proof}
	Recalling $(\mathbb{V}^{ \left \lfloor \frac{k}2 \right \rfloor } )^2 \subset  \mathbb{V}_+^{k}$ 
	and \eqref{eq:1579}, we have 
	$$
	\Wmu \le \Wmu_\star(\theta,\mathbb{V}^k) \le   \phi^\star(\theta,\mathbb{V}^k_+) \le   \phi^\star(\theta,(\mathbb{V}^{ \left \lfloor \frac{k}2 \right \rfloor })^2). 
	$$
	If $\Wmu =  \phi^\star(\theta,(\mathbb{V}^{ \left \lfloor \frac{k}2 \right \rfloor })^2)$, we must have 
	$\Wmu=\Wmu_\star(\theta,\mathbb{V}^k) =  \phi^\star(\theta,\mathbb{V}^k_+) =   \phi^\star(\theta,(\mathbb{V}^{ \left \lfloor \frac{k}2 \right \rfloor })^2)$, 
	which implies the CAD \eqref{eq:1652} is a symmetric OCAD for $\mathbb{V}^k$. 
\end{proof}


\begin{remark}\label{rem:diff}
	Notice that 
	$\phi(p;\theta)$ satisfies $\phi(\alpha p;\theta) =  \phi(p;\theta)$ for any $\alpha > 0$. 
	Thus, by normalization with $\langle p \rangle_{\Omega} = 1$,  
	\Cref{prb:2} 
	can be equivalently cast into 
	\begin{equation}\label{key3331}
		\begin{aligned}
			\mathop{\max}\limits_{p\in \mathbb{V}^k} \quad  &
			(1+\theta) \langle p \rangle^x_{\Omega} +(1-\theta) \langle p \rangle^y_{\Omega}  \\
			\textrm{subject to:} \quad & p(x,y) \ge 0 \quad \forall (x,y) \in \Omega, \\
			& \langle p \rangle_{\Omega} = 1.
		\end{aligned}
	\end{equation}
	Obviously, this problem \eqref{key3331} is a convex semi-infinite optimization problem, which actually 
	falls into the category of polynomial optimization, and 
	its solution is very difficult to obtain. 
	Lasserre \cite{LasserreJeanB2007Aspa} proposed the semidefinite relaxation method to numerically solve polynomial optimization problems like \eqref{key3331}, by relaxing the problem into a sequence of semidefinite programming (SDP) problems with increasing complexity. 
	See \cite{Lasserre2010_MPP, Lasserre2015_IPS} for a comprehensive introduction of polynomial optimization and the semidefinite relaxation method. 
\end{remark}

Due to the challenges in solving \eqref{key3331}, it seems difficult to seek 2D OCADs directly based on the optimality criterion \#1. 
Fortunately, we 
find an explicit analytical formula for $\phi^\star(\theta,(\mathbb{V}^{ \left \lfloor \frac{k}2 \right \rfloor })^2) := \inf_{q \in \mathbb{V}^{  \lfloor \frac{k}2  \rfloor } } \phi(q^2;\theta)$ with $k \in \mathbb N_+$, as derived below. 
This will inspire us to understand, construct, and verify 2D OCADs via the above optimality criteria \#3 and \#4.

Define $D:= \dim  \mathbb{V}^{ \left \lfloor \frac{k}2 \right \rfloor }$. Denote 
the monomial  basis of $\mathbb{V}^{ \left \lfloor \frac{k}2 \right \rfloor }$ by ${\bm  b}(x,y):=(b_1(x,y),\dots,b_D(x,y))^\top$. Define three $D\times D$ symmetric matrices $\mathbf{M}_\Omega  $, $\mathbf{M}_\Omega^x$, and $\mathbf{M}_\Omega^y$ by 
\begin{equation}\label{eq:2040}
	[\mathbf{M}_\Omega  ]_{ij}   = \left \langle b_i b_j \right \rangle_\Omega  , \quad
	[\mathbf{M}_\Omega^x]_{ij}   = \left \langle b_i b_j \right \rangle_\Omega^x, \quad
	[\mathbf{M}_\Omega^y]_{ij}   = \left \langle b_i b_j \right \rangle_\Omega^y, \quad i,j = 1,\dots,D.
\end{equation}
For any polynomial $q \in \mathbb{V}^{ \left \lfloor \frac{k}2 \right \rfloor } \setminus  \{0\}$, it has a unique expansion:
\[
q(x,y) = \sum_{i = 1}^{D} q_i b_i(x,y) = {\bm q}^\top {\bm  b}(x,y).
\]
With the matrices defined in \eqref{eq:2040}, one can express $\langle q^2 \rangle_\Omega$, $\langle q^2 \rangle_\Omega^x$ and $\langle q^2 \rangle_\Omega^y$ in quadratic forms:
\begin{equation}\label{eq:398}
	\langle q^2 \rangle_{\Omega}   = {\bm{q}}^\top \mathbf{M}_{\Omega}   \; {\bm{q}}, \qquad
	\langle q^2 \rangle_{\Omega}^x = {\bm{q}}^\top \mathbf{M}_{\Omega}^x \; {\bm{q}}, \qquad
	\langle q^2 \rangle_{\Omega}^y = {\bm{q}}^\top \mathbf{M}_{\Omega}^y \; {\bm{q}}. 
\end{equation}
If ${\bm q} \neq {\bf 0}$ or equivalently $q \not\equiv 0$, then $\langle q^2 \rangle_{\Omega} >0$,  $\langle q^2 \rangle_{\Omega}^x \ge 0$, and $\langle q^2 \rangle_{\Omega}^y \ge 0$. 
Thus, the symmetric matrix $\mathbf{M}_{\Omega}$ is positive definite, 
and the matrices $\mathbf{M}_{\Omega}^x$ and $\mathbf{M}_{\Omega}^y$ are  
positive semi-definite so that 
the matrix 
$$\mathbf{M}_\theta := (1+\theta)\mathbf{M}_{\Omega}^x + (1-\theta)\mathbf{M}_{\Omega}^y$$ 
is positive semi-definite for any $\theta \in [-1,1]$.

\begin{theorem}\label{lem:2080}
	For any $\theta \in [-1,1]$, it holds that 
	\begin{equation}\label{eq:122}
		\phi^\star \left(\theta,(\mathbb{V}^{ \left \lfloor \frac{k}2 \right \rfloor })^2\right) 
		=\phi ( q_\star^2; \theta  )  = \frac{1}{  \left\| \mathbf{M}_{\Omega}^{-\frac12} \mathbf{M}_\theta \mathbf{M}_{\Omega}^{-\frac12}  \right\|_2   },
	\end{equation}
	namely, 
	where 
	$\| \cdot \|_2$ denotes the 2-norm of matrix, 
	and the critical positive polynomial 
	\begin{equation}\label{eq:2065}
	p_\star (x,y)= q_\star^2(x,y) \quad \mbox{with} \quad q_\star (x,y) :=  \widehat {\bm{q}}_\theta^\top \mathbf{M}_{\Omega}^{-\frac12} {\bm b}(x,y)  
	\end{equation}
	where $\widehat {\bm{q}}_\theta$ is 
	an eigenvector corresponding to the spectral radius of $\mathbf{M}_{\Omega}^{-\frac12} \mathbf{M}_\theta \mathbf{M}_{\Omega}^{-\frac12}$. 
	In other words, $\phi^\star\left(\theta,(\mathbb{V}^{ \left \lfloor \frac{k}2 \right \rfloor })^2\right)$ is the smallest real root of the following polynomial
		\[
		\mathcal{F}_\theta(\phi) := \det \left( \mathbf{M}_\Omega - \phi \mathbf{M}_\theta \right).
		\]
\end{theorem}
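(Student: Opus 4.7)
The plan is to recognize the problem as a generalized Rayleigh quotient. First, I would fix the monomial basis $\bm b$ and write any $q\in\mathbb{V}^{\lfloor k/2\rfloor}\setminus\{0\}$ as $q(x,y)=\bm q^{\!\top}\bm b(x,y)$. Using the quadratic form identities already recorded in \eqref{eq:398}, the objective becomes
\begin{equation*}
\phi(q^2;\theta)=\frac{\bm q^{\!\top}\mathbf{M}_{\Omega}\,\bm q}{\bm q^{\!\top}\mathbf{M}_\theta\,\bm q},
\end{equation*}
interpreted as $+\infty$ whenever $\bm q^{\!\top}\mathbf{M}_\theta\bm q=0$. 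Since $\mathbf{M}_\Omega$ is symmetric positive definite and $\mathbf{M}_\theta$ is symmetric positive semi-definite, minimizing this ratio over $\bm q\neq \bm 0$ is a classical generalized eigenvalue problem.

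Next I would perform the change of variables $\bm q=\mathbf{M}_{\Omega}^{-1/2}\bm v$ (with $\bm v\neq\bm 0$), which rewrites the ratio as $\|\bm v\|_2^{2}\big/\bigl(\bm v^{\!\top}\mathbf{A}\,\bm v\bigr)$ for $\mathbf{A}:=\mathbf{M}_{\Omega}^{-1/2}\mathbf{M}_\theta\mathbf{M}_{\Omega}^{-1/2}$. Since $\mathbf{A}$ is symmetric positive semi-definite, the Courant--Fischer/Rayleigh quotient theorem gives $\sup_{\bm v\neq 0}\bigl(\bm v^{\!\top}\mathbf{A}\bm v\bigr)/\|\bm v\|_2^{2}=\lambda_{\max}(\mathbf{A})=\|\mathbf{A}\|_2$, attained on the eigenspace of $\lambda_{\max}(\mathbf{A})$. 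Inverting gives $\phi^\star(\theta,(\mathbb{V}^{\lfloor k/2\rfloor})^2)=1/\|\mathbf{A}\|_2$, and choosing $\bm v=\widehat{\bm q}_\theta$ a top eigenvector yields the critical $\bm q=\mathbf{M}_{\Omega}^{-1/2}\widehat{\bm q}_\theta$, i.e.\ the explicit $q_\star$ in \eqref{eq:2065}. The attainment of the infimum also follows from compactness after normalizing $\|\bm v\|_2=1$.

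Finally, for the characteristic-polynomial statement, I would factor
\begin{equation*}
\mathcal{F}_\theta(\phi)=\det(\mathbf{M}_\Omega-\phi\mathbf{M}_\theta)=\det(\mathbf{M}_\Omega)\,\det(\mathbf{I}-\phi\mathbf{A}),
\end{equation*}
using $\mathbf{M}_\Omega=\mathbf{M}_\Omega^{1/2}\mathbf{M}_\Omega^{1/2}$ and $\mathbf{M}_\theta=\mathbf{M}_\Omega^{1/2}\mathbf{A}\mathbf{M}_\Omega^{1/2}$. Hence the real roots of $\mathcal{F}_\theta$ are exactly the reciprocals of the nonzero eigenvalues of $\mathbf{A}$, all positive because $\mathbf{A}\succeq 0$. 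The smallest such root is therefore $1/\lambda_{\max}(\mathbf{A})=\phi^\star(\theta,(\mathbb{V}^{\lfloor k/2\rfloor})^2)$, completing the identification.

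The only delicate point is the possibility that $\mathbf{M}_\theta$ is singular (this happens, e.g., when $\theta=\pm 1$), so that $\phi(q^2;\theta)$ may be $+\infty$ on a subspace; the substitution $\bm v=\mathbf{M}_\Omega^{1/2}\bm q$ still goes through because $\mathbf{M}_\Omega$ is strictly positive definite, and the infimum restricted to $\{\bm v:\bm v^{\!\top}\mathbf{A}\bm v>0\}$ is attained on the dominant eigenspace of $\mathbf{A}$ (which is nontrivial as long as $\mathbf{M}_\theta\neq 0$). This caveat aside, no serious obstacle arises; the proof is a clean Rayleigh-quotient calculation.
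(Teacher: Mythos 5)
Your proposal is correct and follows essentially the same route as the paper's own proof: both reduce $\phi(q^2;\theta)$ to the generalized Rayleigh quotient $\bm q^{\!\top}\mathbf{M}_\Omega\bm q/\bm q^{\!\top}\mathbf{M}_\theta\bm q$, substitute $\widehat{\bm q}=\mathbf{M}_\Omega^{1/2}\bm q$ to identify the infimum with $1/\lambda_{\max}(\mathbf{M}_\Omega^{-1/2}\mathbf{M}_\theta\mathbf{M}_\Omega^{-1/2})$, and convert the eigenvalue condition into the determinant equation $\det(\mathbf{M}_\Omega-\phi\mathbf{M}_\theta)=0$. Your explicit handling of the degenerate case where $\mathbf{M}_\theta$ is singular is a small additional care the paper leaves implicit via the $+\infty$ convention, but it does not alter the argument.
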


\begin{proof}
	For any $q \in \mathbb{V}^{ \left \lfloor \frac{k}2 \right \rfloor } \setminus  \{0\}$,  
	denote by $\bm q$ the associated expansion coefficients. 
	Noting that $\mathbf{M}_{\Omega}$ is positive definite, we define 
	$\widehat {\bm q} := \mathbf{M}_{\Omega}^\frac12  {\bm q} \neq {\bf 0}$. 
	It follows from \eqref{eq:398} that 
	\begin{align*}
		\phi(q^2;\theta) = \frac
		{{\bm{q}}^\top \mathbf{M}_{\Omega}{\bm{q}}}
		{{\bm{q}}^\top \big( \theta \mathbf{M}^x_{\Omega} + (1-\theta) \mathbf{M}^y_{\Omega} \big) {\bm{q}}} = \frac{{\bm{q}}^\top \mathbf{M}_{\Omega}{\bm{q}}} {{\bm{q}}^\top \mathbf{M}_{\theta}{\bm{q}}} = \frac{ \| \widehat {\bm q}\|^2 } 
		{ \widehat {\bm{q}}^\top  \mathbf{M}_{\Omega}^{-\frac12} \mathbf{M}_\theta \mathbf{M}_{\Omega}^{-\frac12} \widehat {\bm{q}}}.
	\end{align*}
	Therefore, 
	\begin{align*}
		\phi^\star \left(\theta,(\mathbb{V}^{ \left \lfloor \frac{k}2 \right \rfloor })^2 \right) & = \inf_{ 0 \neq  q \in \mathbb{V}^{  \lfloor \frac{k}2  \rfloor } } \phi(q^2;\theta) = \inf_{{\bm 0}\neq{\bm q}\in\mathbb{R}^D } 
		\frac
		{{\bm{q}}^\top \mathbf{M}_{\Omega}{\bm{q}}}
		{{\bm{q}}^\top  \mathbf{M}_\theta {\bm{q}}} 
		=  
		\inf_{{\bm 0}\neq \widehat {\bm q}\in\mathbb{R}^D } 
		\frac{ \| \widehat {\bm q}\|^2 } 
		{ \widehat {\bm{q}}^\top  \mathbf{M}_{\Omega}^{-\frac12} \mathbf{M}_\theta \mathbf{M}_{\Omega}^{-\frac12} \widehat {\bm{q}}}
		\\
		&	= 
		\left( \sup_{{\bm 0}\neq \widehat {\bm q}\in\mathbb{R}^D }  \frac
		{ \widehat {\bm{q}}^\top  \mathbf{M}_{\Omega}^{-\frac12} \mathbf{M}_\theta \mathbf{M}_{\Omega}^{-\frac12} \widehat {\bm{q}}}{ \| \widehat {\bm q}\|^2 } \right)^{-1}
		= \frac{1}{  \left\| \mathbf{M}_{\Omega}^{-\frac12} \mathbf{M}_\theta \mathbf{M}_{\Omega}^{-\frac12}  \right\|_2    } = \phi(q_\star^2;\theta).
	\end{align*}
	It implies $\phi^\star \left(\theta,(\mathbb{V}^{ \left \lfloor \frac{k}2 \right \rfloor })^2\right)^{-1}$ is the largest eigenvalue of $\mathbf{M}_{\Omega}^{-\frac12} \mathbf{M}_\theta \mathbf{M}_{\Omega}^{-\frac12}$, consequenctly, is the largest root of characteristic polynomial
	\[
		\det \left[ \lambda \mathbf{I} - \mathbf{M}_{\Omega}^{-\frac12} \mathbf{M}_\theta \mathbf{M}_{\Omega}^{-\frac12} \right] = 0
	\]
	which is equilvalent to
	\[
		\det\left[ \lambda \mathbf{M}_{\Omega} - \mathbf{M}_{\theta} \right] = 0.
	\]
	Thus, $\phi^\star \left(\theta,(\mathbb{V}^{ \left \lfloor \frac{k}2 \right \rfloor })^2\right) $ is the smallest real root of 
	\[
		\det\left[ \mathbf{M}_{\Omega} - \phi \mathbf{M}_{\theta} \right] = 0.
	\]
	The proof is completed. 
\end{proof}

\subsection{More critical findings and geometric insights}

Based on extensive numerical experiments, we achieve two interesting findings, which are critical for further understanding, constructing, and verifying the 2D OCADs.

First, we discover that, for $\mathbb V^k$ being either $\mathbb Q^k$ or $\mathbb P^k$, 
the inequality \eqref{eq:1579} should be an equality. 

\begin{conjecture}\label{con:2070}
	For $\mathbb V^k$ being either $\mathbb Q^k$ or $\mathbb P^k$ with 
	$k \in \mathbb{N}_+$, it holds that 
	\begin{equation}\label{eq:conject1}
		\Wmu_\star(\theta,\mathbb{V}^k) = \phi^\star (\theta,\mathbb{V}^k_{+}) \qquad \forall \theta \in [-1,1],
	\end{equation}
	and there always exists a critical positive polynomial for $\phi^\star (\theta,\mathbb{V}^k_{+})$. 
\end{conjecture}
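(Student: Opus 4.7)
The plan is to prove the conjecture by a duality argument from moment theory. The one-sided bound $\Wmu_\star(\theta,\mathbb V^k) \le \phi^\star(\theta,\mathbb V^k_+)$ is already given by Lemma~\ref{lem:430}, so the work is in the reverse direction. The starting observation is that a nonnegative number $\Wmu$ is a feasible boundary weight for some symmetric CAD if and only if the linear functional
\begin{equation*}
\mathscr F_\Wmu(p) := \langle p \rangle_\Omega - \Wmu\big[(1+\theta)\langle p \rangle_\Omega^x + (1-\theta)\langle p \rangle_\Omega^y\big]
\end{equation*}
on $\mathbb V^k$ can be represented as a positive atomic combination of symmetrized point evaluations at nodes in $[0,1]^2$.

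The key idea is to invoke a Richter--Tchakaloff type representation theorem from the theory of truncated moment problems: any linear functional on a finite-dimensional space $V$ of continuous functions on a compact set $K$ which is nonnegative on $V \cap C_+(K)$ admits a representation as a finite positive combination of point evaluations with nodes in $K$, the number of atoms being bounded by $\dim V$. I would apply this with $V := \mathbb V^k({\mathscr G}_s)$ viewed as continuous functions on $K := [0,1]^2$. A symmetric polynomial in $\mathbb V^k({\mathscr G}_s)$ is nonnegative on $[0,1]^2$ if and only if it is nonnegative on all of $\Omega$, by the identity $p(x,y) = p(|x|,|y|)$. Moreover, for any $p \in \mathbb V^k_+$, the average $\tilde p := |\mathscr G_s|^{-1}\sum_{g \in \mathscr G_s} g(p)$ belongs to $\mathbb V^k({\mathscr G}_s) \cap \mathbb V^k_+$ and satisfies $\mathscr F_\Wmu(\tilde p) = \mathscr F_\Wmu(p)$ by the $\mathscr G_s$-invariance of $\mathscr F_\Wmu$. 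Hence nonnegativity of $\mathscr F_\Wmu$ on $\mathbb V^k_+$ is equivalent to nonnegativity on the symmetric subcone. Once the atomic representation on $\mathbb V^k({\mathscr G}_s)$ is obtained, Lemma~\ref{lem:Gs-invarint} lifts it to a feasible symmetric CAD for the full space $\mathbb V^k$.

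To close the argument, I would verify that $\Wmu \le \phi^\star(\theta,\mathbb V^k_+)$ forces $\mathscr F_\Wmu(p) \ge 0$ for all $p \in \mathbb V^k_+$, handling separately the degenerate case $(1+\theta)\langle p\rangle_\Omega^x + (1-\theta)\langle p\rangle_\Omega^y = 0$, which gives $\mathscr F_\Wmu(p) = \langle p\rangle_\Omega \ge 0$ automatically. Combined with the Richter step, this shows that every $\Wmu \le \phi^\star$ is feasible, hence $\Wmu_\star \ge \phi^\star$ and equality holds. For the existence of a critical positive polynomial, I would argue by compactness: the slice $S := \{p \in \mathbb V^k_+ : \langle p\rangle_\Omega = 1\}$ is closed, convex, and bounded in the finite-dimensional space $\mathbb V^k$ (any sequence with diverging sup norm, after rescaling, would yield a nontrivial nonnegative polynomial with vanishing average on $\Omega$, which is impossible), hence compact. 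The continuous functional $p \mapsto (1+\theta)\langle p\rangle_\Omega^x + (1-\theta)\langle p\rangle_\Omega^y$ attains its maximum on $S$ (at least $2$, achieved at $p \equiv 1$), so $\phi^\star$ is attained by $p^\star$ corresponding to this maximum.

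The main obstacle is the invocation of the Richter--Tchakaloff representation in the exact setting required: positivity on the symmetric subspace with atoms constrained to $[0,1]^2$ rather than the full cell $\Omega$. If the textbook form of Richter's theorem is not directly applicable to the restricted subspace, a two-step fallback is (i) use Hahn--Banach/Riesz representation to produce a positive Borel measure on $[0,1]^2$ representing $\mathscr F_\Wmu$, then (ii) apply Tchakaloff's theorem to atomize the measure while preserving the moments against the finitely many monomials spanning $\mathbb V^k({\mathscr G}_s)$. An additional subtlety is ensuring the symmetrization $\tilde p$ stays inside $\mathbb V^k$ itself rather than a larger ambient space; this is where the $\mathscr G_s$-invariance of $\mathbb V^k$ established in \eqref{eq:Vk-invar} is essential, and it is precisely the reason the conjecture is restricted to $\mathbb Q^k$ and $\mathbb P^k$ rather than arbitrary subspaces of $\mathbb Q^k$.
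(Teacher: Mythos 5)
The paper does not actually prove this statement: it is posed as a conjecture, verified only case by case (for $\mathbb Q^k$ with all $k$, and for $\mathbb P^k$ with $k\le 7$) by exhibiting an explicit OCAD together with a nonnegative polynomial vanishing at all of its internal nodes and invoking \Cref{lem:511}; for $\mathbb P^k$ with $k\ge 8$ the authors state that a rigorous proof is not available. Your duality argument is therefore a genuinely different route, and I believe it is correct and settles \eqref{eq:conject1} in full generality. The inequality $\Wmu_\star\le\phi^\star$ is \Cref{lem:430}; for the converse, the key representation step is sound: for a finite-dimensional subspace $V\subset C(K)$ with $K$ compact and $1\in V$, the conical hull of the point evaluations $\{\ell_{(x,y)}:(x,y)\in K\}$ in $V^*$ is closed (the evaluations form a compact set on which the element $1\in V$ is bounded below by a positive constant, so the convex hull misses the origin), and its dual cone is exactly $\{f\in V: f|_K\ge 0\}$; the bipolar theorem then says every functional nonnegative on that cone is a finite positive combination of evaluations, with Carath\'eodory controlling the number of atoms. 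Applied to $V=\mathbb V^k({\mathscr G}_s)$ on $K=[0,1]^2$, where nonnegativity on $[0,1]^2$ and on $\Omega$ coincide for invariant polynomials, this gives the atomic representation of $\mathscr F_{\Wmu}$ for any $0<\Wmu\le\phi^\star$; \Cref{lem:Gs-invarint} upgrades feasibility from the invariant subspace to all of $\mathbb V^k$, the weights sum to one upon testing with $p\equiv 1$, zero atoms can be discarded, and the only blemish is that for $\theta=\pm1$ one boundary weight vanishes, which is consistent with the paper's own relaxation of condition (ii). Your compactness argument for attainment of $\phi^\star$ (boundedness of the slice $\{\,p\in\mathbb V^k_+:\langle p\rangle_\Omega=1\}$ because a nonzero nonnegative polynomial cannot have zero average) is also correct. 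As for what each approach buys: the paper's constructive verification produces the explicit OCADs and critical positive polynomials needed in practice and simultaneously addresses the genuinely harder \Cref{con:2071} (that the infimum over $\mathbb V^k_+$ is attained at a square), whereas your abstract argument proves \Cref{con:2070} for every $k$ and $\theta$ at once --- which would also make the identification $\mathbb B_\omega=\mathbb B^\star_\omega$ in \Cref{thm:Bw} unconditional --- but yields no explicit decomposition and says nothing about \Cref{con:2071}. If you write this up, state the compact-support truncated Riesz--Haviland/Richter theorem precisely (strictly positive element of $V$, compactness of $K$, closedness of the moment cone), since that hypothesis check is the entire content of the reverse inequality.
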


Furthermore, we observe that there exists a critical positive polynomial in the squared form. 
 
\begin{conjecture}\label{con:2071}
	For $\mathbb V^k$ being either $\mathbb Q^k$ or $\mathbb P^k$ with 
	$k \in \mathbb{N}_+$, it holds that 
	\[
	\phi^\star(\theta,\mathbb{V}^{k}_+) = \phi^\star(\theta,(\mathbb{V}^{ \left \lfloor \frac{k}2 \right \rfloor })^2) := \inf_{q \in \mathbb{V}^{  \lfloor \frac{k}2  \rfloor } } \phi(q^2;\theta) \qquad \forall \theta \in [-1,1],
	\]
	and there exists a critical positive polynomial 
	$p^\star = q_\star^2$ with $q_\star \in \mathbb{V}^{  \lfloor \frac{k}2  \rfloor } \setminus \{0\}$. 
\end{conjecture}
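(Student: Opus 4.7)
The plan combines attainment, a mediant-based decomposition, and the spectral characterization provided by \Cref{lem:2080}. The direction $\phi^\star(\theta,\mathbb V^k_+)\le\phi^\star(\theta,(\mathbb V^{\lfloor k/2\rfloor})^2)$ is immediate from $(\mathbb V^{\lfloor k/2\rfloor})^2\subseteq\mathbb V^k_+$, so the substantive work is the reverse inequality together with the existence of a single-square critical polynomial.

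First I would establish attainment of $\phi^\star(\theta,\mathbb V^k_+)$. After normalizing by $\langle p\rangle_\Omega=1$, the nonnegativity constraint identifies $\langle p\rangle_\Omega$ with $\|p\|_{L^1(\Omega)}$, so the feasible set is a closed bounded subset of the finite-dimensional space $\mathbb V^k$, hence compact; continuity of the linear functional $(1+\theta)\langle\cdot\rangle^x_\Omega+(1-\theta)\langle\cdot\rangle^y_\Omega$ then yields a minimizer $p^\star$. By the symmetrization argument used in \Cref{thm:2D:symOCAD}, we may assume $p^\star$ is $\mathscr G_s$-invariant. The ratio $\phi$ satisfies the mediant inequality $\phi(p_1+p_2;\theta)\ge\min_i\phi(p_i;\theta)$ whenever both denominators are positive; together with \Cref{lem:notUnique}, this shows the set of minimizers is a subcone of $\mathbb V^k_+$, and that any decomposition of a minimizer into extreme rays of $\mathbb V^k_+$ must consist entirely of extremal minimizers (the positive-weight ones contributing to the mediant). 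It therefore suffices to exhibit one extreme ray of $\mathbb V^k_+$ that is both a minimizer and a square of a polynomial in $\mathbb V^{\lfloor k/2\rfloor}$.

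The principal obstacle is precisely this last step, and it is genuinely subtle. Not every extreme ray of $\mathbb V^k_+(\Omega)$ is a square: for instance, $1-x^2\in\mathbb P^2_+$ is easily verified to be extremal but is not the square of any polynomial. The claim is therefore that the ``boundary-sharp'' extreme rays, which vanish on a piece of $\partial\Omega$ and artificially depress the denominator $L_\theta(p)=(1+\theta)\langle p\rangle^x_\Omega+(1-\theta)\langle p\rangle^y_\Omega$, can never be minimizers. A clean formulation would assert that any $p\in\mathbb V^k_+$ vanishing on a positive-measure subset of $\partial\Omega$ satisfies $\phi(p;\theta)>\phi^\star(\theta,(\mathbb V^{\lfloor k/2\rfloor})^2)$, while the remaining ``interior'' extreme rays are forced to be single squares by a Hilbert-style rank-one argument on the associated Gram matrix. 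Verifying this dichotomy uniformly in $k$ and $\theta$ is the hard kernel of the conjecture and is where a general proof stalls.

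A more constructive route, which I would prefer, bypasses the extreme-ray classification altogether. Using \Cref{lem:2080}, compute $\phi^\star(\theta,(\mathbb V^{\lfloor k/2\rfloor})^2)$ explicitly as the smallest real root of $\det(\mathbf M_\Omega-\phi\,\mathbf M_\theta)=0$ and extract $q_\star$ as an associated eigen-polynomial. Then attempt to build a symmetric feasible CAD whose internal nodes lie on $\{q_\star=0\}\cap\Omega$ and whose boundary weight equals that eigenvalue; if this construction succeeds, Optimality Criterion \#4 (\Cref{lem:513}) immediately upgrades the CAD to a symmetric OCAD and proves both \Cref{con:2070} and \Cref{con:2071} simultaneously. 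The difficulty concentrates entirely in producing positive quadrature weights at the zeros of $q_\star$, which is exactly the subtle positivity question that \Cref{sec:2DQk} and \Cref{sec:2DPk} dispatch case by case by explicit construction; a single uniform positivity proof would finish the conjecture in one stroke.
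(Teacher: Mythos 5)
You should first be clear that \Cref{con:2071} is stated in the paper as a \emph{conjecture}: the authors do not prove it for general $\mathbb V^k$. What the paper actually establishes is the conjecture for $\mathbb Q^k$ with arbitrary $k$ (\Cref{thm:968}) and for $\mathbb P^k$ with $k\le 7$ (\Cref{thm:3129,thm:3161,thm:3650} together with the $\theta\in\{-1,0,1\}$ results), in every case by explicitly constructing a feasible symmetric CAD whose internal nodes lie on the zero set of an explicit $q_\star\in\mathbb V^{\lfloor k/2\rfloor}$ and then invoking Optimality Criteria \#2--\#4 (\Cref{lem:511,lem:512,lem:513}); for $8\le k\le 15$ only numerical evidence is offered. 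Your second, ``constructive'' route is therefore exactly the paper's method, and you have located the obstruction in the same place the authors do: producing \emph{positive} weights supported on $\{q_\star=0\}\cap\Omega$, uniformly in $k$ and $\theta$, is precisely what is missing and what forces the case-by-case constructions of \Cref{sec:2DQk,sec:2DPk}. So your proposal does not close the general case --- but neither does the paper, and you have not misidentified the difficulty.

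Your first route is genuinely different from anything in the paper and is sound as far as it goes. The compactness argument after normalizing $\langle p\rangle_\Omega=1$ is correct (on $\mathbb V^k_+$ the cell average is, up to equivalence of norms in finite dimensions, the $L^1$ norm), so a critical positive polynomial for $\phi^\star(\theta,\mathbb V^k_+)$ always exists; this unconditionally settles the attainment clause that the paper leaves inside \Cref{con:2070}, which is a small but real gain. The mediant identity underlying \Cref{lem:notUnique} does force every extreme-ray summand of a minimizer to be itself a minimizer, so the conjecture correctly reduces to exhibiting one extremal minimizer lying in $(\mathbb V^{\lfloor k/2\rfloor})^2$. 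That is exactly where the argument stops being a proof: the dichotomy you propose (boundary-sharp extreme rays are never minimizers; the remaining extreme rays are single squares) is asserted, not established, and the appeal to a ``Hilbert-style rank-one argument on the Gram matrix'' is not an argument --- extreme rays of $\mathbb V^k_+(\Omega)$ that are positive somewhere on $\partial\Omega$ need not be squares either, so even the second half of the dichotomy would require real work. Presented as a proof the proposal would be incomplete; presented as a reduction plus an honest identification of the open kernel, it is accurate and consistent with the status of the statement in the paper.
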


We can show that for all the 2D OCADs found in this paper, the above two conjectures are true. 
More specifically, we will prove these two conjectures for the spaces $\mathbb Q^k$ with any $k \in \mathbb N_+$, and for the spaces $\mathbb P^k$ with $1	 \le k \le 7$. 
Numerical evidence has demonstrated the validity of these conjectures for $\mathbb P^k$ spaces with higher $k \in \{8,9,\dots,15\}$, although a rigorous analytical proof is not available yet. 
It is worth noting that if the optimality criteria in \Cref{lem:511,lem:512,lem:513} work, 
then Conjecture \ref{con:2070} and \ref{con:2071} will get proved immediately.

\begin{figure}[h!]
	\centerline{
		\begin{subfigure}[t][][t]{0.5\textwidth}
			\centering
			\includegraphics[width=0.9\textwidth]{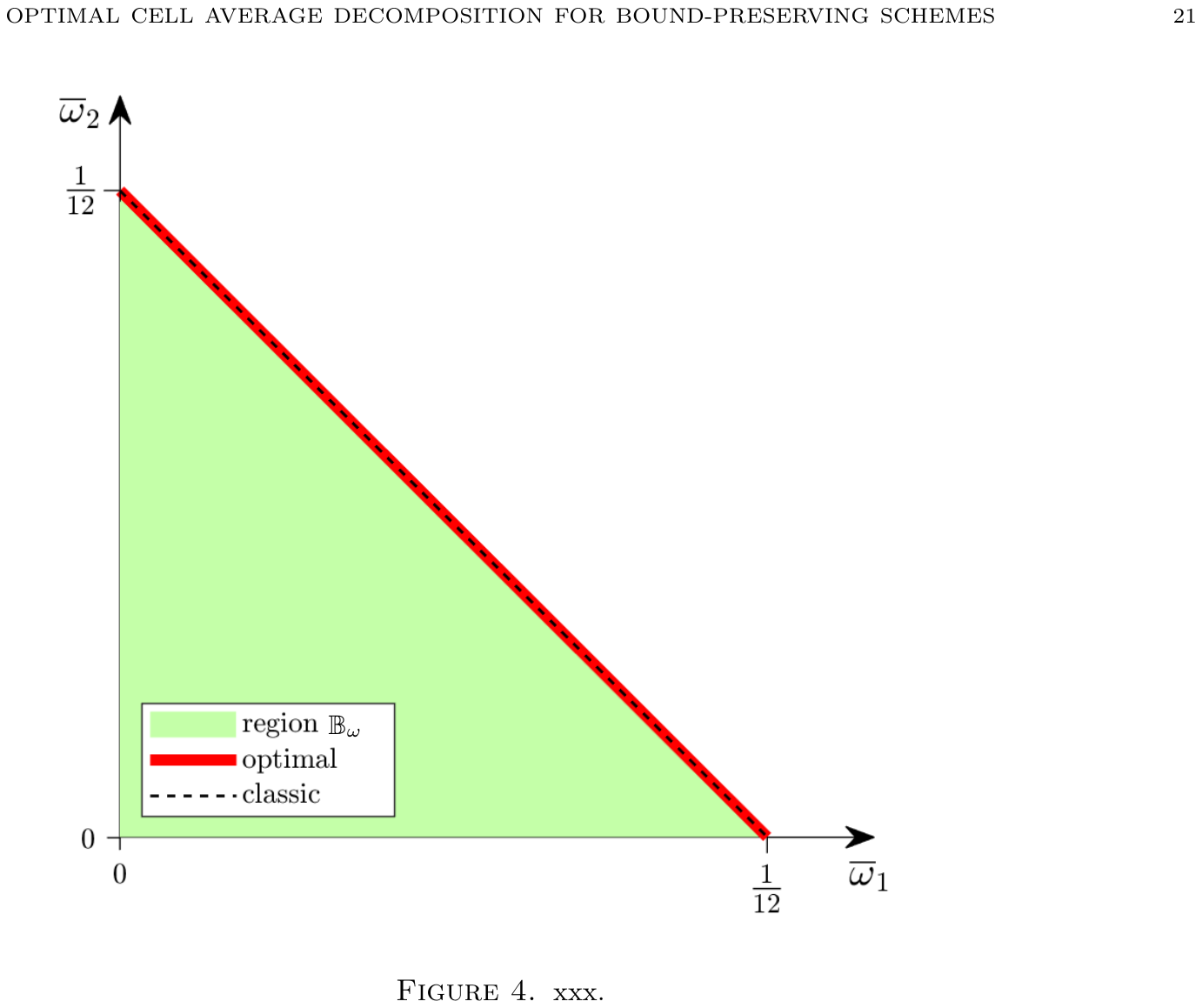}
			\caption{$\mathbb{Q}^4$ and $\mathbb{Q}^5$.}
		\end{subfigure}
		\begin{subfigure}[t][][t]{0.5\textwidth}
			\centering
			\includegraphics[width=0.9\textwidth]{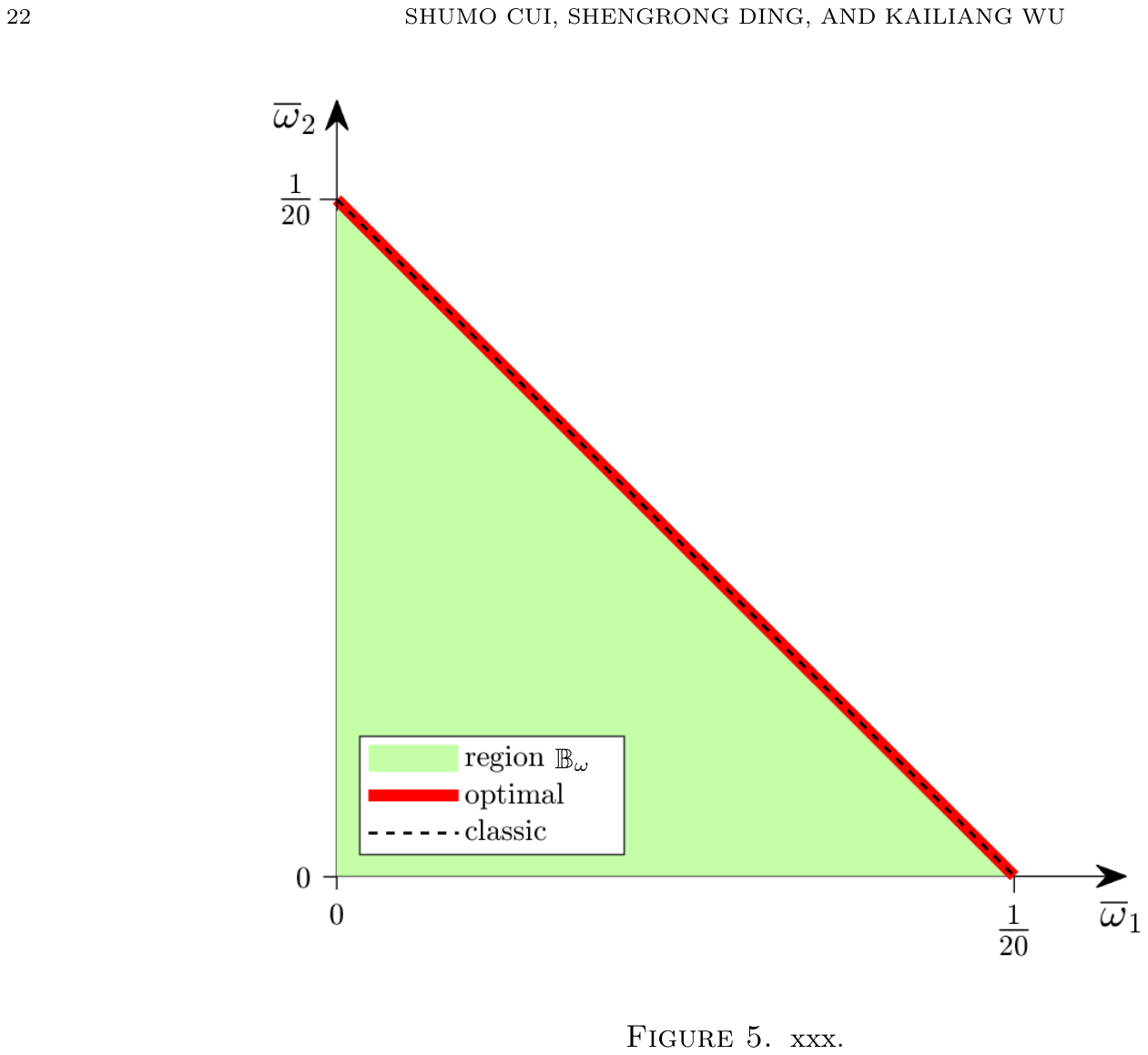}
			\caption{$\mathbb{Q}^6$ and $\mathbb{Q}^7$.}
		\end{subfigure}
	}
	\caption{Geometric illustration of the region $\mathbb{B}_\omega$, the boundary weights of the 
		classic CAD, the boundary weights of the OCAD     
		for $\mathbb{Q}^k$ spaces: the boundary weights of the classic CAD lies on $\partial \mathbb{B}$ and coincide with the boundary weights of the OCAD. }\label{fig:1939}
\end{figure}

For a given space $\mathbb V^k$, we define the following region 
\begin{equation}\label{eq:Bw}
	\mathbb B^\star_\omega := \left\{ 
	(\overline \omega_1, \overline \omega_2)\in [0,1]^2:~ 
	\langle p \rangle_{\Omega} - 
	2\overline \omega_1 \langle p \rangle_{\Omega}^x -
	2\overline \omega_2 \langle p \rangle_{\Omega}^y \ge 0~~~\forall p \in \mathbb V^k_+
	 \right \}. 
\end{equation}

\begin{theorem}\label{thm:Bw}
For any given space $\mathbb V^k$, we have $\mathbb B_\omega \subseteq \mathbb B^\star_\omega$. If \Cref{con:2070} holds true, then we further have $\mathbb B_\omega = \mathbb B^\star_\omega$.  
\end{theorem}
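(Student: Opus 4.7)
The first inclusion $\mathbb B_\omega \subseteq \mathbb B^\star_\omega$ is immediate. Given $(\overline\omega_1,\overline\omega_2) \in \mathbb B_\omega$ with a witnessing feasible symmetric CAD, every $p \in \mathbb V^k_+$ satisfies $p(\pm x^{(s)}, \pm y^{(s)}) \ge 0$ because all four reflected nodes lie in the symmetric reference cell $\Omega$, so each averaged value $\overline{p(x^{(s)},y^{(s)})}$ is nonnegative. The CAD identity then yields
\[
\langle p \rangle_\Omega - 2\overline\omega_1 \langle p \rangle_\Omega^x - 2\overline\omega_2 \langle p \rangle_\Omega^y = \sum_{s=1}^S \omega_s \overline{p(x^{(s)},y^{(s)})} \ge 0,
\]
which is precisely the defining inequality of $\mathbb B^\star_\omega$. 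This half uses neither the conjecture nor any OCAD information.

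For the converse inclusion under \Cref{con:2070}, I plan to compare the two sets ray-by-ray from the origin via a Minkowski-gauge argument. Both $\mathbb B_\omega$ and $\mathbb B^\star_\omega$ are closed convex subsets of $[0,1]^2$: the former by \Cref{lem:compact} and \Cref{lem:convex}, the latter as an intersection of closed half-spaces with the box. Both also contain the origin---for $\mathbb B_\omega$, because any symmetric positive interior quadrature rule for $\mathbb V^k$ supplies a feasible CAD with zero boundary weights (permitted under the nonnegativity relaxation already adopted for $\mathbb A_\omega$); for $\mathbb B^\star_\omega$, because $\langle p \rangle_\Omega \ge 0$ trivially holds for $p \in \mathbb V^k_+$. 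Parametrizing $\overline\omega_1 = \tfrac12\Wmu(1+\theta)$, $\overline\omega_2 = \tfrac12\Wmu(1-\theta)$ with $\theta \in [-1,1]$ and $\Wmu \ge 0$ sweeps out every direction in the closed first quadrant. Along the ray of direction $\theta$, the largest admissible $\Wmu$ for $\mathbb B_\omega$ is by definition $\Wmu_\star(\theta,\mathbb V^k)$; for $\mathbb B^\star_\omega$, each linear constraint collapses to $\Wmu \le \phi(p;\theta)$ (when the denominator is positive, and is automatic otherwise), so the largest admissible $\Wmu$ equals $\phi^\star(\theta,\mathbb V^k_+)$. \Cref{con:2070} asserts these two quantities coincide for every $\theta$. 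Since two closed convex sets in the first quadrant that are star-shaped from the origin and have identical radial supports along every ray must coincide, we conclude $\mathbb B_\omega = \mathbb B^\star_\omega$.

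The main delicate point is the gauge-based set-equality step: one must verify that every $(\overline\omega_1,\overline\omega_2)$ in the first quadrant lies on a unique ray of the $\theta$-parametrization (which it does, since $\{(1+\theta,1-\theta):\theta\in[-1,1]\}$ covers all directions in the closed first quadrant) and handle the edge cases $\theta = \pm 1$, where one coordinate vanishes. Compactness of $\mathbb B_\omega$ (\Cref{lem:compact}) and closedness of $\mathbb B^\star_\omega$ ensure the suprema along those boundary rays are attained, so no degeneracy arises. Beyond this, the only external ingredient is \Cref{con:2070} itself---the unconditional inequality $\Wmu_\star(\theta,\mathbb V^k) \le \phi^\star(\theta,\mathbb V^k_+)$ already comes from \Cref{lem:430}, so the conjecture merely supplies the reverse inequality needed to equate the two radial functions.
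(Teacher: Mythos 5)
Your proof is correct and follows essentially the same route as the paper: the forward inclusion is verbatim the paper's argument, and your radial (gauge) comparison along the rays $\overline\omega_1=\tfrac12\Wmu(1+\theta)$, $\overline\omega_2=\tfrac12\Wmu(1-\theta)$ is exactly the mechanism the paper uses, namely bounding $\Wmu\le\phi^\star(\theta,\mathbb V^k_{+})=\Wmu_\star(\theta,\mathbb V^k)$ via \Cref{con:2070} and then invoking convexity of $\mathbb B_\omega$ to descend along the ray. If anything, your write-up is slightly more careful than the paper's, since you make explicit that $(0,0)\in\mathbb B_\omega$ (under the nonnegative-weight relaxation) so that the set is star-shaped from the origin---a point the paper leaves implicit in its appeal to convexity.
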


\begin{proof}
	For any $( \overline \omega_1, \overline \omega_2 ) \in \mathbb B_\omega$, there is a symmetric CAD with $( \overline \omega_1, \overline \omega_2 )$ as the boundary weights: 
	$$
	\langle p \rangle_\Omega = 2\overline \omega_1 \langle p \rangle_{\Omega}^x+
	2\overline \omega_2 \langle p \rangle_{\Omega}^y+
	\sum_{s} \omega_s  \overline{ p( x^{(s)}, y^{(s)}) }.
	$$
	It follows that 
		$$
	\langle p \rangle_\Omega - 2\overline \omega_1 \langle p \rangle_{\Omega}^x-
	2\overline \omega_2 \langle p \rangle_{\Omega}^y = 
	\sum_{s} \omega_s  \overline{ p( x^{(s)}, y^{(s)}) } \ge 0 \qquad \forall  p \in \mathbb V_+^k,
	$$
	so that $( \overline \omega_1, \overline \omega_2 ) \in \mathbb B^\star_\omega$. Hence $\mathbb B_\omega \subseteq \mathbb B^\star_\omega$. 
	On 
	the other hand, if $( \overline \omega_1, \overline \omega_2 ) \in \mathbb B^\star_\omega$, 
	we define $\overline \omega_1 = \frac12 \overline \omega (1+\theta)$ 
	and  $\overline \omega_2 = \frac12 \overline \omega (1-\theta)$ with $\theta \in [-1,1]$. 
	Then 
	$ \langle p \rangle_{\Omega} - 
	\overline \omega \left[ (1+\theta) \langle p \rangle_{\Omega}^x + (1-\theta)
	 \langle p \rangle_{\Omega}^y \right]
	 \ge 0 $ for all $ p \in \mathbb V^k_+$, which yields $\overline \omega \le   \inf_{ p \in \mathbb{V}_+^k  }\phi (p;\theta) = \phi^\star (\theta,\mathbb{V}^k_{+}) =  \Wmu_\star(\theta,\mathbb{V}^k)$ if \Cref{con:2070} holds true. Thanks to the convexity of 
	 $\mathbb B_\omega$, we know that 
	$( \overline \omega_1, \overline \omega_2 )$ is the boundary weights of a feasible symmetric CAD. That is, $( \overline \omega_1, \overline \omega_2 ) \in \mathbb B_\omega$, and 
	$\mathbb B_\omega = \mathbb B^\star_\omega$. 
\end{proof}

\begin{figure}[h!]
	\centerline{
		\begin{subfigure}[t][][t]{0.5\textwidth}
			\centering
			\includegraphics[width=0.9\textwidth]{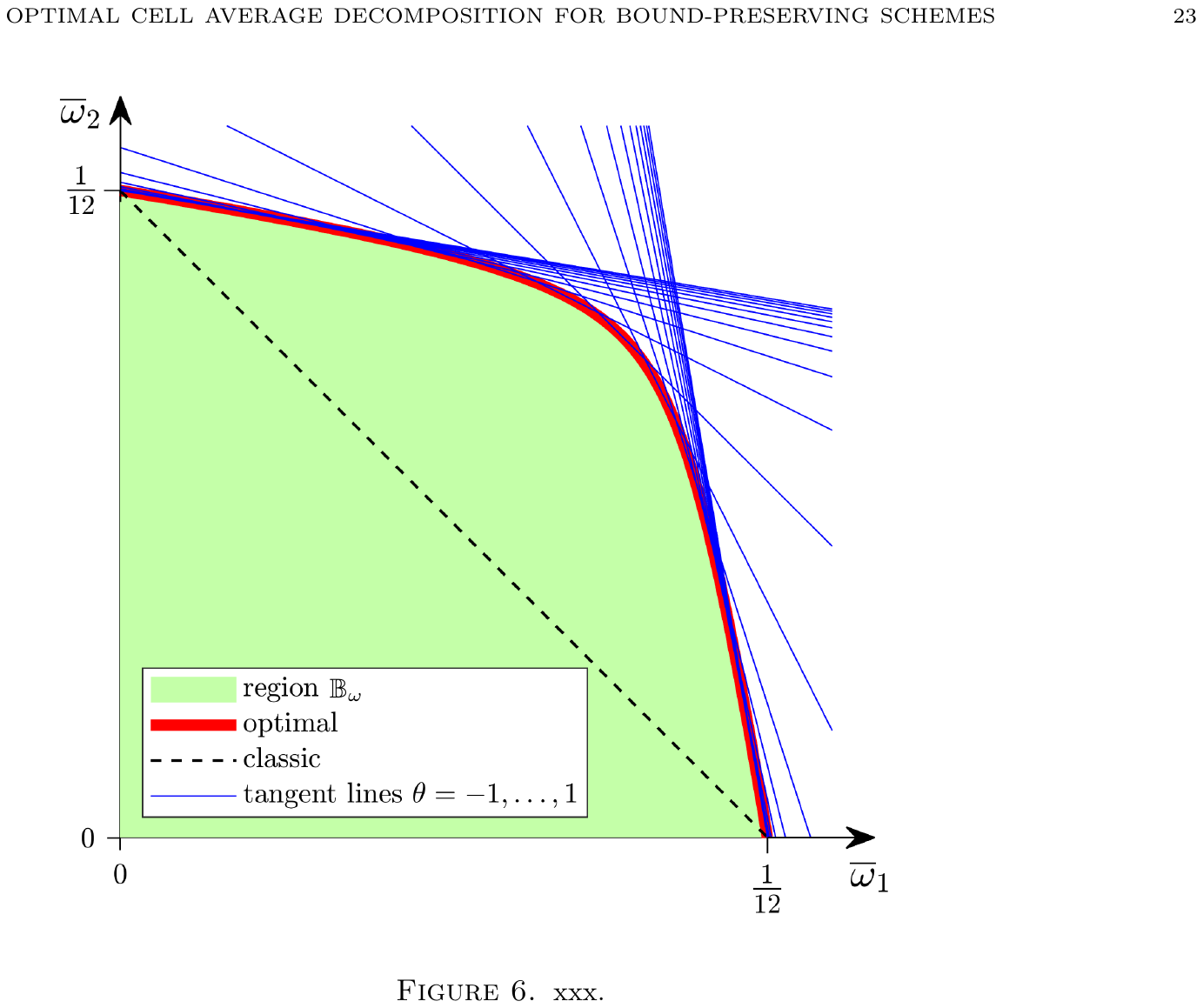}
			\caption{$\mathbb{P}^4$ and $\mathbb{P}^5$.}
		\end{subfigure}
		\begin{subfigure}[t][][t]{0.5\textwidth}
			\centering
			\includegraphics[width=0.9\textwidth]{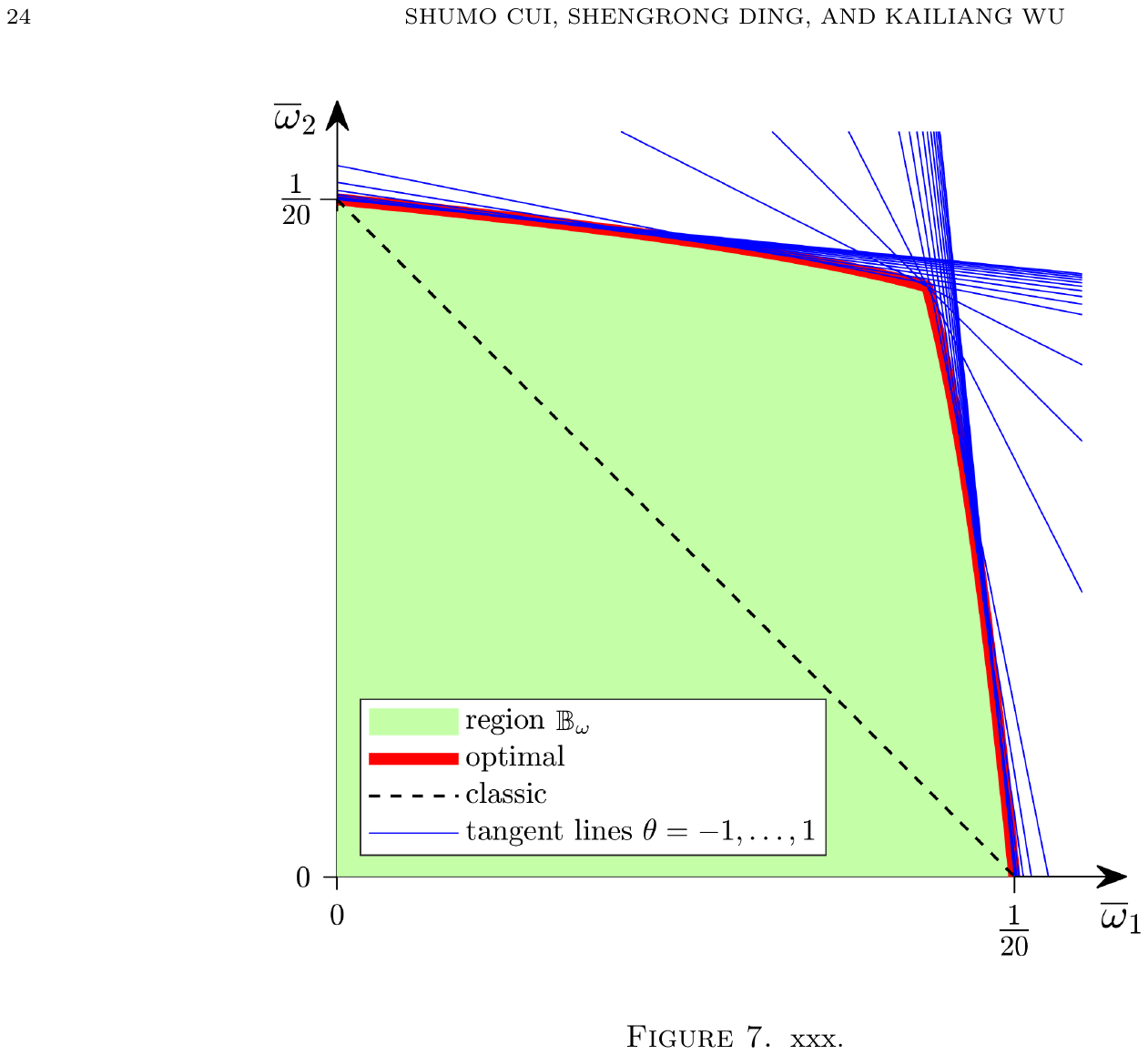}
			\caption{$\mathbb{P}^6$ and $\mathbb{P}^7$.}
			\label{fig:1898b}
		\end{subfigure}
	}
	\caption{Geometric illustration of the region $\mathbb{B}_\omega$, the boundary weights of the 
		classic CAD, the boundary weights of the OCAD   
		for $\mathbb{P}^k$ spaces. 
		We also plot the straight 
		 lines $\langle p^\star \rangle_{\Omega}=2\overline{\omega}_1\langle p^\star \rangle_{\Omega}^x+2\overline{\omega}_2\langle p^\star \rangle_{\Omega}^y$ 
		with $p^\star$ denoting the critical positive polynomial for different $\theta = -1, -0.9, \dots, 1$, which are exactly tangent to $\partial {\mathbb B}_\omega$. 
		  }\label{fig:1898} 
\end{figure}

Based on \Cref{thm:Bw} and the OCADs found in \Cref{sec:2DQk,sec:2DPk} later, we can  visualize the region $\mathbb B_\omega$ for several $\mathbb Q^k$ and $\mathbb P^k$ spaces, as shown in  \Cref{fig:1363,fig:1939,fig:1898}. Based on the visualization results, we have the following important observations and insights from the geometric point of view:
\begin{itemize} 
	\item The region $\mathbb B_\omega$ is compact, confirming the theoretical result in \Cref{lem:compact}. 
	\item The region $\mathbb B_\omega$ is 
	symmetric with respect to the line $\overline \omega_1 = \overline \omega_2$, validating the analysis in \Cref{lem:1525}. 
	\item The region $\mathbb B_\omega$ is convex, confirming the theoretical result in \Cref{lem:convex}. Any line segment connected two points in $\mathbb B_\omega$ represents 
	a convex combination of two feasible CADs. Consequently, 
	the classic CAD is actually a convex combination of the two OCADs at $\theta =\pm 1$, 
	as it will be proved in \Cref{rem:CCADcombine}. 
	\item For the $\mathbb Q^k$ spaces, we see from \Cref{fig:1939} that 
	the region $\mathbb B_\omega$ is triangular in shape. Moreover, 
	the boundary weights of the classic CAD lie on $\partial \mathbb B_\omega$  
	and coincide with the boundary weights of OCAD, inferring the 
	classic CAD is optimal for the $\mathbb Q^k$ spaces as it will be proved in \Cref{sec:2DQk}. 
	\item For the $\mathbb P^k$ spaces, we see from \Cref{fig:1898} that 
	the boundary weights of the classic CAD lie inside $\mathbb{B}_\omega$ rather than on $\partial \mathbb B_\omega$, which implies that the classic CAD is feasible but indeed not optimal in general (except in the special cases of $\theta = \pm 1$). 
	\item  \Cref{fig:1898} displays 
	the straight lines $\langle p^\star \rangle_{\Omega}=2\overline{\omega}_1\langle p^\star \rangle_{\Omega}^x+2\overline{\omega}_2\langle p^\star \rangle_{\Omega}^y$ 
	with $p^\star$ denoting the critical positive polynomial for different $\theta = -1, -0.9, \dots, 1$. 
	We find that these lines are exactly tangent to $\partial {\mathbb B}_\omega$. 
	This finding is consistent with \Cref{thm:Bw} and is closely related to 
	the geometric quasilinearization proposed in \cite{WuShu2021GQL} for convex regions.   
	\item As shown in \Cref{fig:P2P3,fig:1898}, the region boundary $\partial {\mathbb B}_\omega$ corresponding 
	to OCADs is generally smooth, except at $\theta=0$ (namely, $\overline{\omega}_1=\overline{\omega}_2$) for $\mathbb{P}^2$, $\mathbb{P}^3$, $\mathbb{P}^6$ and $\mathbb{P}^7$ (more generally for $\mathbb{P}^{4k+2}$ and $\mathbb{P}^{4k+3}$ with $k \in \mathbb{N}$). At the nonsmooth point on $\partial {\mathbb B}_\omega$, the tangent lines 
	are clearly not unique. 
	This indicates in these special cases, the OCADs and the critical positive polynomials  (normalized with $\langle p^\star \rangle_{\Omega} = 1$) are both not unique (see \Cref{rem:pstar_P2P3,rem:pstar_P6P7}). 
\end{itemize} 

\section{2D OCAD for $\mathbb Q^k$ spaces}\label{sec:2DQk}
With the help of \Cref{lem:512}, we now prove that the classic 2D CAD \eqref{eq:U2Dsplit} is optimal for $\mathbb Q^k$ with $k \in \mathbb N_+$. The 2D OCAD for $\mathbb P^k$ is much more difficult and will be explored carefully in \Cref{sec:2DPk}.

Recall that the classic CAD (\ref{eq:U2Dsplit}) is symmetric. 
It can be transformed onto the reference cell $\Omega=[-1,1]^2$ as 
\begin{equation}\label{eq:1471}
\begin{aligned}
		\langle p \rangle_{\Omega} = & 
	 \omega_1^{{\tt GL}} \left[  (1+\theta)    \langle p \rangle^x_{\Omega}+	
	 (1-\theta) \langle p \rangle^y_{\Omega} \right] 
	 \\
	 &+
	\frac{1+\theta}{2}    \sum_{\ell=2}^{L-1} \sum_{q=1}^Q \omega_\ell^{{\tt GL}} \omega_q^{{\tt G}} p(x_{\ell}^{{\tt GL}},y_{q}^{{\tt G}})+
	\frac{1-\theta}2 \sum_{\ell=2}^{L-1} \sum_{q=1}^Q \omega_\ell^{{\tt GL}} \omega_q^{{\tt G}} p(x_{q}^{{\tt G}},y_{\ell}^{{\tt GL}}),
\end{aligned}
\end{equation}
where 
$\{x_{\ell}^{{\tt GL}}\}$ and $\{y_{\ell}^{{\tt GL}}\}$ denote the $L$-point Gauss--Lobatto quadrature nodes in the internal $[-1,1]$, 
$\{x_{q}^{{\tt G}}\}$ and $\{y_{q}^{{\tt G}}\}$ are the $Q$-point Gauss quadrature nodes in the internal $[-1,1]$.

\begin{theorem}\label{thm:968}
	For any $\theta \in [-1,1]$ and the spaces $\mathbb Q^k$ with any $k \in \mathbb N_+$, 
	the classic 
	 2D CAD (\ref{eq:1471}) with $L = \left \lceil \frac{k+3}2 \right \rceil$ and $Q\ge \frac{k+1}2$ 
	is an OCAD, and Conjectures \ref{con:2070} and \ref{con:2071} hold true with 
\begin{equation}\label{eq:Qk-conj}
		\Wmu_\star(\theta,\mathbb{Q}^k) = \phi^\star (\theta,\mathbb{Q}^k_{+})= \phi^\star(\theta,(\mathbb{Q}^{ \left \lfloor \frac{k}2 \right \rfloor })^2) = \omega_1^{{\tt GL}} = \frac{1}{ L(L-1) }. 
\end{equation}
\end{theorem}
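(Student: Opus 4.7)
The plan is to deploy Optimality Criterion \#3 (\Cref{lem:512}), which is tailor-made for this situation: we just need to exhibit, for the classic CAD \eqref{eq:1471}, a nonzero polynomial $q_\star \in \mathbb{Q}^{\lfloor k/2 \rfloor}$ that vanishes at every internal node of that CAD, and the three desired equalities in \eqref{eq:Qk-conj} will follow automatically together with the optimality and the identification of the critical positive polynomial.

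First I would verify that the classic CAD \eqref{eq:1471} is a symmetric feasible CAD for $\mathbb{Q}^k$ with $\Wmu=\omega_1^{{\tt GL}}$. Feasibility condition (ii) is immediate (all weights are positive and sum to $1$ by construction of tensor-product quadrature), and condition (iii) holds since Gauss--Lobatto and Gauss nodes lie in $[-1,1]$. For condition (i), one applies the $L$-point Gauss--Lobatto rule in one variable (exact on $\mathbb{P}^{2L-3}$, and $2L-3\ge k$ since $L=\lceil(k+3)/2\rceil$) and the $Q$-point Gauss rule in the other (exact on $\mathbb{P}^{2Q-1}$, and $2Q-1\ge k$), iterated over the two directions of the tensor product; since any $p\in\mathbb{Q}^k$ has degree at most $k$ in each variable separately, both rules apply and the standard derivation of \eqref{eq:U2Dsplit} carries through on the reference cell.

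Next, I would construct
\[
q_\star(x,y) \;:=\; \prod_{\ell=2}^{L-1} (x-x_{\ell}^{{\tt GL}})\cdot\prod_{\ell=2}^{L-1}(y-y_{\ell}^{{\tt GL}}).
\]
Its degree in each variable is $L-2$, and a direct parity check $L=\lceil(k+3)/2\rceil$ gives $L-2=\lfloor k/2\rfloor$ in both the even and odd cases of $k$; hence $q_\star\in\mathbb{Q}^{\lfloor k/2\rfloor}\setminus\{0\}$. The internal nodes of \eqref{eq:1471} are $\{(x_{\ell}^{{\tt GL}},y_q^{{\tt G}})\}_{\ell=2,\dots,L-1,\,q=1,\dots,Q}$ and $\{(x_q^{{\tt G}},y_\ell^{{\tt GL}})\}_{\ell=2,\dots,L-1,\,q=1,\dots,Q}$; the first factor of $q_\star$ kills the first family and the second factor kills the second family, so $q_\star$ vanishes at every internal node. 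Applying \Cref{lem:512} to the feasible CAD \eqref{eq:1471} with this choice of $q_\star$ yields at once that \eqref{eq:1471} is a symmetric OCAD for $\mathbb{Q}^k$ and
\[
\Wmu_\star(\theta,\mathbb{Q}^k)=\phi^\star(\theta,\mathbb{Q}^k_+)=\phi^\star\!\left(\theta,(\mathbb{Q}^{\lfloor k/2\rfloor})^2\right)=\phi(q_\star^{2};\theta);
\]
comparing the boundary weights of \eqref{eq:1471} with the canonical form \eqref{eq:715b} and the transformation \eqref{key112} identifies the common value as $\omega_1^{{\tt GL}}=1/[L(L-1)]$, completing \eqref{eq:Qk-conj}. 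Conjectures \ref{con:2070} and \ref{con:2071} then follow as remarked after \Cref{lem:513}.

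There is no serious obstacle in this $\mathbb{Q}^k$ case; the whole argument hinges on the happy coincidence that the $L-2$ interior Gauss--Lobatto nodes in each direction line up exactly with the allowed degree $\lfloor k/2\rfloor$ of the square root of the critical positive polynomial, which is why Criterion \#3 applies cleanly. The only point requiring a touch of care is the parity bookkeeping connecting $L=\lceil(k+3)/2\rceil$ with $\lfloor k/2\rfloor$ for both even and odd $k$; once this is confirmed, the rest is bookkeeping. (By contrast, the analogous $\mathbb{P}^k$ problem in \Cref{sec:2DPk} is much harder precisely because no such factored $q_\star$ is available, since the tensor-product internal nodes of the classic CAD do not lie on any low-total-degree algebraic curve.)
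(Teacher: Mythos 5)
Your proposal is correct and follows essentially the same route as the paper: exhibit $q_\star(x,y)=\prod_{\ell=2}^{L-1}(x-x_\ell^{{\tt GL}})\prod_{\ell=2}^{L-1}(y-y_\ell^{{\tt GL}})\in\mathbb{Q}^{\lfloor k/2\rfloor}$ vanishing at all internal nodes of the classic CAD and invoke Optimality Criterion \#3 (\Cref{lem:512}). Your parity bookkeeping ($L-2=\lfloor k/2\rfloor$) is consistent with the paper's bound $L-2=\lceil\frac{k-1}{2}\rceil\le\lfloor\frac{k}{2}\rfloor$, and the extra feasibility verification you include is harmless.
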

\begin{proof}
	Consider the polynomial
\begin{equation}\label{eq:dualpforQk}
		p^\star (x,y)=  q_\star^2(x,y)   \quad \mbox{with} \quad  q_\star(x,y) :=
		\prod_{\ell = 2}^{L-1} \left(x- x_{\ell }^{{\tt GL}} \right) 
		\cdot
		\prod_{\ell = 2}^{L-1} \left(y- y_{\ell}^{{\tt GL}} \right),  
\end{equation}
which vanishes at all the internal nodes of CAD \eqref{eq:1471}. Besides, 
the degrees of $q_\star$ in $x$ and $y$ are both  
$$
L-2 =    \left \lceil \frac{k+3}2 \right \rceil -2 =     \left \lceil \frac{k-1}2 \right \rceil   \le \left \lfloor \frac{k}2 \right \rfloor, 
$$
so that $q_\star \in \mathbb{Q}^{ \lfloor \frac{k}2  \rfloor }$ and $p^\star \in {\mathbb Q}^k$. 
	According to \Cref{lem:512}, 
	we have \eqref{eq:Qk-conj}, 
	$p^\star$ is a critical positive polynomial for $\phi^\star (\theta,\mathbb{Q}^k_{+})$, and the classic CAD (\ref{eq:1471}) is an OCAD for the space $\mathbb Q^k$.
\end{proof}

\begin{figure}[h]
	\centerline{
		\begin{subfigure}[t][][t]{0.23\textwidth}
			\includegraphics[scale=0.32,trim=5 0 70 0,clip]{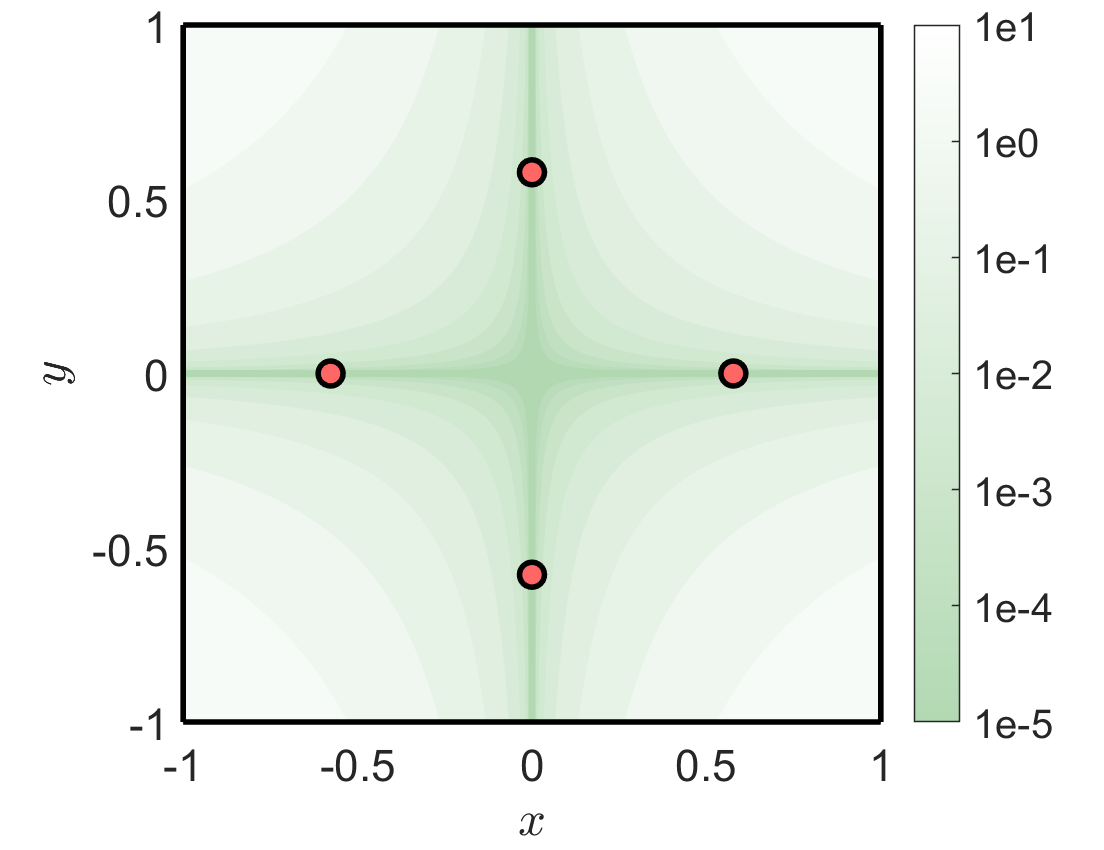}
			\caption{$\mathbb{Q}^2$}
		\end{subfigure}
		\begin{subfigure}[t][][t]{0.23\textwidth}
			\includegraphics[scale=0.32,trim=5 0 70 0,clip]{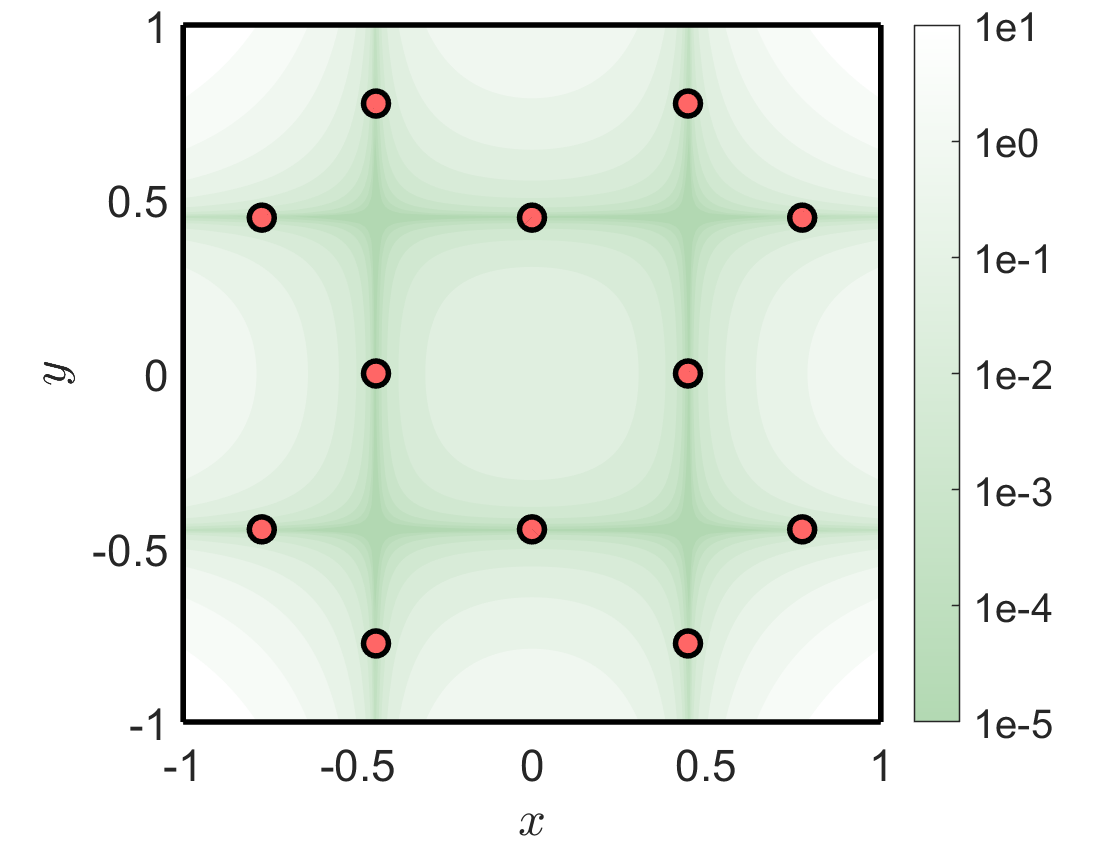}
			\caption{$\mathbb{Q}^4$}
		\end{subfigure}
		\begin{subfigure}[t][][t]{0.23\textwidth}
			\includegraphics[scale=0.32,trim=5 0 70 0,clip]{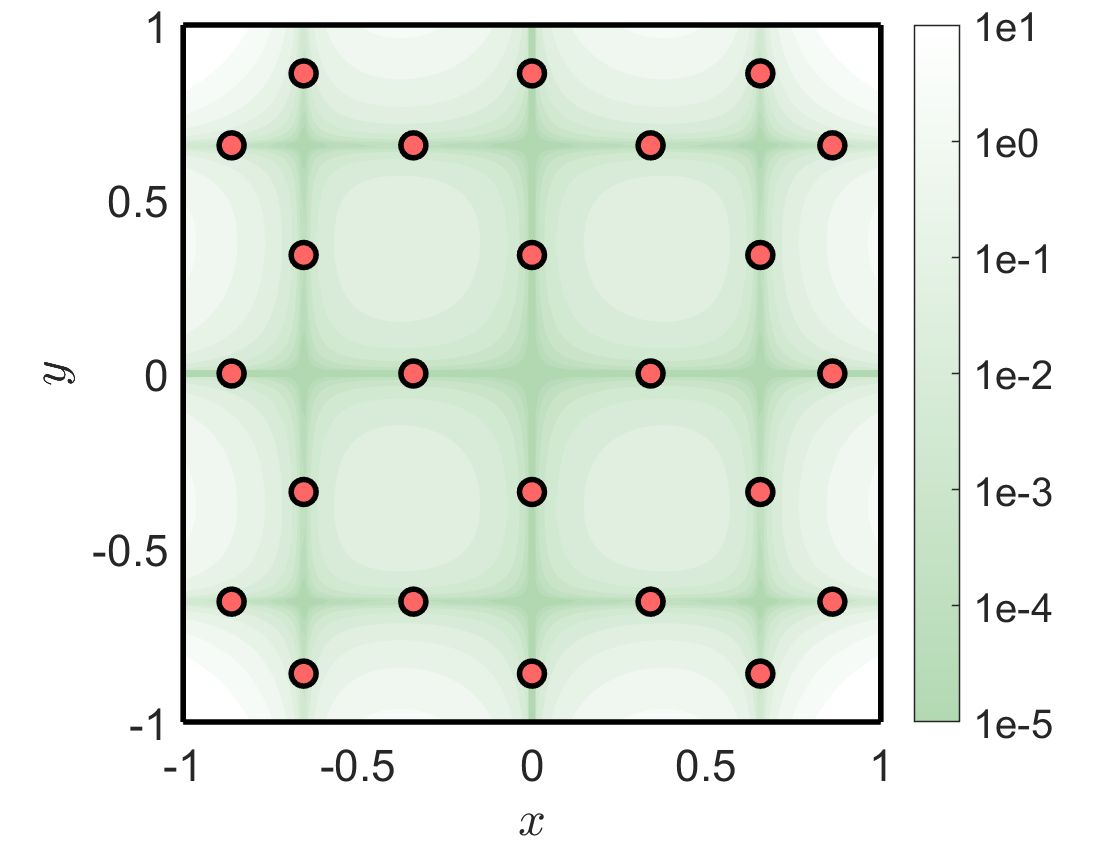}
			\caption{$\mathbb{Q}^6$}
		\end{subfigure}
		\begin{subfigure}[t][][t]{0.23\textwidth}
			\includegraphics[scale=0.32,trim=5 0 70 0,clip]{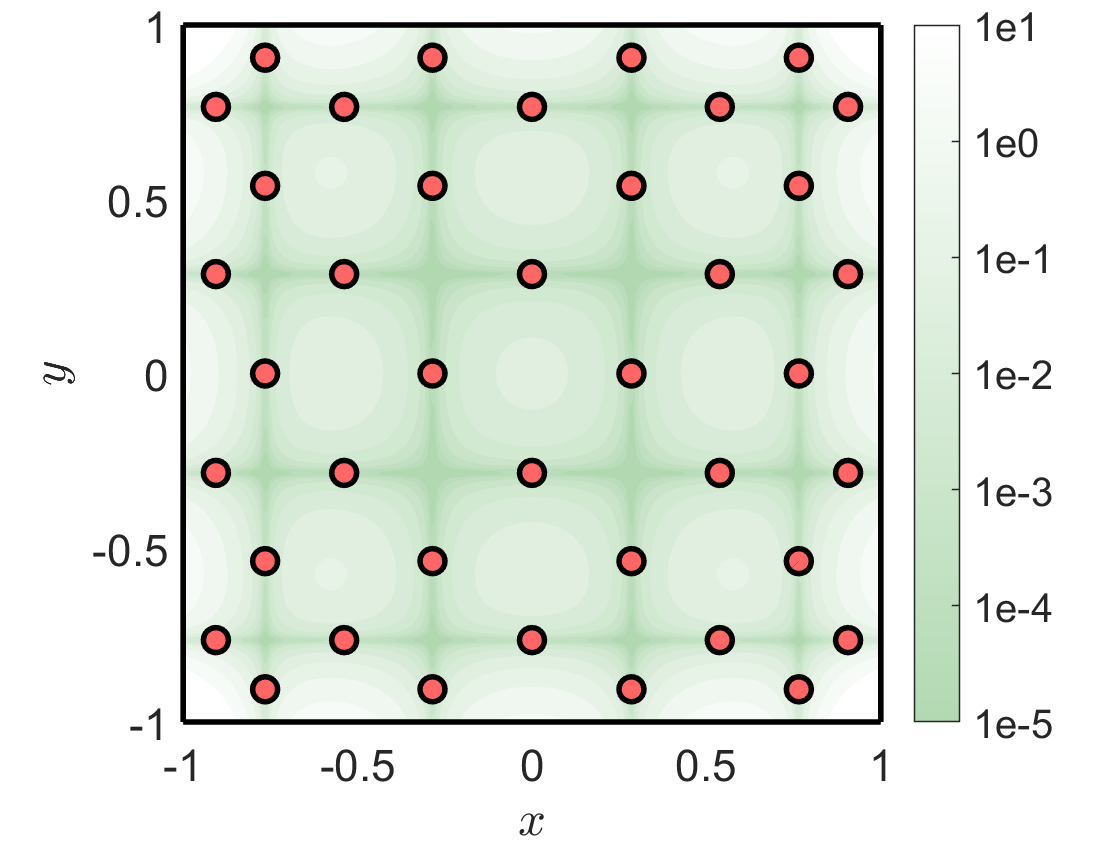}
			\caption{$\mathbb{Q}^8$}
		\end{subfigure}
		\begin{subfigure}[t][][t]{0.05\textwidth}
			\includegraphics[scale=0.32,trim=323 0 0 0,clip]{fig/C2_Qk/OCAD_C2_Q8_FV.png}
		\end{subfigure}
	}
	\caption{Internal nodes of the CAD (\ref{eq:1471}) for polynomial spaces $\mathbb{Q}^k$, $k = 2, 4, 6 ,8$, with $L =  \lceil \frac{k+3}2  \rceil$ and $Q = \lceil \frac{k+1}2  \rceil$. In the background, each corresponding critical positive polynomial (\ref{eq:dualpforQk}) is shown in the logarithmic scale.}\label{fig:1661}
\end{figure}

The internal nodes of the classic CAD (\ref{eq:1471}) and the critical positive polynomial $p^\star$ are illustrated in Figure \ref{fig:1661}. 
	Note that $p^\star$ lies in the space $\mathbb{Q}^k$, but does not belong to the space $\mathbb{P}^k$. Thus, the above proof does not imply that the CAD \eqref{eq:1471} is an OCAD for $\mathbb{P}^k$. As it will be shown in \Cref{sec:2DPk}, solving the 2D OCAD problem for $\mathbb{P}^k$ is much more complicated and challenging.

\section{2D Symmetric OCAD for $\mathbb P^k$ spaces}\label{sec:2DPk}

In this section, we discuss the construction of the symmetric OCAD for $\mathbb P^{k}$ with $k \in \mathbb N_+$. 
When $k=1$, the following CAD on the reference cell $\Omega=[-1,1]^2$ is obviously the symmetric OCAD for $\mathbb P^{1}$: 
$$
\langle p \rangle_{\Omega} =  
\frac12 \left[  (1+\theta)    \langle p \rangle^x_{\Omega}+	
(1-\theta) \langle p \rangle^y_{\Omega} \right],
$$
which is also the OCAD for $\mathbb Q^{1}$. 
However, when $k\ge 2$, the classic CAD \eqref{eq:1471} is generally not optimal for $\mathbb P^{k}$. 
Recently, the OCADs were found in \cite{cui2022classic} 
for $\mathbb P^{2}$ and $\mathbb P^{3}$. 
For $\mathbb P^{k}$ with higher degrees  $k\ge 4$, the OCADs remain unknown yet, 
and their exploration is highly nontrivial and becomes our goal.


We will start our exploration of symmetric OCADs for three special $\theta \in \{-1,1,0\}$ in \Cref{sec:5.1,sec:theta0}, and then study the symmetric OCADs for general $\theta \in [-1,1]$ in \Cref{sec:general-theta}. 
The OCADs for the special $\theta \in \{-1,1,0\}$ will also be helpful for establishing 
the near-optimal or quasi-optimal CADs for general  $\theta \in [-1,1]$; see \Cref{sec:quasi-optimal}.


\subsection{Symmetric OCAD for $\mathbb{P}^{k}$ and $\theta = \pm1$} \label{sec:5.1}

\begin{theorem}\label{thm:1686}
In case of $\theta=-1$, the following symmetric 2D CAD with $L = \left \lceil \frac{k+3}2 \right \rceil$ and $Q\ge \frac{k+1}2$ is an OCAD on $\Omega$ for $\mathbb{P}^k$ with any $k \in \mathbb N_+$.
\begin{equation}\label{eq:1688}
\langle p \rangle_{\Omega} = 
2\omega_{1}^{{\tt GL}} \langle p \rangle_{\Omega}^y+  
\sum_{\ell=2}^{L-1} \sum_{q=1}^Q \omega_\ell^{{\tt GL}} \omega_q^{{\tt G}} p( x_q^{{\tt G}}, y_\ell^{{\tt GL}}).
\end{equation}
In this case, 
Conjectures \ref{con:2070} and \ref{con:2071} hold true with 
\begin{equation}\label{eq:Pk-conj:th-1}
	\Wmu_\star(-1,\mathbb{P}^k) = \phi^\star (-1,\mathbb{P}^k_{+})= \phi^\star(-1,(\mathbb{P}^{ \left \lfloor \frac{k}2 \right \rfloor })^2) = \omega_1^{{\tt GL}} = \frac{1}{ L(L-1) }. 
\end{equation}
\end{theorem}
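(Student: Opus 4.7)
The plan is to apply the Optimality Criterion \#3 (\Cref{lem:512}). This reduces the theorem to two concrete tasks: first, verifying that \eqref{eq:1688} is a feasible symmetric CAD corresponding to the normalized form \eqref{eq:1652} at $\theta = -1$ with $\Wmu = \omega_1^{{\tt GL}}$; second, exhibiting a nonzero polynomial $q_\star \in \mathbb{P}^{\lfloor k/2 \rfloor}$ vanishing at every interior node $(x_q^{{\tt G}}, y_\ell^{{\tt GL}})$ appearing in \eqref{eq:1688}.

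For the feasibility step I would use a tensor-product exactness argument. The $L$-point Gauss--Lobatto rule in $y$ is exact up to degree $2L - 3 \ge k$ (since $L = \lceil (k+3)/2 \rceil$), and the $Q$-point Gauss rule in $x$ is exact up to degree $2Q - 1 \ge k$ (since $Q \ge (k+1)/2$). Applying these two quadratures successively to any $p \in \mathbb{P}^k$, splitting the outer Gauss--Lobatto sum into boundary terms ($\ell = 1, L$) and interior terms ($2 \le \ell \le L-1$), and using $\omega_1^{{\tt GL}} = \omega_L^{{\tt GL}}$ together with the fact that the $Q$-point Gauss rule applied to $p(\cdot, \pm 1)$ reproduces $\langle p \rangle_\Omega^{\pm y}$, one arrives at \eqref{eq:1688} directly. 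Comparison with \eqref{eq:1652} at $\theta = -1$ then reads off $\Wmu = \omega_1^{{\tt GL}}$.

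For the critical polynomial, the tensor-product geometry makes the natural choice obvious: every interior node of \eqref{eq:1688} sits on one of the $L - 2$ horizontal lines $y = y_\ell^{{\tt GL}}$, so I would take the univariate $y$-factor
\[
q_\star(x, y) := \prod_{\ell=2}^{L-1} \bigl(y - y_\ell^{{\tt GL}}\bigr).
\]
A brief parity check gives $\deg q_\star = L - 2 = \lceil (k-1)/2 \rceil = \lfloor k/2 \rfloor$ irrespective of the parity of $k$, so $q_\star \in \mathbb{P}^{\lfloor k/2 \rfloor} \setminus \{0\}$. Invoking \Cref{lem:512} then delivers all three conclusions at once: \eqref{eq:1688} is a symmetric OCAD for $\mathbb{P}^k$; $p^\star = q_\star^2$ is a critical positive polynomial; and
\[
\Wmu_\star(-1, \mathbb{P}^k) = \phi^\star(-1, \mathbb{P}^k_{+}) = \phi^\star\bigl(-1, (\mathbb{P}^{\lfloor k/2 \rfloor})^2\bigr) = \omega_1^{{\tt GL}},
\]
which simultaneously verifies \Cref{con:2070} and \Cref{con:2071} for $\theta = -1$.

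Neither step is a real obstacle. The degenerate case $\theta = -1$ eliminates the $\langle p \rangle_\Omega^x$ contribution, which is precisely why a univariate $y$-polynomial suffices as $q_\star$; for generic $\theta$ one must work much harder to identify the critical polynomial and the OCAD's interior nodes (the subject of the later sections). The only minor technicality here is the parity arithmetic ensuring $L - 2 \le \lfloor k/2 \rfloor$; everything else is a direct appeal to the previously established optimality criterion.
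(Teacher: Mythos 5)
Your proposal is correct and follows essentially the same route as the paper: the paper's proof of this theorem also takes $q_\star(x,y)=\prod_{\ell=2}^{L-1}(y-y_\ell^{{\tt GL}})$, checks that its degree $L-2=\lceil\frac{k-1}{2}\rceil\le\lfloor\frac{k}{2}\rfloor$ places $q_\star$ in $\mathbb{P}^{\lfloor k/2\rfloor}$, and invokes the Optimality Criterion of \Cref{lem:512}. The only difference is that you spell out the tensor-product quadrature argument for feasibility, which the paper leaves implicit since \eqref{eq:1688} is the classic CAD \eqref{eq:1471} specialized to $\theta=-1$.
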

\begin{proof}
Consider the polynomial
\begin{equation}\label{eq:1696}
		p^\star (x,y)= q_\star^2(x,y) \quad \mbox{with} \quad  q_\star(x,y) := \prod_{\ell = 2}^{L-1} \left(y- y_{\ell}^{{\tt GL}} \right),
\end{equation}
which vanishes at all the internal nodes of CAD \eqref{eq:1688}. Besides, 
the degree of $q_\star$ in $x$ is 0, and the degree of $q_\star$ in $y$ is 
$ (L-2) =       \left \lceil \frac{k-1}2 \right \rceil   \le \left \lfloor \frac{k}2 \right \rfloor$
so that $q_\star \in \mathbb{P}^{ \left \lfloor \frac{k}2 \right \rfloor }$. 
According to \Cref{lem:512}, 
we have \eqref{eq:Pk-conj:th-1}, 
$p^\star$ is a critical positive polynomial $\phi^\star (-1,\mathbb{P}^k_{+})$, and the CAD (\ref{eq:1471}) is an OCAD for $\mathbb P^k$ and $\theta=-1$.
\end{proof}

Based on \Cref{thm:sym_theta} and \Cref{cor:1575}, we immediately obtain 
an OCAD for $\theta = 1$.

\begin{theorem}\label{thm:1699}
In case of $\theta=1$, the following symmetric 2D CAD with $L = \left \lceil \frac{k+3}2 \right \rceil$ and $Q\ge \frac{k+1}2$ is an OCAD on $\Omega$ for $\mathbb{P}^k$ with any $k \in \mathbb N_+$. 
\begin{equation}\label{eq:1712}
\langle p \rangle_{\Omega} = 
2\omega_1^{{\tt GL}} \langle p \rangle_{\Omega}^x+  
\sum_{\ell=2}^{L-1} \sum_{q=1}^Q \omega_\ell^{{\tt GL}} \omega_q^{{\tt G}} p(x_\ell^{{\tt GL}},y_q^{{\tt G}}).
\end{equation}
In this case, 
Conjectures \ref{con:2070} and \ref{con:2071} hold true with 
\begin{equation}\label{eq:Pk-conj:th1}
	\Wmu_\star(1,\mathbb{P}^k) = \phi^\star (1,\mathbb{P}^k_{+})= \phi^\star(1,(\mathbb{P}^{ \left \lfloor \frac{k}2 \right \rfloor })^2) = \omega_1^{{\tt GL}} = \frac{1}{ L(L-1) }. 
\end{equation}
\end{theorem}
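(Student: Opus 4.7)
The plan is to establish \Cref{thm:1699} as a direct mirror of \Cref{thm:1686}, exploiting the $x \leftrightarrow y$ symmetry of the reference cell $\Omega=[-1,1]^2$. Two routes are available, both essentially immediate given the machinery already developed.

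The first route proceeds by symmetry alone. Starting from the OCAD \eqref{eq:1688} for $\theta=-1$ established in \Cref{thm:1686}, I would apply \Cref{lem:1525}, which turns a feasible symmetric CAD for parameter $\theta$ into one for $-\theta$ by swapping the $x$- and $y$-coordinates of every internal node and exchanging the boundary functionals $\langle \cdot \rangle_{\Omega}^x$ and $\langle \cdot \rangle_{\Omega}^y$. Combined with \Cref{thm:sym_theta}, which yields $\Wmu_\star(1,\mathbb{P}^k) = \Wmu_\star(-1,\mathbb{P}^k) = \omega_1^{{\tt GL}}$, this produces an OCAD at $\theta=1$. Since the $L$-point Gauss--Lobatto and $Q$-point Gauss rules in $[-1,1]$ depend only on the interval, a harmless relabeling $(y_\ell^{{\tt GL}},x_q^{{\tt G}}) \mapsto (x_\ell^{{\tt GL}}, y_q^{{\tt G}})$ after the swap recovers precisely the form \eqref{eq:1712}.

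The second route mirrors the proof of \Cref{thm:1686} through a direct application of \Cref{lem:512}. I would define the candidate critical positive polynomial
\[
p^\star(x,y) = q_\star^2(x,y), \qquad q_\star(x,y) := \prod_{\ell=2}^{L-1}\bigl(x - x_\ell^{{\tt GL}}\bigr),
\]
which vanishes at every internal node $(x_\ell^{{\tt GL}}, y_q^{{\tt G}})$ of \eqref{eq:1712}, since the internal Gauss--Lobatto nodes $\{x_\ell^{{\tt GL}}\}_{\ell=2}^{L-1}$ are exactly the roots of $q_\star$ in $x$. The degree of $q_\star$ equals $L-2 = \lceil (k-1)/2 \rceil \le \lfloor k/2 \rfloor$, so $q_\star \in \mathbb{P}^{\lfloor k/2 \rfloor} \setminus \{0\}$. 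Invoking \Cref{lem:512} then delivers in one stroke the optimality of \eqref{eq:1712}, the chain of equalities in \eqref{eq:Pk-conj:th1}, and the validity of \Cref{con:2070,con:2071} at $\theta=1$, with $p^\star$ serving as the critical positive polynomial for $\phi^\star(1,\mathbb{P}^k_+)$.

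There is essentially no substantive obstacle: the whole argument reduces to either the symmetry transport via \Cref{thm:sym_theta} and \Cref{lem:1525}, or to a single application of \Cref{lem:512} with the mirror polynomial $q_\star(x,y) = \prod_{\ell=2}^{L-1}(x - x_\ell^{{\tt GL}})$ replacing the $y$-based version used in \Cref{thm:1686}. The only piece of bookkeeping for the symmetry route is verifying that after the coordinate swap and relabeling, one recovers precisely the standardized CAD \eqref{eq:1712} rather than a merely equivalent rearrangement, which amounts to the fact that the Gauss and Gauss--Lobatto quadratures on $[-1,1]$ are intrinsic to the interval.
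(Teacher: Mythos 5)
Your proposal is correct, and your first route is exactly the paper's own proof: the paper derives \eqref{eq:1712} from the $\theta=-1$ OCAD \eqref{eq:1688} by invoking \Cref{thm:sym_theta} together with \Cref{cor:1575} (which rests on \Cref{lem:1525}). Your second route, applying \Cref{lem:512} directly with $q_\star(x,y)=\prod_{\ell=2}^{L-1}(x-x_\ell^{{\tt GL}})$, is an equally valid self-contained alternative that simply mirrors the proof of \Cref{thm:1686}.
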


\begin{remark}\label{rem:CCADcombine}
	Note that 
	\begin{equation}\label{eq:classicCADconvex}
		\mbox{CAD~\eqref{eq:1471}} = \frac{1+\theta}2 \times \mbox{CAD~\eqref{eq:1688}} 
		+ \frac{1-\theta}2 \times \mbox{CAD~\eqref{eq:1712}},
	\end{equation}
	which implies the classic CAD \eqref{eq:1471} is actually a convex combination of 
	the above two OCADs (\ref{eq:1688}) and (\ref{eq:1712}). 
In the special cases of $\theta = \pm 1$, the above OCADs (\ref{eq:1688}) and (\ref{eq:1712}) for $\mathbb P^k$	
	coincide with the classic CAD (\ref{eq:1471}), which is also optimal for $\mathbb{Q}^k$. 
However, unfortunately, in the cases of $\theta \in (-1,1)$, the classic CAD (\ref{eq:1471}) is no longer optimal for $\mathbb P^k$. 	
\end{remark}

\subsection{Fully symmetric OCAD for  $\mathbb{P}^{k}$ and $\theta=0$}\label{sec:theta0}

In this subsection, we focus on the case of $\theta = 0$, namely, $a_1/\Delta x = a_2 / \Delta y$. In this case,  there is a symmetric OCAD for $\mathbb{P}^k$ 
in the form of 
\[
\langle p \rangle_{\Omega} = \Wmu_\star(0,\mathbb{P}^k) \left[ \langle p \rangle_{\Omega}^x + \langle p \rangle_{\Omega}^y \right] + \sum_s \omega_s \overline{p(x^{(s)},y^{(s)})}. 
\]
Then, according to \Cref{thm:sym_theta} and \Cref{cor:1575}, 
the following symmetric CAD is also optimal for $\mathbb{P}^k$ and $\theta = 0$: 
\[
\langle p \rangle_{\Omega} = \Wmu_\star(0,\mathbb{P}^k) \left[ \langle p \rangle_{\Omega}^x + \langle p \rangle_{\Omega}^y \right] + \sum_s \omega_s \overline{p(y^{(s)},x^{(s)})}.
\]
Taking an average of the above two OCADs leads to a new fully symmetric OCAD: 
\begin{equation}\label{2014}
	\langle p \rangle_{\Omega} = \Wmu_\star(0,\mathbb{P}^k) \left[ \langle p \rangle_{\Omega}^x + \langle p \rangle_{\Omega}^y \right] + \sum_s \frac{\omega_s}{2} \left[ \, \overline{p(x^{(s)},y^{(s)})}+\overline{p(y^{(s)},x^{(s)})} \, \right],
\end{equation}
whose internal nodes are symmetric not only with respect to the $x$- and $y$-axes, but also with respect to $x=y$ and $x=-y$. Such full symmetry 
is very helpful for finding the OCAD \eqref{2014}. 

In order to precisely describe such fully symmetric structures, we can 
invoke the invariance with respect to the following full symmetric group of transformations: 
\begin{equation}\label{eq:Gfs}
{\mathscr G}_{f} := 
\left\{
\begin{array}{cccc}
	(x,y)\mapsto( x, y), &
	(x,y)\mapsto(-x, y), &
	(x,y)\mapsto( x,-y), &
	(x,y)\mapsto(-x,-y) \\
	(x,y)\mapsto( y, x), &
	(x,y)\mapsto(-y, x), &
	(x,y)\mapsto( y,-x), &
	(x,y)\mapsto(-y,-x)
\end{array}
\right\}.
\end{equation}
Clearly, the reference cell $\Omega=[-1,1]^2$ is ${\mathscr G}_{f}$-invariant, namely, 
$g(\Omega) = \Omega$ for all $g \in {\mathscr G}_{f}$. 
In addition, the space $\mathbb{P}^k$  is 
${\mathscr G}_{f}$-invariant, namely, 
\begin{equation}\label{eq:Vk-invar2}
	g(p):=p(g(x,y)) \in \mathbb{P}^k \qquad \forall g \in {\mathscr G}_{f}.	
\end{equation}

\begin{definition}(${\mathscr G}_{f}$-invariant subspace)
	The polynomial space 
	$$\mathbb{P}^k({\mathscr G}_{f}) := \{p \in \mathbb{P}^k : 
	g(p) = p ~~~~~ \forall g \in {\mathscr G}_{f} \}
	$$ 
	is called the ${\mathscr G}_{f}$-invariant subspace of $\mathbb{P}^k$.  
\end{definition}

 The OCAD in the form of \eqref{2014} is called fully symmetric as its internal nodes form a ${\mathscr G}_{f}$-invariant set. 
The fully symmetric structure is helpful for reducing the feasibility requirement in condition (i) of \Cref{def:2D_FCAD}, as shown in the following lemma.

\begin{lemma}\label{lem:Gfs-invarint}
	If a fully symmetric CAD \eqref{2014} is feasible for the ${\mathscr G}_{f}$-invariant subspace $\mathbb{P}^k({\mathscr G}_{f})$, then it is feasible for $\mathbb{P}^k$. 
\end{lemma}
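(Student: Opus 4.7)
The plan is to mirror the group-averaging argument used in the proof of \Cref{lem:Gs-invarint}, but now applied to the larger symmetry group $\mathscr{G}_f$ of order $8$. The whole proof rests on two facts: (a) the linear functional associated with the fully symmetric CAD (\ref{2014}) is itself $\mathscr{G}_f$-invariant, not merely $\mathscr{G}_s$-invariant, and (b) any $p_0 \in \mathbb{P}^k$ can be symmetrized into an element of $\mathbb{P}^k(\mathscr{G}_f)$ by summing its $\mathscr{G}_f$-orbit, without changing the value of the functional.

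First, I would introduce the linear functional
\[
\mathscr{F}(p) := \langle p \rangle_{\Omega} - \Wmu_\star(0,\mathbb{P}^k)\bigl[\langle p \rangle_{\Omega}^x + \langle p \rangle_{\Omega}^y\bigr] - \sum_s \frac{\omega_s}{2}\Bigl[\,\overline{p(x^{(s)},y^{(s)})}+\overline{p(y^{(s)},x^{(s)})}\,\Bigr]
\]
on $\mathbb{P}^k$. The hypothesis gives $\mathscr{F}(p)=0$ for every $p\in\mathbb{P}^k(\mathscr{G}_f)$, and the goal is to upgrade this to all of $\mathbb{P}^k$. The key verification is that $\mathscr{F}(g(p))=\mathscr{F}(p)$ for every $g\in\mathscr{G}_f$. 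The term $\langle p\rangle_\Omega$ is invariant because $g(\Omega)=\Omega$; the sign-flip transformations fix $\langle p\rangle^x_\Omega$ and $\langle p\rangle^y_\Omega$ individually, while the swap $(x,y)\mapsto(y,x)$ exchanges these two quantities, so their \emph{sum} (which is what appears in $\mathscr{F}$) is preserved; the node term is invariant by construction, since $\overline{p(\cdot,\cdot)}$ already averages over the four sign flips and the extra summand $\overline{p(y^{(s)},x^{(s)})}$ is precisely what the swap contributes.

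With $\mathscr{G}_f$-invariance of $\mathscr{F}$ in hand, I would argue by contradiction. Suppose $\mathscr{F}(p_0)\neq 0$ for some $p_0\in\mathbb{P}^k$. Form the symmetrization
\[
p_1 := \sum_{g\in\mathscr{G}_f} g(p_0),
\]
which lies in $\mathbb{P}^k(\mathscr{G}_f)$ because for every $h\in\mathscr{G}_f$ the map $g\mapsto hg$ permutes $\mathscr{G}_f$. Linearity of $\mathscr{F}$ together with $\mathscr{F}(g(p_0))=\mathscr{F}(p_0)$ yields $\mathscr{F}(p_1) = |\mathscr{G}_f|\,\mathscr{F}(p_0)\neq 0$, contradicting the assumed vanishing of $\mathscr{F}$ on $\mathbb{P}^k(\mathscr{G}_f)$. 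Hence $\mathscr{F}\equiv 0$ on $\mathbb{P}^k$, which is exactly feasibility of (\ref{2014}) for $\mathbb{P}^k$.

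I do not anticipate a substantial obstacle; the argument is a direct analogue of the one for \Cref{lem:Gs-invarint}. The only point requiring care is the swap transformation in the $\mathscr{G}_f$-invariance check, because $\langle p\rangle^x_\Omega$ and $\langle p\rangle^y_\Omega$ are individually \emph{not} preserved under $(x,y)\mapsto(y,x)$. This is precisely why the lemma is stated for the fully symmetric form (\ref{2014}) with a common weight $\Wmu_\star(0,\mathbb{P}^k)$ on $\langle p\rangle^x_\Omega+\langle p\rangle^y_\Omega$, and it is also why the reduction is only available in the fully symmetric case $\theta=0$.
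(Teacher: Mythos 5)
Your proposal is correct and is precisely the argument the paper intends: the paper omits the proof of this lemma, stating only that it is "similar to that of Lemma~\ref{lem:Gs-invarint}," and your write-up carries out exactly that adaptation — the $\mathscr{G}_f$-invariance of the functional $\mathscr{F}$ followed by group averaging over the orbit of $p_0$. Your remark that the swap $(x,y)\mapsto(y,x)$ only preserves the \emph{sum} $\langle p\rangle^x_\Omega+\langle p\rangle^y_\Omega$, which is why the common weight in the fully symmetric form \eqref{2014} is essential, correctly identifies the one point where the adaptation differs from the $\mathscr{G}_s$ case.
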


\begin{proof}
	The proof is similar to that of \Cref{lem:Gs-invarint} and is thus omitted. 
\end{proof}

\begin{remark}
		Note that 
		the dimension of the subspace $\mathbb{P}^k({\mathscr G}_{f})$ is typically much smaller than the dimension of $\mathbb{P}^k$. 
		For example, for $\mathbb{P}^3 = \operatorname{span}\{1,x,y,x^2,xy,y^2,x^3,x^2y,xy^2,y^3\}$, we have  $\mathbb{P}^3({\mathscr G}_{f}) = \operatorname{span}\{1,x^2+y^2\}$, 
		and $\dim \mathbb{P}^3({\mathscr G}_{f}) =2 \ll  \dim \mathbb{P}^3 = 10$. 
		In general, 
	by the invariant theory \cite{flatto1968basic}, the basis $\mathbb{P}^k(\mathscr G_{f})$ for $k \in \mathbb{N}_{+}$ can be represented using $x^2+y^2$ and $x^2 y^2$: 
	\[
	\mathbb{P}^k(\mathscr G_{f}) = \operatorname{span}\left\{ (x^2 y^2)^{\alpha} (x^2+y^2)^{\beta} ~|~ 4\alpha+2\beta \le k, \alpha, \beta \in \mathbb{N} \right\}.
	\]
	For example, for $k = 2,4,6,8$, we have
	the following basis of $\mathbb{P}^k(\mathscr G_{f})$
	\[
	\begin{aligned}
		\mathbb{P}^2(\mathscr G_{f}) =& \operatorname{span}\{ 1, x^2+y^2 \}, \\
		\mathbb{P}^4(\mathscr G_{f}) =& \operatorname{span}\{ 1, x^2+y^2, (x^2+y^2)^2, x^2y^2 \}, \\
		\mathbb{P}^6(\mathscr G_{f}) =& \operatorname{span}\{ 1, x^2+y^2, (x^2+y^2)^2, x^2y^2 , (x^2+y^2)^3, (x^2+y^2) x^2 y^2\}, \\
		\mathbb{P}^8(\mathscr G_{f}) =& \operatorname{span}\{ 1, x^2+y^2, (x^2+y^2)^2, x^2y^2 , (x^2+y^2)^3, (x^2+y^2) x^2 y^2, (x^2+y^2)^4, (x^2+y^2)^2 x^2 y^2, x^4 y^4\}. \\
	\end{aligned}
	\]
	It is clear that $\dim \mathbb{P}^k(\mathscr G_{f}) \ll \dim \mathbb{P}^k = \frac{(k+2)(k+1)}{2}$.
Therefore, \Cref{lem:Gfs-invarint} greatly simplifies the feasibility requirement and  
will be very useful for seeking fully symmetric OCADs.
\end{remark}

As shown in the following theorems, we have 
successfully found out 
the fully symmetric OCADs in the form of \eqref{2014} for $\theta =0$ and $\mathbb{P}^k$ spaces with $2\le k \le 7$. With the help of \Cref{lem:Gfs-invarint}, we will provide a general algorithm to numerically 
compute the fully symmetric OCADs \eqref{2014} for $\theta =0$ and $\mathbb{P}^k$ with higher $k \ge 8$. 

\begin{theorem}[OCAD for $\theta = 0$, $\mathbb{P}^2$ and $\mathbb{P}^3$]
	In case of $\theta = 0$, the following fully symmetric CAD is an OCAD on $\Omega$ for the spaces $\mathbb{P}^2$ and $\mathbb{P}^3$:
	\begin{equation}\label{eq:2276}
		\begin{aligned}
			\langle p \rangle_{\Omega}
			=& \frac{1}{4} \, [ \langle p \rangle_{\Omega}^x+ \langle p \rangle_{\Omega}^y ] + \frac12 \, p(0,0).
		\end{aligned}
	\end{equation}
	In this case, 
	Conjectures \ref{con:2070} and \ref{con:2071} hold true for $2\le k \le 3$ with 
	\begin{equation}\label{eq:P2P3-conj:th0}
		\Wmu_\star(0,\mathbb{P}^k) = \phi^\star (0,\mathbb{P}^k_{+})= \phi^\star(0,(\mathbb{P}^1)^2) = \frac14, \qquad k=2,3. 
	\end{equation}
\end{theorem}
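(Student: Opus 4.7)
The plan is to prove \eqref{eq:2276} is a feasible symmetric CAD and then apply Optimality Criterion \#3 (\Cref{lem:512}) with a carefully chosen $q_\star \in \mathbb{P}^1$ to simultaneously establish optimality, the value $\Wmu_\star(0,\mathbb{P}^k) = \tfrac14$, and both \Cref{con:2070,con:2071}. Since \eqref{eq:2276} is clearly fully symmetric (with a single internal node at the origin of weight $\tfrac12$), \Cref{lem:Gfs-invarint} reduces the verification of condition (i) in \Cref{def:2D_FCAD} from all of $\mathbb{P}^3$ to the much smaller $\mathscr{G}_f$-invariant subspace $\mathbb{P}^3(\mathscr{G}_f) = \operatorname{span}\{1,\, x^2+y^2\}$.

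First I would verify feasibility on this two-dimensional basis. For $p \equiv 1$ both sides clearly equal $1$. For $p = x^2 + y^2$, a direct computation on $\Omega=[-1,1]^2$ gives $\langle p \rangle_\Omega = \tfrac{2}{3}$, $\langle p \rangle_\Omega^x = \langle p \rangle_\Omega^y = \tfrac{4}{3}$, and $p(0,0)=0$, so the right-hand side of \eqref{eq:2276} equals $\tfrac{1}{4}(\tfrac{4}{3}+\tfrac{4}{3}) + \tfrac{1}{2}\cdot 0 = \tfrac{2}{3}$, matching the left-hand side. Conditions (ii) and (iii) of \Cref{def:2D_FCAD} are trivial since all weights are positive and $(0,0)\in\Omega$. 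By \Cref{lem:Gfs-invarint}, \eqref{eq:2276} is therefore feasible for $\mathbb{P}^3$, and a fortiori for $\mathbb{P}^2$.

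For optimality, I would take $q_\star(x,y) := x$. Since $\lfloor k/2 \rfloor = 1$ for both $k=2,3$, we have $q_\star \in \mathbb{P}^{\lfloor k/2 \rfloor}\setminus\{0\}$, and $q_\star$ vanishes at the unique internal node $(0,0)$. \Cref{lem:512} then immediately yields that \eqref{eq:2276} is a symmetric OCAD for $\mathbb{P}^k$ ($k=2,3$), that $p^\star := q_\star^2 = x^2$ is a critical positive polynomial, and that
\[
\Wmu_\star(0,\mathbb{P}^k) \;=\; \phi^\star(0,\mathbb{P}^k_+) \;=\; \phi^\star\bigl(0,(\mathbb{P}^1)^2\bigr) \;=\; \phi(x^2;0).
\]
A direct calculation gives $\langle x^2\rangle_\Omega = \tfrac{1}{3}$, $\langle x^2\rangle_\Omega^x = 1$, $\langle x^2\rangle_\Omega^y = \tfrac{1}{3}$, whence $\phi(x^2;0) = \tfrac{1/3}{1+1/3} = \tfrac{1}{4}$, confirming \eqref{eq:P2P3-conj:th0} and hence Conjectures \ref{con:2070} and \ref{con:2071} in these cases.

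There is essentially no obstacle beyond bookkeeping: the whole argument is driven by the fact that the single internal node is $(0,0)$, which is annihilated by any nonzero linear polynomial with zero constant term, so the Optimality Criterion \#3 applies in its most direct form. The only minor subtlety is recognizing that the choice $q_\star = x$ (as opposed to, say, $q_\star = x+y$) is admissible and sufficient; by \Cref{lem:notUnique} any other choice such as $q_\star = y$ or $q_\star = \alpha x + \beta y$ would yield the same optimal value, reflecting the non-uniqueness of the critical positive polynomial at the nonsmooth point $\theta = 0$ of $\partial \mathbb{B}_\omega$ visible in \Cref{fig:P2P3}.
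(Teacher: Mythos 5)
Your proof is correct and follows essentially the same route as the paper: feasibility is reduced to the two-dimensional invariant subspace $\operatorname{span}\{1,x^2+y^2\}$ via \Cref{lem:Gfs-invarint}, and optimality follows from Criterion \#3 with $q_\star = x$ (the paper uses both $x^2$ and $y^2$ as critical positive polynomials, which is the same idea). Your explicit verification of the moment identities and of $\phi(x^2;0)=\tfrac14$ merely fills in the computations the paper leaves as ``it can be verified.''
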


\begin{proof}
	It can be verified that the fully symmetric CAD \eqref{eq:2276} is feasible 
	for the ${\mathscr G}_{f}$-invariant subspace $\mathbb{P}^2({\mathscr G}_{f}) = \mathbb{P}^3({\mathscr G}_{f}) ={\rm span}\{1,x^2+y^2\}$. Thus, by \Cref{lem:Gfs-invarint}, it is feasible 
	for $\mathbb{P}^2$ and $\mathbb{P}^3$. 
	Both polynomials 
	\begin{equation}\label{eq:2334}
		p_1^\star(x,y) = x^2 \quad \text{and} \quad p_2^\star(x,y) = y^2
	\end{equation}
	vanish at the internal node $(0,0)$. Additionally, both $p_1^\star$ and $p_2^\star$ belong to  $(\mathbb{P}^1)^2$. By \Cref{lem:512}, the fully symmetric CAD \eqref{eq:2276} is optimal for $\mathbb{P}^2$ and $\mathbb{P}^3$ spaces, and both $p_1^\star$ and $p_2^\star$ are the corresponding critical positive polynomials.
\end{proof}

\begin{remark}\label{rem:pstar_P2P3}
	As shown in \eqref{eq:2334}, the critical positive polynomials for either  
	$\phi^\star (0,\mathbb{P}^2_{+})$ or $\phi^\star (0,\mathbb{P}^3_{+})$ 
	are not unique. In fact, there are infinitely many critical positive polynomials in this case, 
	for example, 
	$(x + t y)^2$ for any $t \in \mathbb R$, $\alpha_1 x^2 + \alpha_2 y^2$ for any $\alpha_1,\alpha_2 \in \mathbb R_+$, as discussed in \Cref{lem:notUnique}.   
\end{remark}

\begin{theorem}[OCAD for $\theta = 0$, $\mathbb{P}^4$ and $\mathbb{P}^5$]
	In case of $\theta = 0$, the following fully symmetric CAD is an OCAD on $\Omega$ for the spaces $\mathbb{P}^4$ and $\mathbb{P}^5$:
	\begin{equation}\label{eq:2286a}
		\begin{aligned}
			\langle p \rangle_{\Omega}
			=& \left( 2-\frac{\sqrt{14}}{2} \right) [ \langle p \rangle_{\Omega}^x+ \langle p \rangle_{\Omega}^y ]
			+ \frac{5\sqrt{14}-15}{7} \, \overline{p\left(\sqrt{\frac{7-\sqrt{14}}{15}},\sqrt{\frac{7-\sqrt{14}}{15}}\right)} \\
			+& \frac{\sqrt{14}-3}{7} \left[ \, \overline{p\left(\sqrt{\frac{14-2\sqrt{14}}{15}},0\right)} + \overline{p\left(0,\sqrt{\frac{14-2\sqrt{14}}{15}} \right)} \right].
		\end{aligned}
	\end{equation}
	In this case, 
	Conjectures \ref{con:2070} and \ref{con:2071} hold true for $4\le k \le 5$ with 
	\begin{equation}\label{eq:P4P5-conj:th0}
		\Wmu_\star(0,\mathbb{P}^k) = \phi^\star (0,\mathbb{P}^k_{+})= \phi^\star(0,(\mathbb{P}^2)^2) = 2-\frac{\sqrt{14}}{2}, \qquad k=4,5. 
	\end{equation}
\end{theorem}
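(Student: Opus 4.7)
My plan is to mirror the structure of the $\mathbb{P}^2/\mathbb{P}^3$ proof by invoking Optimality Criterion~\#3 (Lemma~\ref{lem:512}) with a carefully chosen degree-two critical polynomial $q_\star$, together with Lemma~\ref{lem:Gfs-invarint} to economize the feasibility check, and Lemma~\ref{lem:2k2kp1} to lift the result from $\mathbb{P}^4$ to $\mathbb{P}^5$.

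First I would verify feasibility of the candidate CAD~\eqref{eq:2286a}. Because every internal node forms a full $\mathscr G_f$-orbit, the CAD is fully symmetric and Lemma~\ref{lem:Gfs-invarint} reduces feasibility on $\mathbb{P}^4$ to checking the identity on the four-element basis $\mathbb{P}^4(\mathscr G_f) = \operatorname{span}\{1,\, x^2+y^2,\, (x^2+y^2)^2,\, x^2 y^2\}$. Abbreviating $\alpha^2 := (7-\sqrt{14})/15$ and $\beta^2 := (14-2\sqrt{14})/15$, this becomes four linear relations in $\Wmu = 2-\sqrt{14}/2$ and the two internal weights $(5\sqrt{14}-15)/7$ and $(\sqrt{14}-3)/7$, which are routine to check using $\langle x^{2i}\rangle_\Omega = 1/(2i+1)$. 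Feasibility for $\mathbb{P}^5$ is then automatic by Lemma~\ref{lem:2k2kp1}.

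The key step, and what I expect to be the only nontrivial one, is the choice of the critical polynomial. The node locations in~\eqref{eq:2286a} are engineered so that the identity $2\alpha^2 = \beta^2$ holds (since $2(7-\sqrt{14}) = 14-2\sqrt{14}$). This suggests the candidate
\[
q_\star(x,y) := (x^2+y^2) - \beta^2 \;\in\; \mathbb{P}^2 = \mathbb{P}^{\lfloor 4/2 \rfloor},
\]
which vanishes at every internal node of~\eqref{eq:2286a}: at $(\pm\alpha,\pm\alpha)$ because $2\alpha^2=\beta^2$, and at $(\pm\beta,0)$ and $(0,\pm\beta)$ trivially. Applying Lemma~\ref{lem:512} with $\theta=0$ and this $q_\star$ immediately delivers that $p^\star := q_\star^2 \in \mathbb{P}^4_+$ is a critical positive polynomial, that~\eqref{eq:2286a} is a symmetric OCAD for $\mathbb{P}^4$, and that
\[
\Wmu_\star(0,\mathbb{P}^4) \;=\; \phi^\star(0,\mathbb{P}^4_+) \;=\; \phi^\star\!\left(0,(\mathbb{P}^2)^2\right) \;=\; \phi(q_\star^2;\,0),
\]
which verifies both Conjectures~\ref{con:2070} and~\ref{con:2071} at the degrees in question.

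To pin down the numerical value $2-\sqrt{14}/2$, I would compute $\phi(q_\star^2;0)$ by direct integration of $q_\star^2 = (x^2+y^2)^2 - 2\beta^2(x^2+y^2) + \beta^4$ over $\Omega=[-1,1]^2$ and over a pair of its sides, obtaining $\langle q_\star^2\rangle_\Omega$ and $\langle q_\star^2\rangle_\Omega^x = \langle q_\star^2\rangle_\Omega^y$ as rational expressions in $\beta^2$, and then substituting $\beta^2 = (14-2\sqrt{14})/15$. Extension to $\mathbb{P}^5$ finally follows from Lemma~\ref{lem:2k2kp1}, which guarantees $\Wmu_\star(0,\mathbb{P}^5) = \Wmu_\star(0,\mathbb{P}^4)$ and that~\eqref{eq:2286a} is also an OCAD for $\mathbb{P}^5$. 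The tedious parts are the four feasibility equations and the final integration; the genuine difficulty was discovering the compact radial form $q_\star = (x^2+y^2) - \beta^2$, which the hidden identity $2\alpha^2 = \beta^2$ makes natural.
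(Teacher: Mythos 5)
Your proposal is correct and follows essentially the same route as the paper: feasibility is reduced to the four-element basis of $\mathbb{P}^4(\mathscr G_f)=\mathbb{P}^5(\mathscr G_f)$ via Lemma~\ref{lem:Gfs-invarint}, and optimality follows from Lemma~\ref{lem:512} applied to exactly the same critical polynomial $q_\star(x,y)=x^2+y^2-\tfrac{14-2\sqrt{14}}{15}$, which vanishes at all internal nodes precisely because $2\alpha^2=\beta^2$. The only cosmetic difference is that the paper checks feasibility for $\mathbb{P}^4$ and $\mathbb{P}^5$ simultaneously (their $\mathscr G_f$-invariant subspaces coincide) rather than invoking Lemma~\ref{lem:2k2kp1}, and the final integration you propose to recover $2-\sqrt{14}/2$ is not needed since Lemma~\ref{lem:512} already identifies $\Wmu_\star$ with the boundary weight of the feasible CAD.
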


\begin{proof}
	It can be verified that the fully symmetric CAD \eqref{eq:2286a} is feasible 
	for the ${\mathscr G}_{f}$-invariant subspace $\mathbb{P}^4({\mathscr G}_{f}) = \mathbb{P}^5({\mathscr G}_{f}) ={\rm span}\{1,x^2+y^2,(x^2+y^2)^2, x^2y^2 \}$. Thus, by \Cref{lem:Gfs-invarint}, it is feasible 
	for $\mathbb{P}^4$ and $\mathbb{P}^5$. The polynomial 
	\begin{equation}\label{eq:2354}
		p^\star(x,y) = q_\star^2 (x,y) \quad \mbox{with} \quad 
		q_\star(x,y) := x^2 + y^2 - \frac{14-2\sqrt{14}}{15}  
	\end{equation}
	vanishes at all the internal nodes. By \Cref{lem:512}, the fully symmetric CAD \eqref{eq:2286a} is optimal for $\mathbb{P}^4$ and $\mathbb{P}^5$, and $p^\star$ is the corresponding critical positive polynomial.
\end{proof}

\begin{theorem}[OCAD for $\theta = 0$, $\mathbb{P}^6$ and $\mathbb{P}^7$]
	In case of $\theta = 0$, the following fully symmetric CAD is an OCAD on $\Omega$ for the spaces $\mathbb{P}^6$ and $\mathbb{P}^7$:
	\begin{equation}\label{eq:2004}
		\begin{aligned}
			\langle p \rangle_{\Omega}
			=& \left(1-\frac{\sqrt{30}}{6}\right) [ \langle p \rangle_{\Omega}^x+ \langle p \rangle_{\Omega}^y ] 
			+ \frac{875\sqrt{30}-3125}{4563} \, \overline{p\left(\sqrt{\frac{3}{5}-\frac{\sqrt{30}}{25}},\sqrt{\frac{3}{5}-\frac{\sqrt{30}}{25}}\right)} \\
			+& \frac{343\sqrt{30}-1225}{4563} \left[ \, \overline{p\left(\sqrt{\frac{6}{7}-\frac{2\sqrt{30}}{35}},0\right)} + \overline{p\left(0,\sqrt{\frac{6}{7}-\frac{2\sqrt{30}}{35}}\right)} \, \right]
			+ \frac{1012-40\sqrt{30}}{4563} \, p(0,0).
		\end{aligned}
	\end{equation}
	In this case, 
Conjectures \ref{con:2070} and \ref{con:2071} hold true for $6\le k \le 7$ with 
\begin{equation}\label{eq:P6P7-conj:th0}
	\Wmu_\star(0,\mathbb{P}^k) = \phi^\star (0,\mathbb{P}^k_{+})= \phi^\star(0,(\mathbb{P}^3)^2) = 1-\frac{\sqrt{30}}{6}, \qquad k=6,7. 
\end{equation}
\end{theorem}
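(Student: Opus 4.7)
The plan is to follow the template established for the $\mathbb{P}^2/\mathbb{P}^3$ and $\mathbb{P}^4/\mathbb{P}^5$ cases: reduce feasibility to the $\mathscr{G}_f$-invariant subspace, then apply Optimality Criterion \#3 (\Cref{lem:512}) by exhibiting a nonzero $q_\star \in \mathbb{P}^3$ that vanishes at every internal node of the proposed CAD.

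First I would verify that the fully symmetric CAD \eqref{eq:2004} is feasible on the invariant subspace
\[
\mathbb{P}^6(\mathscr{G}_f) = \mathbb{P}^7(\mathscr{G}_f) = \operatorname{span}\bigl\{\,1,\; x^2+y^2,\; (x^2+y^2)^2,\; x^2 y^2,\; (x^2+y^2)^3,\; (x^2+y^2)\,x^2 y^2\,\bigr\},
\]
a $6$-dimensional subspace on which feasibility reduces to six moment identities. Each basis element is fully symmetric and even, so the reference-cell integrals are elementary rational expressions in $1$ and $\sqrt{30}$. Once these are checked, \Cref{lem:Gfs-invarint} promotes feasibility to all of $\mathbb{P}^6$ and $\mathbb{P}^7$.

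Second I would construct the critical polynomial $q_\star$. Denote the nine internal nodes by $(0,0)$, $(\pm a,\pm a)$, $(\pm b,0)$, $(0,\pm b)$, where $a^2=(15-\sqrt{30})/25$ and $b^2=2(15-\sqrt{30})/35$; a short calculation reveals the key identity $b^2=\tfrac{10}{7}\,a^2$. Motivated by this relation, I propose
\[
q_\star(x,y) \;:=\; x\!\left( x^2 + \tfrac{3}{7}\,y^2 - b^2 \right) \;\in\; \mathbb{P}^3.
\]
This $q_\star$ vanishes at $(0,0)$ and at $(0,\pm b)$ by virtue of the $x$-factor, at $(\pm b,0)$ because $b^2+0-b^2=0$, and at $(\pm a,\pm a)$ because $a^2+\tfrac{3}{7}a^2 = \tfrac{10}{7}a^2 = b^2$. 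Since $q_\star \not\equiv 0$ and $\mathbb{P}^3 = \mathbb{P}^{\lfloor 6/2\rfloor} = \mathbb{P}^{\lfloor 7/2\rfloor}$, the hypotheses of \Cref{lem:512} are satisfied.

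Applying \Cref{lem:512} then yields in one stroke that \eqref{eq:2004} is a symmetric OCAD on $\Omega$ for both $\mathbb{P}^6$ and $\mathbb{P}^7$, that $\Wmu_\star(0,\mathbb{P}^k) = \phi^\star(0,\mathbb{P}^k_+) = \phi^\star\bigl(0,(\mathbb{P}^3)^2\bigr) = \phi(q_\star^2;0)$ for $k=6,7$ (thereby confirming Conjectures \ref{con:2070} and \ref{con:2071} in these cases), and that $p^\star := q_\star^2$ is a critical positive polynomial. The numerical value $1-\sqrt{30}/6$ is then recovered either by substituting the boundary weight back into \eqref{eq:2004} or by directly evaluating $\phi(q_\star^2;0)$. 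The main obstacle is the routine but arithmetically delicate verification in Step~1, where the six moment identities involve careful manipulation of nested radicals; no conceptual difficulty arises, since the truly hard part---discovering the explicit nodes, weights, and the structure of $q_\star$---is already encoded in the theorem statement.
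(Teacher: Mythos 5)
Your proposal is correct and follows essentially the same route as the paper: verify feasibility on the six-dimensional $\mathscr{G}_f$-invariant subspace, lift it via \Cref{lem:Gfs-invarint}, and then apply \Cref{lem:512} with a degree-$3$ polynomial vanishing at all internal nodes. Your $q_\star(x,y)=x\bigl(x^2+\tfrac{3}{7}y^2-b^2\bigr)$ is, up to the constant factor $35$, exactly the square root of the paper's critical polynomial $p_2^\star(x,y)=x^2\bigl(35x^2+15y^2+2\sqrt{30}-30\bigr)^2$ (the paper additionally records the symmetric partner $p_1^\star$, but one such polynomial suffices for the criterion).
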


\begin{proof}
	It can be verified that the fully symmetric CAD \eqref{eq:2004} is feasible 
		for the ${\mathscr G}_{f}$-invariant subspace $\mathbb{P}^6({\mathscr G}_{f}) = \mathbb{P}^7({\mathscr G}_{f}) ={\rm span}\{1,x^2+y^2,(x^2+y^2)^2, x^2y^2, (x^2+y^2)^3, (x^2+y^2)x^2y^2 \}$. Thus, by \Cref{lem:Gfs-invarint}, it is feasible 
	for $\mathbb{P}^6$ and $\mathbb{P}^7$. Both polynomials
	\begin{equation}\label{eq:2017}
		p_1^\star(x,y) 
		= y^2\,\left(15\,x^2+35\,y^2+2\,\sqrt{30}-30\right)^2
		\quad \textrm{and} \quad 
		p_2^\star(x,y) 
		= x^2\,\left(35\,x^2+15\,y^2+2\,\sqrt{30}-30\right)^2
	\end{equation}
	vanish at all the internal nodes of CAD \eqref{eq:2004}. Additionally, both $p_1^\star$ and $p_2^\star$ belong to $(\mathbb{P}^3)^2$, thus belong to $\mathbb{P}^6_{+}$ and $\mathbb{P}^7_{+}$. By \Cref{lem:512}, the fully symmetric CAD \eqref{eq:2004} is optimal for $\mathbb{P}^6$ and $\mathbb{P}^7$, and both $p_1^\star$ and $p_2^\star$ are the corresponding critical positive polynomials.
\end{proof}

\begin{remark}\label{rem:pstar_P6P7}
	One can add $p_1^\star$ and $p_2^\star$ in \eqref{eq:2334} (or \eqref{eq:2017}), resulting in a new critical positive polynomial, which is ${\mathscr G}_{f}$-invariant. In \Cref{fig:2036}, we plot the internal nodes and the critical polynomial with such a symmetry for $\mathbb{P}^2$ to $\mathbb{P}^7$ spaces. We observe that the internal nodes of our OCADs are much fewer than those of the classic 
	CAD (\ref{eq:1471}). 
\end{remark}

\begin{figure}[h!]
	\begin{subfigure}[t][][t]{0.3\textwidth}
		\includegraphics[scale=0.32,trim= 0 5 0 0,clip]{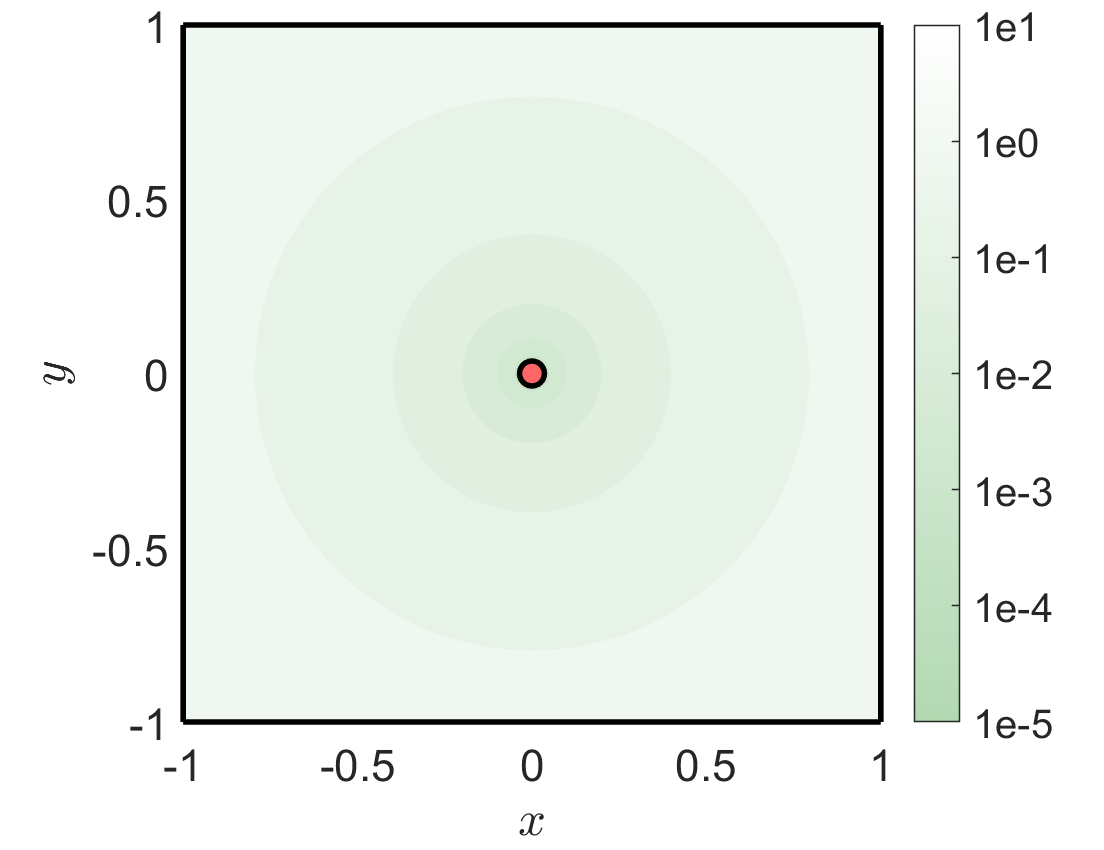}
		\caption{$\mathbb{P}^2$ and $\mathbb{P}^3$. The critical positive polynomial is the sum of $p_1^\star$ and $p_2^\star$ given in \eqref{eq:2334}.}
	\end{subfigure}
	\hfill
	\begin{subfigure}[t][][t]{0.3\textwidth}
		\includegraphics[scale=0.32,trim= 0 5 0 0,clip]{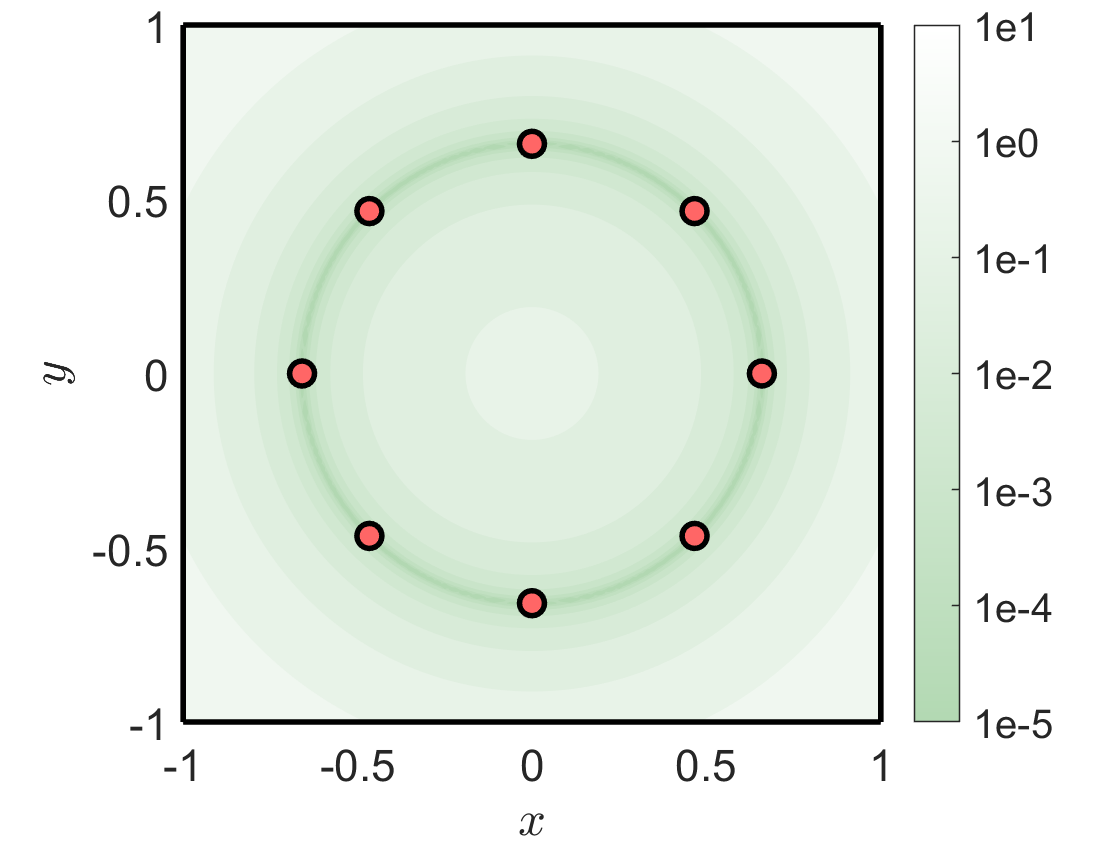}
		\caption{$\mathbb{P}^4$ and $\mathbb{P}^5$. The critical positive polynomial is $p^\star$ in \eqref{eq:2017}.}
	\end{subfigure}
	\hfill
	\begin{subfigure}[t][][t]{0.3\textwidth}
		\includegraphics[scale=0.32,trim= 0 5 0 0,clip]{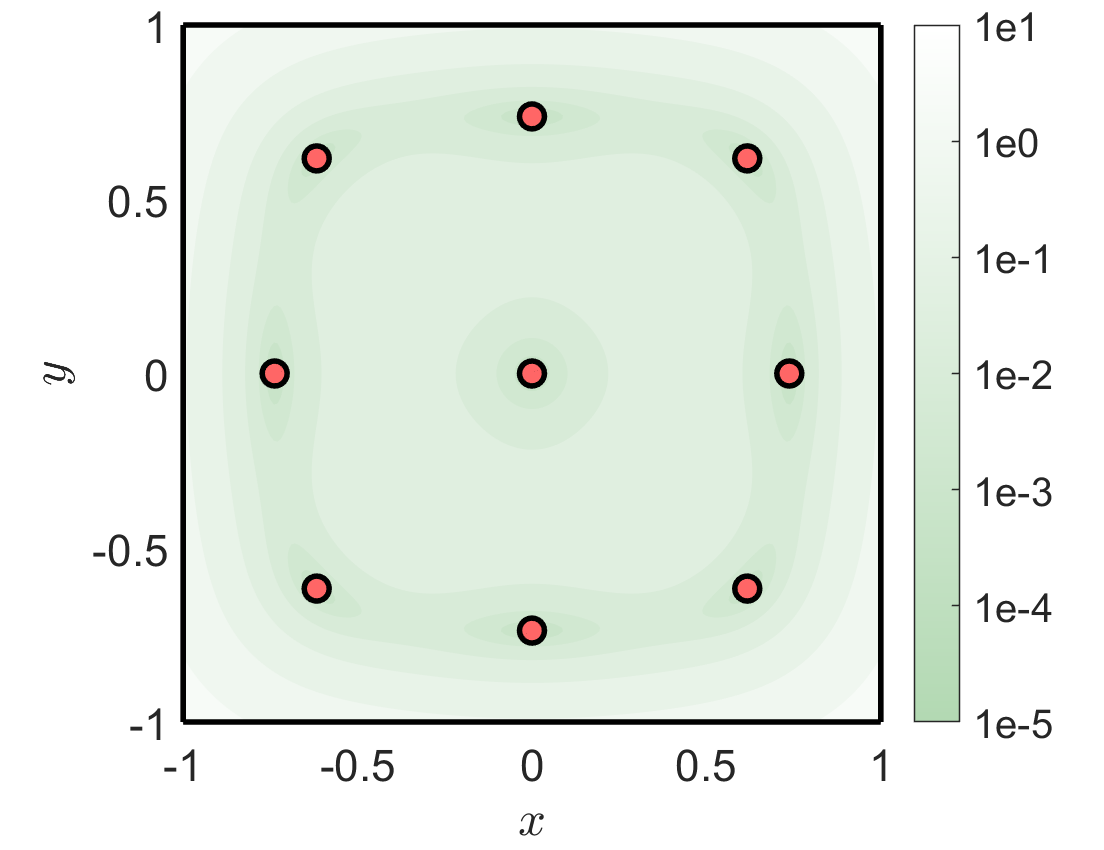}
		\caption{$\mathbb{P}^6$ and $\mathbb{P}^7$. The critical positive polynomial is the sum of $p_1^\star$ and $p_2^\star$ given in \eqref{eq:2354}.}
	\end{subfigure}
	\caption{Internal nodes of the fully symmetric OCADs for $\theta =0$ and $\mathbb{P}^k$ spaces with $2 \le k \le 7$. The background also displays the corresponding critical positive polynomials. 
	}
		\label{fig:2036}
\end{figure}

As the degree $k$ increases, 
it becomes more and more difficult to find the analytical form of the fully symmetric OCADs, even in the special case of $\theta =0$. 
We propose the following general theorem, based on which we can numerically seek the fully symmetric OCADs for $\mathbb P^k$ spaces with $k\ge 8$ (\Cref{sec:A2}).

	\begin{theorem}\label{thm:3125}
		For any given $k \in \mathbb{N}_+$, let 
		 $q_\star(x,y) \in \mathbb{P}^{\left \lfloor \frac{k}2 \right \rfloor}$ be the polynomial defined in  \eqref{eq:2065}. 
		 If there exist $\{(\omega_s,x^{(s)},y^{(s)}): \omega_s > 0,0 \le y^{(s)} \le x^{(s)} \le 1\}_{s=1}^S$ satisfying the following system 
		\begin{equation}\label{eq:3137}
			\begin{cases}
				 q_\star(x^{(s)},y^{(s)})  = 0, 
				 \\ \displaystyle 
				 \langle g_i \rangle_\Omega = 2 \phi^\star(0,(\mathbb{P}^{ \left \lfloor \frac{k}2 \right \rfloor })^2)
				 \langle g_i \rangle_\Omega^{x}
				  +
				\sum_{s=1}^S \omega_s g_i(x^{(s)},y^{(s)}), \qquad 1 \le i \le \dim( \mathbb{P}^k(\mathscr G_{f}) )
			\end{cases}
		\end{equation}
		with $\{g_i\}$ being a basis of $\mathbb{P}^k(\mathscr G_{f})$, then the following CAD 
		\begin{equation}\label{eq:3148}
			\langle p \rangle_\Omega
			=  \phi^\star(0,(\mathbb{P}^{ \left \lfloor \frac{k}2 \right \rfloor })^2)
			\Big[
			\langle p \rangle_\Omega^{x}
			+ \langle p \rangle_\Omega^{y}
			\Big]
			+ \sum_{s=1}^S \frac{\omega_s}{2} [\overline{p(x^{(s)},y^{(s)})}+\overline{p(y^{(s)},x^{(s)})}] 
		\end{equation}
		is a fully symmetric OCAD for $\theta = 0$ and $\mathbb{P}^k$. Furthermore, 
		in this case, 
		Conjectures \ref{con:2070} and \ref{con:2071} hold true with 
		\begin{equation}\label{eq:Pk-conj:th0}
			\Wmu_\star(0,\mathbb{P}^k) = \phi^\star (0,\mathbb{P}^k_{+})= \phi^\star(0,(\mathbb{P}^{ \left \lfloor \frac{k}2 \right \rfloor })^2) = \phi(q^2_\star;0), 
		\end{equation}
		and $q_\star^2(x,y)$ is 
		the critical positive polynomial for $\phi^\star (0,\mathbb P_+^k)$. 
	\end{theorem}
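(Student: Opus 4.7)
The plan is to invoke Optimality Criterion \#4 (\Cref{lem:513}). That criterion reduces the whole task to verifying that the CAD \eqref{eq:3148} is a feasible symmetric CAD for $\mathbb{P}^k$ whose boundary weight equals $\phi^\star\bigl(0,(\mathbb{P}^{\lfloor k/2\rfloor})^2\bigr)$. Since the boundary weight in \eqref{eq:3148} is built to equal $\phi^\star\bigl(0,(\mathbb{P}^{\lfloor k/2\rfloor})^2\bigr)$ by construction, the real work is the feasibility check.

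For feasibility, I would first observe that the internal node set of \eqref{eq:3148} is $\mathscr G_f$-invariant, since it is precisely the $\mathscr G_f$-orbit of $\{(x^{(s)},y^{(s)})\}_{s=1}^S$. By \Cref{lem:Gfs-invarint}, it therefore suffices to check the identity \eqref{eq:3148} for every $g$ in a basis $\{g_i\}$ of the invariant subspace $\mathbb{P}^k(\mathscr G_f)$. For $\mathscr G_f$-invariant $g$, three elementary simplifications apply: (a) $\langle g\rangle_\Omega^x = \langle g\rangle_\Omega^y$ by $x\leftrightarrow y$ symmetry, so the bracket in \eqref{eq:3148} collapses to $2\langle g\rangle_\Omega^x$; (b) $\overline{g(x^{(s)},y^{(s)})} = g(x^{(s)},y^{(s)})$, since each of the four sign reflections defining the symmetric average \eqref{eq:1270} leaves $g$ fixed; and (c) $\overline{g(y^{(s)},x^{(s)})} = g(x^{(s)},y^{(s)})$ by swap-invariance. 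Substituting (a)--(c) into \eqref{eq:3148} converts it verbatim into the linear system in the second line of \eqref{eq:3137}, which is satisfied by hypothesis. Conditions (ii)--(iii) of \Cref{def:2D_FCAD} are immediate from $\omega_s>0$ and $0\le y^{(s)}\le x^{(s)}\le 1$. Hence \eqref{eq:3148} is a feasible symmetric CAD for $\mathbb{P}^k$ with boundary weight exactly $\phi^\star\bigl(0,(\mathbb{P}^{\lfloor k/2\rfloor})^2\bigr)$.

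\Cref{lem:513} then yields at once $\Wmu_\star(0,\mathbb{P}^k) = \phi^\star(0,\mathbb{P}^k_+) = \phi^\star\bigl(0,(\mathbb{P}^{\lfloor k/2\rfloor})^2\bigr)$, which is precisely the chain \Cref{con:2070}--\Cref{con:2071} predict in this case; \Cref{lem:2080} rewrites this common value as $\phi(q_\star^2;0)$, giving \eqref{eq:Pk-conj:th0}. Because $q_\star^2\in(\mathbb{P}^{\lfloor k/2\rfloor})^2 \subset \mathbb{P}^k_+$ and $\phi(q_\star^2;0)=\phi^\star(0,\mathbb{P}^k_+)$, the polynomial $q_\star^2$ is indeed a critical positive polynomial for $\phi^\star(0,\mathbb{P}^k_+)$, completing all the claims.

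The only nonroutine step is the invariance reduction: without \Cref{lem:Gfs-invarint}, one would face a feasibility condition against the full $\binom{k+2}{2}$-dimensional space $\mathbb{P}^k$, whereas after the reduction one only has to match $\dim \mathbb{P}^k(\mathscr G_f)$ much smaller linear constraints---exactly the number built into the hypothesis \eqref{eq:3137}. Beyond that, the argument is essentially bookkeeping, and I do not anticipate any deeper obstacle; the main thing to be careful about is verifying items (a)--(c) from the definitions of $\mathscr G_f$, of the line averages $\langle\cdot\rangle_\Omega^x$, $\langle\cdot\rangle_\Omega^y$, and of the symmetric point-average $\overline{p(\cdot,\cdot)}$ in \eqref{eq:1270}.
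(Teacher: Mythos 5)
Your proof is correct. The feasibility half is identical to the paper's: reduce to the $\mathscr G_f$-invariant subspace via \Cref{lem:Gfs-invarint}, observe that for $\mathscr G_f$-invariant $g$ the line averages coincide and all eight orbit evaluations collapse to $g(x^{(s)},y^{(s)})$, and match the result against the linear system in \eqref{eq:3137}. Where you diverge is the optimality step: the paper applies Optimality Criterion \#3 (\Cref{lem:512}), using the first line of \eqref{eq:3137} --- that $q_\star$ vanishes at every internal node --- to conclude in one stroke that the CAD is optimal and that $q_\star^2$ is a critical positive polynomial. You instead apply Criterion \#4 (\Cref{lem:513}), exploiting only the fact that the boundary weight is numerically equal to $\phi^\star\bigl(0,(\mathbb{P}^{\lfloor k/2\rfloor})^2\bigr)$, and then recover the identification with $\phi(q_\star^2;0)$ and the criticality of $q_\star^2$ from \Cref{lem:2080}. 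Both routes are legitimate here because the hypothesis of the theorem supplies the inputs for either criterion. The trade-off is minor but real: your route never actually uses the vanishing condition $q_\star(x^{(s)},y^{(s)})=0$ in the optimality argument (it is only needed to make the node system solvable), but it does lean on knowing the exact value of $\phi^\star\bigl(0,(\mathbb{P}^{\lfloor k/2\rfloor})^2\bigr)$ from \Cref{lem:2080}; the paper's route via Criterion \#3 is self-contained in that respect, deriving the value $\phi(q_\star^2;0)$ as an output rather than requiring it as an input. No gap in either case.
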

	
	\begin{proof}
	Due to the symmetry of $g_i \in \mathbb{P}^k(\mathscr G_{f})$, it satisfies
	\[
		g_i(x^{(s)},y^{(s)}) = \overline{g_i(x^{(s)},y^{(s)})} = \overline{g_i(y^{(s)},x^{(s)})} \qquad 1 \le s \le S.
	\]
	and $\langle g_i \rangle_\Omega^{x} = \langle g_i \rangle_\Omega^{y}$. Thus, \eqref{eq:3137} implies 
		\begin{equation}\label{eq:3183}
				 \langle g_i \rangle_\Omega = \phi^\star(0,(\mathbb{P}^{ \left \lfloor \frac{k}2 \right \rfloor })^2)
				\left[\langle g_i \rangle_\Omega^{x}+\langle g_i \rangle_\Omega^{y}\right]+
				\sum_{s=1}^S \frac{\omega_s}{2} [\overline{g_i(x^{(s)},y^{(s)})} + \overline{g_i(y^{(s)},x^{(s)})}] 
		\end{equation}
		for all $1 \le i \le \dim( \mathbb{P}^k(\mathscr G_{f}) )$. Since $\{g_i\}$ are a basis of $\mathbb{P}^k(\mathscr G_{f})$, 
		the decomposition \eqref{eq:3148} holds for any $g \in \mathbb{P}^k(\mathscr G_{f})$. 
		Thanks to \Cref{lem:Gfs-invarint}, the fully symmetric CAD \eqref{eq:3148} is feasible for $\mathbb{P}^k$. 
		Next, we verify the optimality of the CAD \eqref{eq:3148}. 
		Because $\{(\omega_s,x^{(s)},y^{(s)})\}_{s=1}^S$ satisfy \eqref{eq:3137}, 
		the polynomial $q^2_\star \in \mathbb{P}^k$ vanishes at all the internal nodes of the CAD \eqref{eq:3148}. By \Cref{lem:512}, the CAD \eqref{eq:3148} is optimal for $\theta = 0$ and $\mathbb{P}^k$, and  $q_\star^2(x,y)$ is the critical positive polynomial for $\phi^\star (0,\mathbb P_+^k)$.
	\end{proof}
	


When $k\ge 8$, it become very difficult to find the analytical solution to the system \eqref{eq:3137} or to 
rigorously prove the existence of its solution. Nevertheless, we can always 
construct the fully symmetric OCAD \eqref{eq:3183} by numerically solving the equations \eqref{eq:3137} using an iterative method. 
The readers are referred to \Cref{sec:A2} for the fully symmetric OCADs we found for $\mathbb P^k$ spaces with $8 \le k \le 15$.

\subsection{Symmetric OCADs for $\mathbb{P}^{k}$ and general $\theta \in [-1,1]$}\label{sec:general-theta}

In this subsection, we discuss the symmetric OCADs for $\mathbb{P}^{k}$ and general $\theta \in [-1,1]$. Seeking OCADs for general $\theta$ is much more difficult than it for 
the three special cases in the previous subsections. 
As shown in the following theorems, we have 
successfully found out 
the analytical formulas of the symmetric OCADs for $\mathbb{P}^k$ spaces with $k \le 7$, 
which cover the widely used polynomial spaces. We will also provide a general algorithm to numerically 
compute the symmetric OCADs for general $\theta \in [-1,1]$ and $\mathbb{P}^k$ with higher degree $k \ge 8$.

\begin{theorem}[OCAD for $\mathbb{P}^2$ and $\mathbb{P}^3$]\label{thm:3129}
	For any $\theta \in [-1,1]$, the 2D symmetric OCAD on $\Omega$ for $\mathbb P^2$ and $\mathbb P^3$ spaces is given by
	\begin{equation}\label{eq:2849}
		\langle p \rangle_{\Omega} = \overline{\omega}_\star
		\Big[  
		(1+\theta) \langle p \rangle^x_{\Omega}+	
		(1-\theta) \langle p \rangle^y_{\Omega} \Big] 
		+
		\omega_1 \, \overline{p\left(x^{(1)},y^{(1)}\right)}
	\end{equation}
	with
	\begin{equation}\label{eq:2858}
		\overline{\omega}_\star = \frac{1}{4+2|\theta|}, \quad
		\omega_1 = \frac{1+|\theta|}{2+|\theta|}, \quad
		\left(\,x^{(1)},y^{(1)}\,\right) = 
		\begin{dcases}
			\left(\, \sqrt{\frac{2|\theta|}{3+3|\theta|}},0 \,\right) & \text{ if } \theta \in [-1,0], \\
			\left(\, 0,\sqrt{\frac{2|\theta|}{3+3|\theta|}} \,\right) & \text{ if } \theta \in [0,1].
		\end{dcases}
	\end{equation}
Moreover, Conjectures \ref{con:2070} and \ref{con:2071} hold true for all $\theta\in [-1,1]$ and $2\le k \le 3$ with 
\begin{equation}\label{eq:P2P3-conj}
	\Wmu_\star(\theta,\mathbb{P}^k) = \phi^\star (\theta,\mathbb{P}^k_{+})= \phi^\star(\theta,(\mathbb{P}^1)^2) =  \frac{1}{4+2|\theta|}, \qquad k=2,3. 
\end{equation}
\end{theorem}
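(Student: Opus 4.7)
The plan is to leverage the general theory developed in Section~3, especially the symmetry reductions and the optimality criteria, so that the proof reduces to a single explicit calculation plus the exhibition of one critical positive polynomial of very low degree.

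First I would reduce the problem via two structural lemmas. By \Cref{thm:sym_theta} together with \Cref{cor:1575}, it suffices to handle $\theta\in[-1,0]$ (so $|\theta|=-\theta$), since the $\theta\in[0,1]$ case then follows by swapping the roles of $x$ and $y$ (which matches the two branches of $(x^{(1)},y^{(1)})$ in \eqref{eq:2858}). Next, by \Cref{lem:2k2kp1}, any symmetric OCAD for $\mathbb{P}^2$ is automatically a symmetric OCAD for $\mathbb{P}^3$ with the same optimal weight, so I only need to establish \eqref{eq:2849}--\eqref{eq:P2P3-conj} for $k=2$ and $\theta\in[-1,0]$.

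Next I would verify feasibility of the proposed CAD \eqref{eq:2849} for $\mathbb{P}^2$. Positivity of the weights $\overline{\omega}_\star=\frac{1}{4-2\theta}$ and $\omega_1=\frac{1-\theta}{2-\theta}$ is immediate for $\theta\in[-1,0]$, and $x^{(1)}=\sqrt{\frac{-2\theta}{3-3\theta}}\in[0,1]$ with $y^{(1)}=0$, so $(\pm x^{(1)},0)\in\Omega$. For the decomposition identity, note $y^{(1)}=0$ makes $\overline{p(x^{(1)},y^{(1)})}=\tfrac12[p(x^{(1)},0)+p(-x^{(1)},0)]$, so all odd monomials $\{x,y,xy\}$ vanish on both sides by symmetry; only the monomials $\{1,x^2,y^2\}$ require a direct check, which is a short algebraic verification using $\langle x^2\rangle^x_\Omega=1$, $\langle x^2\rangle^y_\Omega=\tfrac13$ and $(x^{(1)})^2=\tfrac{-2\theta}{3-3\theta}$.

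Then I would invoke the Optimality Criterion \#3 (\Cref{lem:512}) with the witness polynomial
\[
q_\star(x,y) := y \ \in\ \mathbb{P}^1 = \mathbb{P}^{\lfloor k/2\rfloor},
\]
which clearly vanishes at both internal nodes $(\pm x^{(1)},0)$ since $y^{(1)}=0$. Consequently, \Cref{lem:512} immediately yields that \eqref{eq:2849} is a symmetric OCAD for $\mathbb{P}^2$, that $p^\star=q_\star^2=y^2$ is a critical positive polynomial, and that the chain of equalities in \eqref{eq:P2P3-conj} holds, confirming Conjectures~\ref{con:2070} and~\ref{con:2071} for this case (the numerical value $\overline{\omega}_\star=\frac{1}{4-2\theta}=\frac{1}{4+2|\theta|}$ then follows by computing $\phi(y^2;\theta)=\langle y^2\rangle_\Omega/[(1+\theta)\langle y^2\rangle^x_\Omega+(1-\theta)\langle y^2\rangle^y_\Omega]$). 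Finally, lifting to $\mathbb{P}^3$ via \Cref{lem:2k2kp1} and to $\theta\in[0,1]$ via \Cref{lem:1525}/\Cref{cor:1575} completes the theorem.

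There is essentially no deep obstacle once \Cref{lem:512} is available: the whole theorem hinges on the lucky circumstance that, in degree $\le 3$, a single linear polynomial $y$ (or $x$, when $\theta\ge 0$) can be made to vanish at the full orbit of internal nodes. The only mildly delicate point is bookkeeping---keeping the signs straight between $\theta$ and $|\theta|$ across the reduction $\theta\in[-1,0]\leadsto[-1,1]$, and matching the two cases of $(x^{(1)},y^{(1)})$ in \eqref{eq:2858} with the two choices of critical polynomial ($y$ versus $x$) produced by \Cref{lem:1525}.
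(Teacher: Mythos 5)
Your proposal is correct and follows essentially the same route as the paper: verify feasibility on the $\mathscr{G}_s$-invariant subspace $\operatorname{span}\{1,x^2,y^2\}$ via \Cref{lem:Gs-invarint}, then apply Optimality Criterion \#3 (\Cref{lem:512}) with the witness $q_\star = y$ (i.e.\ $p^\star = y^2$) for $\theta\in[-1,0]$, and transfer to $\theta\in[0,1]$ by \Cref{thm:sym_theta}. The only cosmetic difference is that you route the $\mathbb{P}^3$ case through \Cref{lem:2k2kp1}, while the paper treats $\mathbb{P}^2$ and $\mathbb{P}^3$ simultaneously since their invariant subspaces coincide.
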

\begin{proof}
	It is easy to verify that the symmetric CAD \eqref{eq:2849} is feasible 
	for the ${\mathscr G}_s$-invariant subspace $\mathbb{P}^2({\mathscr G}_{s}) = \mathbb{P}^3({\mathscr G}_{s})={\rm span} 
	\{1,x^2,y^2\}$. Thus by \Cref{lem:Gs-invarint}, it is feasible for $\mathbb{P}^2$ and $\mathbb{P}^3$ spaces. 
	In case of $\theta \in [-1,0]$, the nonnegative polynomial $p^\star(x,y) = y^2$ belong to $(\mathbb{P}^1)^2$, thus is in both $\mathbb{P}^2_{+}$ and $\mathbb{P}^3_{+}$. Furthermore, $p^\star$ vanishes at all the internal nodes given in \eqref{eq:2858}. 
	By \Cref{lem:512}, the symmetric CAD \eqref{eq:2849} is optimal for both $\mathbb{P}^2$ and $\mathbb{P}^3$ spaces when $\theta \in [-1,0]$, and \eqref{eq:P2P3-conj} holds for all $\theta\in [-1,0]$ and $2\le k \le 3$. 
	By \Cref{thm:sym_theta}, the CAD \eqref{eq:2849} is also optimal when $\theta \in [0,1]$. The proof is completed.
\end{proof}

\begin{remark}
For comparison,	\Cref{fig:1882} illustrates our symmetric OCAD \eqref{eq:2849} and the classic CAD \eqref{eq:1471}. We clearly see 
that the OCAD \eqref{eq:2849} involves much fewer internal nodes than the classic CAD. 
It is worth mentioning that the symmetric OCAD \eqref{eq:2849} is consistent with the OCAD derived in our previous paper \cite{cui2022classic}, where only the OCAD for $\mathbb{P}^2$ and $\mathbb{P}^3$ spaces was found. 
\end{remark}


\begin{theorem}[OCAD for $\mathbb{P}^4$ and $\mathbb{P}^5$]\label{thm:3161}
	For any $\theta \in [-1,1]$, the 2D symmetric OCAD on $\Omega$ for $\mathbb P^4$ and $\mathbb P^5$ spaces is given by
	\begin{equation}\label{eq:2879}
		\langle p \rangle_{\Omega} = \overline \omega_\star 
		\left[ 
		(1+\theta) \langle p \rangle^x_{\Omega}+ 
		(1-\theta) \langle p \rangle^y_{\Omega} \right] 
		+
		\omega_1 \, \overline{p\left(x^{(1)},y^{(1)}\right)}
		+
		\omega_2 \, \overline{p\left(x^{(2)},y^{(2)}\right)}
	\end{equation}
	with
	\begin{subequations}\label{eq:2890}
		\begin{equation}\label{eq:2890a}
			\overline{\omega}_\star = \left[\frac{14}{3}+\frac{2}{3}\sqrt{78\,\theta^2+46} \cos \left( \frac{1}{3} \arccos\frac{1476\,\theta^2-244}{(78\,\theta^2+46)^{\frac{3}{2}}} \right) \right]^{-1},
		\end{equation}
		\begin{equation}\label{eq:2890b}
			\omega_1 = \frac{5(1-4\overline{\omega}_\star+2|\theta|\overline{\omega}_\star)^2}{9\,(1-6\overline{\omega}_\star+4|\theta|\overline{\omega}_\star)}, \quad
			\omega_2 = 1-2\overline{\omega}_\star-\omega_1,
		\end{equation}
		\begin{equation}\label{eq:2890c}
			\left(\, x^{(1)}, y^{(1)} \,\right) = 
			\begin{dcases}
				\left(\, 
				\sqrt{\frac{3\,(1-6\overline{\omega}_\star+4|\theta|\overline{\omega}_\star)}{5\,(1-4\overline{\omega}_\star+2|\theta|\overline{\omega}_\star)}}, 
				\sqrt{\frac{1-6\overline{\omega}_\star}{3\,(1-4\overline{\omega}_\star+2|\theta|\overline{\omega}_\star)}}
				\,\right) 
				& \text{ if } \theta \in [-1,0], \\
				\left(\, 
				\sqrt{\frac{1-6\overline{\omega}_\star}{3\,(1-4\overline{\omega}_\star+2|\theta|\overline{\omega}_\star)}},
				\sqrt{\frac{3\,(1-6\overline{\omega}_\star+4|\theta|\overline{\omega}_\star)}{5\,(1-4\overline{\omega}_\star+2|\theta|\overline{\omega}_\star)}}
				\,\right) & \text{ if } \theta \in [0,1],
			\end{dcases}
		\end{equation}
		\begin{equation}\label{eq:2890d}
			\left(\, x^{(2)}, y^{(2)} \,\right) = 
			\begin{dcases}
				\left(\, 
				0, 
				\sqrt{\frac{1-4\overline{\omega}_\star-2|\theta| \overline{\omega}_\star-3 \omega_1 (y^{(1)})^2}{3\,\omega_2}}
				\,\right) 
				& \text{ if } \theta \in [-1,0], \\
				\left(\,
				\sqrt{\frac{1-4\overline{\omega}_\star-2|\theta| \overline{\omega}_\star-3 \omega_1 (x^{(1)})^2}{3\,\omega_2}},
				0
				\,\right) & \text{ if } \theta \in [0,1].
			\end{dcases}
		\end{equation}
	\end{subequations}
	Moreover, Conjectures \ref{con:2070} and \ref{con:2071} hold true for all $\theta\in [-1,1]$ and $4\le k \le 5$.
\end{theorem}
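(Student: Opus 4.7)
The plan is to follow the blueprint of the proof of \Cref{thm:3129} and reduce everything to Optimality Criterion \#3, namely \Cref{lem:512}. Thanks to \Cref{thm:sym_theta} together with the explicit construction in \eqref{eq:nTheta}, it suffices to treat $\theta \in [-1,0]$: the formulas \eqref{eq:2890c}--\eqref{eq:2890d} have been written so that the $\theta \in [0,1]$ case is exactly the $x \leftrightarrow y$ swap of the $\theta \in [-1,0]$ case, which is the form predicted by that symmetry.

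First I would establish feasibility. For $k \in \{4,5\}$, the relevant invariant subspace is
\[
\mathbb{P}^4(\mathscr{G}_s) = \mathbb{P}^5(\mathscr{G}_s) = \operatorname{span}\{1, x^2, y^2, x^4, x^2 y^2, y^4\},
\]
which is six-dimensional, so by \Cref{lem:Gs-invarint} it suffices to check that the CAD \eqref{eq:2879} is exact on these six basis monomials. Substituting each monomial into both sides produces six polynomial identities in the parameters $(\overline{\omega}_\star, \omega_1, \omega_2, x^{(1)}, y^{(1)}, x^{(2)}, y^{(2)})$. The closed-form expressions \eqref{eq:2890} are designed precisely so that all six identities hold, and the verification is a direct algebraic substitution.

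Next, I would exhibit the critical positive polynomial. For $\theta \in [-1,0]$ each internal node belongs to one of two $\mathscr{G}_s$-orbits, lying on $x \in \{\pm x^{(1)}, 0\}$ and $y \in \{\pm y^{(1)}, \pm y^{(2)}\}$. Any polynomial in the three-dimensional subspace $\operatorname{span}\{1, x^2, y^2\} \subset \mathbb{P}^2$ vanishes on every orbit iff it vanishes at $(x^{(1)}, y^{(1)})$ and at $(0, y^{(2)})$, which gives two linear conditions on three coefficients. A nonzero $q_\star \in \mathbb{P}^2$ with this property therefore always exists, and $q_\star^2 \in (\mathbb{P}^2)^2 \subset \mathbb{P}^4_+$ vanishes at every internal node of \eqref{eq:2879}. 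Applying \Cref{lem:512} with this $q_\star$ then yields simultaneously that \eqref{eq:2879} is a symmetric OCAD for $\mathbb{P}^k$ with $k \in \{4,5\}$, that $\Wmu_\star(\theta, \mathbb{P}^k) = \phi^\star(\theta, \mathbb{P}^k_+) = \phi^\star(\theta, (\mathbb{P}^2)^2) = \phi(q_\star^2;\theta)$, and that \Cref{con:2070} and \Cref{con:2071} both hold for $\theta \in [-1,0]$. The extension to $\theta \in [0,1]$ then follows from \Cref{thm:sym_theta} and \eqref{eq:nTheta}, which carry both the OCAD and the conjectures across.

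The main obstacle is not this logical chain but the \emph{derivation} of the formulas \eqref{eq:2890}, i.e.\ guessing the correct OCAD in the first place. Guided by the geometry of \Cref{thm:Bw}, optimality forces the critical polynomial to vanish at every internal node, so one must impose simultaneously the six feasibility equations of the first step and the requirement that a nonzero $q_\star \in \operatorname{span}\{1,x^2,y^2\}$ vanishes on both orbits of internal nodes. Eliminating the auxiliary unknowns $\omega_1, \omega_2, x^{(i)}, y^{(i)}$ collapses the whole system to a single cubic equation in $\overline{\omega}_\star$, whose admissible real root takes precisely the trigonometric Cardano form displayed in \eqref{eq:2890a}. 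Once this guess is in hand, every remaining step in the proof reduces to routine algebra.
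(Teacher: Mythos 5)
Your proposal follows essentially the same route as the paper's proof in Appendix A: reduce to $\theta\in[-1,0]$ via \Cref{thm:sym_theta}, verify exactness on the six-dimensional invariant subspace $\operatorname{span}\{1,x^2,y^2,x^4,x^2y^2,y^4\}$ using \Cref{lem:Gs-invarint} (with the sixth equation collapsing to the cubic whose Cardano root is \eqref{eq:2890a}), and then invoke Optimality Criterion \#3 with a $q_\star\in\operatorname{span}\{1,x^2,y^2\}$ vanishing at the two internal orbits — your dimension-count for the existence of $q_\star$ is a clean substitute for the paper's explicit formula $q_\star=((y^{(2)})^2-(y^{(1)})^2)x^2+(x^{(1)})^2y^2-(x^{(1)})^2(y^{(2)})^2$. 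The only step you omit is the verification of feasibility conditions (ii) and (iii) — positivity of $\overline\omega_\star,\omega_1,\omega_2$ and membership of $(x^{(s)},y^{(s)})$ in $[0,1]^2$ — which is needed before \Cref{lem:512} applies; the paper discharges this by inspection of the plotted weights and nodes (\Cref{fig:2417}), so you should at least record that check.
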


	For better readability, we put the proof of \Cref{thm:3161} in \Cref{sec:A0}. The weights and nodes \eqref{eq:2890a}--\eqref{eq:2890d} are illustrated in \Cref{fig:2417}, which clearly verifies the feasibility conditions (ii) and (iii) in \Cref{def:2D_FCAD}.

\begin{figure}[h!]
	\centering
	\includegraphics[width=0.7\textwidth]{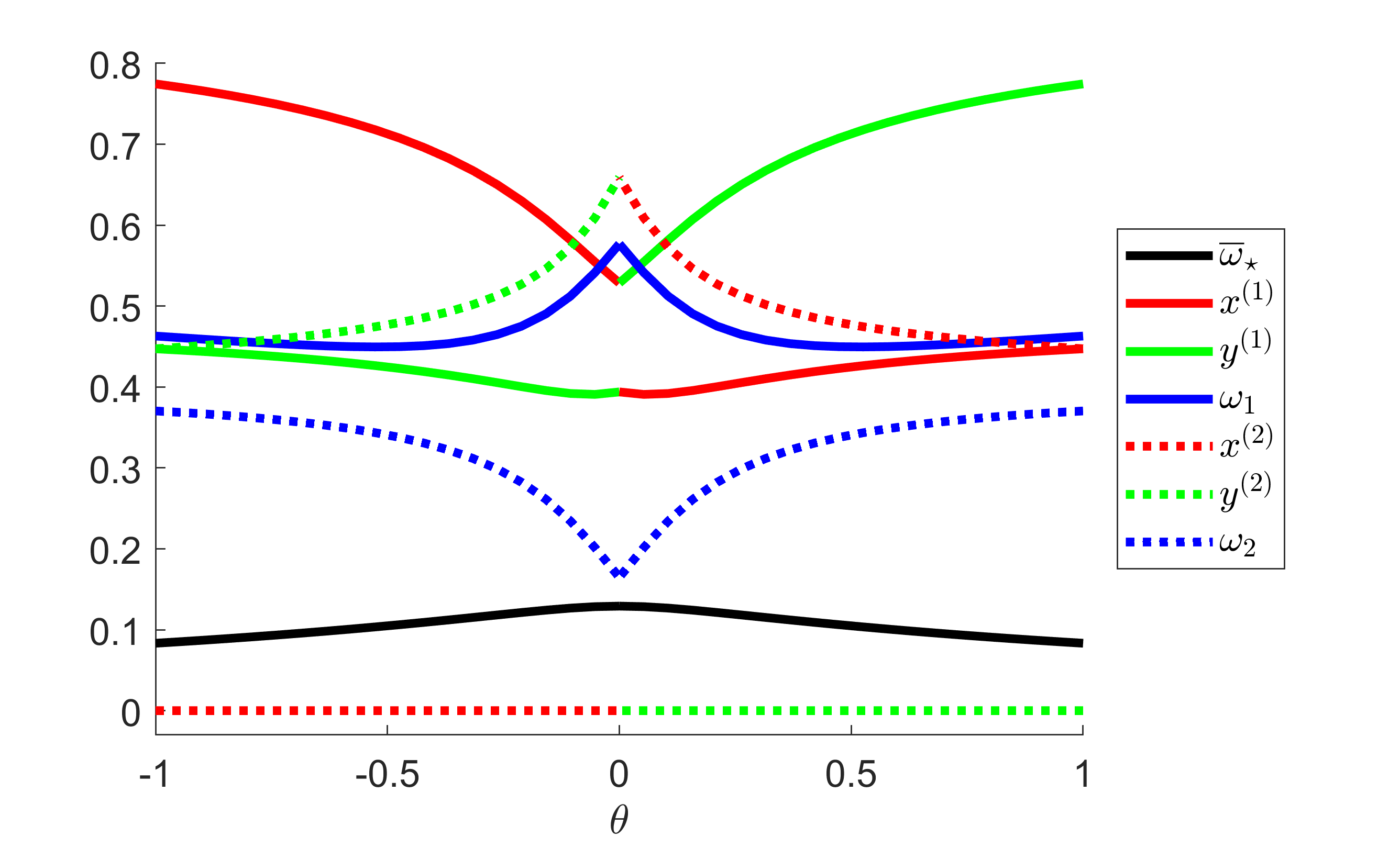}
	\caption{The boundary weight $\Wmu_\star$,  internal weights $\{\omega_s\}$ and internal nodes $(x^{(s)},y^{(s)})$ of the symmetric OCAD \eqref{eq:2879} for $\theta \in [-1,1]$. }\label{fig:2417}
\end{figure}

\begin{remark}
	For comparison,	\Cref{fig:1882} illustrates our symmetric OCAD \eqref{eq:2879} and the classic CAD \eqref{eq:1471}. We clearly see 
	that the OCAD \eqref{eq:2879} involves much fewer internal nodes than the classic CAD. 
\end{remark}

We have also found out the 2D symmetric OCAD \eqref{eq:3724} for $\mathbb P^6$ and $\mathbb P^7$ spaces, whose discovery is highly nontrivial; see \Cref{thm:3650}. 
	
	\begin{theorem}\label{thm:3650}
		For any $\theta \in [-1,1]$, the 2D symmetric OCAD on $\Omega$ for $\mathbb P^6$ and $\mathbb P^7$ spaces is given by
		\begin{equation}\label{eq:3724}
		\langle p \rangle_{\Omega} = \overline \omega_\star 
		\left[  
		(1+\theta) \langle p \rangle^x_{\Omega}+	
		(1-\theta) \langle p \rangle^y_{\Omega} \right] 
		+
		\sum_{s = 1}^4\omega_s \, \overline{p\left(x^{(s)},y^{(s)}\right)}.
		\end{equation}
		The boundary weight is given by
		\begin{equation}\label{eq:3733}
		\overline{\omega}_\star = \left[2|\theta|+\frac{20}{3}+\frac{2}{3}\sqrt{126\,\theta^2+96|\theta|+94}\cos\left(\frac{1}{3}\arccos\frac{864|\theta|^3+2916\,\theta^2+288|\theta|-532}{(126\,\theta^2+96|\theta|+94)^{\frac{3}{2}}}\right)\right]^{-1}.
		\end{equation}
		The coordinates and the weights of internal nodes \# 1 and \# 2 are given by
		\begin{equation}\label{eq:3737}
		\left(x^{(1)}, y^{(1)}, \omega_1 \right) = 
		\begin{dcases}
		\left(
		\sqrt{\frac{m_{22}-\sqrt{\frac{(1-\beta_1)\beta_2}{\beta_1}} }{m_{02}}},
		\sqrt{\frac{m_{04}+\sqrt{\frac{(1-\beta_1)\beta_3}{\beta_1}} }{m_{02}}},
		\frac{\beta_1 m_{02}}{ ( y^{(1)} )^2}
		\right), & \textrm{if } \theta \in [-1,0], \\
		\left(
		\sqrt{\frac{m_{40}+\sqrt{\frac{(1-\beta_1)\beta_2}{\beta_1}} }{m_{20}}},
		\sqrt{\frac{m_{22}-\sqrt{\frac{(1-\beta_1)\beta_3}{\beta_1}} }{m_{20}}},
		\frac{\beta_1 m_{20}}{ ( x^{(1)} )^2}
		\right), & \textrm{if } \theta \in [ 0,1],
		\end{dcases}
		\end{equation}
		\begin{equation}\label{eq:3752}
		( x^{(2)}, y^{(2)}, \omega_2) = 
		\begin{dcases}
		\left(
		\sqrt{\frac{m_{22}+\sqrt{\frac{\beta_1\beta_2}{1-\beta_1}} }{m_{02}}},
		\sqrt{\frac{m_{04}-\sqrt{\frac{\beta_1\beta_3}{1-\beta_1}} }{m_{02}}},
		\frac{(1-\beta_1) m_{02}}{ ( y^{(2)} )^2}
		\right), & \textrm{if } \theta \in [-1,0], \\
		\left(
		\sqrt{\frac{m_{40}-\sqrt{\frac{\beta_1\beta_2}{1-\beta_1}} }{m_{20}}},
		\sqrt{\frac{m_{22}+\sqrt{\frac{\beta_1\beta_3}{1-\beta_1}} }{m_{20}}},
		\frac{(1-\beta_1) m_{20}}{ ( x^{(2)} ) ^2}
		\right), & \textrm{if } \theta \in [ 0,1]
		\end{dcases}
		\end{equation}
		with
		\begin{equation}\label{eq:3768}
			(\beta_1,\beta_2,\beta_3) = 
			\begin{dcases}
			\left(
			1-\frac{m_{22}^2}{m_{42} m_{02}} + \frac{\sqrt{30}+2}{36} \theta^2, 
			m_{42}m_{02}-m_{22}^2,
			m_{06}m_{02}-m_{04}^2
			\right)
			& \textrm{ if } \; \theta \in [-1,0], \\
			\left(
			1-\frac{m_{22}^2}{m_{24} m_{02}} + \frac{\sqrt{30}+2}{36} \theta^2, 
			m_{60}m_{20}-m_{40}^2,
			m_{24}m_{20}-m_{22}^2
			\right)
			& \textrm{ if } \; \theta \in [ 0,1],
			\end{dcases}
		\end{equation}
		and
		\[
			m_{ij} = \frac{1}{(i+1)(j+1)} - \overline{\omega}_\star 
			\left[\frac{1+\theta}{1+j} + \frac{1-\theta}{1+i} \right], \quad i,j = 0,2,4,6, \; i+j \le 6.
		\]
		The weights and the coordinates of internal nodes \# 3 and \# 4 are given by
		\begin{equation}\label{eq:3791}
			\omega_3 = \frac{m_0}{2}\left(1+\frac{\beta_4}{\sqrt{\beta_4^2+4}}\right),
			\quad
			\omega_4 = \frac{m_0}{2}\left(1-\frac{\beta_4}{\sqrt{\beta_4^2+4}}\right),
		\end{equation}
		\begin{equation}\label{eq:3796}
			(x_3,y_3,x_4,y_4) = 
			\begin{dcases}
				\left( 
				\sqrt{\frac{m_2}{m_0}-\sqrt{\frac{\omega_4}{\omega_3}} \frac{\sqrt{m_4 m_0-m_2^2}}{m_0}},
				0,
				\sqrt{\frac{m_2}{m_0}+\sqrt{\frac{\omega_3}{\omega_4}} \frac{\sqrt{m_4 m_0-m_2^2}}{m_0}},
				0
				\right)
				& \textrm{ if } \; \theta \in [-1,0],
				\\
				\left( 
				0,
				\sqrt{\frac{m_2}{m_0}-\sqrt{\frac{\omega_4}{\omega_3}} \frac{\sqrt{m_4 m_0-m_2^2}}{m_0}},
				0,
				\sqrt{\frac{m_2}{m_0}+\sqrt{\frac{\omega_3}{\omega_4}} \frac{\sqrt{m_4 m_0-m_2^2}}{m_0}}
				\right)
				& \textrm{ if } \; \theta \in [0,1]
			\end{dcases}
		\end{equation}
		with	
\begin{align*}
				\beta_4 &= \frac{m_6 m_0^2 - 3 m_4 m_2 m_0 + 2 m_2^3}{(m_4 m_0 - m_2^2)^{3/2}}, 
			\\
			m_k &= 
			\begin{dcases}
			m_{k0}-\omega_1 \left( x^{(1)} \right)^k-\omega_2  \left( x^{(2)} \right)^k, & \textrm{ if } \; \theta \in [-1,0], \\
			m_{0k}-\omega_1 \left( y^{(1)} \right)^k-\omega_2 \left( y^{(2)} \right)^k, & \textrm{ if } \; \theta \in [0 ,1],
			\end{dcases}  
			\quad k = 0,2,4,6.
\end{align*}
		Moreover, Conjectures \ref{con:2070} and \ref{con:2071} hold true for all $\theta\in [-1,1]$ and $6\le k \le 7$.
	\end{theorem}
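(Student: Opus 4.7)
The plan is to follow the same two-step strategy used to prove Theorem \ref{thm:3161} for $\mathbb{P}^4$ and $\mathbb{P}^5$. First I would verify that the symmetric CAD \eqref{eq:3724}, with the explicit data \eqref{eq:3733}--\eqref{eq:3796}, is feasible on the ${\mathscr G}_s$-invariant subspace $\mathbb{P}^6({\mathscr G}_s) = \mathbb{P}^7({\mathscr G}_s) = \operatorname{span}\{1, x^2, y^2, x^4, x^2 y^2, y^4, x^6, x^4 y^2, x^2 y^4, y^6\}$, so that feasibility on all of $\mathbb{P}^6$ and $\mathbb{P}^7$ follows automatically from Lemma \ref{lem:Gs-invarint}. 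Second, I would invoke Optimality Criterion \#3 (Lemma \ref{lem:512}) by exhibiting a polynomial $q_\star \in \mathbb{P}^3\setminus\{0\}$ that vanishes at every internal node of \eqref{eq:3724}; then $p^\star := q_\star^2 \in \mathbb{P}^6_+$ is a critical positive polynomial, which simultaneously establishes optimality and Conjectures \ref{con:2070} and \ref{con:2071} for $k=6,7$.

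For the feasibility step, by Theorem \ref{thm:sym_theta} and Remark \ref{cor:1575} it suffices to treat $\theta\in[-1,0]$; the case $\theta\in[0,1]$ is then obtained by the $x\leftrightarrow y$ symmetry in Lemma \ref{lem:1525}. The quantities $m_{ij}$ in the theorem statement are precisely the residual moments $\langle x^i y^j\rangle_\Omega - \overline{\omega}_\star[(1+\theta)\langle x^i y^j\rangle_\Omega^x + (1-\theta)\langle x^i y^j\rangle_\Omega^y]$, so feasibility on $\mathbb{P}^6({\mathscr G}_s)$ reduces to the ten moment identities $m_{ij} = \sum_{s=1}^4 \omega_s \overline{(x^{(s)})^i (y^{(s)})^j}$ for even $i,j$ with $i+j\le 6$. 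Because nodes \#3 and \#4 lie on the $x$-axis ($y^{(s)}=0$ for $s=3,4$), these identities decouple: the four cases $j=0$ determine $(\omega_3,\omega_4,x^{(3)},x^{(4)})$ as a Chebyshev-type two-point positive quadrature for the reduced moments $m_k$ in \eqref{eq:3791}--\eqref{eq:3796}, while the six cases $j\ge 2$ involve only $(\omega_1,\omega_2,x^{(1)},y^{(1)},x^{(2)},y^{(2)})$ and, by Vieta-type manipulations, collapse to the closed-form expressions \eqref{eq:3737}--\eqref{eq:3768}. A direct (if lengthy) algebraic check confirms that the stated formulas satisfy these ten equations, and the positivity of all weights together with the inclusion $(x^{(s)},y^{(s)})\in[0,1]^2$ can be verified graphically, as was done for Figure \ref{fig:2417}.

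For the optimality step, in the case $\theta\in[-1,0]$ I would look for the critical polynomial in the form $q_\star(x,y) = y\bigl(a x^2 + b y^2 + c\bigr) \in \mathbb{P}^3$. It automatically vanishes at $(\pm x^{(3)},0)$ and $(\pm x^{(4)},0)$, and the two remaining conditions $a(x^{(s)})^2 + b(y^{(s)})^2 + c = 0$ for $s=1,2$ form a $2\times 3$ linear system in $(a,b,c)$ that always admits a non-trivial solution; by the ${\mathscr G}_s$-evenness of $ax^2 + by^2 + c$, $q_\star$ vanishes on the full ${\mathscr G}_s$-orbit of every internal node. Lemma \ref{lem:512} then yields $\Wmu_\star(\theta,\mathbb{P}^k) = \phi^\star(\theta,\mathbb{P}^k_+) = \phi^\star(\theta,(\mathbb{P}^3)^2) = \phi(q_\star^2;\theta)$ for $k=6,7$, proving both optimality and the conjectures. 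The case $\theta\in[0,1]$ uses the mirrored ansatz $q_\star(x,y) = x(ax^2+by^2+c)$. The main obstacle is not the verification itself but the \emph{derivation} of the boundary weight \eqref{eq:3733}: the compatibility between the system \eqref{eq:3737}--\eqref{eq:3768} and the eigenvalue characterization in Theorem \ref{lem:2080} forces $\overline{\omega}_\star$ to be the smallest real root of a depressed cubic, whose trigonometric Cardano solution is precisely \eqref{eq:3733}. Matching this root to the explicit formulas for the node coordinates, and confirming that the cubic indeed governs $\phi^\star(\theta,(\mathbb{P}^3)^2)$ through the characteristic polynomial $\mathcal{F}_\theta(\phi)$, is the genuinely nontrivial part of the argument.
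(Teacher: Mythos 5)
Your proposal is correct, and the feasibility half coincides with the paper's: restrict to $\theta\in[-1,0]$, verify the moment identities on the invariant subspace (exploiting that nodes \#3 and \#4 sit on the $x$-axis so the system decouples), and check conditions (ii)--(iii) graphically. Where you genuinely diverge is the optimality step. The paper does \emph{not} use Criterion \#3 here; it uses Criterion \#4 (\Cref{lem:513}): it computes $\phi^\star(\theta,(\mathbb{P}^3)^2)$ explicitly via \Cref{lem:2080} by factoring the characteristic polynomial $\mathcal{F}_\theta(\phi)$ into the four pieces \eqref{eq:3324}, identifies (partly by plotting, \Cref{fig:3346}) which factor carries the smallest real root, solves that cubic by the trigonometric Cardano formula to recover \eqref{eq:3733}, and concludes optimality because the feasible boundary weight equals $\phi^\star(\theta,(\mathbb{P}^3)^2)$. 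Your route instead mirrors the paper's own $\mathbb{P}^4/\mathbb{P}^5$ proof (\Cref{thm:3161}): exhibit $q_\star=y(ax^2+by^2+c)\in\mathbb{P}^3$ vanishing at all internal nodes and invoke \Cref{lem:512}. This is sound --- $q_\star$ kills the axis nodes automatically, the two remaining conditions form a $2\times 3$ homogeneous linear system with a guaranteed nontrivial solution, and the parity of $q_\star^2$ handles the full $\mathscr{G}_s$-orbits; it is also consistent with the critical polynomials \eqref{eq:2017} the paper records at $\theta=0$. What your route buys is that you never touch the $10\times 10$ determinant or its factorization, and the existence of $q_\star$ is pure linear algebra. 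What the paper's route buys is an independent derivation of the closed form \eqref{eq:3733} as the smallest root of an explicit cubic; in your version that same cubic still appears, but only as the compatibility condition closing the moment system during the feasibility check (the analogue of \eqref{eq:2978}), which you correctly flag as the genuinely laborious part. Both arguments deliver the conjectures for $k=6,7$ with the same identification $\Wmu_\star=\phi^\star(\theta,\mathbb{P}^k_+)=\phi(q_\star^2;\theta)$.
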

	
	For better readability, we put the proof of \Cref{thm:3650} in \Cref{sec:A1}. The weights and nodes \eqref{eq:3733}--\eqref{eq:3796} are illustrated in \Cref{fig:3862}, which clearly verifies the feasibility conditions (ii) and (iii) in \Cref{def:2D_FCAD}.
	
	\begin{figure}
	\centerline{
		\includegraphics[width=0.24\textwidth]{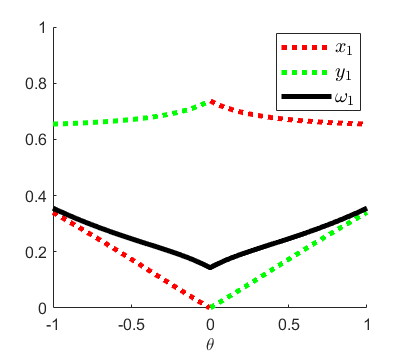}
		\includegraphics[width=0.24\textwidth]{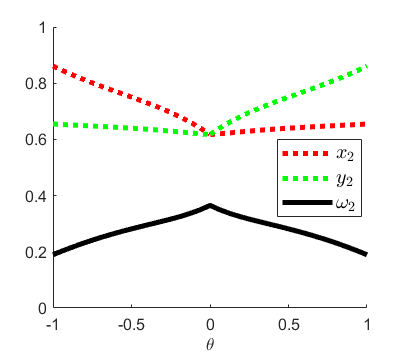}
		\includegraphics[width=0.24\textwidth]{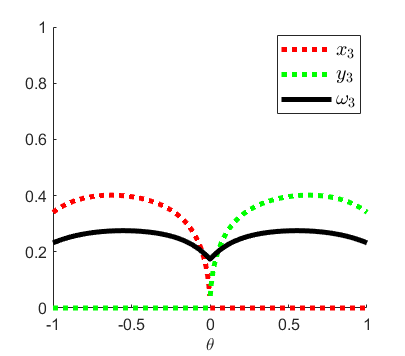}
		\includegraphics[width=0.24\textwidth]{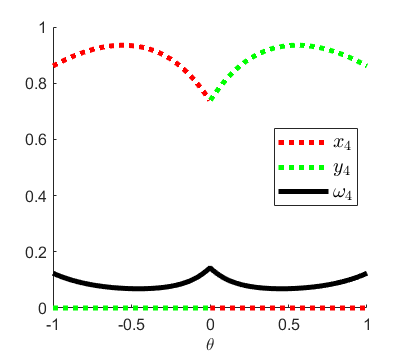}
	}
	\caption{The weights and the internal nodes of the OCAD \eqref{eq:3724} for $\theta \in [-1,1]$.}\label{fig:3862}
\end{figure}
	
	\begin{remark}
		For comparison,	\Cref{fig:1882} illustrates our symmetric OCAD \eqref{eq:3724} and the classic CAD \eqref{eq:1471}. We clearly see 
		that the OCAD \eqref{eq:3724} involves much fewer internal nodes than the classic CAD. 
	As $\theta \to 0$, the symmetric OCAD \eqref{eq:3724} converges to the fully symmetric OCAD in \eqref{eq:2004}, for which the critical positive polynomial is not unique (as it was explained in \Cref{fig:1898b} due to the non-smoothness of $\partial \mathbb B_\omega$).
	\end{remark}
	

\begin{table}
	\centering
	\caption{ Internal nodes of the OCADs (for $\theta = -1, -0.2, 0$) and the classic CAD for $\mathbb{P}^k$, $k = 2,\dots,11$.  The critical polynomials of the OCADs are also displayed.}\label{fig:1882}
	\begin{tabular}{ c @{\hspace{-1mm}} m{34mm} m{30mm} m{30mm} m{30mm} m{5mm} }
		
		\toprule[1.5pt]
		
		
		polynomial & \hspace{ 7mm} {\tt classic} CAD & \multicolumn{4}{c}{{\tt optimal} CAD } \\[0.1mm]
		\cmidrule(lr){2-2} \cmidrule(lr){3-6}
		space & \hspace{11mm} $\theta \in [-1,1] $ & \hspace{13mm} $\theta = -1$ & \hspace{13mm} $\theta = -0.2$ & \hspace{13mm} $\theta = 0$ & \\
		
		\midrule[1.5pt]
		
		$\mathbb{P}^2$ or $\mathbb{P}^3$ &
		\includegraphics[scale=0.28,trim=7 5 70 0,clip]{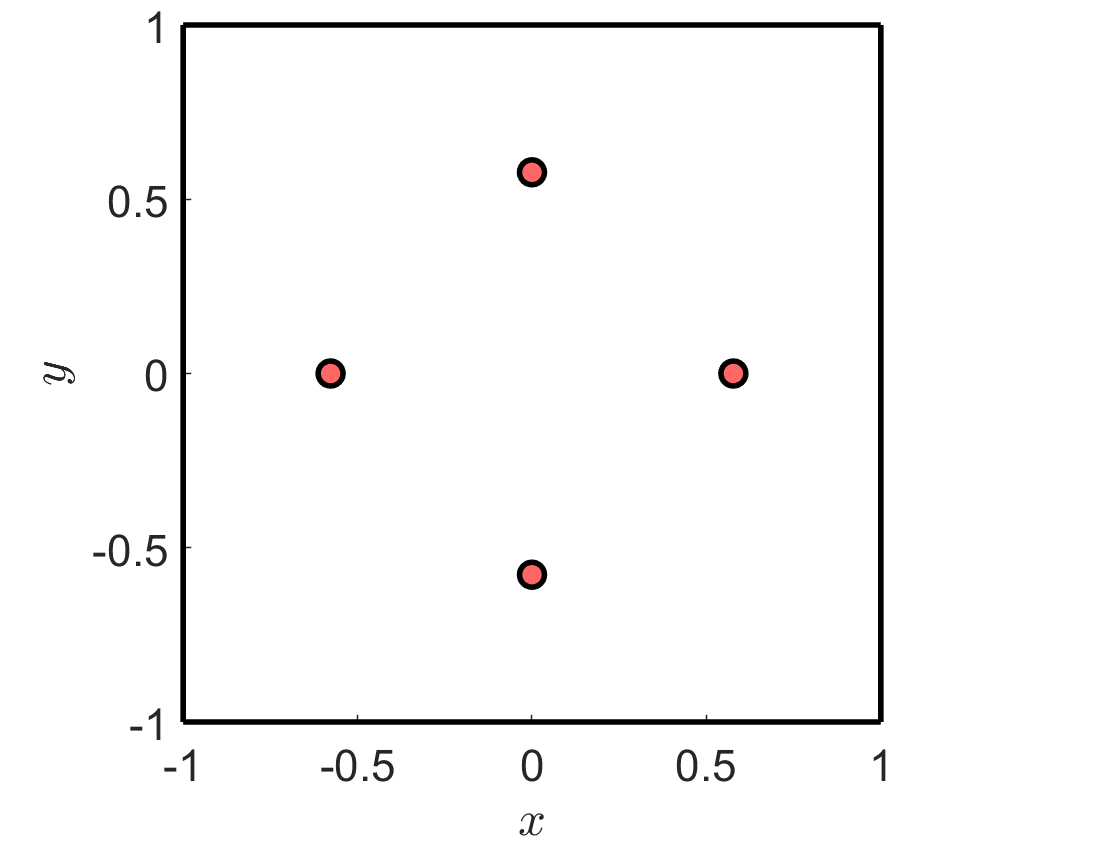} &
		\includegraphics[scale=0.28,trim=7 5 70 0,clip]{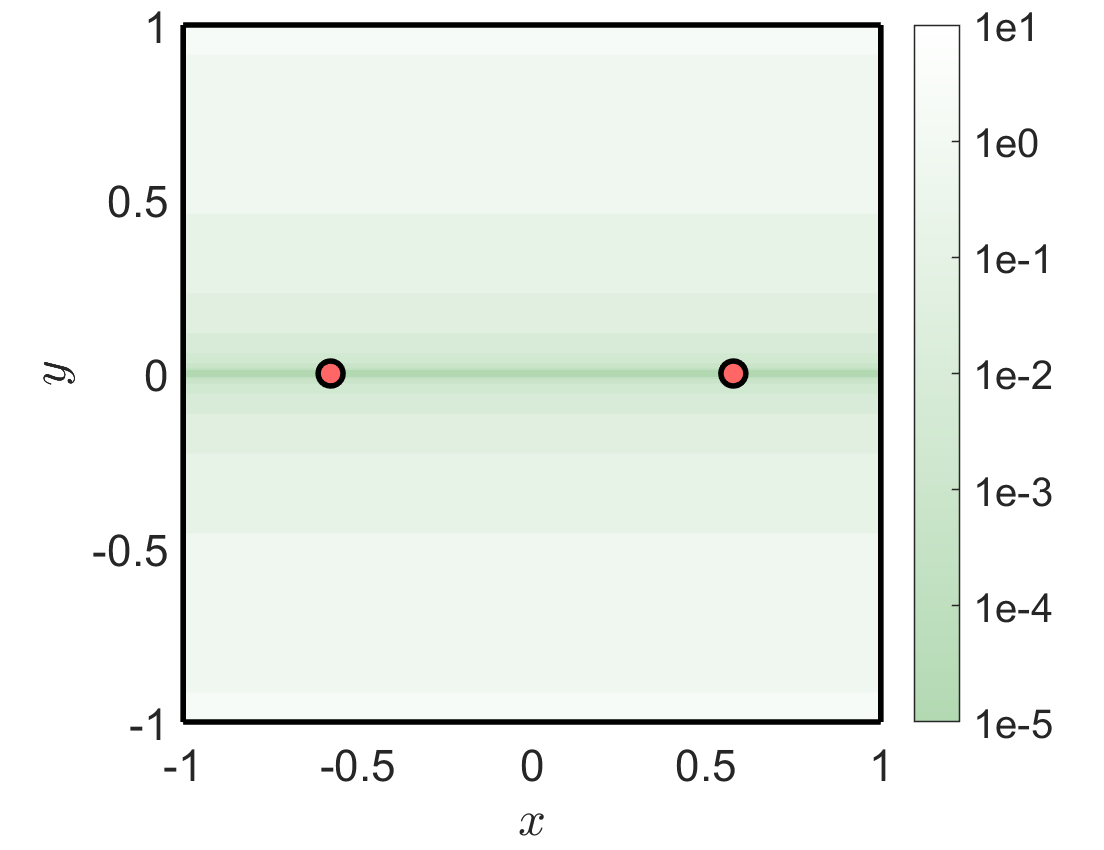} &
		\includegraphics[scale=0.28,trim=7 5 70 0,clip]{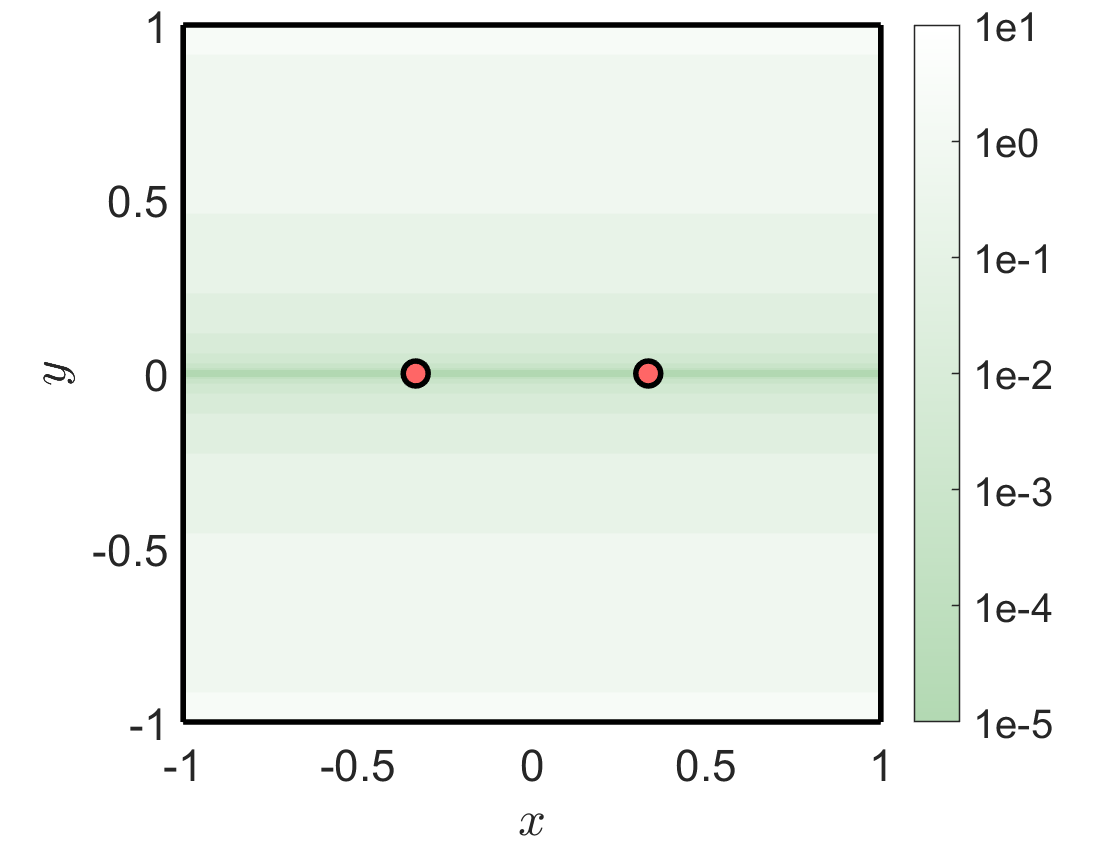} &
		\includegraphics[scale=0.28,trim=7 5 70 0,clip]{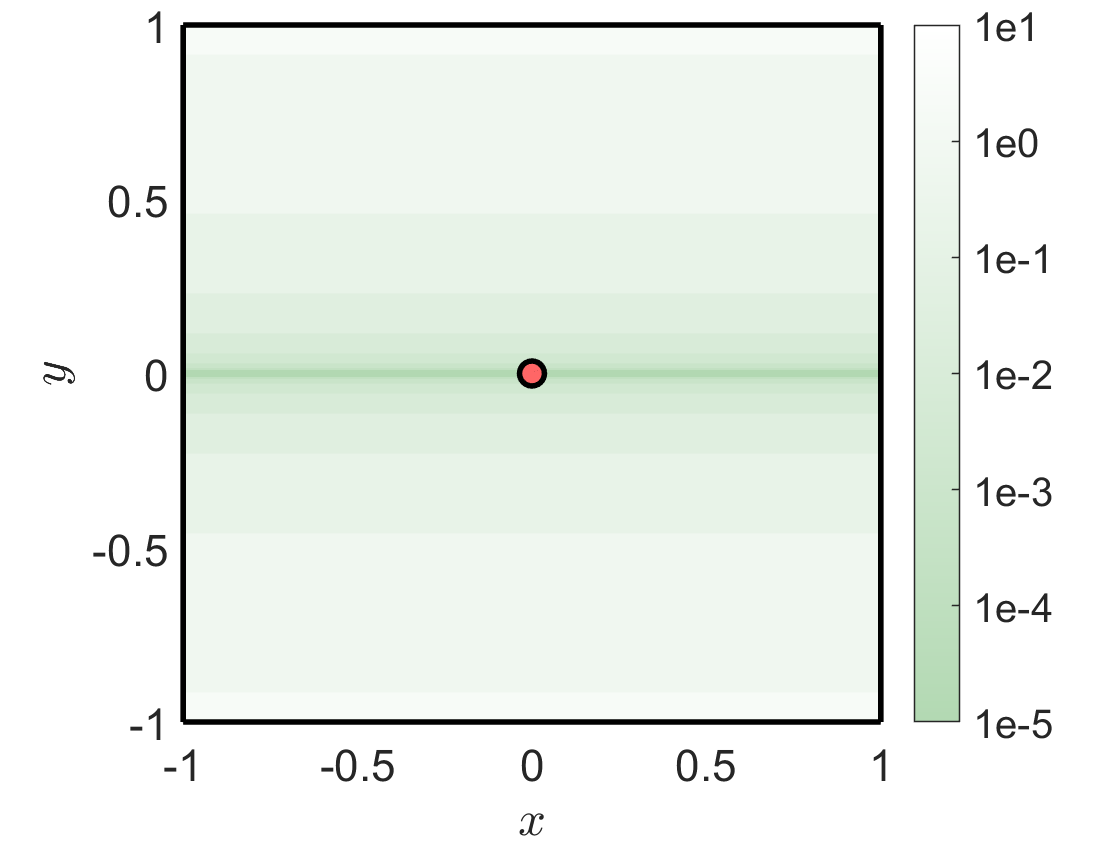} &
		\includegraphics[scale=0.28,trim=323 5 0 0,clip]{fig/C2_Pk/P2/movie_node_2/00000000.png} \\
		
		$\mathbb{P}^4$ or $\mathbb{P}^5$ &
		\includegraphics[scale=0.28,trim=7 5 70 0,clip]{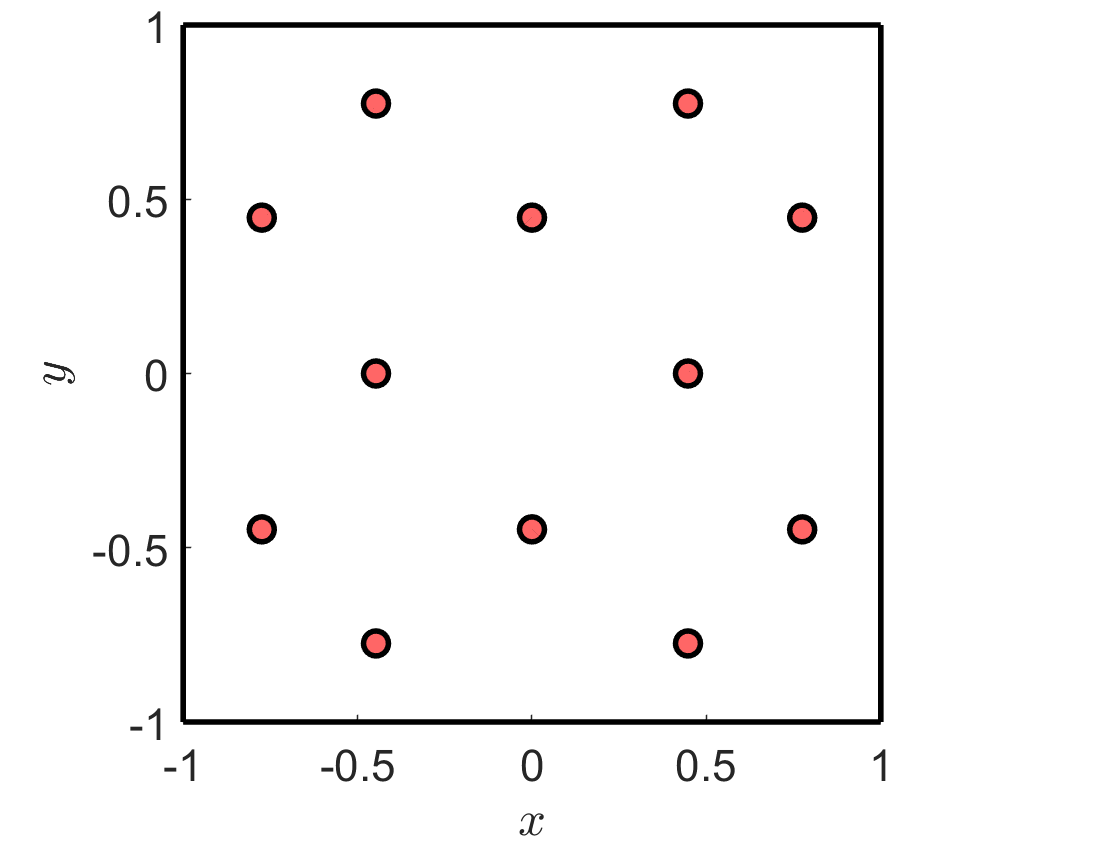} &
		\includegraphics[scale=0.28,trim=7 5 70 0,clip]{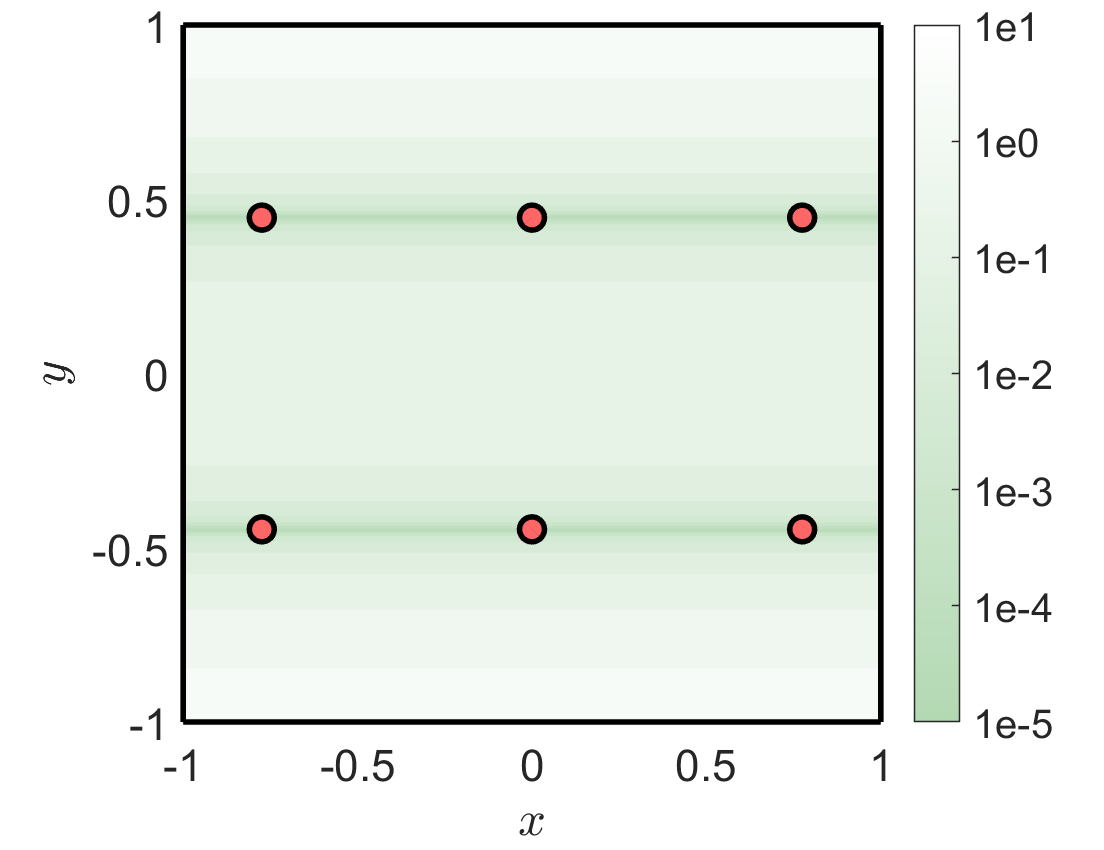} &
		\includegraphics[scale=0.28,trim=7 5 70 0,clip]{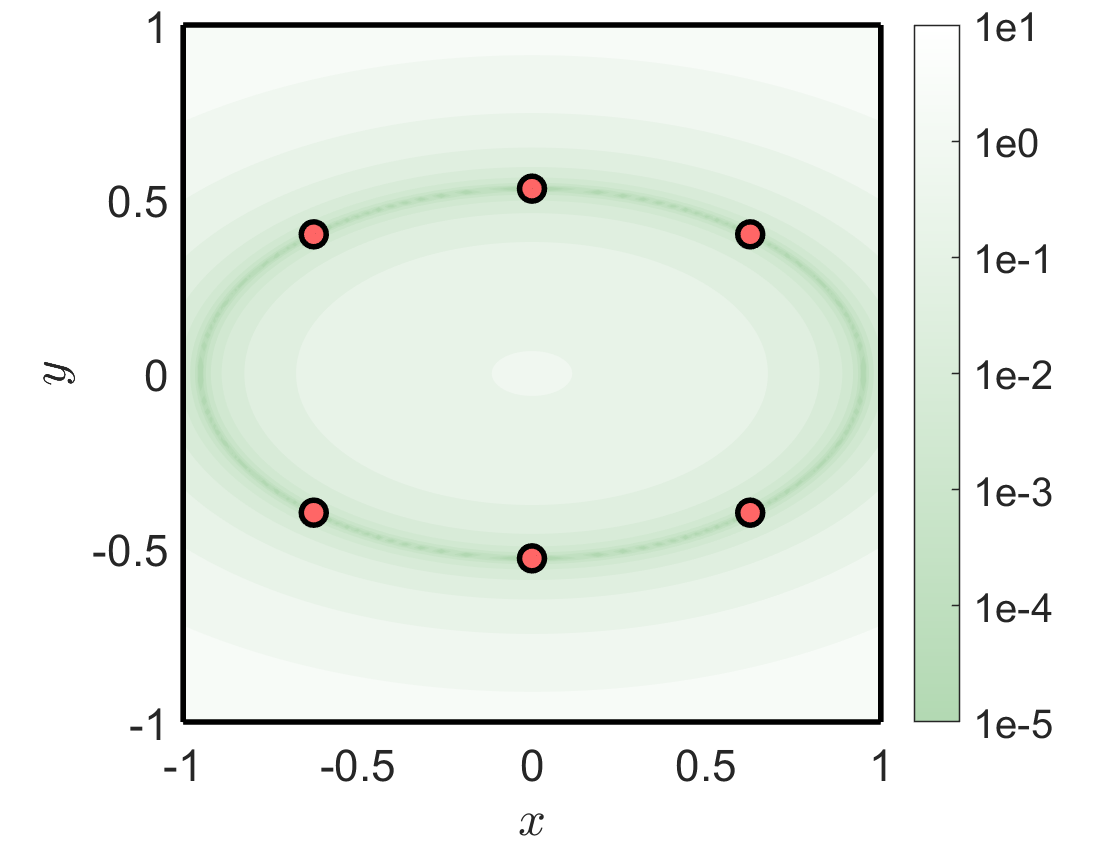} &
		\includegraphics[scale=0.28,trim=7 5 70 0,clip]{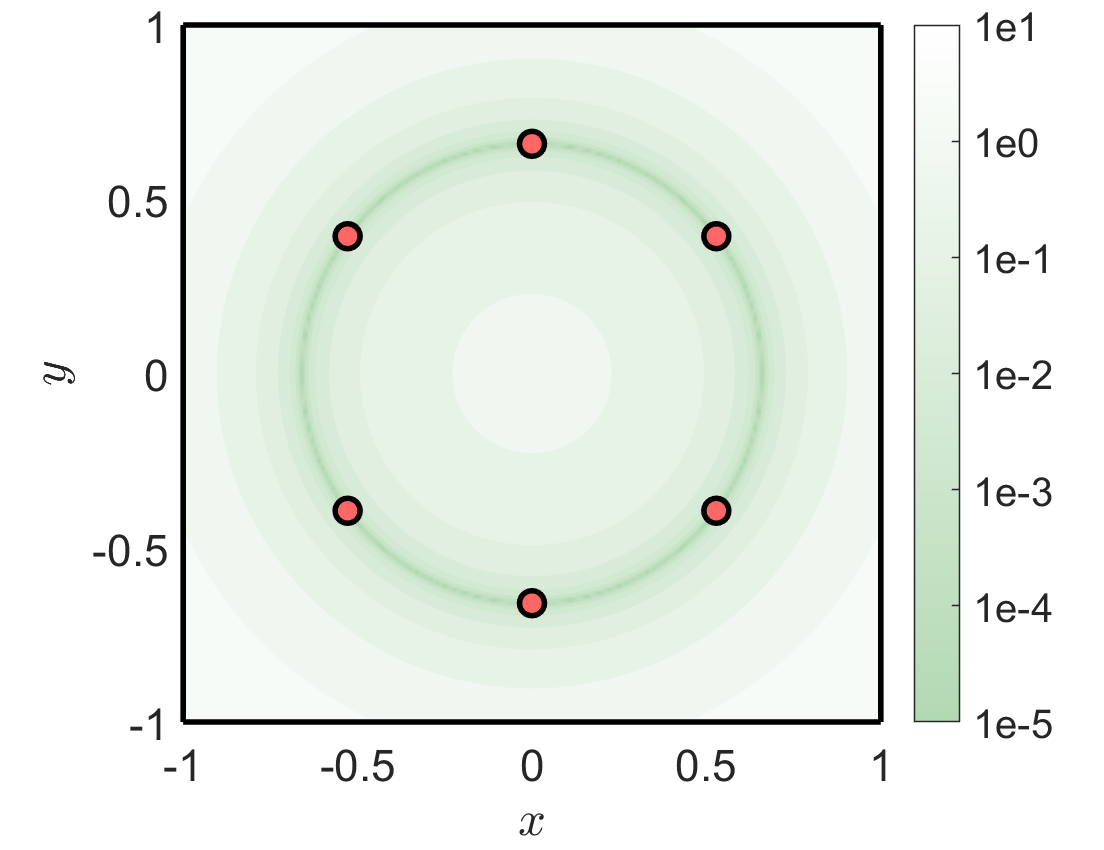} &
		\includegraphics[scale=0.28,trim=323 5 0 0,clip]{fig/C2_Pk/P4/movie_node_3/000000.png} \\
		
		$\mathbb{P}^6$ or $\mathbb{P}^7$ &
		\includegraphics[scale=0.28,trim=7 5 70 0,clip]{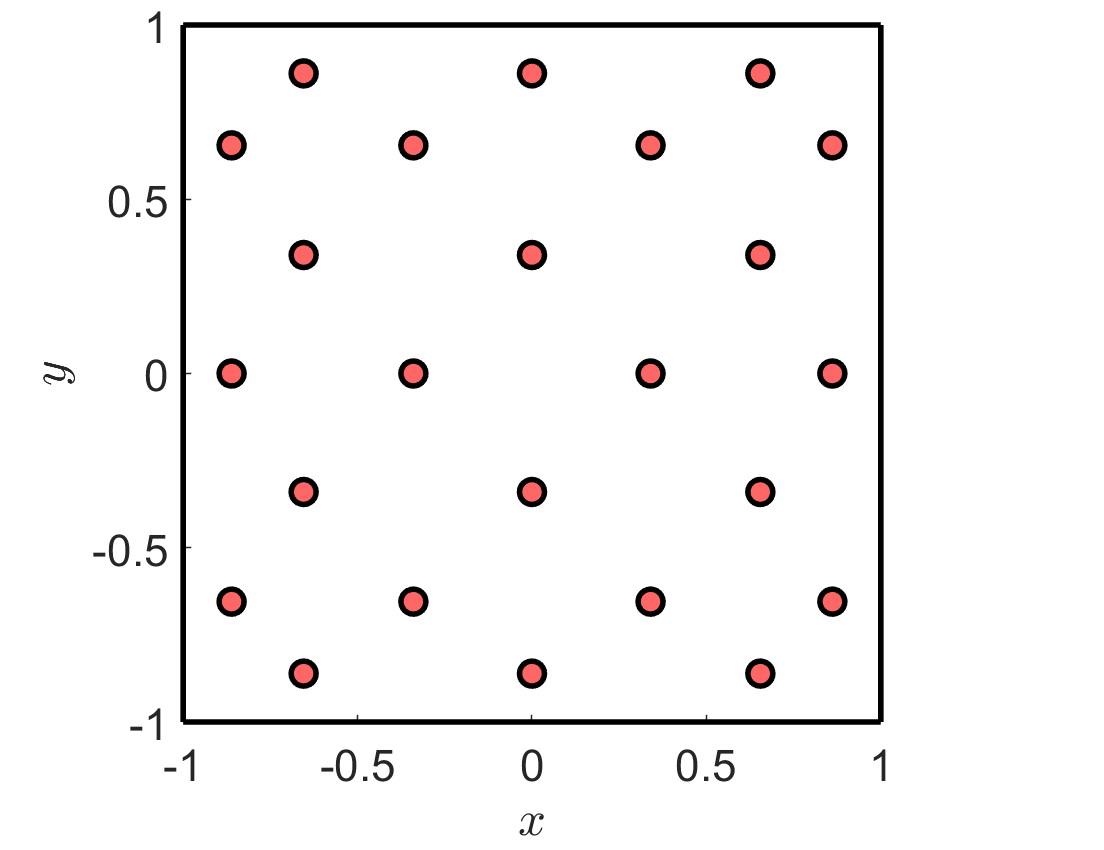} &
		\includegraphics[scale=0.28,trim=7 5 70 0,clip]{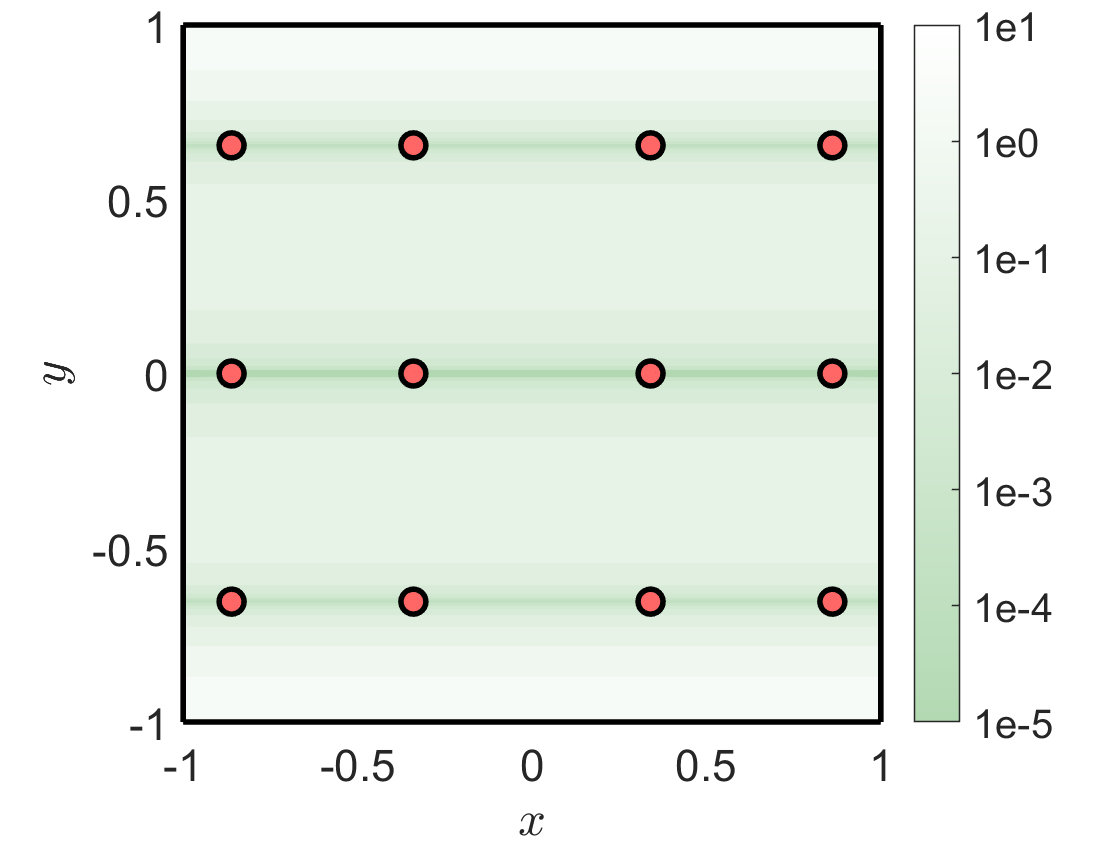} &
		\includegraphics[scale=0.28,trim=7 5 70 0,clip]{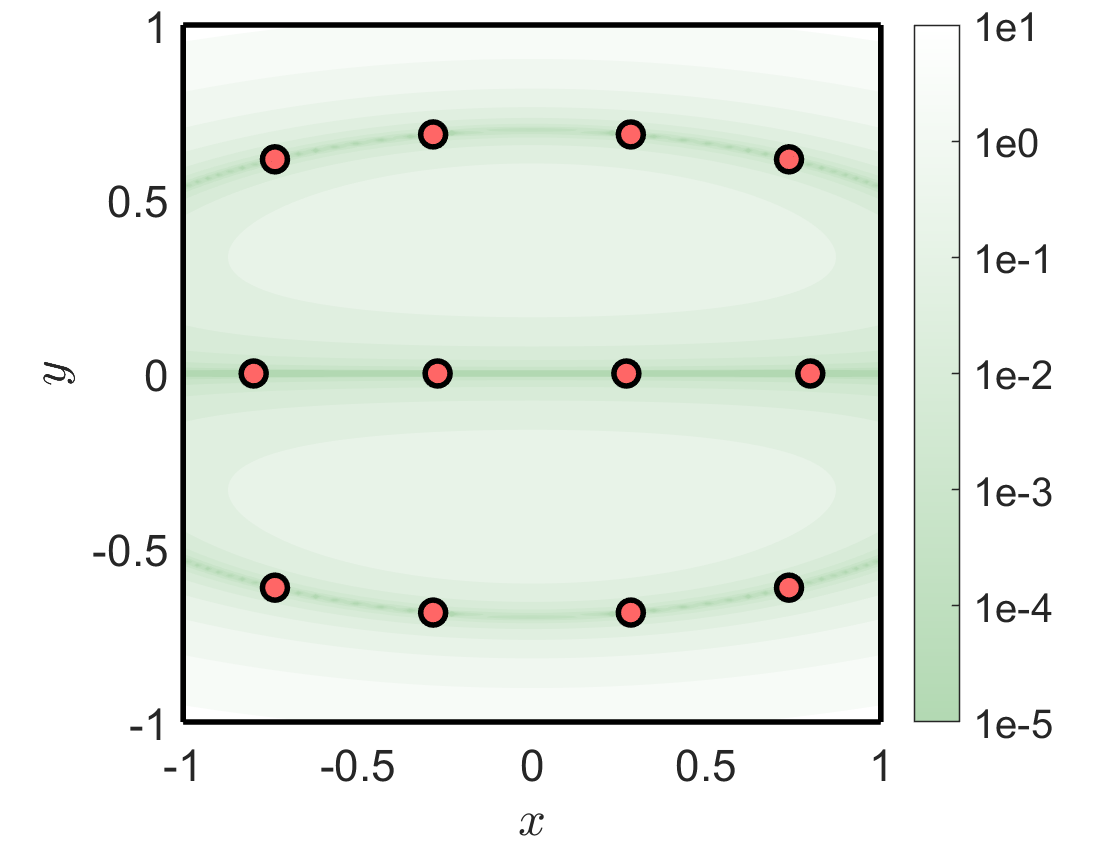} &
		\includegraphics[scale=0.28,trim=7 5 70 0,clip]{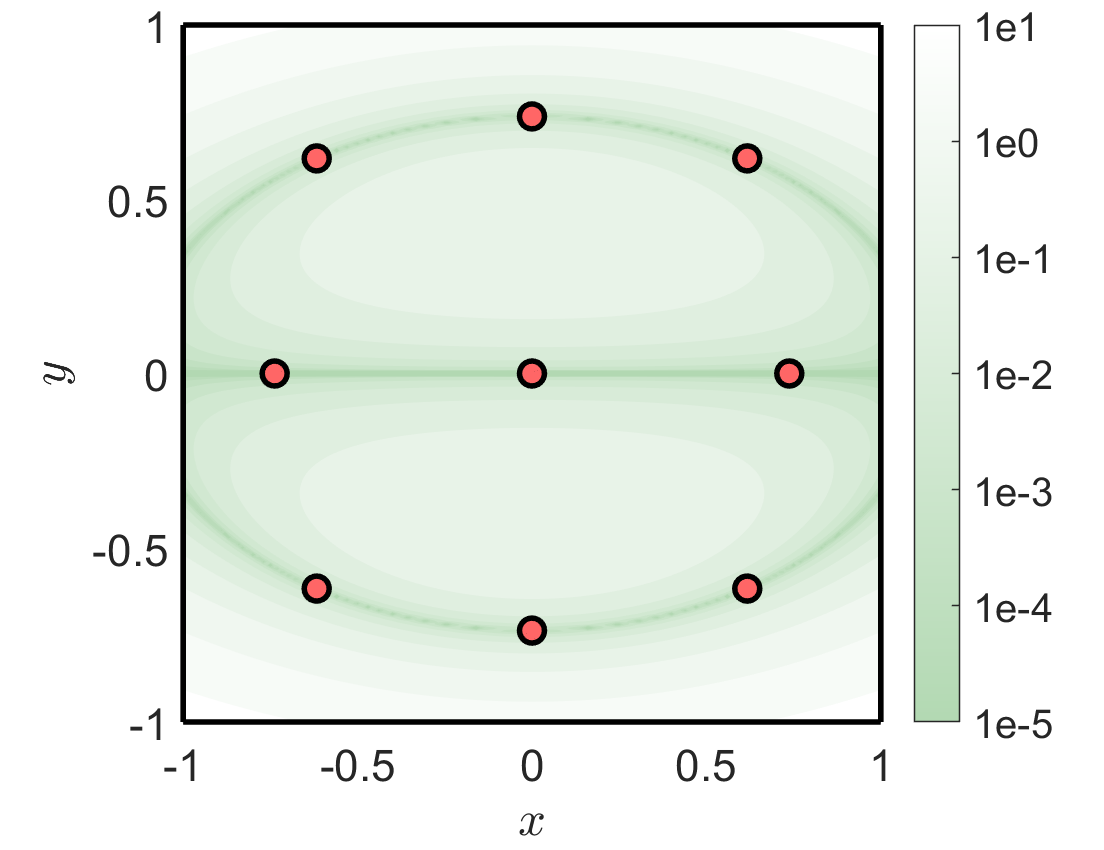} &
		\includegraphics[scale=0.28,trim=323 5 0 0,clip]{fig/C2_Pk/P6/movie_node_44/000000.png} \\
		
		$\mathbb{P}^8$ or $\mathbb{P}^9$ &
		\includegraphics[scale=0.28,trim=7 5 70 0,clip]{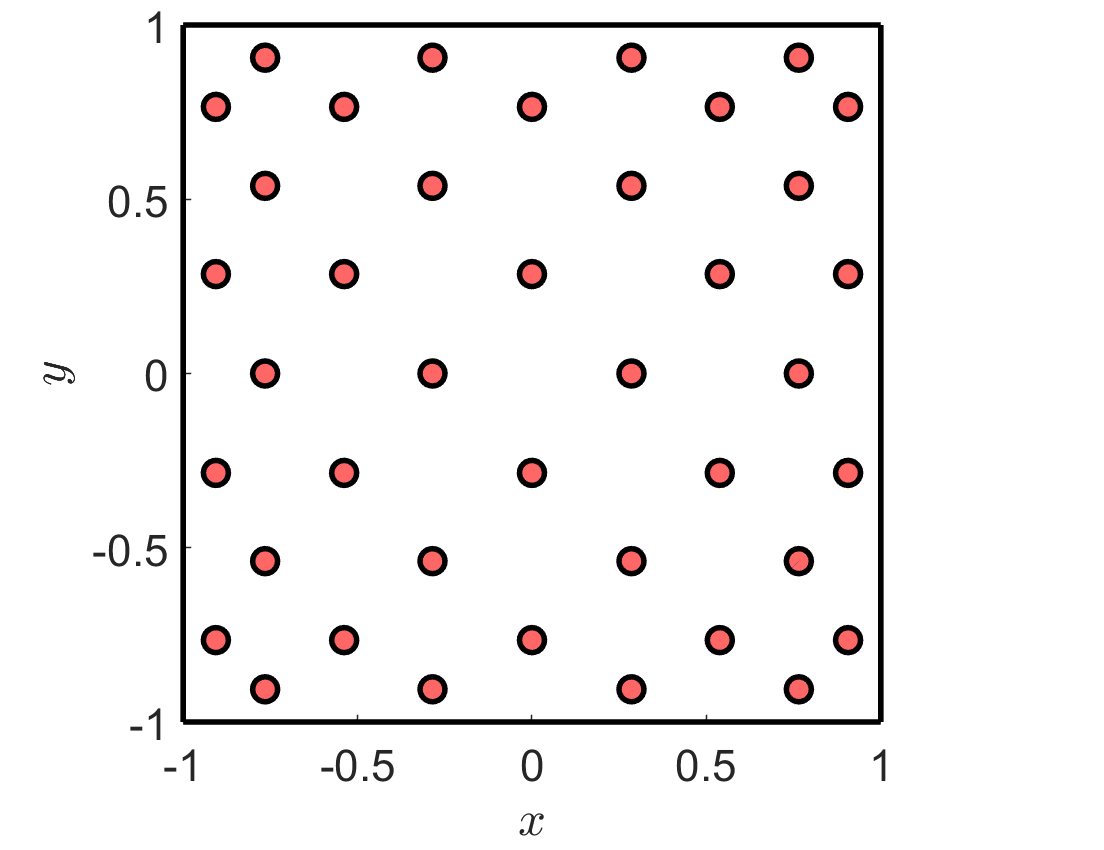} &
		\includegraphics[scale=0.28,trim=7 5 70 0,clip]{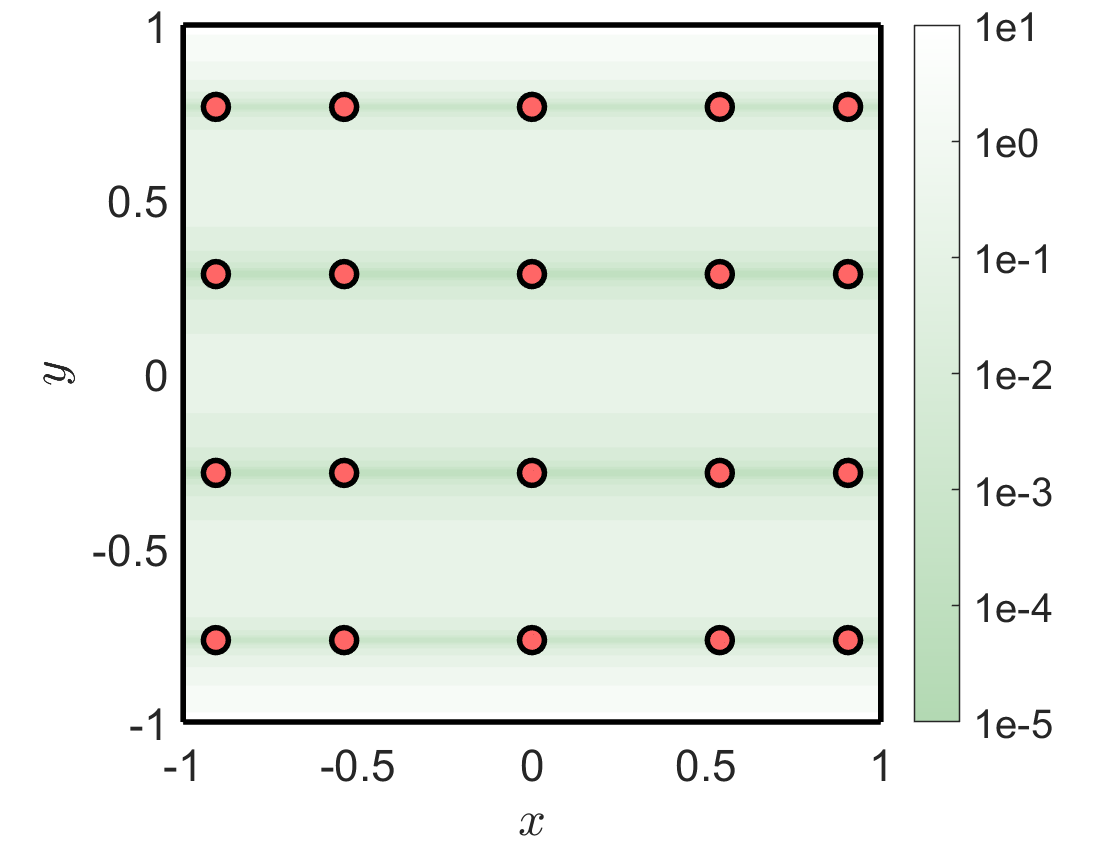} &
		\includegraphics[scale=0.28,trim=7 5 70 0,clip]{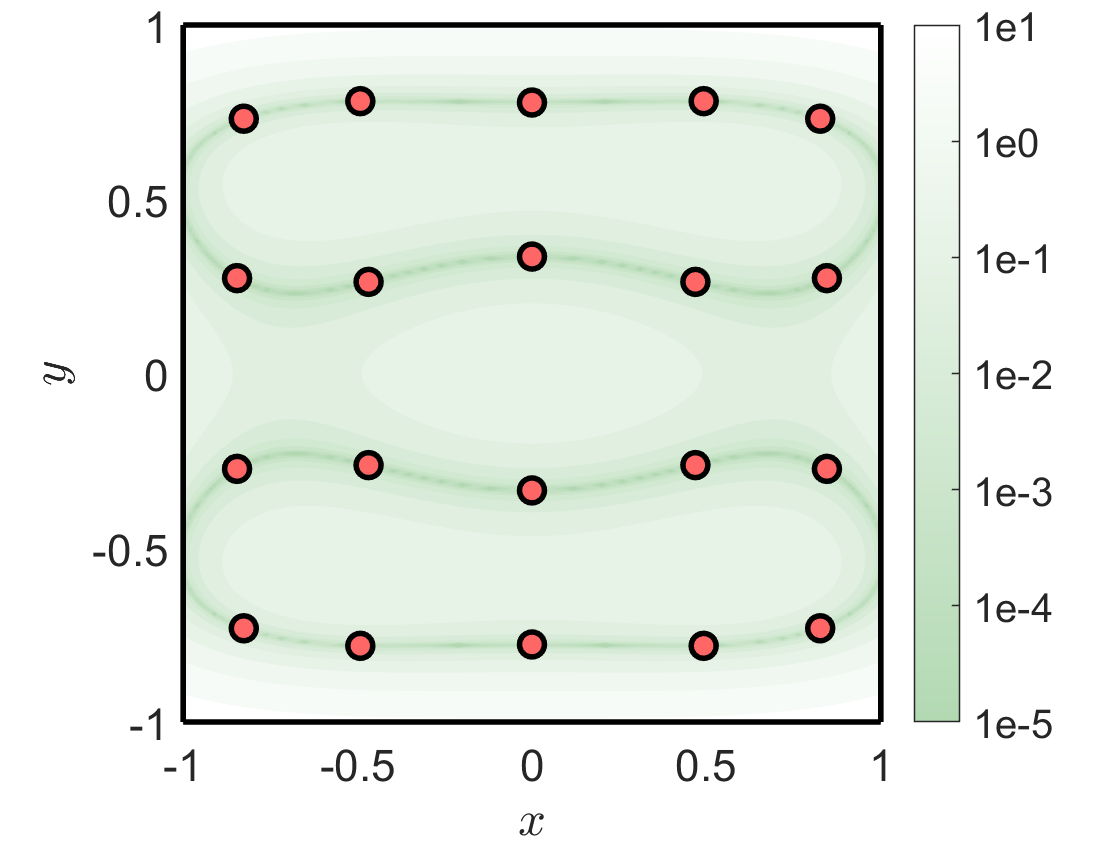} &
		\includegraphics[scale=0.28,trim=7 5 70 0,clip]{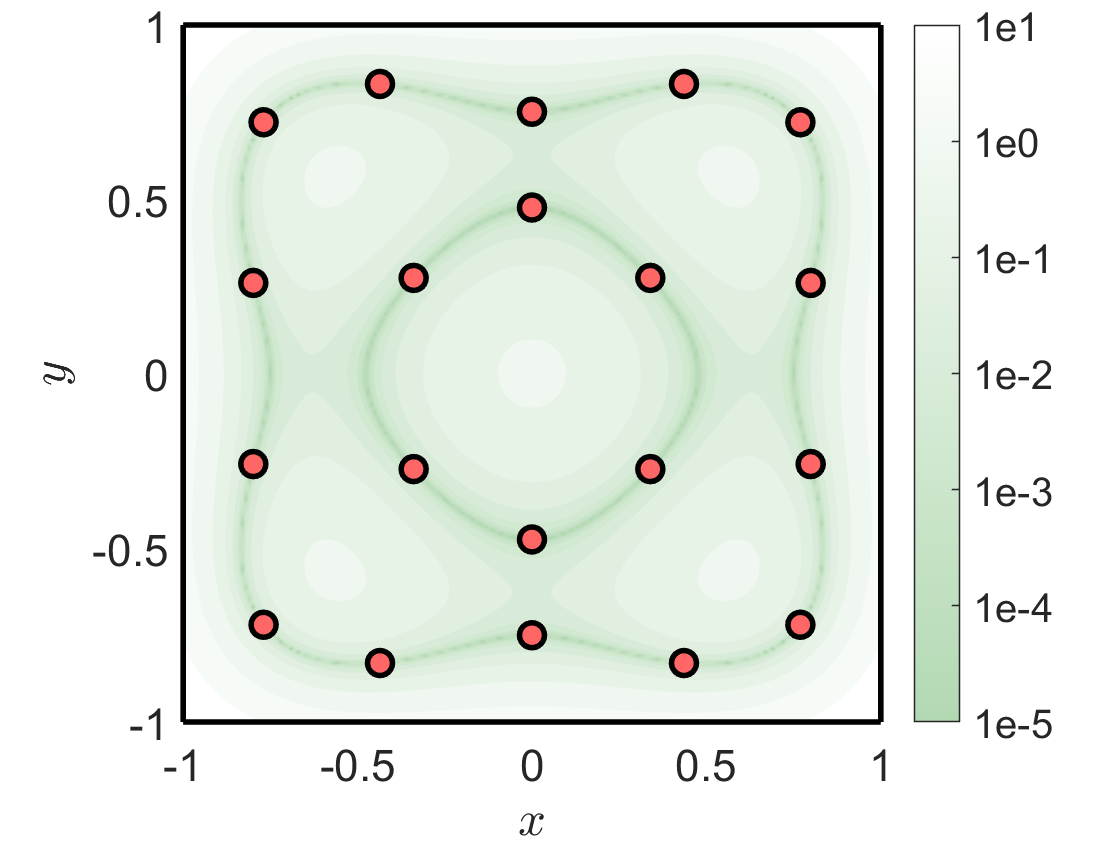} &
		\includegraphics[scale=0.28,trim=323 5 0 0,clip]{fig/C2_Pk/P8/movie_node_55/000000.png} \\
		
		$\mathbb{P}^{10}$ or $\mathbb{P}^{11}$ &
		\includegraphics[scale=0.28,trim=7 5 70 0,clip]{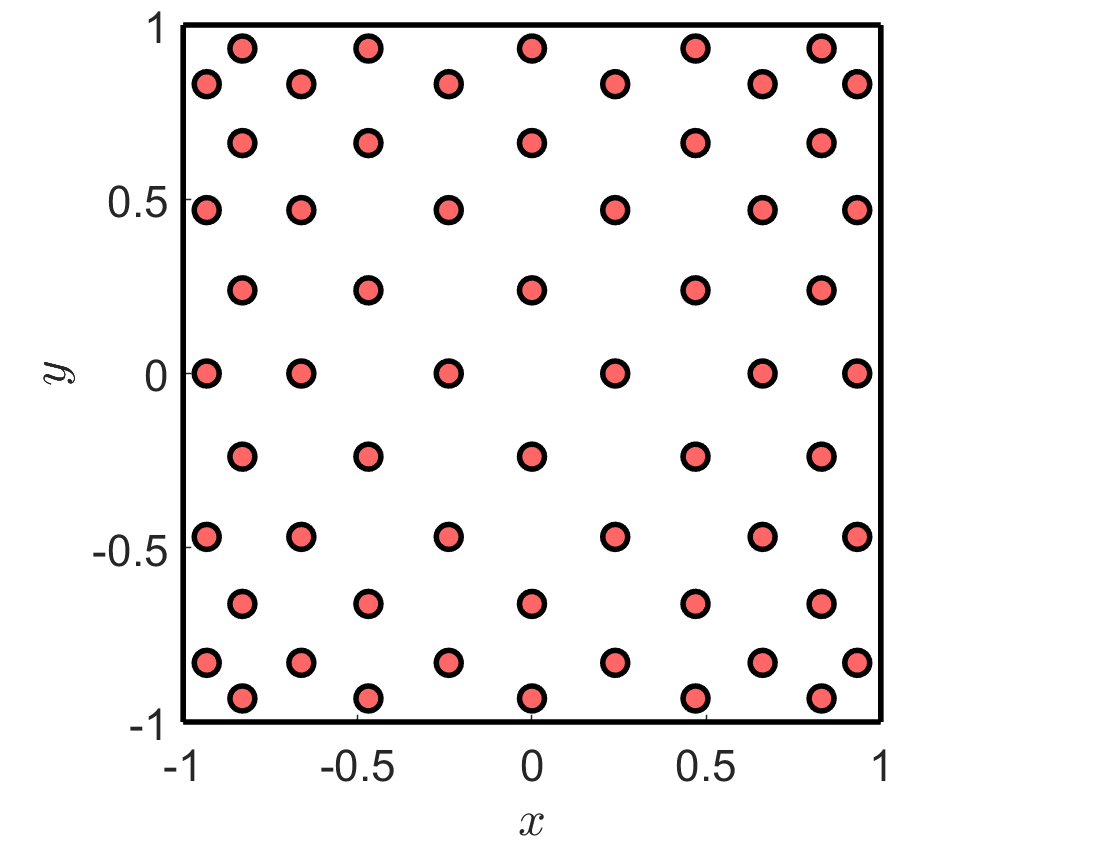} &
		\includegraphics[scale=0.28,trim=7 5 70 0,clip]{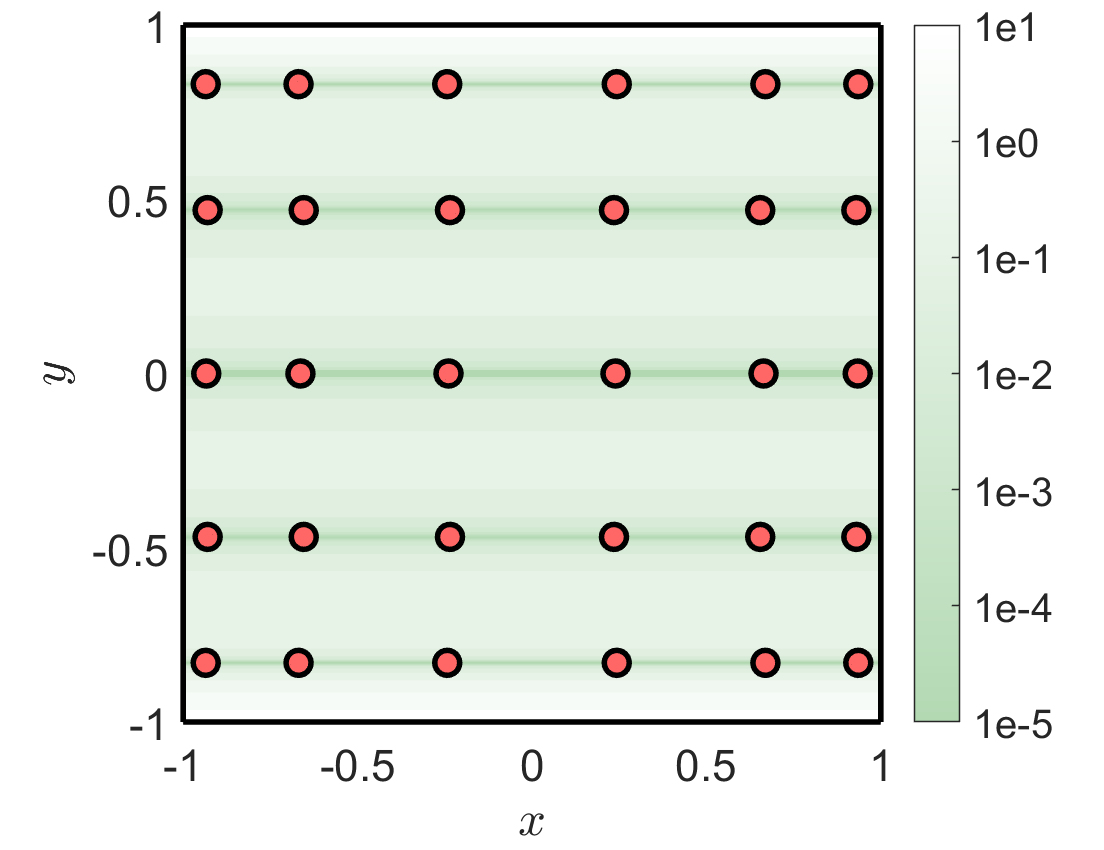} &
		\includegraphics[scale=0.28,trim=7 5 70 0,clip]{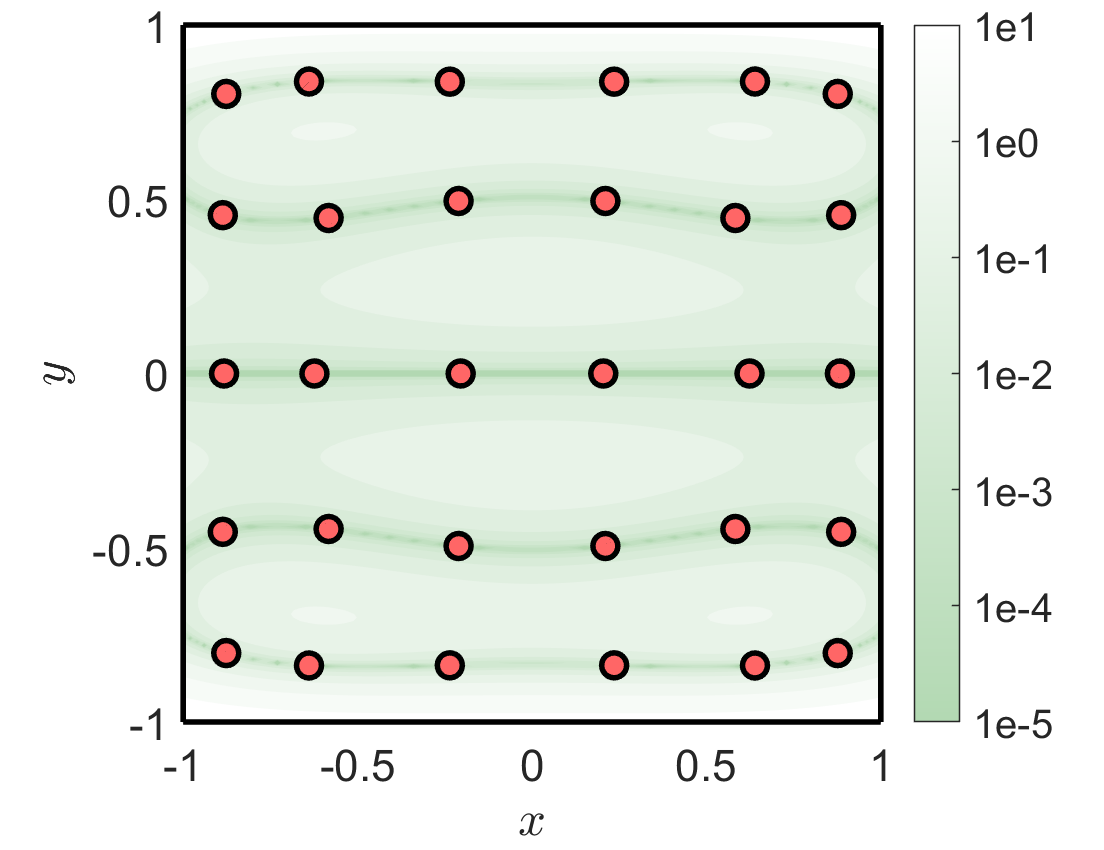} &
		\includegraphics[scale=0.28,trim=7 5 70 0,clip]{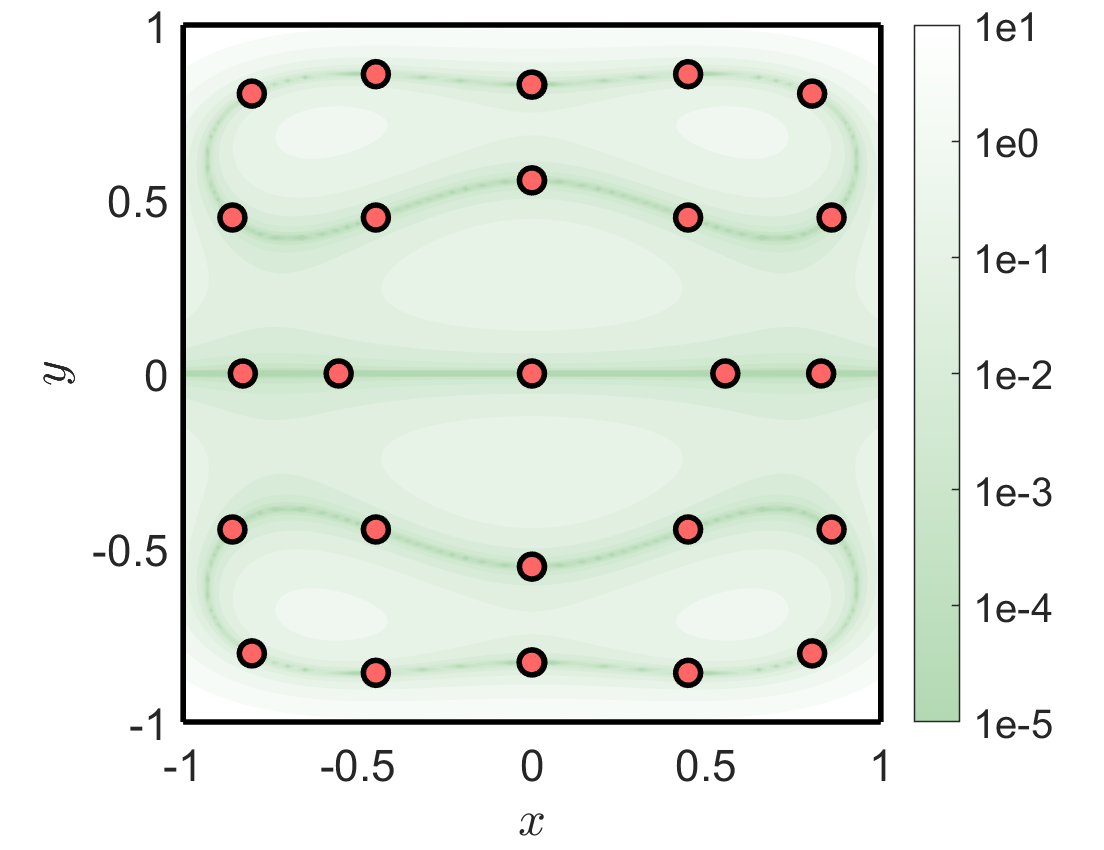} &
		\includegraphics[scale=0.28,trim=323 5 0 0,clip]{fig/C2_Pk/P10/movie_node_666/00000000.png} \\
		
		\bottomrule[1.5pt]
	\end{tabular}
\end{table}

	We have derived the explicit formulas of OCADs for $\mathbb{P}^2$ to $\mathbb{P}^7$ spaces. 
	However, for $\mathbb P^k$ spaces with $k\ge 8$, seeking the analytical form of the OCADs for general $\theta \in [-1,1]$ is very challenging (if not impossible). In the following, we propose a systematic approach to numerically construct OCADs for higher $k \ge 8$.  
	

	\begin{theorem}\label{thm:generalthetaPK}
		For any given $k \in \mathbb{N}_+$ and $\theta \in [-1,1]$, 
		let   
		$q_\star(x,y) \in \mathbb{P}^{\left \lfloor \frac{k}2 \right \rfloor}$ be the polynomial defined in  \eqref{eq:2065}. 
		If there exist $\{(\omega_s,x^{(s)},y^{(s)}): \omega_s > 0,  x^{(s)}, y^{(s)} \in [0,1]   \}_{s=1}^S$ satisfying the following system 
				\begin{equation}\label{eq:3407}
			\left\{
			\begin{aligned}
				& q_\star(x^{(s)},y^{(s)})  = 0, \\
				& \langle g_i \rangle_\Omega = \phi^\star(\theta,(\mathbb{P}^{ \left \lfloor \frac{k}2 \right \rfloor })^2) \Big[
				(1+\theta) \langle g_i \rangle_\Omega^{x}
				+ (1-\theta) \langle g_i \rangle_\Omega^{y}
				\Big]+\sum^{S}_{s=1} \omega_s \overline{g_i(x^{(s)},y^{(s)})}, \quad 1\le i \le \dim( \mathbb{P}^k(\mathscr G_{s})  )
			\end{aligned}
			\right.
		\end{equation}
		with $\{g_i\}$ being a basis of $\mathbb{P}^k(\mathscr G_{s})$, then the following symmetric CAD 
		\begin{equation}\label{eq:3397}
	\langle p \rangle_\Omega
	= \phi^\star(\theta,(\mathbb{P}^{ \left \lfloor \frac{k}2 \right \rfloor })^2)
	\Big[
	(1+\theta) \langle p \rangle_\Omega^{x}
	+ (1-\theta) \langle p \rangle_\Omega^{y}
	\Big]
	+
	\sum^{S}_{s=1} \omega_s \overline{p(x^{(s)},y^{(s)})} 
\end{equation}
is an OCAD for $\mathbb P^k$ and $\theta \in [-1,1]$. 		
	Furthermore, 
	in this case, 
	Conjectures \ref{con:2070} and \ref{con:2071} hold true with 
	\begin{equation}\label{eq:Pk-conj}
		\Wmu_\star(\theta,\mathbb{P}^k) = \phi^\star (\theta,\mathbb{P}^k_{+})= \phi^\star(\theta,(\mathbb{P}^{ \left \lfloor \frac{k}2 \right \rfloor })^2) = \phi(q^2_\star;\theta), 
	\end{equation}
	and $q_\star^2(x,y)$ is 
	the critical positive polynomial for $\phi^\star (\theta,\mathbb P_+^k)$. 	
	\end{theorem}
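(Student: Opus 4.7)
The plan is to invoke Optimality Criterion \#3 (Theorem \ref{lem:512}) directly, after first establishing feasibility of the candidate symmetric CAD \eqref{eq:3397}. The only real work lies in reducing feasibility to the assumed matching of moments on a basis of the $\mathscr{G}_s$-invariant subspace $\mathbb{P}^k(\mathscr{G}_s)$; everything else falls out of machinery already developed in Section \ref{sec:2Dtheory}.

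First, I would verify the three feasibility conditions of Definition \ref{def:2D_FCAD} for the CAD \eqref{eq:3397}. Positivity of the internal weights $\omega_s$ and the condition $(x^{(s)}, y^{(s)}) \in [0,1]^2$ are given by hypothesis. Positivity of the boundary weight $\phi^\star(\theta,(\mathbb{P}^{\lfloor k/2 \rfloor})^2)$ follows from Theorem \ref{lem:2080}, since $\mathbf{M}_\Omega$ is positive definite and $\mathbf{M}_\theta$ is positive semi-definite, so the smallest real root of $\mathcal{F}_\theta$ is positive. The substantive condition is (i). Every basis polynomial $g_i$ of $\mathbb{P}^k(\mathscr{G}_s)$ satisfies $g_i(g(x,y)) = g_i(x,y)$ for all $g \in \mathscr{G}_s$, hence $\overline{g_i(x^{(s)}, y^{(s)})} = g_i(x^{(s)}, y^{(s)})$, so the second equation in \eqref{eq:3407} is precisely the statement that \eqref{eq:3397} holds at $p = g_i$. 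By linearity, \eqref{eq:3397} holds for every $p \in \mathbb{P}^k(\mathscr{G}_s)$, and Lemma \ref{lem:Gs-invarint} then extends its validity to all of $\mathbb{P}^k$.

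Next, I would apply Criterion \#3. Since $q_\star \in \mathbb{P}^{\lfloor k/2 \rfloor} \setminus \{0\}$ and the first equation of \eqref{eq:3407} asserts that $q_\star$ vanishes at every internal node $(x^{(s)}, y^{(s)})$, Theorem \ref{lem:512} immediately gives
\[
\Wmu_\star(\theta,\mathbb{P}^k) = \phi^\star(\theta,\mathbb{P}^k_+) = \phi(q_\star^2;\theta),
\]
identifies \eqref{eq:3397} as a symmetric OCAD for $\mathbb{P}^k$, and certifies $q_\star^2$ as a critical positive polynomial for $\phi^\star(\theta,\mathbb{P}^k_+)$. Finally, to close the chain of equalities \eqref{eq:Pk-conj} and to verify Conjecture \ref{con:2071}, I would appeal to Theorem \ref{lem:2080}: the definition of $q_\star$ in \eqref{eq:2065} was specifically engineered so that $\phi(q_\star^2;\theta) = \phi^\star(\theta,(\mathbb{P}^{\lfloor k/2 \rfloor})^2)$.

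The proof itself is essentially bookkeeping once Criterion \#3 is in hand; the genuine obstacle lies in \emph{verifying the hypothesis}, i.e., in establishing the existence and explicit construction of $\{(\omega_s, x^{(s)}, y^{(s)})\}_{s=1}^S$ satisfying the nonlinear algebraic system \eqref{eq:3407}. For large $k$ this is a formidable polynomial system of moment-matching equations constrained by $q_\star(x^{(s)},y^{(s)}) = 0$, and its solvability is not a priori evident. This is precisely why the analytical formulas in Theorems \ref{thm:3129}, \ref{thm:3161}, and \ref{thm:3650} required substantial case-by-case effort, and why the construction for $k \ge 8$ is carried out by an iterative numerical scheme rather than in closed form.
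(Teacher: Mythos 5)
Your proposal is correct and follows essentially the same route as the paper's proof: feasibility is obtained by verifying the CAD on a basis of the $\mathscr G_s$-invariant subspace and invoking Lemma \ref{lem:Gs-invarint}, and optimality together with the chain of equalities \eqref{eq:Pk-conj} follows from Optimality Criterion \#3 (Theorem \ref{lem:512}) applied to $q_\star$, which vanishes at all internal nodes, combined with Theorem \ref{lem:2080}. Your additional checks (positivity of the boundary weight, conditions (ii)--(iii)) and the closing remark about the real difficulty lying in solving the system \eqref{eq:3407} are accurate elaborations of what the paper leaves implicit.
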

	
	\begin{proof}
		Since $\{g_i\}$ are a basis of $\mathbb{P}^k(\mathscr G_{s})$, 
		the decomposition \eqref{eq:3397} holds for any $g \in \mathbb{P}^k(\mathscr G_{s})$. 
		Thanks to \Cref{lem:Gs-invarint}, the symmetric CAD \eqref{eq:3397} is feasible for $\mathbb{P}^k$.
		Because the non-negative polynomial $q^2_\star \in \mathbb{P}^{\left \lfloor \frac{k}2 \right \rfloor}$ vanishes at all the internal nodes of \eqref{eq:3397}, the CAD \eqref{eq:3397} is optimal for $\mathbb{P}^k$ according to \Cref{lem:512}. Moreover, $\overline{\omega}_\star (\theta,\mathbb{P}^k) = \phi^\star(\theta,(\mathbb{P}^{ \left \lfloor \frac{k}2 \right \rfloor })^2)$, which implies the validity of Conjectures \ref{con:2070} and \ref{con:2071}.
	\end{proof}
	

Yet, when $k\ge 8$, we cannot find the analytical solution to the system \eqref{eq:3407} nor 
rigorously prove the existence of its solution. Based on \Cref{thm:generalthetaPK}, we 
propose the following algorithm for  
constructing the OCAD \eqref{eq:3183} by numerically solving the equations \eqref{eq:3407}.

	\begin{algorithm}\label{alg:3427}
		For $\theta \in (-1,1)$ and $k \ge 8$, 
		the symmetric OCAD for $\mathbb{P}^k$ space can be obtained via the following steps:
		\begin{enumerate}
			\item Calculate the value of $\phi^\star(\theta,(\mathbb{P}^{ \left \lfloor \frac{k}2 \right \rfloor })^2)$ according \eqref{eq:122}.
			\item Find the polynomial $q_\star(x,y)$ according to \eqref{eq:2065}.
			\item Solve the system \eqref{eq:3407} to obtain $\{\omega_s,x^{(s)},y^{(s)}\}$.
			\item Obtain the OCAD for $\mathbb{P}^k$ space in the form of \eqref{eq:3397}. 
		\end{enumerate}
	For $\theta = \pm 1$ and any $k \in \mathbb{N}_+$, the symmetric OCADs for $\mathbb{P}^k$ were given in \Cref{thm:1686,thm:1699}.
	\end{algorithm}



\begin{remark}
According to \Cref{lem:2k2kp1}, the symmetric OCAD for $\mathbb{P}^{2k}$ space is also a symmetric OCAD for $\mathbb{P}^{2k+1}$. According to 
\Cref{thm:sym_theta} and \Cref{cor:1575}, given the OCAD for $\mathbb{P}^{2k}$ space and $\theta \in [-1,0]$, 
we immediately obtain the corresponding symmetric OCAD for $\theta \in [0,1]$ via \eqref{eq:nTheta}. 
As such, we only need to seek the symmetric OCAD for $\mathbb{P}^{2k}$ space ($k\ge 4$) and $\theta \in [-1,0]$ via \Cref{alg:3427}. 
\end{remark}

In Step (3) of \Cref{alg:3427}, numerically solving the nonlinear algebraic equations \eqref{eq:3407} is a  nontrivial task. 
This is typically based on an iterative algorithm (we use the MATLAB built-in function {\tt fsolve}), which requires us to provide a good initial guess sufficiently close to the true solution so as to ensure the convergence. 
We choose the initial guess as follows. 
Given $k \in \mathbb{N}_{+}$ and $\theta \in [-1,0]$, we first partition the interval $[-1,\theta]$ into  $-1 = \theta_0 < \theta_1 < \dots < \theta_j < \dots <\theta_J = \theta$, and then perform \Cref{alg:3427} from $\theta_0$ to $\theta_J$ sequentially. 
For each $j\in \{1,\dots,J\}$, the initial guess of solving the system \eqref{eq:3407} corresponding $\theta_j$ is given by the final numerical solution to the system \eqref{eq:3407} coresponding to $\theta_{j-1}$. 
It is worth noting that, at the beginning of this process, 
the exact solution to the system \eqref{eq:3407} corresponding to $\theta_0 = -1$ was already analytically given in \Cref{thm:1686}.


For example, we apply the above-mentioned strategy  to find the OCADs for $\mathbb{P}^k$ spaces ($8\le k \le 11$) with $\theta = -1, -0.8, \dots, -0.2, 0$. The residual of each iteration is plotted in \Cref{fig:3491}, indicating the fast convergence of the iterations to machine accuracy. The internal nodes of the found OCADs (for $\theta = -1, -0.2, 0$) are illustrated in \Cref{fig:1882}. 

\begin{figure}[hbtp]
\centerline{
\begin{subfigure}[t][][t]{0.5\textwidth}
\includegraphics[width=\textwidth]{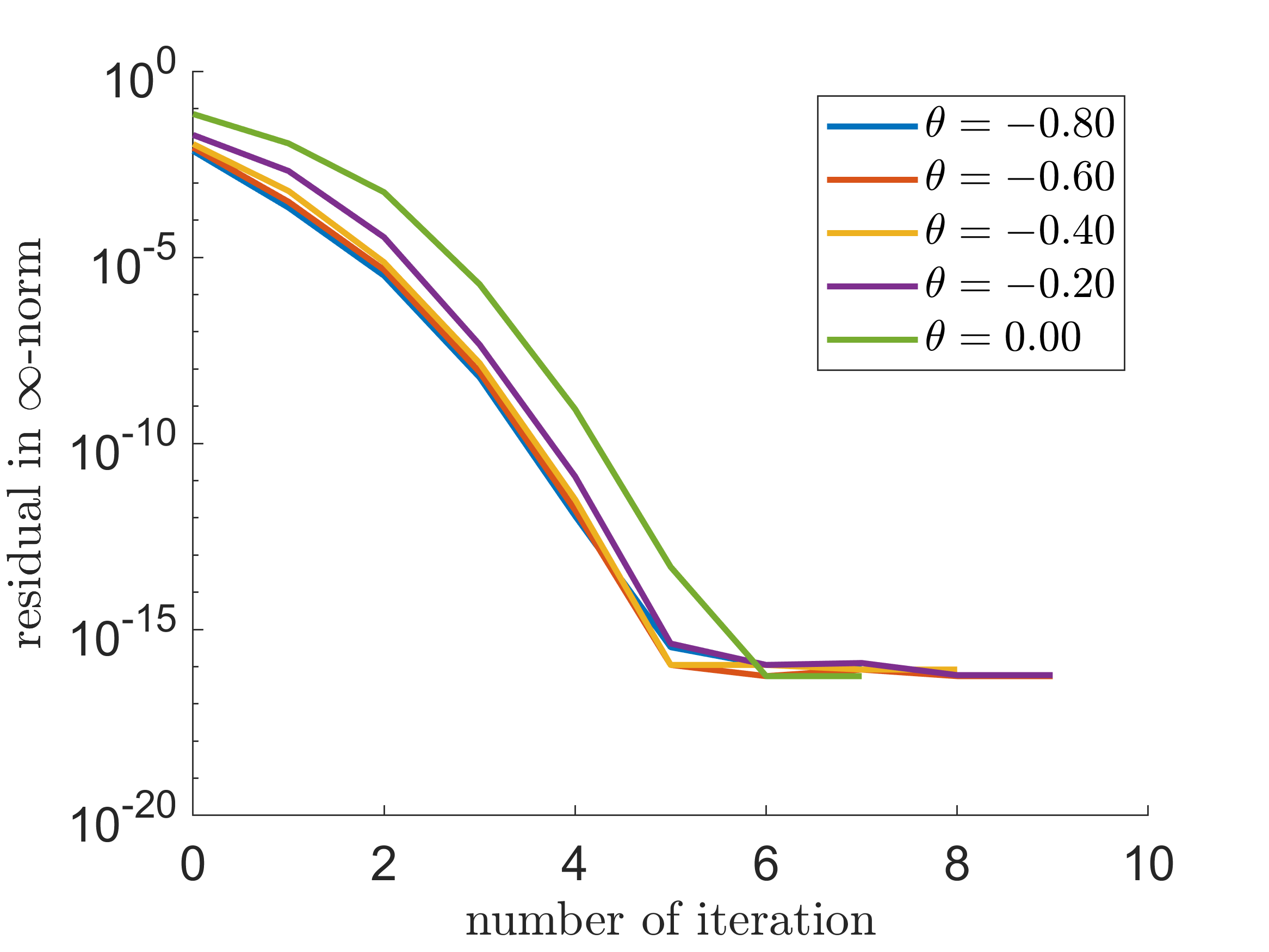}
\caption{$\mathbb{P}^8$ and $\mathbb{P}^9$}
\end{subfigure}
\begin{subfigure}[t][][t]{0.5\textwidth}
\includegraphics[width=\textwidth]{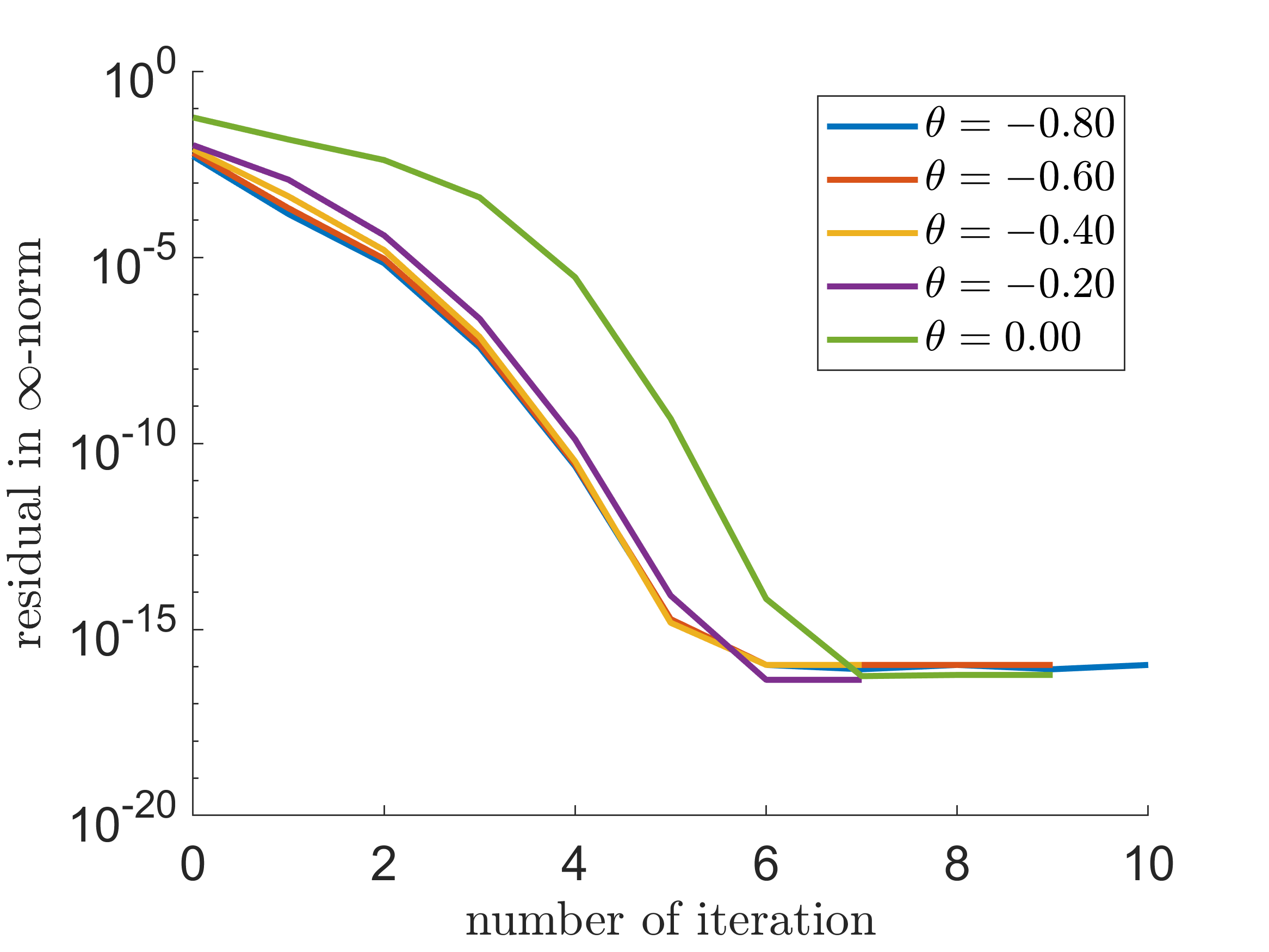}
\caption{$\mathbb{P}^{10}$ and $\mathbb{P}^{11}$}
\end{subfigure}
}
\caption{Fast convergence of the iterations for solving system \eqref{eq:3407} using the MATLAB built-in function {\tt fsolve} with the proposed initial-guess strategy. 
 }\label{fig:3491}
\end{figure}


\section{2D quasi-optimal CAD for $\mathbb{P}^{k}$ spaces}\label{sec:quasi-optimal}

As discussed in \Cref{rem:CCADcombine}, the classic CAD  
\eqref{eq:1471} is optimal for only $\theta = \pm 1$ but not optimal for general $\theta\in (-1,1)$. 
Recall that the classic CAD  
\eqref{eq:1471} 
is actually convex combinations of 
the two special OCADs (\ref{eq:1688}) and (\ref{eq:1712}) for $\theta = \pm 1$. 
For the OCADs (\ref{eq:1688}) and (\ref{eq:1712}), the corresponding 
boundary weights are repetitively $(0,  \overline \omega_\star (-1, \mathbb P^k)  )$ and 
$(\overline \omega_\star (1, \mathbb P^k), 0  )$, which are two 
vertexes on $\partial {\mathbb B}_\omega$. See \Cref{fig:1666} for an illustration. 
The boundary weights of 
the classic CAD  
\eqref{eq:1471} form a straight line segment between  
the two vertexes $(0,  \overline \omega_\star (-1, \mathbb P^k)  )$ and 
$(\overline \omega_\star (1, \mathbb P^k), 0  )$. 
Any straight line segment in 
the region ${\mathbb B}_\omega$ represents convex combinations of two feasible CADs. 
As observed from \Cref{fig1666:P2P3} for $\mathbb P^2$ and $\mathbb P^3$, the boundary weights of  
OCAD for all $\theta \in [-1,1]$ lie on $\partial {\mathbb B}_\omega$, forming two line segments that connect three vertexes $(0,  \overline \omega_\star (-1, \mathbb P^k)  )$,  
$(\overline \omega_\star (1, \mathbb P^k), 0  )$, and $(\frac12 \overline \omega_\star (0, \mathbb P^k), \frac12 \overline \omega_\star (0, \mathbb P^k)  )$. 
This intuitively reveals that the general OCAD \eqref{eq:2849} 
for $\mathbb P^2$ and $\mathbb P^3$ with $\theta \in [-1,1]$ is actually convex combinations of three 
special OCADs for $\theta \in \{-1,0,1\}$. 
However, as shown in \Cref{fig1666:P4,fig1666:P6,fig1666:P8}, for $\mathbb P^k$ with higher $k\ge 4$, the part of $\partial {\mathbb B}_\omega$ related to OCAD weights becomes curved, namely, it is no longer formed by straight edges. 
In spite of this, we discover that some convex combinations of three 
special OCADs for $\theta \in \{-1,0,1\}$ will still provide a feasible CAD, which  is very close to the OCAD and thus named as ``{\em quasi-optimal} CAD'' in the following.


In order to define the quasi-optimal CAD, let us denote the symmetric OCAD for $\mathbb{P}^{k}$  and $\theta = 0$ by
\begin{equation}\label{eq:2177}
	\langle p \rangle_{\Omega} = 
	\Wmu_{\star,0} \left [\langle p \rangle_{\Omega}^x+\langle p \rangle_{\Omega}^y \right]+  \sum_{s=1}^{S_0} \omega^{(s)}_{0} p( x_0^{(s)}, y_0^{(s)}) \qquad \forall p \in \mathbb{P}^{k},
\end{equation}
where $\Wmu_{\star,0}:= \Wmu_{\star}(0, \mathbb P^k)$. 
The values of $\Wmu_{\star,0}$, internal weights $\{\omega^{(s)}_{0}\}$, and nodes $\{(x_0^{(s)}, y_0^{(s)})\}$ can be obtained via \Cref{sec:theta0,sec:general-theta}. 

\begin{figure}[htbp]
	\centering
	\begin{subfigure}[t][][t]{0.49\textwidth}
		\centering
		\includegraphics[width=0.88\textwidth]{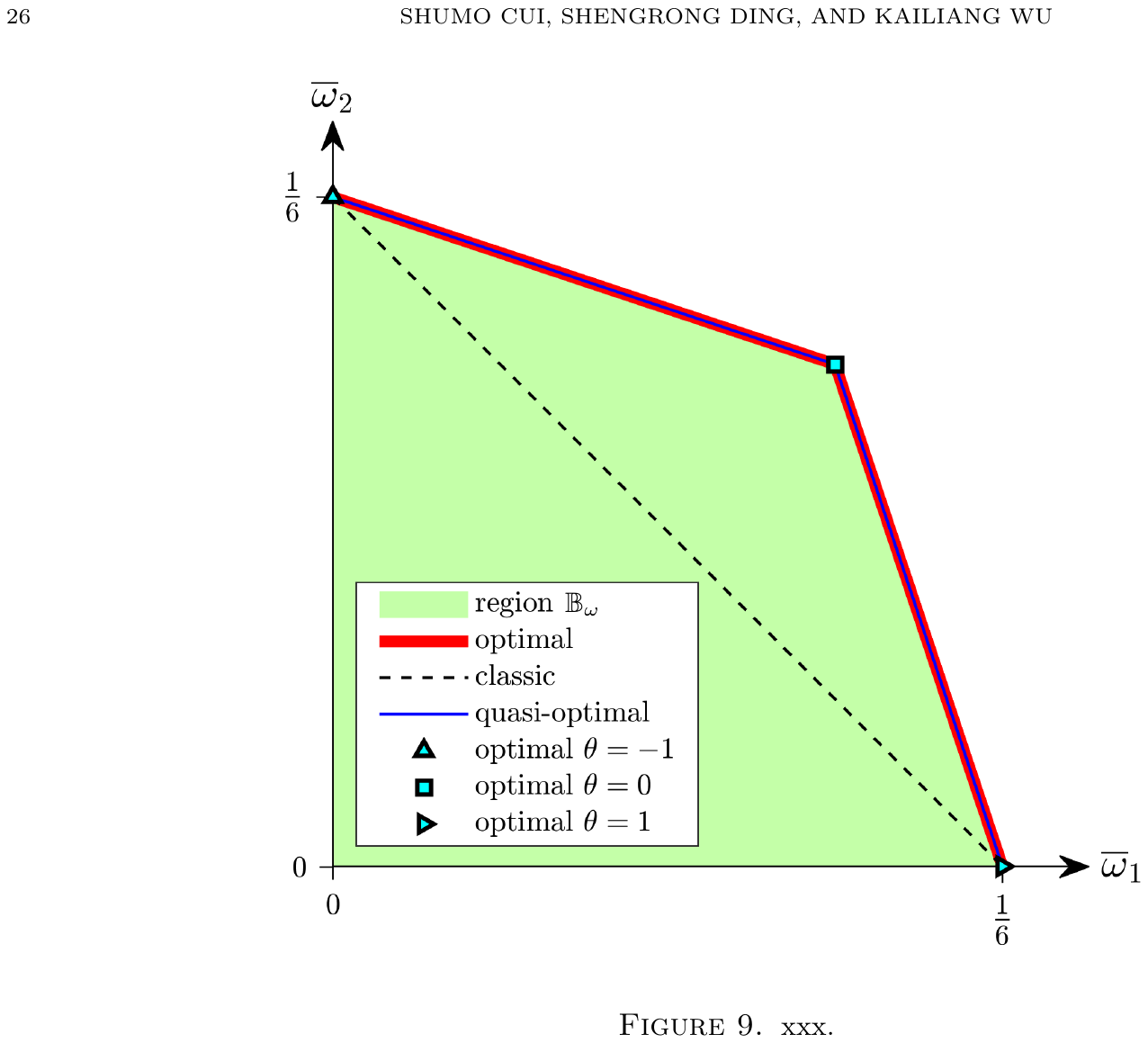}
		\caption{$\mathbb{P}^2$ and $\mathbb{P}^3$.}
		\label{fig1666:P2P3}
	\end{subfigure}
	\begin{subfigure}[t][][t]{0.49\textwidth}
		\centering
		\includegraphics[width=0.88\textwidth]{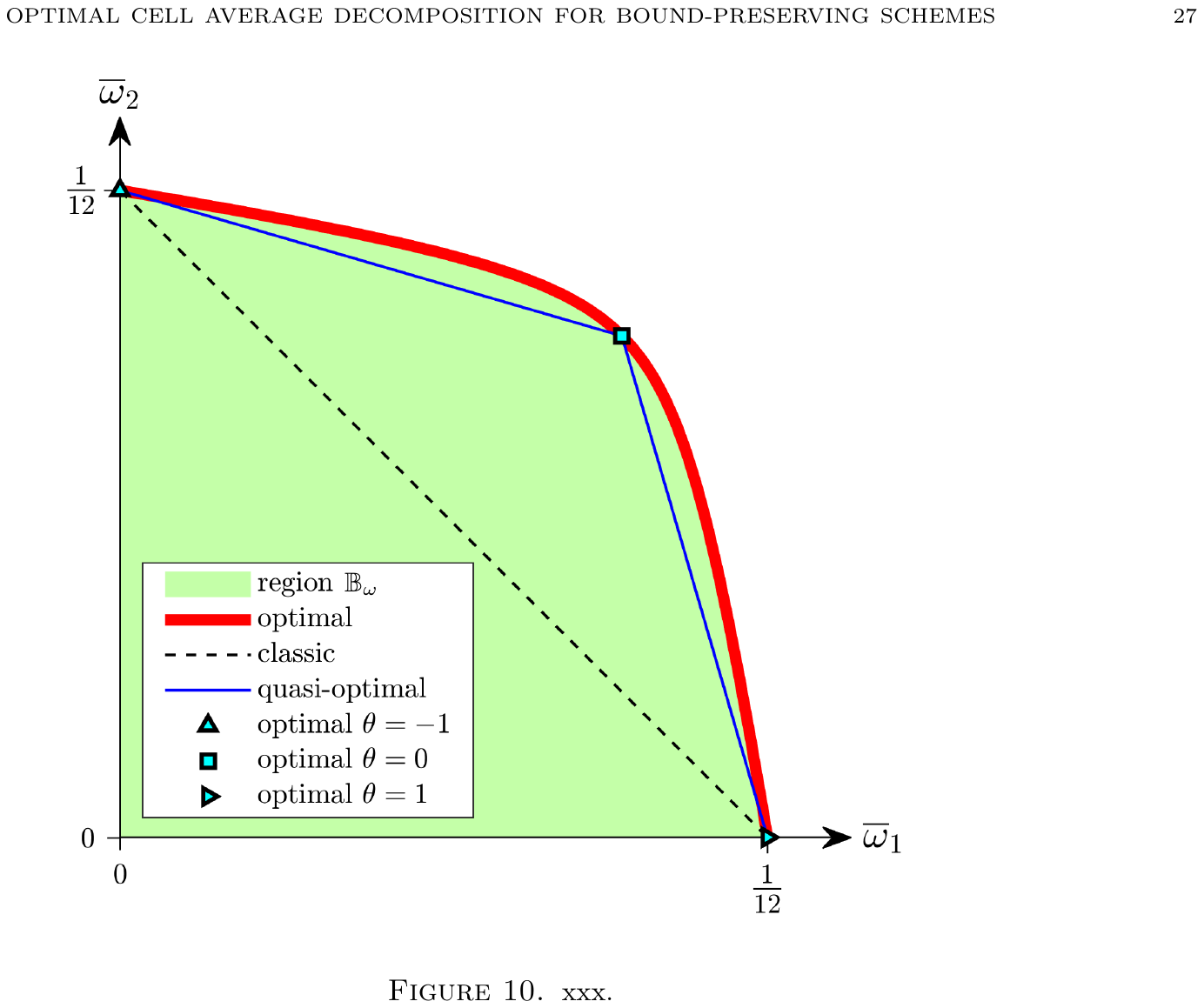}
		\caption{$\mathbb{P}^4$ and $\mathbb{P}^5$.}
		\label{fig1666:P4}
	\end{subfigure}
	\\
	\begin{subfigure}[t][][t]{0.49\textwidth}
		\centering
		\includegraphics[width=0.88\textwidth]{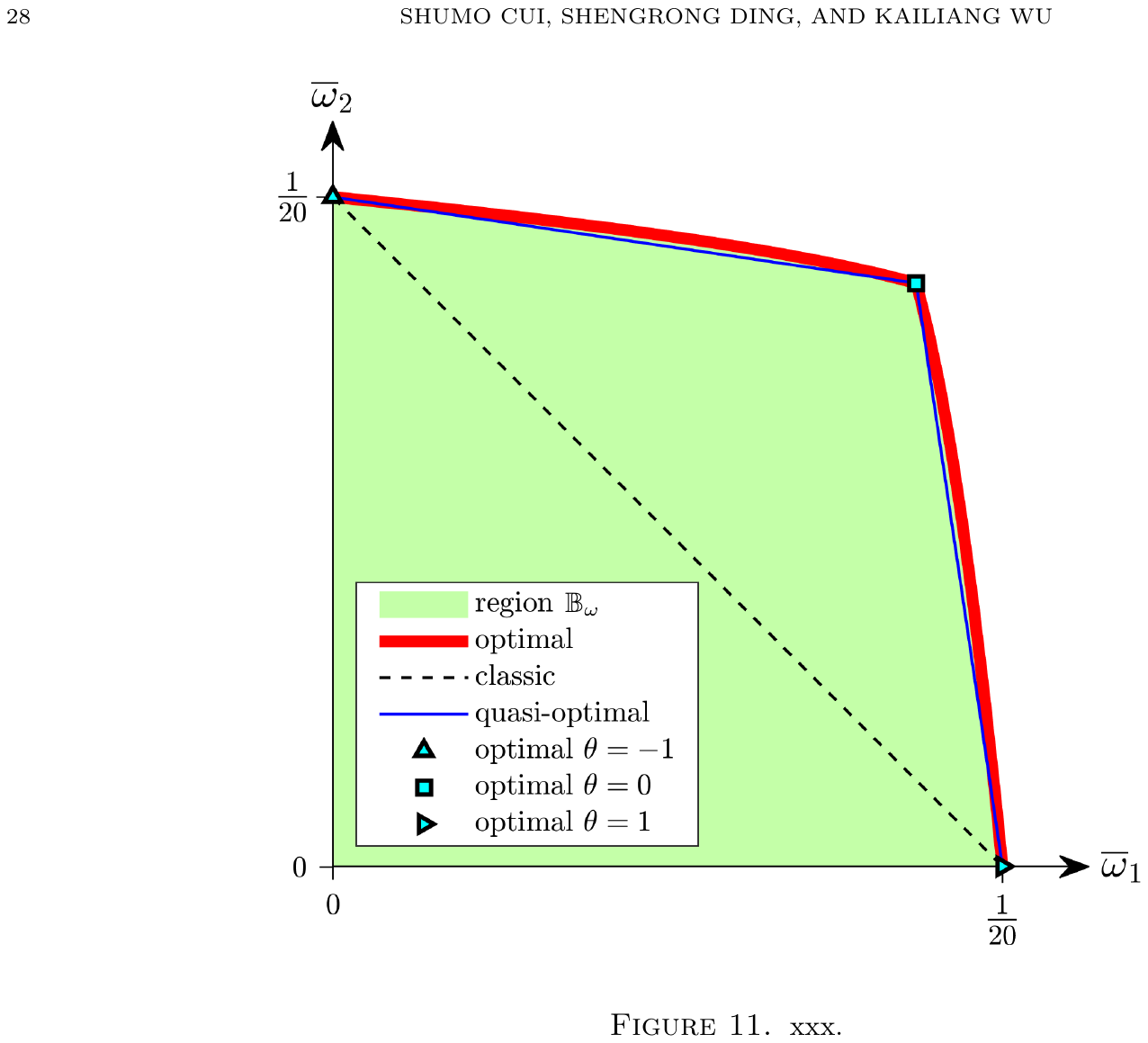}
		\caption{$\mathbb{P}^6$ and $\mathbb{P}^7$.}
		\label{fig1666:P6}
	\end{subfigure}
	\begin{subfigure}[t][][t]{0.49\textwidth}
		\centering
		\includegraphics[width=0.88\textwidth]{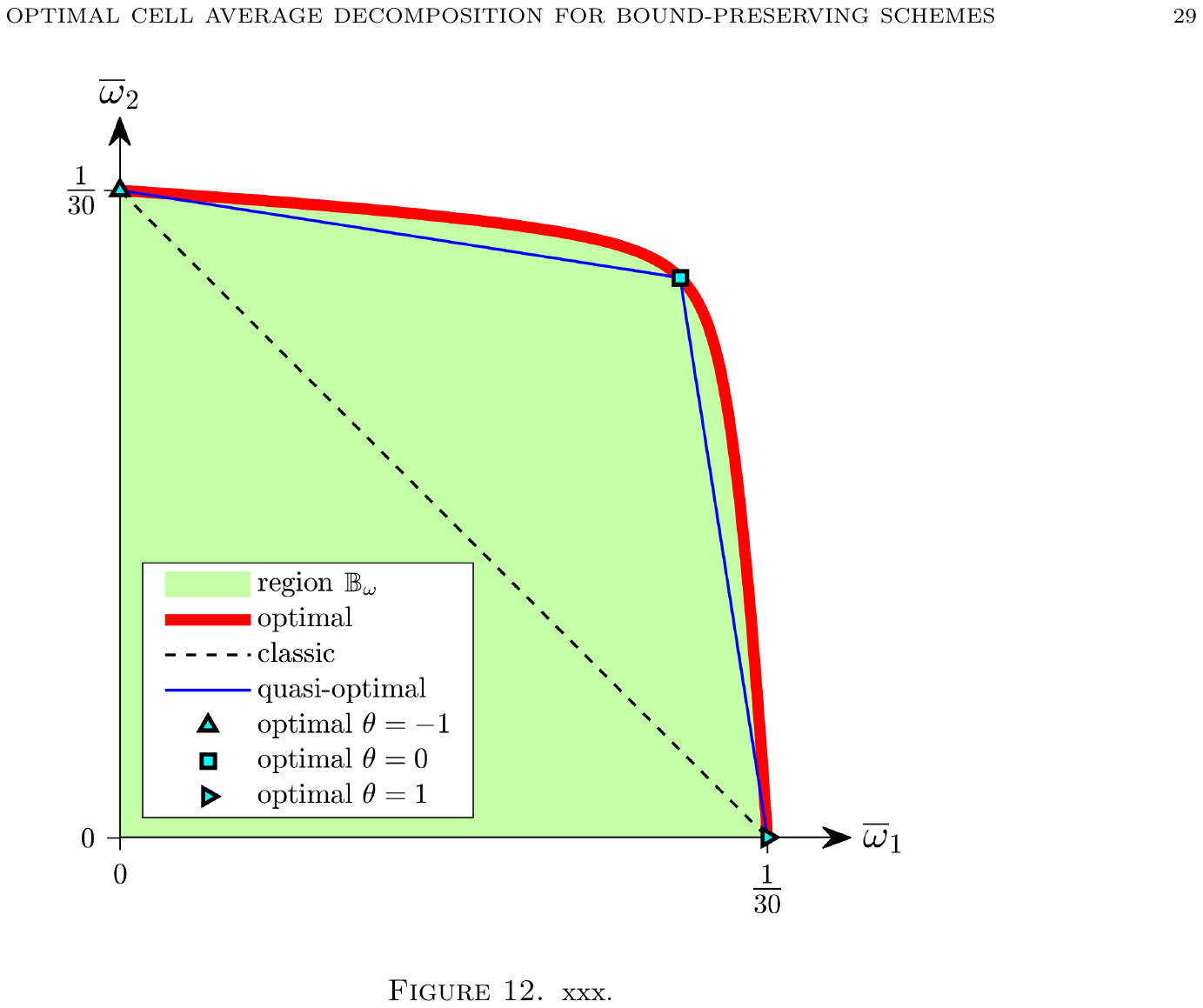}
		\caption{$\mathbb{P}^8$ and $\mathbb{P}^9$.}
		\label{fig1666:P8}
	\end{subfigure}
	\caption{Geometric illustration and comparison of the optimal, classic, quasi-optimal CADs for their boundary weights in the region $\mathbb{B}_\omega$. 
	}\label{fig:1666}
\end{figure}

\begin{theorem}[Quasi-optimal CAD]
	The following symmetric CAD is feasible for $\mathbb{P}^{k}$:
	\begin{equation*}
		\langle p \rangle_{\Omega} = 
		\begin{dcases}
			\tau \cdot \Big[\, \text{OCAD in } (\ref{eq:1688}) \text{ for $\theta = -1$} \,\Big] + 
			(1-\tau) \cdot \Big[\, \text{OCAD in } (\ref{eq:2177}) \text{ for $\theta = 0$} \,\Big] & \text{if } \theta \in [-1,0], \\
			\tau \cdot \Big[\, \text{OCAD in } (\ref{eq:1696}) \text{ for $\theta = 1$} \,\Big] +
			(1-\tau) \cdot \Big[\, \text{OCAD in } (\ref{eq:2177}) \text{ for $\theta = 0$} \,\Big] & \text{if } \theta \in [0,1],
		\end{dcases}
	\end{equation*}
	which can be equivalently written as 
	\begin{equation}\label{eq:2194}
		\langle p \rangle_{\Omega} = 
		\Wmu^{\tt Q}_\star \,
		\left[ (1+\theta) \langle p \rangle_{\Omega}^x + (1-\theta) \langle p \rangle_{\Omega}^y \right]+
		\tau \sum_{\ell=2}^{L-1} \sum_{q=1}^Q \omega_\ell^{{\tt GL}} \omega_q^{{\tt G}} p( x_{\ell q}^{{\tt Q}}, y_{\ell q}^{{\tt Q}})+
		(1-\tau) \sum_{s=1}^{S_0} \omega^{(s)}_{0} \overline{p( x_0^{(s)}, y_0^{(s)})}
	\end{equation}
	with
	\[
	\tau = \frac{\Wmu_{\star,0}|\theta|}{\Wmu_{\star,0} |\theta|+ \omega_1^{{\tt GL}}(1-|\theta|)}, \qquad 
	\Wmu^{\tt Q}_\star = \frac{\Wmu_{\star,0} \omega_1^{{\tt GL}} }{\Wmu_{\star,0} |\theta|+ \omega_1^{{\tt GL}}(1-|\theta|)},
	\]
	and
	\[
	( x_{\ell q}^{{\tt Q}}, y_{\ell q}^{{\tt Q}}) = 
	\begin{dcases}
		( x_q^{{\tt G}}, y_\ell^{{\tt GL}}), & \text{if } \theta \in [-1,0], \\
		( x_\ell^{{\tt GL}}, y_q^{{\tt G}}), & \text{if } \theta \in [0,1].
	\end{dcases}
	\]
\end{theorem}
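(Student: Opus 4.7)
The plan is to derive the theorem as a direct application of the convex-combination principle already established in \Cref{lem:convex}, followed by a bookkeeping step that matches coefficients to recover the compact form \eqref{eq:2194}. First I would fix $\theta\in[-1,0]$ (the case $\theta\in[0,1]$ being symmetric by \Cref{lem:1525}). The three ingredient CADs \eqref{eq:1688}, \eqref{eq:1712}, and \eqref{eq:2177} are feasible for $\mathbb{P}^k$ by \Cref{thm:1686,thm:1699} and the results in \Cref{sec:theta0,sec:general-theta}. Since $\Wmu_{\star,0}>0$ and $\omega_1^{\tt GL}>0$, the quantity
\[
\tau=\frac{\Wmu_{\star,0}|\theta|}{\Wmu_{\star,0}|\theta|+\omega_1^{\tt GL}(1-|\theta|)}
\]
lies in $[0,1]$ for all $\theta\in[-1,1]$, so the displayed formula really is a convex combination of two feasible CADs. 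By \Cref{lem:convex} it is itself a feasible CAD for $\mathbb{P}^k$.

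Next, I would carry out the algebraic repackaging into the form \eqref{eq:2194}. The OCAD \eqref{eq:1688} for $\theta=-1$ contributes boundary weights $(\overline{\omega}_1,\overline{\omega}_2)=(0,\omega_1^{\tt GL})$ together with internal nodes $(x_q^{\tt G},y_\ell^{\tt GL})$ carrying weights $\omega_\ell^{\tt GL}\omega_q^{\tt G}$, while the OCAD \eqref{eq:2177} for $\theta=0$ contributes boundary weights $(\Wmu_{\star,0}/2,\Wmu_{\star,0}/2)$ together with its own internal nodes and weights. Taking a $\tau$:$(1-\tau)$ combination yields boundary weights
\[
\overline{\omega}_1=\tfrac{1}{2}(1-\tau)\Wmu_{\star,0},\qquad \overline{\omega}_2=\tau\,\omega_1^{\tt GL}+\tfrac{1}{2}(1-\tau)\Wmu_{\star,0}.
\]
Using $|\theta|=-\theta$ and $1-|\theta|=1+\theta$ for $\theta\in[-1,0]$, a direct substitution of the formulas for $\tau$ and $\Wmu^{\tt Q}_\star$ gives $2\overline{\omega}_1=(1+\theta)\Wmu^{\tt Q}_\star$ and $2\overline{\omega}_2=(1-\theta)\Wmu^{\tt Q}_\star$, which matches the symmetric-CAD form \eqref{eq:715b}. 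The internal-node contributions pass through unchanged, producing exactly the two sums in \eqref{eq:2194}, with $(x_{\ell q}^{\tt Q},y_{\ell q}^{\tt Q})=(x_q^{\tt G},y_\ell^{\tt GL})$ on the $\tau$-piece. The symmetric case $\theta\in[0,1]$ is handled by the same computation after swapping the roles of the $x$- and $y$-directions, which accounts for the conditional definition of $(x_{\ell q}^{\tt Q},y_{\ell q}^{\tt Q})$.

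There is no genuine obstacle here: the feasibility reduces to \Cref{lem:convex}, and the rewriting is pure algebra. The only subtle point worth highlighting in the write-up is that a generic convex combination of two symmetric feasible CADs need not lie on the line $\overline{\omega}_1/(1+\theta)=\overline{\omega}_2/(1-\theta)$ dictated by the constraint \eqref{eq:2123}; the specific choice of $\tau$ is engineered precisely so that this identity is recovered, which is why $\tau$ must depend on $\theta$, $\Wmu_{\star,0}$, and $\omega_1^{\tt GL}$ rather than being a free parameter. Once this is noted, the computation above closes the proof.
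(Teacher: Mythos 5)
Your proof is correct and follows essentially the same route as the paper: the paper's own proof simply observes that \eqref{eq:2194} is a convex combination of two feasible CADs and invokes \Cref{lem:convex}. Your additional coefficient-matching computation (verifying $2\overline{\omega}_1=(1+\theta)\Wmu^{\tt Q}_\star$ and $2\overline{\omega}_2=(1-\theta)\Wmu^{\tt Q}_\star$, and noting that $\tau$ is chosen precisely to land on the constraint line) is accurate and merely makes explicit the "equivalently written as" step that the paper leaves to the reader.
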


\begin{proof}
The quasi-optimal CAD \eqref{eq:2194} is 
either the convex combination of the two special OCADs for $\theta = -1$ and $\theta =0$ (if $\theta \in [-1, 0]$) or the convex combination of the two special OCADs for $\theta = 1$ and $\theta =0$ (if $\theta \in [0,1]$). Consequently, the quasi-optimal CAD \eqref{eq:2194} is feasible, according to \Cref{lem:convex}. 
\end{proof}

For comparison, \Cref{fig:1666} illustrates 
the boundary weights of the optimal, classic, quasi-optimal CADs in the region $\mathbb{B}_\omega$. 
One can see that the classic CAD   
is notably far from the OCAD, especially when $\theta$ is near $0$, while  
the quasi-optimal CAD is much closer to the OCAD. 
In particular, the quasi-optimal CAD 
exactly coincides with the OCAD for $\mathbb P^2$ and $\mathbb P^3$.

To quantitatively analyze how close the quasi-optimal CAD is to the OCAD for $\mathbb P^k$ with  $k\ge 4$, we define the $\overline \omega$-ratio as follow. 

\begin{definition}[$\overline \omega$-ratio] 
	For a feasible symmetric CAD 
	$$
	\langle p \rangle_{\Omega} = 
	\Wmu \left[ (1+\theta) \langle p \rangle_{\Omega}^x + (1-\theta) \langle p \rangle_{\Omega}^y \right] +  \sum_{s} \omega_s   \overline{ p( x^{(s)}, y^{(s)}) } \qquad \forall p \in \mathbb P^k, 
	$$
	the ratio of its weight $\Wmu$ to the OCAD weight $\overline{\omega}_\star (\theta,\mathbb{P}^k)$ is called the $\overline \omega$-ratio. 
\end{definition}

In particular, the $\overline \omega$-ratio of the classic CAD \eqref{eq:1471} for $\mathbb P^k$ is given by 
\begin{equation}\label{eq:omega-ratio-classic}
	\frac{ \omega_1^{\tt GL} }{ \overline{\omega}_\star } = 
	\frac{ 1 }{ \left \lceil \frac{k+3}2 \right \rceil \left \lceil \frac{k+1}2 \right \rceil  \overline{\omega}_\star (\theta,\mathbb{P}^k)  }, 
\end{equation}
and $\overline \omega$-ratio of the quasi-optimal CAD \eqref{eq:2194} for $\mathbb P^k$ is given by 
\begin{equation}\label{eq:omega-ratio-qOCAD}
	\frac{ \Wmu^{\tt Q}_\star }{ \overline{\omega}_\star } = 
	\frac{\Wmu_{\star,0}  }{\Wmu_{\star,0} |\theta|+ \omega_1^{{\tt GL}}(1-|\theta|)} 
	\times \frac{ \omega_1^{\tt GL} }{ \overline{\omega}_\star }. 
\end{equation}
For $\mathbb P^2$ and $\mathbb P^3$, the quasi-optimal CAD \eqref{eq:2194} is OCAD, so that 
$\frac{ \Wmu^{\tt Q}_\star }{ \overline{\omega}_\star } = 1 $. 
For $\mathbb P^k$ with higher $k\ge 4$, \Cref{fig:188} gives a comparison of  
the optimal, classic, quasi-optimal CADs in terms of their 
boundary weights and $\overline \omega$-ratios. 
One can see that the $\overline \omega$-ratio of the classic CAD \eqref{eq:1471}
can reach as low as 
57\% $\sim$ 65\%, and  
$$
\omega_1^{\tt GL}  \ge 57\% ~~ \overline{\omega}_\star.
$$
Consequently, 
using OCAD to replace the classic CAD would help to notably improve the BP CFL condition. 
From \Cref{fig:188}, we also clearly observe  that $\Wmu^{\tt Q}_\star$ is very close to 
$\overline{\omega}_\star$, and the $\overline \omega$-ratio of the quasi-optimal CAD  
\eqref{eq:2194} is always above 95\% (overall much higher than that of the classic CAD \eqref{eq:1471}), namely,  
$$
\Wmu^{\tt Q}_\star \ge 95\% ~~ \overline{\omega}_\star.
$$
In other words, $\Wmu^{\tt Q}$ is only less than 5\% lower than $\Wmu_\star$. 
Since the construction of 
quasi-optimal CAD \eqref{eq:2194} only requires the special OCAD for $\theta \in \{-1,0,1\}$, 
and thus is  much easier than the construction of OCAD for all $\theta \in [-1,1]$. 
Therefore, the quasi-optimal CAD \eqref{eq:2194} is a good alternative for OCAD.


%
%
%
%

\begin{figure}[htbp]
	\centering
	\begin{subfigure}[t][][t]{0.32\textwidth}
		\centering
		\includegraphics[width=0.99\textwidth,trim=15 0 0 0,clip]{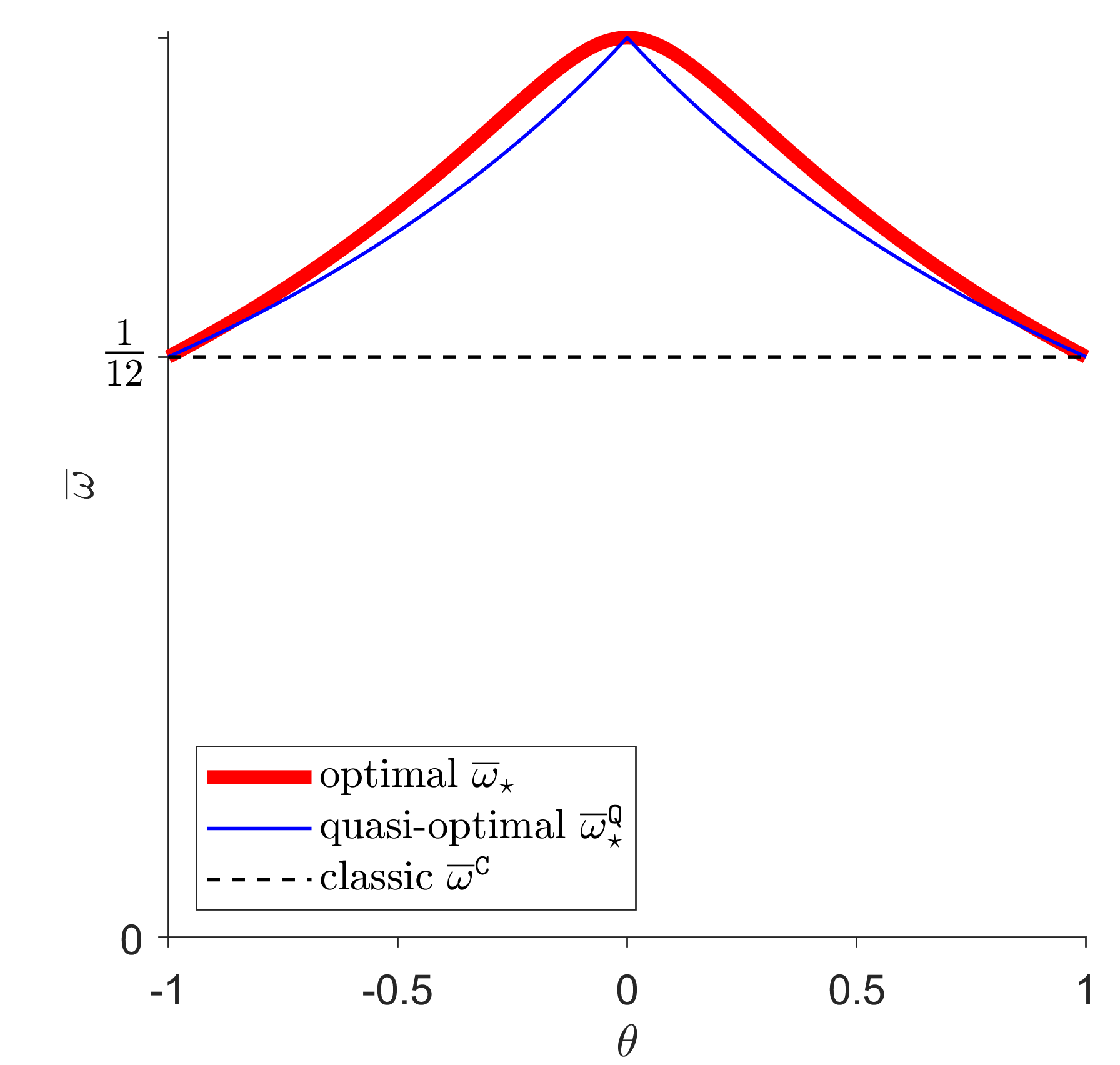}
	\end{subfigure}
	\begin{subfigure}[t][][t]{0.32\textwidth}
		\centering
		\includegraphics[width=0.99\textwidth,trim=15 0 0 0,clip]{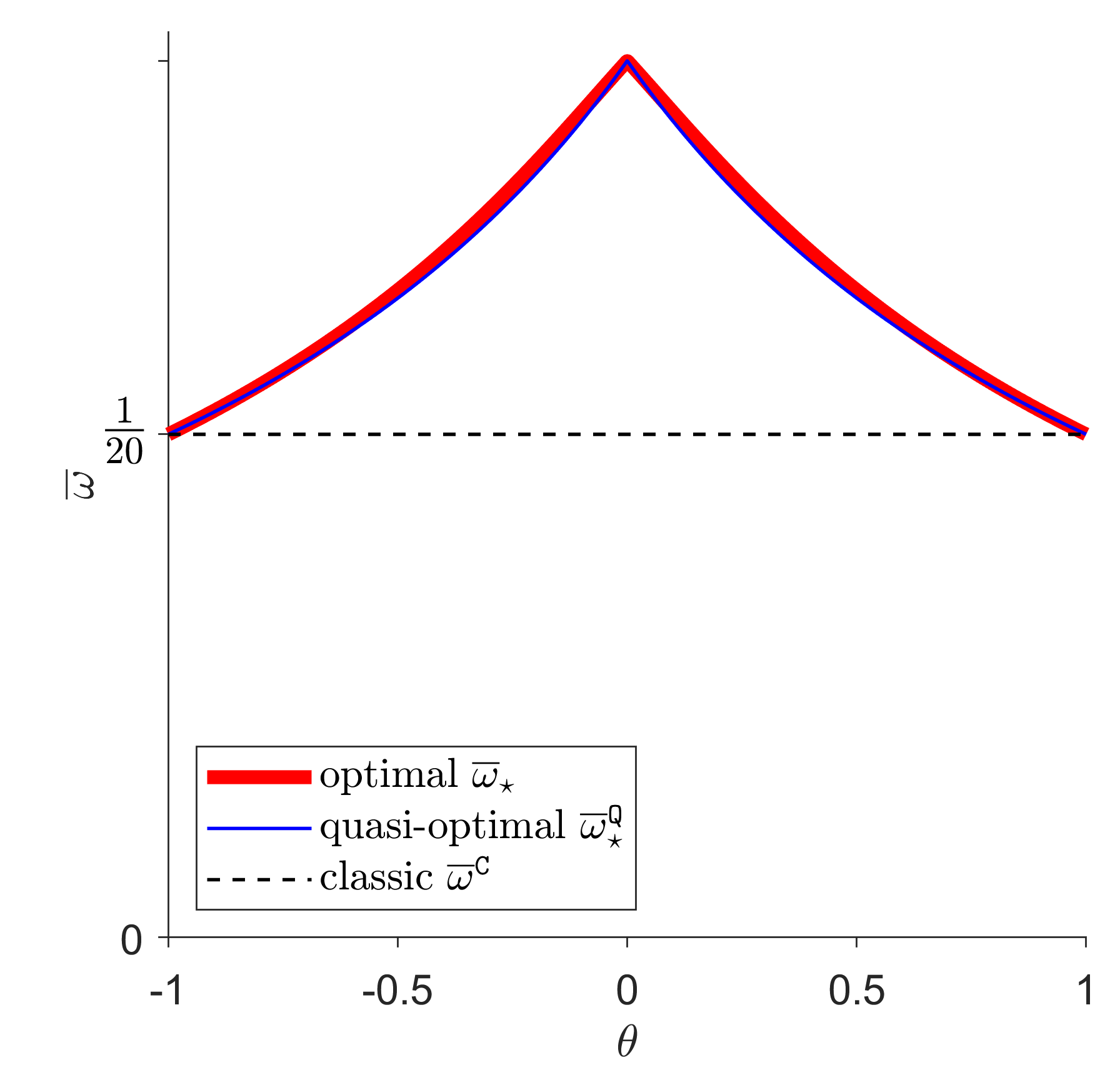}
	\end{subfigure}
	\begin{subfigure}[t][][t]{0.32\textwidth}
		\centering
		\includegraphics[width=0.99\textwidth,trim=15 0 0 0,clip]{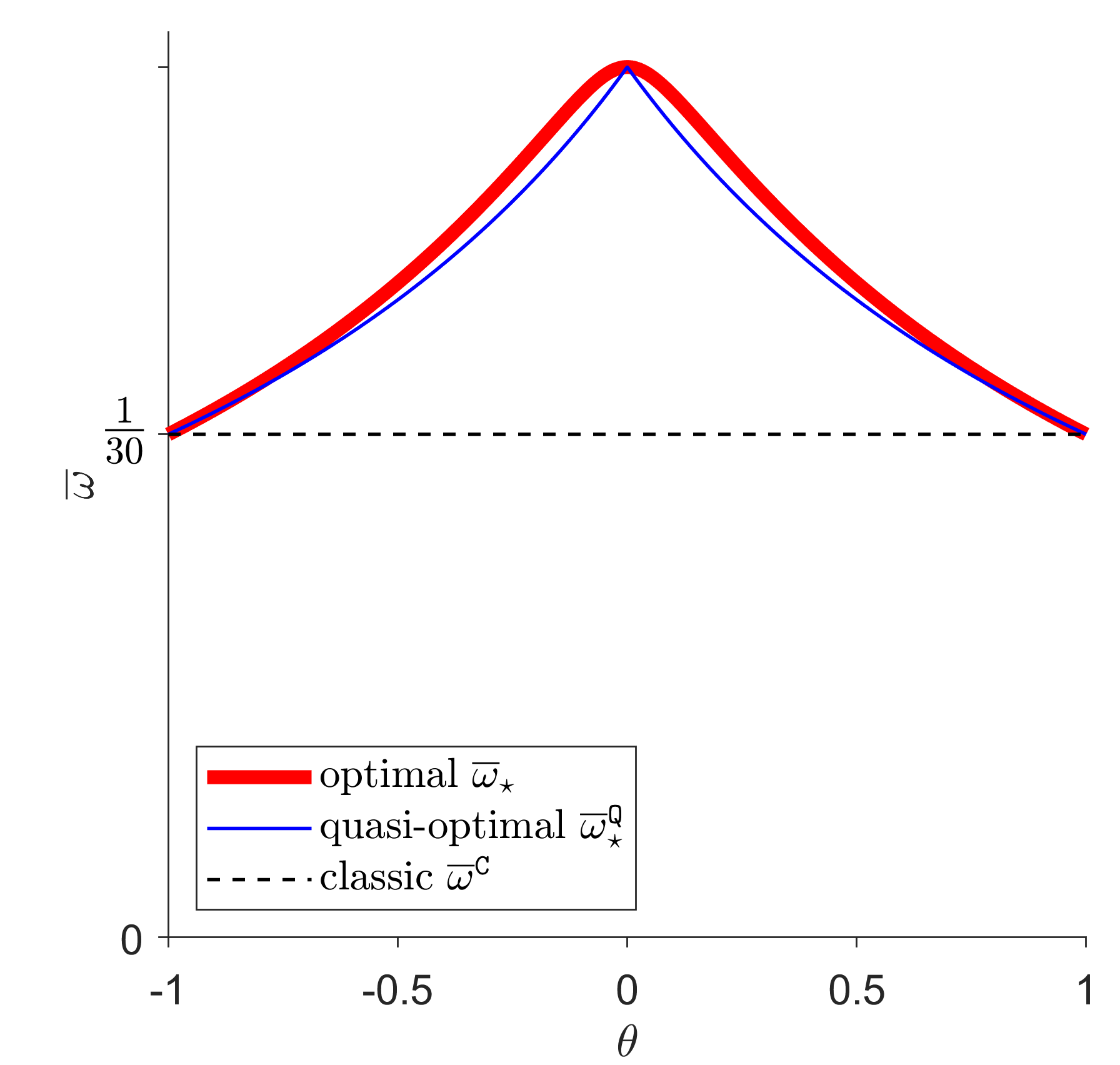}
	\end{subfigure}
	\\
	\begin{subfigure}[t][][t]{0.32\textwidth}
		\centering
		\includegraphics[width=0.99\textwidth,trim=17 0 0 0,clip]{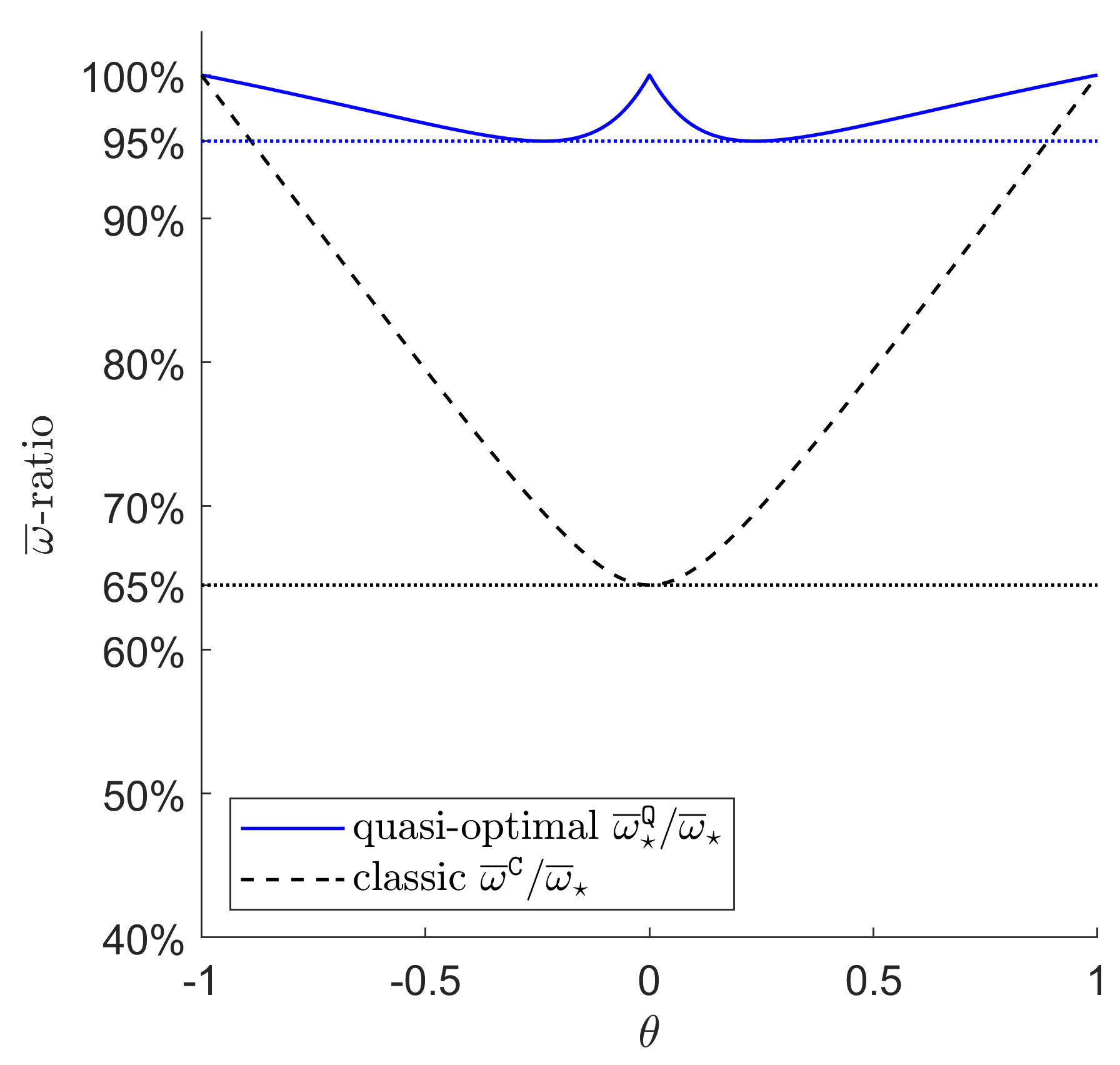}
		\caption{$\mathbb{P}^4$ and $\mathbb{P}^5$.}
	\end{subfigure}
	\begin{subfigure}[t][][t]{0.32\textwidth}
		\centering
		\includegraphics[width=0.99\textwidth,trim=17 0 0 0,clip]{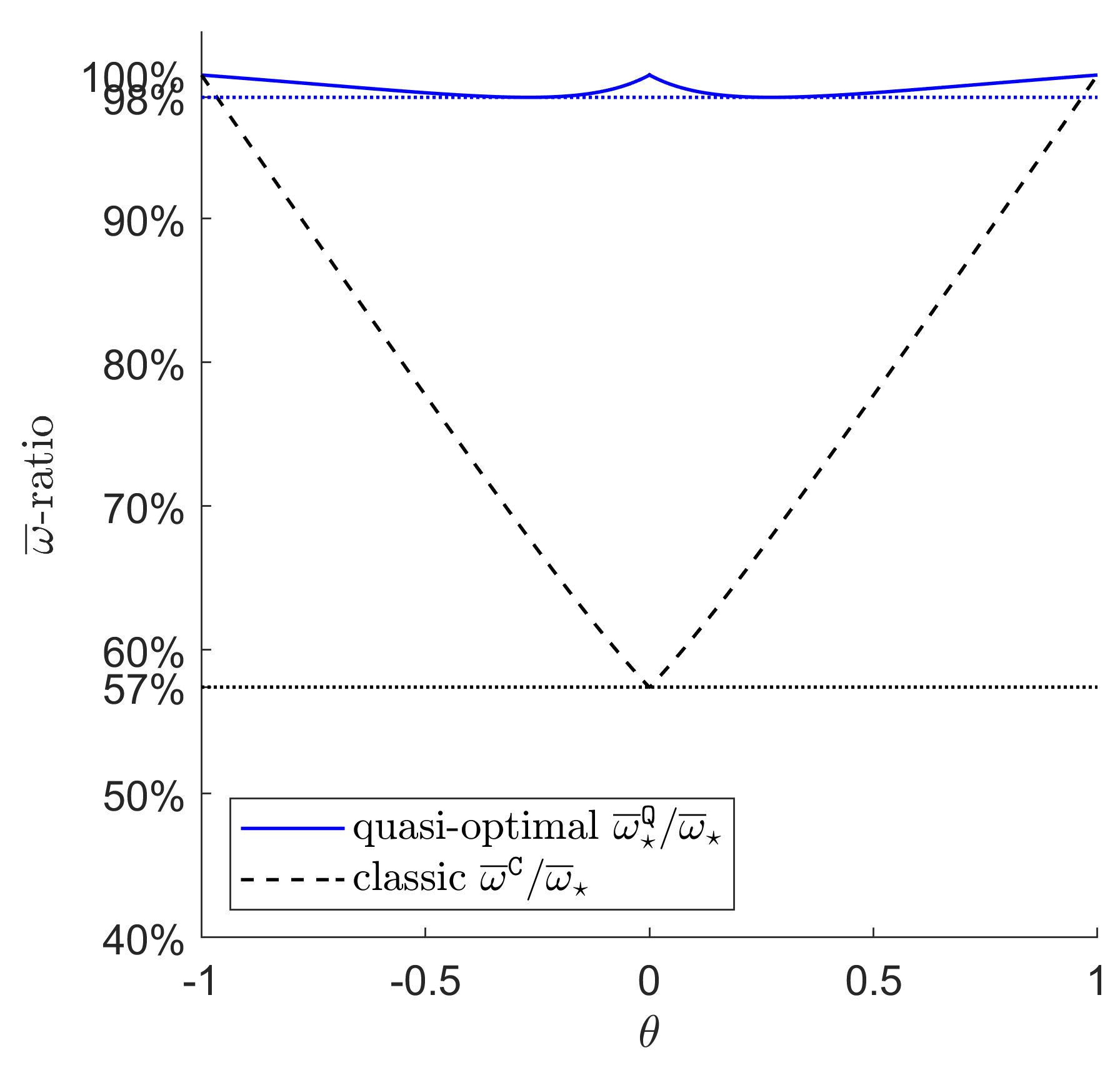}
		\caption{$\mathbb{P}^6$ and $\mathbb{P}^7$.}
	\end{subfigure}
	\begin{subfigure}[t][][t]{0.32\textwidth}
		\centering
		\includegraphics[width=0.99\textwidth,trim=17 0 0 0,clip]{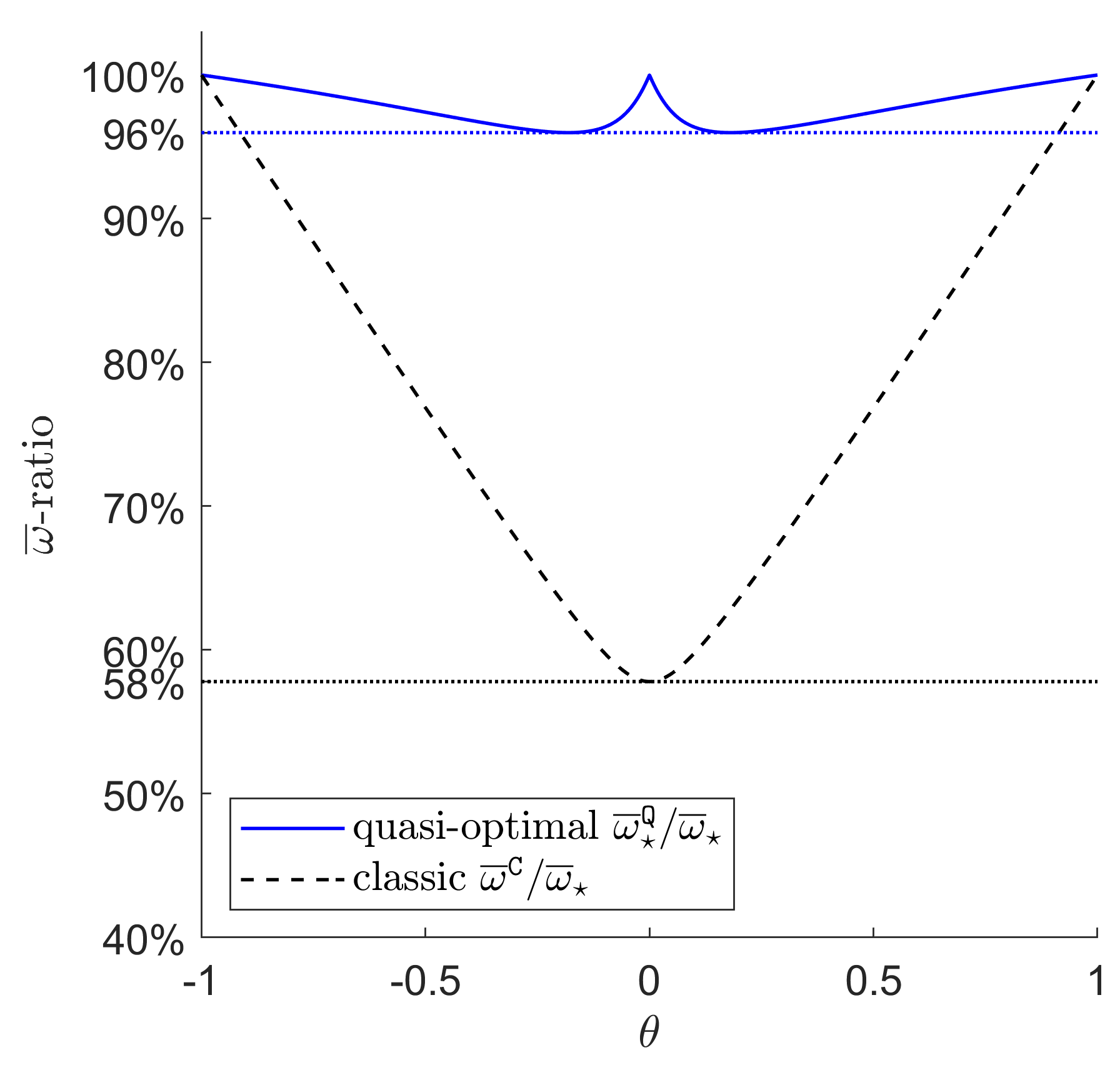}
		\caption{$\mathbb{P}^8$ and $\mathbb{P}^9$.}
	\end{subfigure}
	\caption{Comparison of the optimal, classic, quasi-optimal CADs in terms of their 
		boundary weights (top row) and $\overline \omega$-ratios (bottom row). 
	}\label{fig:188}
\end{figure}

\section{BP schemes based on OCAD and quasi-optimal CAD for hyperbolic systems}\label{sec:BPscheme} 

In this section, we apply the proposed OCAD and quasi-optimal CAD to designing 
efficient BP schemes for hyperbolic systems of conservation laws. 
Since we have proved that the existing classic CAD \eqref{eq:U2Dsplit} by Zhang and Shu \cite{zhang2010,zhang2010b} is already optimal in 1D and for $\mathbb Q^k$ spaces in 2D, 
this section is only focused on the  $\mathbb P^k$-based BP  DG schemes in 2D.


Consider the $(k+1)$th-order $\mathbb P^k$-based DG scheme with the forward Euler time discretization\footnote{All our discussions are also valid for high-order strong-stability-preserving (SSP) time discretizations \cite{GottliebKetchesonShu2011}, which are convex combinations of forward Euler step.} for  
the 2D hyperbolic conservation laws
\begin{equation}\label{2DCL}
	\frac{\partial }{\partial t} u+ \frac{\partial }{\partial x} f_1(u) + \frac{\partial }{\partial y} f_2(u)  = 0  \qquad (x,y,t) \in \mathbb{R} \times \mathbb{R} \times \mathbb R^{+}.
\end{equation}
Following the Zhang--Shu framework \cite{zhang2010,zhang2010b}, in order to design a BP DG scheme, we only need to ensure the cell averages within the region $G$. As long as the BP property of the updated cell averages is guaranteed, 
one may employ a simple BP limiter to enforce the pointwise bounds 
of the piecewise DG polynomial solutions without affecting the high-order accuracy \cite{zhang2010,zhang2010b}. 
On a rectangular cell $\Omega_{ij}:=[x_{i-\frac12},x_{i+\frac12}]\times[y_{j-\frac12},y_{j+\frac12}]$, the evolution equation of cell averages for the $(k+1)$th-order DG scheme reads
\begin{equation}\label{eq:171}
	\begin{aligned}
		\bar {u}_{ij}^{n+1} = 
		\bar u_{ij}^n & - 
		\frac{\Delta t}{\Delta x} \sum_{q=1}^Q \omega_q^{\tt G}
		\left[  \hat f_1 \big( u_{i+\frac12,q}^{-}, u_{i+\frac12,q}^{+}  \big) - \hat f_1 \big( u_{i-\frac12,q}^{-}, u_{i-\frac12,q}^{+}  \big)  \right]
		\\
		& - 
		\frac{\Delta t}{\Delta y} \sum_{q=1}^Q \omega_q^{\tt G} 
		\left[  \hat f_2 \big( u_{q,j+\frac12}^{-}, u_{q,j+\frac12}^{+}  \big) - \hat f_2 \big( u_{q,j-\frac12}^{-}, u_{q,j-\frac12}^{+}  \big)  \right], 
	\end{aligned}
\end{equation} 
where 
$Q$ is typically taken as $k+1$ such that the Gauss quadrature has sufficient high-order accuracy, and 
the limiting values at the cell interfaces are computed by  
\begin{align} \label{eq:u1}
	&u_{i-\frac12,q}^{+} = p_{ij} \big( x_{i-\frac12}, y_{j,q}^{\tt G} \big), \quad 
	u_{i+\frac12,q}^{-} = p_{ij} \big( x_{i+\frac12}, y_{j,q}^{\tt G} \big), \\
	& \label{eq:u2}
	u_{q,j-\frac12}^{+} = p_{ij} \big( x_{i,q}^{\tt G}, y_{j-\frac12} \big), \quad 
	u_{q,j+\frac12}^{-} = p_{ij} \big( x_{i,q}^{\tt G}, y_{j+\frac12} \big)
\end{align}
with $p_{ij}(x,y) \in \mathbb{P}^k$ denoting the DG solution polynomial on $\Omega_{ij}$ at time level $n$ satisfying
\begin{equation*}
	\bar {u}_{ij}^{n} = \frac{1}{\dx \dy} \int_{ x_{i-\frac12} }^{ x_{i+\frac12} } \int_{ y_{j-\frac12} }^{ y_{j+\frac12} } p_{ij}(x,y) \, {\rm d} y {\rm d} x. 
\end{equation*}
In \eqref{eq:171}, we take the numerical fluxes $\hat{f}_1$ and $\hat{f}_2$ as the BP numerical fluxes with which the corresponding 1D three-point first-order schemes are BP, {\rm i.e.}, for any $u_1,u_2,u_3 \in G$ it holds that 
\begin{equation}\label{1DBP}
	u_2 - \frac{ \dt }{ \dx} \left( \hat{f}_1(u_2,u_3) - \hat{f}_1(u_1,u_2) \right) \in G, \qquad 
	u_2 - \frac{\dt }{ \dy} \left( \hat{f}_2(u_2,u_3) - \hat{f}_2(u_1,u_2) \right) \in G \qquad 
\end{equation}
under a suitable CFL condition $\max\{a_1 \dt / \dx,a_2 \dt/\dy\} \le c_0$, where $a_1$ and $a_2$ denote the maximum characteristic speeds in $x$- and $y$-directions, and $c_0$ is the maximum allowable CFL number for the 1D first-order schemes. For example, typically $c_0=1$ for the Lax--Friedrichs flux \cite{zhang2010b,jiang2018invariant},  and $c_0=\frac12$ for the HLL and HLLC fluxes \cite{jiang2018invariant}.

We now discuss the BP conditions for the scheme (\ref{eq:171}) based on a 2D feasible  symmetric CAD for $\mathbb P^k$ in the form of 
\begin{equation}\label{eq:15275}
	\langle p \rangle_{\Omega} = \Wmu \left[ (1+\theta) \langle p \rangle_{\Omega}^x + (1-\theta) \langle p \rangle_{\Omega}^y \right] + \sum_{s=1}^S \omega_s \overline{p(x^{(s)},y^{(s)})} \qquad \forall p \in \mathbb P^k
\end{equation}
with $\theta$ defined in \eqref{eq:eta}. Given a feasible 
CAD \eqref{eq:15275} on the reference cell $\Omega = [-1,1]^2$, we should first transfer it onto 
$\Omega_{ij}$ as 
\begin{equation}\label{eq:15275b}
	\langle p \rangle_{\Omega_{ij}} = \Wmu \left[ (1+\theta) \langle p \rangle_{\Omega_{ij}}^x + (1-\theta) \langle p \rangle_{\Omega_{ij}}^y \right] + \sum_{s=1}^S \omega_s \overline{p(x_{ij}^{(s)},y_{ij}^{(s)})} \qquad \forall p \in \mathbb P^k 
\end{equation}
with the weights unchanged and 
the nodes $\{( x_{ij}^{(s)}, y_{ij}^{(s)} )\}$ given by the inverse transformation of (\ref{eq:trans}).

\begin{theorem}[BP via general symmetric CAD]\label{thm:CFL}
	Suppose that there is a 2D feasible symmetric CAD in the form of \eqref{eq:15275}. 
	If the DG solution polynomial $p_{ij}(x,y)$ satisfies for all $i$ and $j$ that 
	\begin{equation}\label{eq:478a}
		u_{i-\frac12,q}^{+}  \in G, \quad
		u_{i+\frac12,q}^{-}  \in G, \quad
		u_{q,j-\frac12}^{+}  \in G, \quad
		u_{q,j+\frac12}^{-}  \in G, \quad
		q = 1,\dots,Q, 
	\end{equation}
	and 
	\begin{equation}\label{eq:478a2}
		\Pi_{ij} := \frac{1}{1-2 \Wmu } \sum_{s=1}^S \omega_s \overline{p_{ij}(x_{ij}^{(s)},y_{ij}^{(s)})} \in G, 
	\end{equation}    
	then the high-order scheme \eqref{eq:171} preserves 
	$\bar u_{ij}^{n+1}\in G$ under the BP CFL condition
	\begin{equation}\label{key3dd}
		\Delta t \left( \frac{a_1 } {\Delta x} + \frac{ a_2 }{ \Delta y} \right) \le    \Wmu c_0. 
	\end{equation}
\end{theorem}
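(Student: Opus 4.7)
The plan is to adapt the Zhang--Shu convex-combination argument, using the general symmetric CAD \eqref{eq:15275b} in place of the tensor-product Gauss--Lobatto decomposition. First I would apply the CAD to the DG polynomial $p_{ij}$ itself: since $p_{ij} \in \mathbb{P}^k$ and the $Q$-point Gauss rule is exact for $\mathbb{P}^k$, the line averages $\langle p_{ij} \rangle_{\Omega_{ij}}^{\pm x}$ and $\langle p_{ij} \rangle_{\Omega_{ij}}^{\pm y}$ can be expressed as $\sum_q \omega_q^{\tt G}$-weighted sums of the interface values $\{u_{i\pm\frac12,q}^{\mp},u_{q,j\pm\frac12}^{\mp}\}$ defined in \eqref{eq:u1}--\eqref{eq:u2}. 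Combined with $\sum_s \omega_s = 1 - 2\Wmu$, this yields the identity
\[
\bar u_{ij}^n = \tfrac{\Wmu(1+\theta)}{2}\sum_{q=1}^Q \omega_q^{\tt G}\bigl(u_{i-\frac12,q}^{+}+u_{i+\frac12,q}^{-}\bigr) + \tfrac{\Wmu(1-\theta)}{2}\sum_{q=1}^Q \omega_q^{\tt G}\bigl(u_{q,j-\frac12}^{+}+u_{q,j+\frac12}^{-}\bigr) + (1-2\Wmu)\,\Pi_{ij}.
\]

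Next, I would substitute this expression for $\bar u_{ij}^n$ into the scheme \eqref{eq:171} and perform the standard splitting of the flux differences by inserting the intermediate interior fluxes $\hat f_1(u_{i-\frac12,q}^{+},u_{i+\frac12,q}^{-})$ and $\hat f_2(u_{q,j-\frac12}^{+},u_{q,j+\frac12}^{-})$. For each $q$, this recasts the $x$-direction contribution as
\[
\tfrac{\Wmu(1+\theta)}{2}\Bigl[ H_{\lambda_x}\bigl(u_{i-\frac12,q}^{-},u_{i-\frac12,q}^{+},u_{i+\frac12,q}^{-}\bigr) + H_{\lambda_x}\bigl(u_{i-\frac12,q}^{+},u_{i+\frac12,q}^{-},u_{i+\frac12,q}^{+}\bigr) \Bigr],
\]
where $H_{\lambda_x}(a,b,c) := b - \lambda_x[\hat f_1(b,c)-\hat f_1(a,b)]$ and $\lambda_x := \frac{2\Delta t}{\Delta x\,\Wmu(1+\theta)}$; an analogous expression with $\lambda_y := \frac{2\Delta t}{\Delta y\,\Wmu(1-\theta)}$ handles the $y$-direction. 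Assembling everything, $\bar u_{ij}^{n+1}$ becomes a convex combination of $\Pi_{ij}$ and a collection of the above $H_{\lambda_x}$, $H_{\lambda_y}$ updates, with the nonnegative weights summing to $(1-2\Wmu) + 2\Wmu\sum_q \omega_q^{\tt G} = 1$ (nonnegativity of $1-2\Wmu$ follows from $\sum_s \omega_s>0$).

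The final step is to verify the CFL hypothesis. Using the definition \eqref{eq:eta}, one computes $1+\theta = \frac{2a_1/\Delta x}{a_1/\Delta x + a_2/\Delta y}$ and $1-\theta = \frac{2a_2/\Delta y}{a_1/\Delta x + a_2/\Delta y}$, so the condition \eqref{key3dd} is equivalent to the simultaneous bounds $a_1 \lambda_x \le c_0$ and $a_2 \lambda_y \le c_0$. Under these, the 1D three-point BP property \eqref{1DBP} guarantees each $H_{\lambda_x}$ and $H_{\lambda_y}$ update lies in $G$, since the three input states are in $G$ by \eqref{eq:478a}; together with $\Pi_{ij}\in G$ from \eqref{eq:478a2} and the convexity of $G$, we conclude $\bar u_{ij}^{n+1} \in G$.

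I do not anticipate a serious technical obstacle: the decomposition is essentially bookkeeping once the symmetric CAD form \eqref{eq:15275b} is rewritten in terms of Gauss point values. The one place that deserves care is checking that the CFL translation $(\lambda_x,\lambda_y)\mapsto \Wmu c_0 \ge \Delta t(a_1/\Delta x+a_2/\Delta y)$ holds symmetrically in both directions, which is exactly why the particular parametrization $\overline\omega_1 = \tfrac12\Wmu(1+\theta)$, $\overline\omega_2 = \tfrac12\Wmu(1-\theta)$ introduced in \eqref{key112} was chosen; this makes the two one-dimensional CFL inequalities collapse into the single clean condition \eqref{key3dd}.
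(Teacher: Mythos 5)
Your proposal is correct and follows essentially the same route as the paper's proof: the same CAD-plus-Gauss-quadrature decomposition of $\bar u_{ij}^n$, the same recasting of the scheme as a convex combination of formally first-order three-point updates with effective ratios $\Delta t/(\widehat\omega_1\Delta x)$ and $\Delta t/(\widehat\omega_2\Delta y)$, and the same translation of the two directional CFL bounds into the single condition \eqref{key3dd}. The only cosmetic difference is that your displayed per-$q$ contribution omits the factor $\omega_q^{\tt G}$, which you correctly restore when summing the weights to one.
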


\begin{proof}
	Applying the symmetric CAD \eqref{eq:15275b} to 
	$\bar u_{ij}^{n} = \langle p_{ij} \rangle_{ \Omega_{ij} }$ gives 
	\begin{equation}\label{eq:4540}
		\bar u_{ij}^{n} =   \widehat \omega_1  \sum_{q=1}^Q   \omega_q^{{\tt G}}  
		\left(  u_{i-\frac12,q}^+ +  u_{i+\frac12,q}^- \right) 
		+  \widehat \omega_2  \sum_{q=1}^Q   \omega_q^{{\tt G}} 
		\left( u_{q,j-\frac12}^+ +  u_{q,j+\frac12}^- \right) 
		+ ( 1 - 2 \Wmu ) \Pi_{ij}
	\end{equation}
	where the Gauss quadrature and \eqref{eq:478a2} have been used, 
	and 
	$$\widehat \omega_1:= \frac{ \Wmu  (1+\theta) } 2 =  \frac{\Wmu a_1/\Delta x  }{a_1 / \Delta x + a_2 / \Delta y}, \qquad 
	\widehat \omega_2:= \frac{ \Wmu  (1-\theta) } 2 =  \frac{\Wmu a_2/\Delta x  }{a_1 / \Delta x + a_2 / \Delta y}.
	$$
	Under the assumption of \eqref{eq:478a} and \eqref{eq:478a2}, it holds that $\bar u_{ij}^{n} \in G$.  
	Substituting the decomposition \eqref{eq:4540} into \eqref{eq:171}, one can rewrite the scheme \eqref{eq:171} as 
	\begin{equation}\label{eq:476}
		\bar {u}_{ij}^{n+1}  = \sum_{q=1}^Q    \omega_q^{\tt G}  \,
		\widehat \omega_1 (  H_{i+\frac12,q}^- + H_{i-\frac12,q}^+  ) + 
		\sum_{q=1}^Q    \omega_q^{\tt G}  \,
		\widehat \omega_2 ( H_{q,j+\frac12}^- + H_{q,j-\frac12}^+ ) + ( 1 - 2 \Wmu ) \Pi_{ij}
	\end{equation} 
	with
	\begin{align*}
		H_{i+\frac12,q}^- &= u_{i+\frac12,q}^- - \frac{\Delta t}{\widehat \omega_1 \Delta x} 
		\left( \hat f_1 \big( u_{i+\frac12,q}^-, u_{i+\frac12,q}^+ \big)  
		- \hat f_1 \big( u_{i-\frac12,q}^+, u_{i+\frac12,q}^- \big)  \right),
		\\
		H_{i-\frac12,q}^+ & = u_{i-\frac12,q}^+ - \frac{\Delta t}{\widehat \omega_1 \Delta x} 
		\left(   \hat f_1 \big( u_{i-\frac12,q}^+, u_{i+\frac12,q}^- \big) 
		- \hat f_1 \big( u_{i-\frac12,q}^-, u_{i-\frac12,q}^+ \big)   \right),
		\\
		H_{q,j+\frac12}^- &= u_{q,j+\frac12}^- - \frac{\Delta t}{\widehat \omega_2 \Delta y} 
		\left( \hat f_2 \big( u_{q,j+\frac12}^-, u_{q,j+\frac12}^+ \big)  
		- \hat f_2 \big( u_{q,j-\frac12}^+, u_{q,j+\frac12}^- \big)  \right),
		\\
		H_{q,j-\frac12}^+ &= u_{q,j-\frac12}^+ - \frac{\Delta t}{\widehat \omega_2 \Delta y} 
		\left(  \hat f_2 \big( u_{q,j-\frac12}^+, u_{q,j+\frac12}^- \big) 
		- \hat f_2 \big( u_{q,j-\frac12}^-, u_{q,j-\frac12}^+ \big) \right),
	\end{align*}
	which have the same form as the 1D three-point first-order schemes \eqref{1DBP} and thus satisfy  
	\begin{equation*}
		H_{i+\frac12,q}^-\in G, \quad  H_{i-\frac12,q}^+ \in G, \quad H_{q,j+\frac12}^- \in G, \quad H_{q,j-\frac12}^+ \in G,
	\end{equation*}
	under the CFL type conditions 
	\begin{equation}\label{eq:tempCFL00}
		a_1 {\Delta t}\le c_0 {\widehat \omega_1  \Delta x}, \quad  a_2 {\Delta t}\le c_0 { \widehat \omega_2  \Delta y}.  
	\end{equation}
	Because (\ref{eq:476}) is a convex combination form, by the convexity of $G$ we conclude that $\bar {u}_{ij}^{n+1} \in G$ under the CFL conditions \eqref{eq:tempCFL00}, which are exactly equivalent to \eqref{key3dd}. The proof is completed. 
\end{proof}

\begin{remark}[Classic CAD]\label{rem:CCAD}
	If the Zhang--Shu classic CAD is considered, then 
	the BP CFL condition \eqref{key3dd} becomes 
	\begin{equation}\label{key3dd-c}
		\Delta t \left( \frac{a_1 } {\Delta x} + \frac{ a_2 }{ \Delta y} \right) \le    \omega_1^{{\tt GL}} c_0. 
	\end{equation}	
\end{remark}

\begin{remark}[BP Limiter] 
	The condition \eqref{eq:478a2} is satisfied if ${p_{ij}(x,y)} \in G$ for all $(x,y)\in \mathbb S_{ij}$. 
	In general, the DG solution polynomial may not automatically meet  
	the conditions \eqref{eq:478a} and \eqref{eq:478a2}, which should be enforced by a BP limiter. 
	For the scalar conservation law with the maximum principle \eqref{scalar_bounds} and $G=[U_{\min}, U_{\max} ]$, 
	the BP limiter \cite{zhang2010} is given by 
	\begin{equation}\label{eq:BPlimiter}
		\tilde p_{ij}(x,y) =  \delta ( p_{ij}(x,y) - \bar u_{ij}^{n} ) + \bar u_{ij}^{n}, 
		\qquad \delta = \min \left\{  
		\left| \frac{U_{\max} - \bar u_{ij}^{n} }{ p_{ij}^{\max} -  \bar u_{ij}^{n} } \right|,  
		\left| \frac{U_{\min} - \bar u_{ij}^{n} }{ p_{ij}^{\min} -  \bar u_{ij}^{n} } \right|, 
		1
		\right\},
	\end{equation}
	where 
	$$p_{ij}^{\max} = \max_{ (x,y)\in \Theta_{ij} } p_{ij}(x,y),~~~ 
	p_{ij}^{\min} = \min_{ (x,y)\in \Theta_{ij} } p_{ij}(x,y),~~~
	\Theta_{ij} = \{ (x_{i \pm \frac12}, y_{j,q}^{\tt G}), 
	(x_{i,q}^{\tt G}, y_{j\pm \frac12}) \}_{q=1}^Q \cup \mathbb S_{ij}.$$ 
	One can verify that the limited DG solution polynomial $\tilde p_{ij}$ satisfies the desired conditions \eqref{eq:478a} and \eqref{eq:478a2}.    
	Similar local scaling BP limiters have also been designed for the Euler equations \cite{zhang2010b} and many other hyperbolic systems \cite{wang2012robust,QinShu2016,Wu2017,WuShu2019}. It is worth noting that the internal nodes of the OCAD are much fewer than those of the classic CAD (see \Cref{fig:1882}). 
	When the local scaling BP limiter is performed at the internal nodes in all computational cells, using our OCAD also reduces the computational cost in the BP limiting procedure.  
\end{remark}

\begin{remark}[Simplified BP Limiter]\label{rem:SBP} 
	One can also use a simplified BP limiter \cite{zhang2011b} to enforce the conditions \eqref{eq:478a} and \eqref{eq:478a2}, 
	without using the internal nodes $\mathbb S_{ij}$ of the CAD. 
	In fact, according to \eqref{eq:4540}, $\Pi_{ij}$ can also be represented as 
	\begin{equation}\label{eq:478a3}
		\Pi_{ij} = \frac{ \bar u_{ij}^{n}  -  \Wmu \left[  \frac{ 1 + \theta }2    \sum_{q=1}^Q   \omega_q^{{\tt G}}  
			\left(  u_{i-\frac12,q}^+ +  u_{i+\frac12,q}^- \right) 
			+  \frac{ 1 - \theta }2  \sum_{q=1}^Q   \omega_q^{{\tt G}} 
			\left( u_{q,j-\frac12}^+ +  u_{q,j+\frac12}^- \right)  \right]  }{1-2 \Wmu } . 
	\end{equation}   
	For example, for the scalar conservation law with $G=[U_{\min}, U_{\max} ]$, the simplified BP limiter is given by \eqref{eq:BPlimiter} 
	with 
	\begin{align}\label{eq:sBPlimiter1}
		p_{ij}^{\max} = \max \left \{ p_{ij}(x_{i \pm \frac12}, y_{j,q}^{\tt G}), p_{ij} (x_{i,q}^{\tt G}, y_{j\pm \frac12}), \Pi_{ij}  \right\},
		\\  \label{eq:sBPlimiter2}
		p_{ij}^{\min} = \min \left \{ p_{ij}(x_{i \pm \frac12}, y_{j,q}^{\tt G}), p_{ij} (x_{i,q}^{\tt G}, y_{j\pm \frac12}), \Pi_{ij} \right \},
	\end{align}
	where $\Pi_{ij}$ is computed by \eqref{eq:478a3}. 
	{\em A remarkable advantage of using 
	the simplified BP limiter is that it 
	only involves the boundary weight of CAD and 
	 does not require the information of internal CAD nodes in the resulting BP schemes. 
	 As we have seen, finding the internal CAD nodes is difficult. Therefore, using 
	  the simplified BP limiter to construct BP schemes effectively avoids such  difficulty. 
}
\end{remark}

As direct consequences of \Cref{thm:CFL}, we have the following conclusions. 

\begin{theorem}[BP via OCAD]\label{thm:CFL0}
	Consider the OCAD for the $\mathbb P^k$-based DG scheme. 
	If for all $i$ and $j$ the (limited) DG solution polynomial $\tilde p_{ij}(x,y)$ satisfies 
	\begin{equation}\label{eq:478a-L}
		\tilde u_{i-\frac12,q}^{+}  \in G, \quad
		\tilde u_{i+\frac12,q}^{-}  \in G, \quad
		\tilde u_{q,j-\frac12}^{+}  \in G, \quad
		\tilde u_{q,j+\frac12}^{-}  \in G, \quad
		q = 1,\dots,Q, 
	\end{equation}
	and 
	\begin{equation}\label{eq:478a2-L}
		\Pi_{ij} := \frac{ \bar u_{ij}^{n}  -  \Wmu_\star(\theta, \mathbb P^k) \left[ \frac{ 1 + \theta }2    \sum_{q=1}^Q   \omega_q^{{\tt G}}  
			\left(  \tilde u_{i-\frac12,q}^+ +  \tilde u_{i+\frac12,q}^- \right) 
			+  \frac{ 1 - \theta }2  \sum_{q=1}^Q   \omega_q^{{\tt G}} 
			\left( \tilde u_{q,j-\frac12}^+ +  \tilde u_{q,j+\frac12}^- \right)  \right]  }{1-2  \Wmu_\star(\theta, \mathbb P^k) } \in G, 
	\end{equation}  
	then the high-order scheme \eqref{eq:171} with the BP limiter preserves 
	$\bar u_{ij}^{n+1}\in G$ under the BP CFL condition
	\begin{equation}\label{key3ddo}
		\Delta t \left( \frac{a_1 } {\Delta x} + \frac{ a_2 }{ \Delta y} \right) \le    \Wmu_\star c_0. 
	\end{equation}
\end{theorem}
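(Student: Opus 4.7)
The plan is to derive this theorem as a direct corollary of \Cref{thm:CFL} by specializing the feasible symmetric CAD to be the symmetric OCAD for $\mathbb{P}^k$, whose existence is guaranteed by the constructions in \Cref{sec:2DPk} (analytically for $k \le 7$ via \Cref{thm:3129,thm:3161,thm:3650}, and via \Cref{alg:3427} for $k \ge 8$). Setting $\Wmu = \Wmu_\star(\theta,\mathbb{P}^k)$ in \eqref{eq:15275} and applying the inverse transformation of \eqref{eq:trans} gives the corresponding feasible symmetric CAD \eqref{eq:15275b} on $\Omega_{ij}$ with $\Wmu = \Wmu_\star(\theta,\mathbb{P}^k)$. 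The BP CFL condition \eqref{key3dd} of \Cref{thm:CFL} with this choice of $\Wmu$ immediately reads as \eqref{key3ddo}.

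The one subtle step is to reconcile the form of $\Pi_{ij}$ used in \eqref{eq:478a2-L} with the internal-node form \eqref{eq:478a2} appearing in the hypothesis of \Cref{thm:CFL}. The key observation is that these two expressions are identically equal on the limited DG polynomial $\tilde p_{ij}$. Indeed, applying the CAD \eqref{eq:15275b} with $\Wmu = \Wmu_\star(\theta,\mathbb{P}^k)$ to $\tilde p_{ij}\in\mathbb{P}^k$ yields the decomposition \eqref{eq:4540} (exactly as in the proof of \Cref{thm:CFL}); solving that identity for $\tfrac{1}{1-2\Wmu_\star}\sum_s \omega_s \overline{\tilde p_{ij}(x_{ij}^{(s)},y_{ij}^{(s)})}$ recovers precisely the right-hand side of \eqref{eq:478a2-L}. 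Thus \eqref{eq:478a2-L} is the rewritten form \eqref{eq:478a3} of condition \eqref{eq:478a2}, and hypotheses \eqref{eq:478a-L}--\eqref{eq:478a2-L} are exactly hypotheses \eqref{eq:478a}--\eqref{eq:478a2} of \Cref{thm:CFL} applied to $\tilde p_{ij}$.

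With these two identifications in place, \Cref{thm:CFL} directly yields $\bar u_{ij}^{n+1}\in G$ under \eqref{key3ddo}, completing the proof. There is essentially no genuine obstacle here: the theorem is a specialization of \Cref{thm:CFL}, and the only calculation to verify is the algebraic equivalence between \eqref{eq:478a2} and the closed-form representation \eqref{eq:478a2-L}, which is an immediate rearrangement of \eqref{eq:4540}. A brief remark will note that the practical importance of using the representation \eqref{eq:478a2-L} is precisely what is emphasized in \Cref{rem:SBP}: it bypasses any need to evaluate $\tilde p_{ij}$ at the internal OCAD nodes $\{(x_{ij}^{(s)},y_{ij}^{(s)})\}$, so the BP limiter can be enforced using only the Gauss boundary nodes together with the single cell-interior value $\Pi_{ij}$ computable from the cell average and boundary traces.
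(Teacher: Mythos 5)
Your proposal is correct and follows exactly the route the paper intends: the paper states \Cref{thm:CFL0} as a "direct consequence of \Cref{thm:CFL}," and your specialization $\Wmu=\Wmu_\star(\theta,\mathbb P^k)$ together with the identification of \eqref{eq:478a2-L} as the closed form \eqref{eq:478a3} of condition \eqref{eq:478a2} (obtained by rearranging \eqref{eq:4540}) is precisely the content of that reduction and of \Cref{rem:SBP}. No gaps.
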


\begin{theorem}[BP via quasi-optimal CAD]\label{thm:CFL1}
	Consider the quasi-optimal CAD for the $\mathbb P^k$-based DG scheme. 
	If for all $i$ and $j$ the (limited) DG solution polynomial $\tilde p_{ij}(x,y)$ satisfies   
	\eqref{eq:478a-L} 
	and 
	\begin{equation}\label{eq:478a2-q}
		\Pi_{ij} := \frac{ \bar u_{ij}^{n}  -  \Wmu_\star^{\tt Q} \left[  \frac{ 1 + \theta }2    \sum_{q=1}^Q   \omega_q^{{\tt G}}  
			\left( \tilde u_{i-\frac12,q}^+ + \tilde  u_{i+\frac12,q}^- \right) 
			+  \frac{ 1 - \theta }2  \sum_{q=1}^Q   \omega_q^{{\tt G}} 
			\left( \tilde u_{q,j-\frac12}^+ +  \tilde u_{q,j+\frac12}^- \right)  \right]  }{1-2  \Wmu_\star^{\tt Q} } \in G, 
	\end{equation}  
	then the high-order scheme \eqref{eq:171} with the BP limiter preserves 
	$\bar u_{ij}^{n+1}\in G$ under the BP CFL condition
	\begin{equation}\label{key3ddq}
		\Delta t \left( \frac{a_1 } {\Delta x} + \frac{ a_2 }{ \Delta y} \right) \le    \Wmu_\star^{\tt Q} c_0
	\end{equation}
	with $\Wmu^{\tt Q}_\star = \frac{\Wmu_{\star,0} \omega_1^{{\tt GL}} }{\Wmu_{\star,0} |\theta|+ \omega_1^{{\tt GL}}(1-|\theta|)}$ and $\Wmu_{\star,0}:= \Wmu_{\star}(0, \mathbb P^k)$.
\end{theorem}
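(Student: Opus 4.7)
The plan is to obtain this theorem as a direct corollary of Theorem~\ref{thm:CFL}, specialized to the feasible symmetric CAD provided by the quasi-optimal CAD \eqref{eq:2194}. The first step is to observe that the quasi-optimal CAD has exactly the form \eqref{eq:15275} required by Theorem~\ref{thm:CFL}, with boundary weight $\Wmu = \Wmu_\star^{\tt Q}$. Its feasibility for $\mathbb{P}^k$ was already established earlier in Section~\ref{sec:quasi-optimal}, where it was constructed as a convex combination of two feasible OCADs (the OCAD \eqref{eq:1688} or \eqref{eq:1712} for $\theta=\pm 1$, together with the OCAD \eqref{eq:2177} for $\theta=0$) and shown to be feasible by invoking Lemma~\ref{lem:convex}.

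Second, I would match the hypotheses of Theorem~\ref{thm:CFL} to those stated here, applied to the limited DG polynomial $\tilde p_{ij}$. Condition \eqref{eq:478a-L} on the Gauss-node boundary values of $\tilde p_{ij}$ is exactly \eqref{eq:478a}. Condition \eqref{eq:478a2-q} is the equivalent rewriting \eqref{eq:478a3} of the internal-combination condition \eqref{eq:478a2}: indeed, the symmetric cell-average identity \eqref{eq:4540} allows one to solve for $\Pi_{ij}$ in terms of the cell average $\bar u_{ij}^n$ and the boundary Gauss-node values, which is precisely what is written in \eqref{eq:478a2-q} with $\Wmu = \Wmu_\star^{\tt Q}$. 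Hence both hypotheses of Theorem~\ref{thm:CFL} are satisfied.

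Finally, setting $\Wmu = \Wmu_\star^{\tt Q}$ in the BP CFL condition \eqref{key3dd} yields exactly \eqref{key3ddq}, so the conclusion $\bar u_{ij}^{n+1}\in G$ follows immediately. There is no genuine obstacle here, since all the heavy lifting has already been done—namely, constructing the quasi-optimal CAD in Section~\ref{sec:quasi-optimal} and proving the general Theorem~\ref{thm:CFL} via the Zhang--Shu convex decomposition argument. The only mild bookkeeping point worth emphasizing in the write-up is the equivalence between the internal-node form \eqref{eq:478a2} and the boundary-value form \eqref{eq:478a3} of $\Pi_{ij}$, which is exactly what makes the simplified BP limiter of Remark~\ref{rem:SBP} applicable here without requiring explicit knowledge of the internal nodes of the quasi-optimal CAD.
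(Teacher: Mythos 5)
Your proposal is correct and follows exactly the route the paper intends: the paper states this theorem (together with Theorem~\ref{thm:CFL0}) explicitly as a direct consequence of Theorem~\ref{thm:CFL}, obtained by specializing the general symmetric CAD to the quasi-optimal CAD \eqref{eq:2194}, whose feasibility was already established in Section~\ref{sec:quasi-optimal} via Lemma~\ref{lem:convex}. Your remark on the equivalence between the internal-node form \eqref{eq:478a2} and the boundary-value form \eqref{eq:478a3} of $\Pi_{ij}$ is precisely the bookkeeping point the paper handles through Remark~\ref{rem:SBP}, so nothing is missing.
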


\begin{remark}[Comparison of CFL conditions]
		The standard CFL condition for linear stability of the $\mathbb P^k$-based DG method with a $(k+1)$-stage $(k+1)$-order Runge--Kutta (RK) time discretization \cite{cockburn2001runge} is given by the following empirical formula   
	\begin{equation}\label{eq:LS}
	\Delta t	\left(   \frac{a_1}{\dx}+ \frac{a_2}{\dy} \right)  \le \frac{1}{2k+1}.  
	\end{equation} 
Table \ref{tab:CFL} gives a comparison of different CFL numbers in the special case of $a_1 \Delta y = a_2 \Delta x$ (i.e.~$\theta =0$) and $c_0=1$. 
It indicates that 
the optimal BP CFL condition \eqref{key3ddo} of the DG schemes (with the BP limiter) is 
much weaker than the classic BP CFL condition \eqref{key3dd-c} via the Zhang--Shu classic CAD. 
Moreover, if $c_0=1$, 
the optimal BP CFL condition \eqref{key3ddo} is 
typically weaker than the standard CFL condition \eqref{eq:LS} except for $k=8$. In practice, one may need to consider both 
the BP CFL and linearly stable CFL conditions to fully ensure the stability of the DG method. 
Under this consideration, the standard CFL condition with OCAD is sufficient to guarantee the BP property when $c_0=1$, while if $c_0=\frac12$ the optimal BP CFL condition \eqref{key3ddo}  dominates but it is still much weaker than classic BP CFL condition \eqref{key3dd-c}; see \Cref{fig:199} for further illustration.  
\end{remark}

\begin{figure}[htbp]
	\centering
	\begin{subfigure}[t][][t]{0.48\textwidth}
		\centering
		\includegraphics[width=0.99\textwidth]{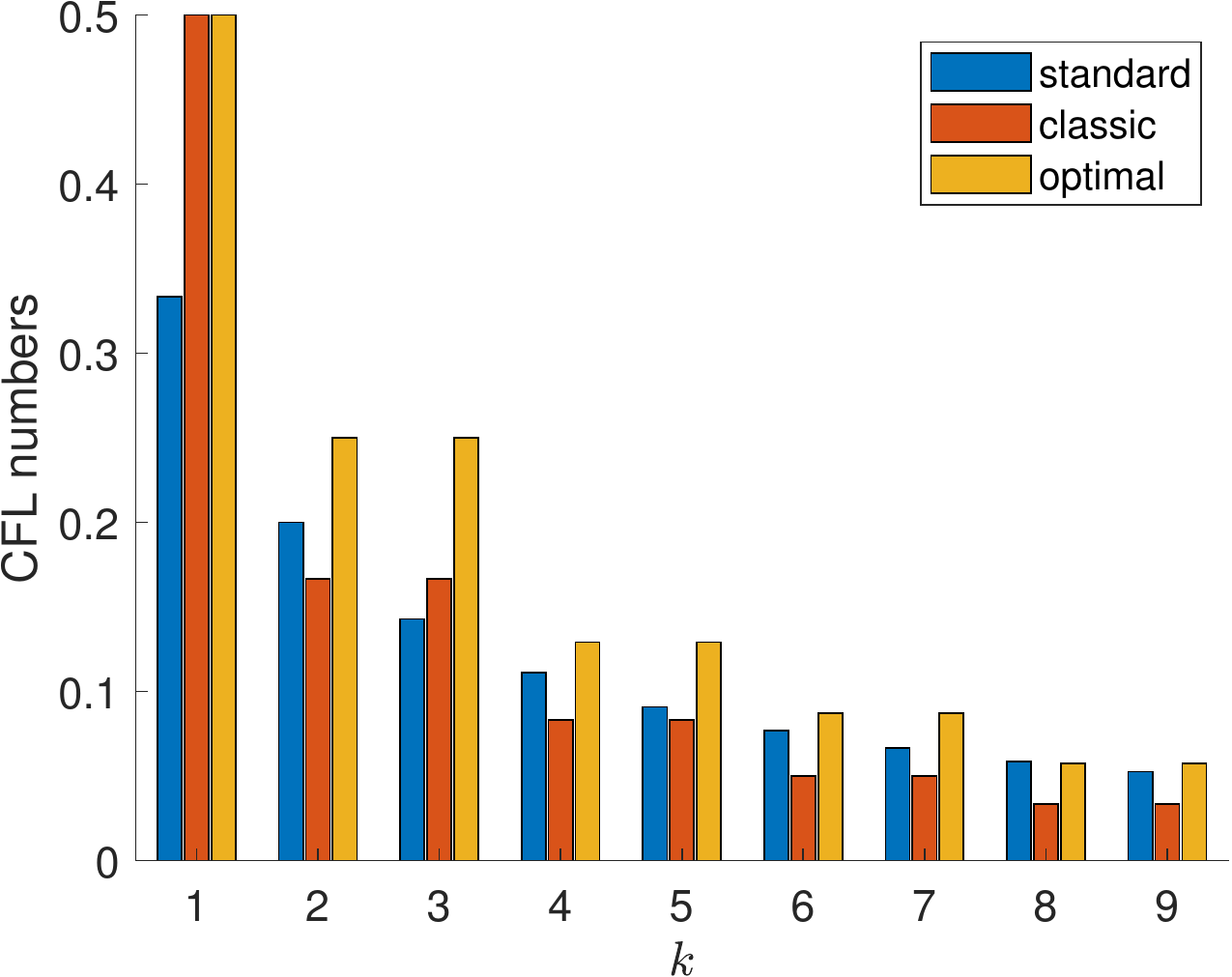}
		\caption{$c_0=1$}
	\end{subfigure}
	\begin{subfigure}[t][][t]{0.48\textwidth}
		\centering
		\includegraphics[width=0.99\textwidth]{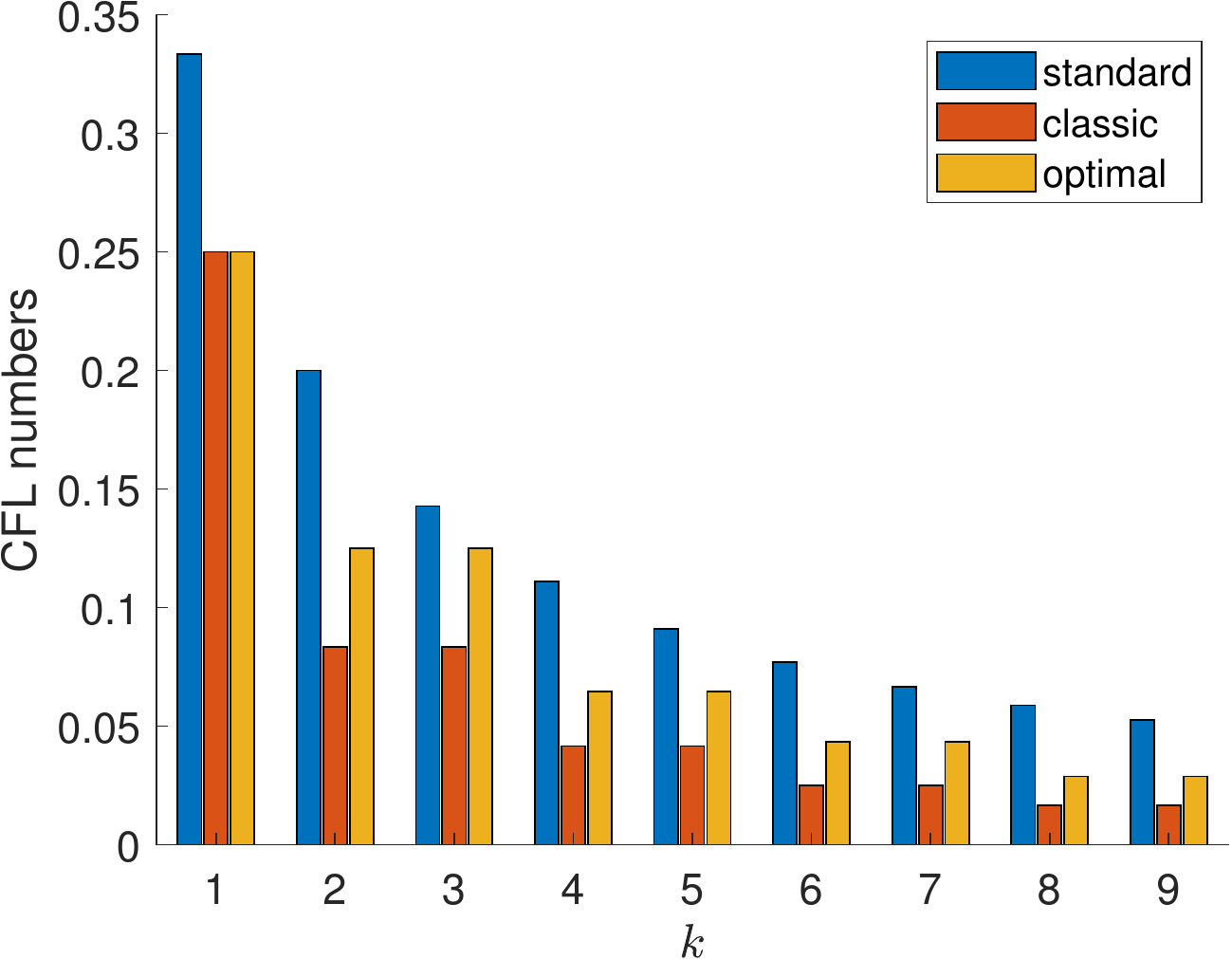}
		\caption{$c_0=\frac12$}
	\end{subfigure}
	\caption{Comparison of standard linearly stable CFL number $\frac{1}{2k+1}$,  classic BP CFL number $\omega_1^{{\tt GL}} c_0$, and 
		optimal BP CFL number $\Wmu_\star c_0$,  
		in the case of $a_1 \Delta y = a_2 \Delta x$ for $\mathbb P^k$-based DG methods with $1\le k \le 9$. 
	}\label{fig:199}
\end{figure}

\begin{remark}[Easy implementation]
	It is worth emphasizing that one 
	only requires a slight and local modification to an existing code to enjoy the above-mentioned advantages of our OCADs or quasi-optimal CADs. Specifically, one only needs to slightly modify the BP limiting procedure, and then the theoretical BP CFL condition is notably improved. 
\end{remark}

\begin{remark}
	The above analysis only considered the forward Euler time discretization. Because a high-order SSP time discretization can be viewed as a convex combination of the forward Euler method, our analysis and conclusions are also valid if the  high-order SSP time discretization is employed. 
\end{remark}

%
%
%

\section{Numerical experiments}\label{sec:examples}

This section tests the accuracy, efficiency, and robustness of the 2D high-order BP DG schemes designed via the proposed OCAD and quasi-optimal CAD, which are respectively referred to as the ``{\tt optimal} approach'' and 
``{\tt quasi-optimal} approach'' for short. For comparison, we also present the results of the 2D high-order BP DG schemes designed via the Zhang--Shu classic CAD \eqref{eq:U2Dsplit}, which are referred to as the ``{\tt classic} approach'' for short. 
We employ the three-stage third-order SSP Runge--Kutta method \cite{GottliebKetchesonShu2011} for time discretization, except for the accuracy tests in Example 1 where the $(k+1)$th-order SSP multi-step method is used    
for the $\mathbb P^k$-based DG scheme to match the temporal and spatial accuracy.  
The time step-size is taken as that indicated by the theoretical BP CFL condition, or that indicated by linear stability, whichever is smaller, namely, 
$$
\Delta t= C_{\tt SSP} \frac{ \min\{ \Wmu c_0, \frac{1}{2k+1} \} }{ \frac{a_1}{\dx}+ \frac{a_2}{\dy} },
$$
where $C_{\tt SSP}$ denotes the SSP coefficient of the adopted time discretization method, $\Wmu = \Wmu_\star$ for the {\tt optimal} approach, $\Wmu = \Wmu_\star^{\tt Q}$ for the {\tt quasi-optimal} approach, and 
$\Wmu = \omega_1^{\tt GL}$ for the {\tt classic} approach, respectively. 
While the CAD is independent of the choice of BP numerical fluxes, we adopt 
the global Lax--Friedrichs flux with $c_0=1$ in all our numerical tests. 
All  the  schemes  are  implemented using C++ language with double precision on a Linux server with 
Intel(R) Xeon(R) Platinum 8268 CPU @ 2.90GHz 2TB RAM. 


\subsection{Example 1: Linear convection equation}
In order to examine the convergence, 
we first consider the 2D linear convection equation 
$$
u_t+u_x + u_y=0, \qquad (x,y,t)\in[-1,1]\times[-1,1]\times\mathbb{R}^+
$$
with periodic boundary conditions and initial data  $u(x,y,0) = \sin(\pi(x+y))$. 
The exact solution $u(x,y,t)=\sin( \pi(x+y-2t) )$ satisfies a maximum principle with the invariant region $G=[-1,1]$. 
We simulate this problem until $t=0.5$. \Cref{tab:ex1_2,tab:ex1_3,tab:ex1_4,tab:ex1_5} list the $\ell^2$ numerical errors and the corresponding convergence rates for the $\mathbb{P}^k$-based BP DG method with $k\in \{2,4,6,8\}$ 
at different grid resolutions of $N \times N$ cells with $\Delta x=\Delta y=2/N$. 
The CPU time of all these simulations is presented in these tables. 
The results show the expected $(k+1)$th-order convergence is achieved by the $\mathbb{P}^k$ BP DG method, while the BP limiter does not destroy the accuracy. 
Moreover, the {\tt optimal} approach admits a larger time step and thus uses much less CPU time than the {\tt classic} approach, while the numerical errors of the two approaches are very close.  
This confirms the advantage in efficiency of using OCAD over the classic CAD.

\begin{table}[htbp] 
	\centering
	\caption{Example 1: $\ell^2$ errors at $t=0.5$ and corresponding convergence rates for $\mathbb{P}^2$-based BP DG methods with a four-step third-order multi-step method whose SSP coefficient is $\frac{1}3$. The CPU time is measured and shown in seconds.}
	\label{tab:ex1_2}
	\setlength{\tabcolsep}{2mm}{
		\begin{tabular}{ccllcll}
			\toprule[1.5pt]
			\multirow{2}{*}{$ N $} &
			\multicolumn{3}{c}{{\tt optimal} approach} & 
			\multicolumn{3}{c}{{\tt classic} approach} \\
			\cmidrule(r){2-4} \cmidrule(l){5-7}
			& $ \ell^{2} $ error & Order & CPU(s)
			& $ \ell^{2} $ error & Order & CPU(s) \\
			
			\midrule[1.5pt]
			& \multicolumn{3}{c}{ $\Delta t=\frac{1}{30}\Delta x$} & \multicolumn{3}{c}{ $\Delta t=\frac{1}{36}\Delta x$}\\
			\cmidrule(r){2-4}  \cmidrule(l){5-7}
			
			20  & 5.37e-4 & -    & 0.52   & 5.39e-4 & -    & 0.67    \\
			40  & 5.94e-5 & 3.18 & 2.50   & 5.96e-5 & 3.18 & 3.37    \\
			80  & 7.30e-6 & 3.02 & 14.62  & 7.31e-6 & 3.03 & 21.88   \\
			160 & 9.11e-7 & 3.00 & 103.20 & 9.11e-7 & 3.00 & 132.39  \\
			320 & 1.14e-7 & 3.00 & 517.60 & 1.14e-7 & 3.00 & 695.42  \\
			640 & 1.42e-8 & 3.00 &2870.80 & 1.42e-8 & 3.00 & 3858.67 \\
			
			\bottomrule[1.5pt]
		\end{tabular}
	}
\end{table}

\begin{table}[htbp] 
	\centering
	\caption{Same as \Cref{tab:ex1_2} except for $\mathbb{P}^4$-based BP DG methods with a ten-step fifth-order multi-step method whose SSP coefficient is about $0.282$.}
	\label{tab:ex1_3}
	\setlength{\tabcolsep}{2mm}{
		\begin{tabular}{ccllcll}
			\toprule[1.5pt]
			\multirow{2}{*}{$ N $} &
			\multicolumn{3}{c}{{\tt optimal} approach} & 
			\multicolumn{3}{c}{{\tt classic} approach} \\
			\cmidrule(r){2-4} \cmidrule(l){5-7}
			& $ \ell^{2} $ error & Order & CPU(s)
			& $ \ell^{2} $ error & Order & CPU(s) \\
			
			\midrule[1.5pt]
			& \multicolumn{3}{c}{ $\Delta t=\frac{0.282}{18}\Delta x$} & \multicolumn{3}{c}{ $\Delta t=\frac{0.282}{24}\Delta x$}\\
			\cmidrule(r){2-4} \cmidrule(l){5-7}
			
			20 &6.98e{-}7 &{-} &4.30  &6.78e{-}7 &{-} &6.95\\%
			40 &2.11e{-}8 &5.05&25.62 &2.11e{-}8 &5.00&35.66\\%
			60 &2.78e{-}9 &5.00&70.72 &2.78e{-}9 &5.00&108.96\\%
			80 &6.61e{-}10&5.00&142.74&6.61e{-}10&5.00&211.88\\%
			100&2.17e{-}10&5.00&225.72&2.17e{-}10&5.00&343.83\\%
			120&8.70e{-}11&5.00&354.57&8.70e{-}11&5.00&542.14\\%
			
			\bottomrule[1.5pt]
		\end{tabular}
	}
\end{table}

\begin{table}[htbp] 
	\centering
	\caption{Same as \Cref{tab:ex1_2} except for $\mathbb{P}^6$-based BP DG methods with an eighteen-step seventh-order multi-step method whose SSP coefficient is about $0.217$.}
	\label{tab:ex1_4}
	\setlength{\tabcolsep}{2mm}{
		\begin{tabular}{ccllcll}
			\toprule[1.5pt]
			\multirow{2}{*}{$ N $} &
			\multicolumn{3}{c}{{\tt optimal} approach} & 
			\multicolumn{3}{c}{{\tt classic} approach} \\
			\cmidrule(r){2-4} \cmidrule(l){5-7}
			& $\ell^{2} $ error & Order & CPU(s)
			& $\ell^{2} $ error & Order & CPU(s) \\
			
			\midrule[1.5pt]
			& \multicolumn{3}{c}{ $\Delta t=\frac{0.217}{26}\Delta x$} & \multicolumn{3}{c}{ $\Delta t=\frac{0.217}{40}\Delta x$}\\
			\cmidrule(r){2-4}  \cmidrule(l){5-7}
			
			4 &1.37e{-}4 &{-} &0.58 &1.57e{-}4 &{-} &1.12\\%
			8 &1.98e{-}7 &9.44&2.35 &3.12e{-}7 &8.98&4.07\\%
			12&1.17e{-}8 &6.97&5.18 &1.37e{-}8 &7.72&9.49\\%
			16&1.59e{-}9 &6.94&10.21&1.59e{-}9 &7.48&18.97\\%
			20&3.40e{-}10&6.91&16.91&3.40e{-}10&6.91&30.92\\%
			24&9.69e{-}11&6.88&24.99&9.69e{-}11&6.88&44.33\\%
			
			\bottomrule[1.5pt]
		\end{tabular}
	}
\end{table}

\begin{table}[htbp] 
	\centering
	\caption{Same as \Cref{tab:ex1_2} except for $\mathbb{P}^8$-based BP DG methods with a 22-step ninth-order multi-step method whose SSP coefficient is $0.1$.}
	\label{tab:ex1_5}
	\setlength{\tabcolsep}{2mm}{
		\begin{tabular}{ccllcll}
			\toprule[1.5pt]
			\multirow{2}{*}{$ N $} &
			\multicolumn{3}{c}{{\tt optimal} approach} & 
			\multicolumn{3}{c}{{\tt classic} approach} \\
			\cmidrule(r){2-4} \cmidrule(l){5-7}
			& $\ell^{2} $ error & Order & CPU(s)
			& $\ell^{2} $ error & Order & CPU(s) \\
			
			\midrule[1.5pt]
			& \multicolumn{3}{c}{ $\Delta t=\frac{0.005767}{2}\Delta x  \approx \frac{0.1}{34.68}\Delta x$} & \multicolumn{3}{c}{ $\Delta t=\frac{0.1}{60}\Delta x$}\\
			\cmidrule(r){2-4} \cmidrule(l){5-7}
			
			4 &2.12e{-}7 &{-} &3.38 &2.29e{-}7 &{-} &7.78\\%
			6 &5.36e{-}9 &9.08&7.18 &5.38e{-}9 &9.25&15.17\\%
			8 &4.00e{-}10&9.02&12.91&4.00e{-}10&9.04&28.41\\%
			10&5.41e{-}11&8.97&22.99&5.41e{-}11&8.96&47.77\\%
			12&1.04e{-}11&9.03&32.06&1.04e{-}11&9.03&68.85\\%
			14&2.61e{-}12&9.00&44.91&2.62e{-}12&8.97&92.61\\%
			
			\bottomrule[1.5pt]
		\end{tabular}
	}
\end{table}

\subsection{Example 2: Inviscid Burgers' equation}

In this example \cite{zhang2010}, we consider the inviscid Burgers' equation 
\begin{align}\label{eq:Burgers}
	u_t+ \left(\frac{u^2}{2} \right)_x + \left(\frac{u^2}{2}\right)_y=0, \qquad (x,y,t)\in[-1,1]\times[-1,1]\times\mathbb{R}^+. 
\end{align}
The initial condition is taken as $u(x,y,0) = \sin(\pi(x+y))$, and the periodic boundary conditions are adopted. 
The exact solution also obeys the maximum principle with $G=[-1,1]$. 
The exact solution is smooth up to $t = \frac{1}{2\pi} \approx 0.159$, and later a stationary shock wave develops.  
\Cref{fig:ex2-1} shows the numerical solutions at $t=0.23$ and the snapshots cut along $y=x$, obtained by using  
$\mathbb{P}^2, \mathbb{P}^4, \mathbb{P}^6$ -based BP DG methods. 
We see the shock is well captured, with only the BP limiter and without using any non-oscillatory limiters.  
One can observe that the {\tt optimal} approach  and the {\tt classic} approach give very similar results, and 
the shock is equally well resolved by both approaches. 
However, the CPU time of the {\tt optimal} approach is much less than that of the 
{\tt classic} approach, as shown in Table \ref{tab:ex2}.

\begin{figure}[htbp]
	\centering
	\begin{subfigure}[t][][t]{0.31\textwidth}
		\centering
		\includegraphics[width=\textwidth]{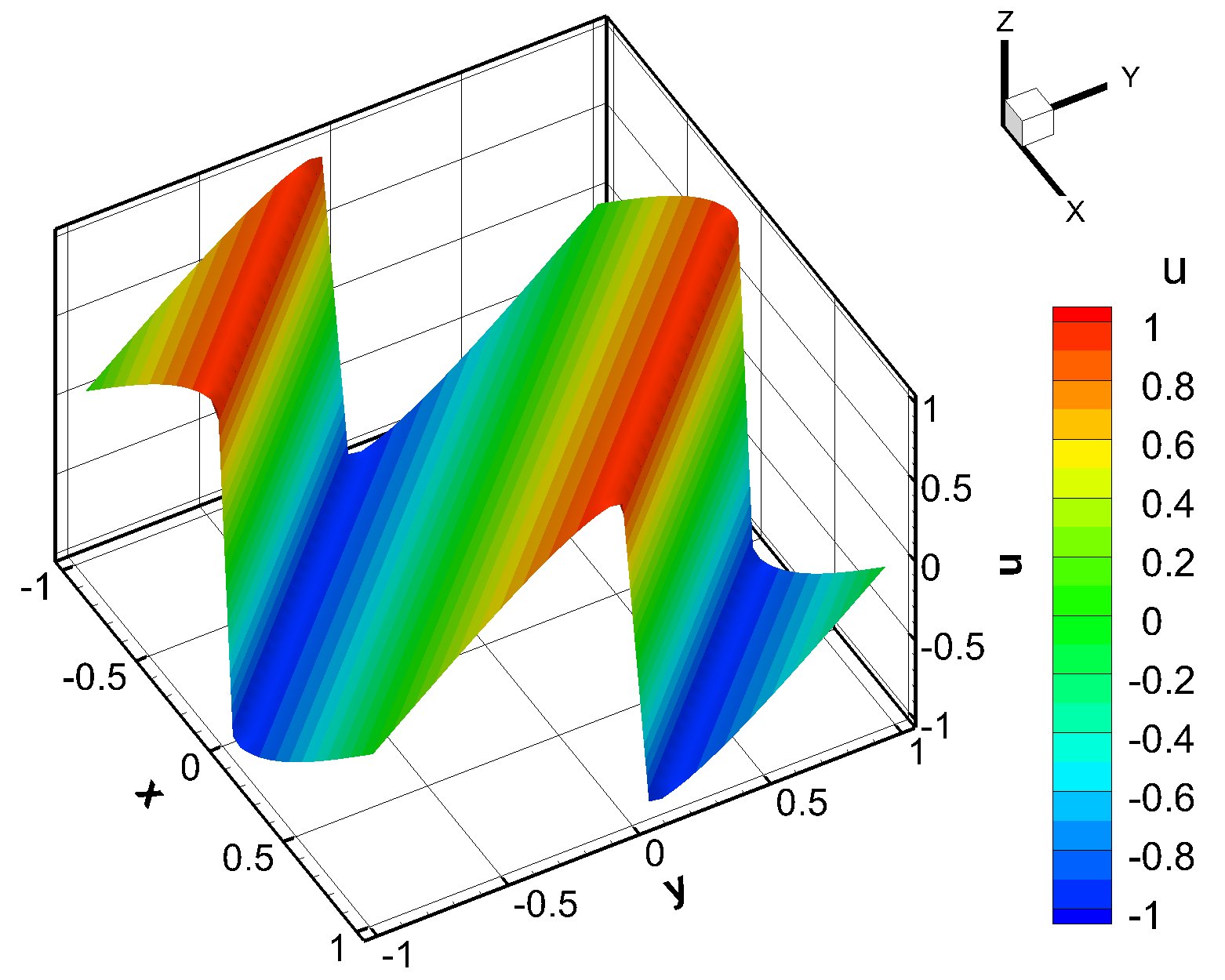}
		\caption{$\mathbb{P}^2$}
	\end{subfigure}
	\quad
	\begin{subfigure}[t][][t]{0.31\textwidth}
		\centering
		\includegraphics[width=\textwidth]{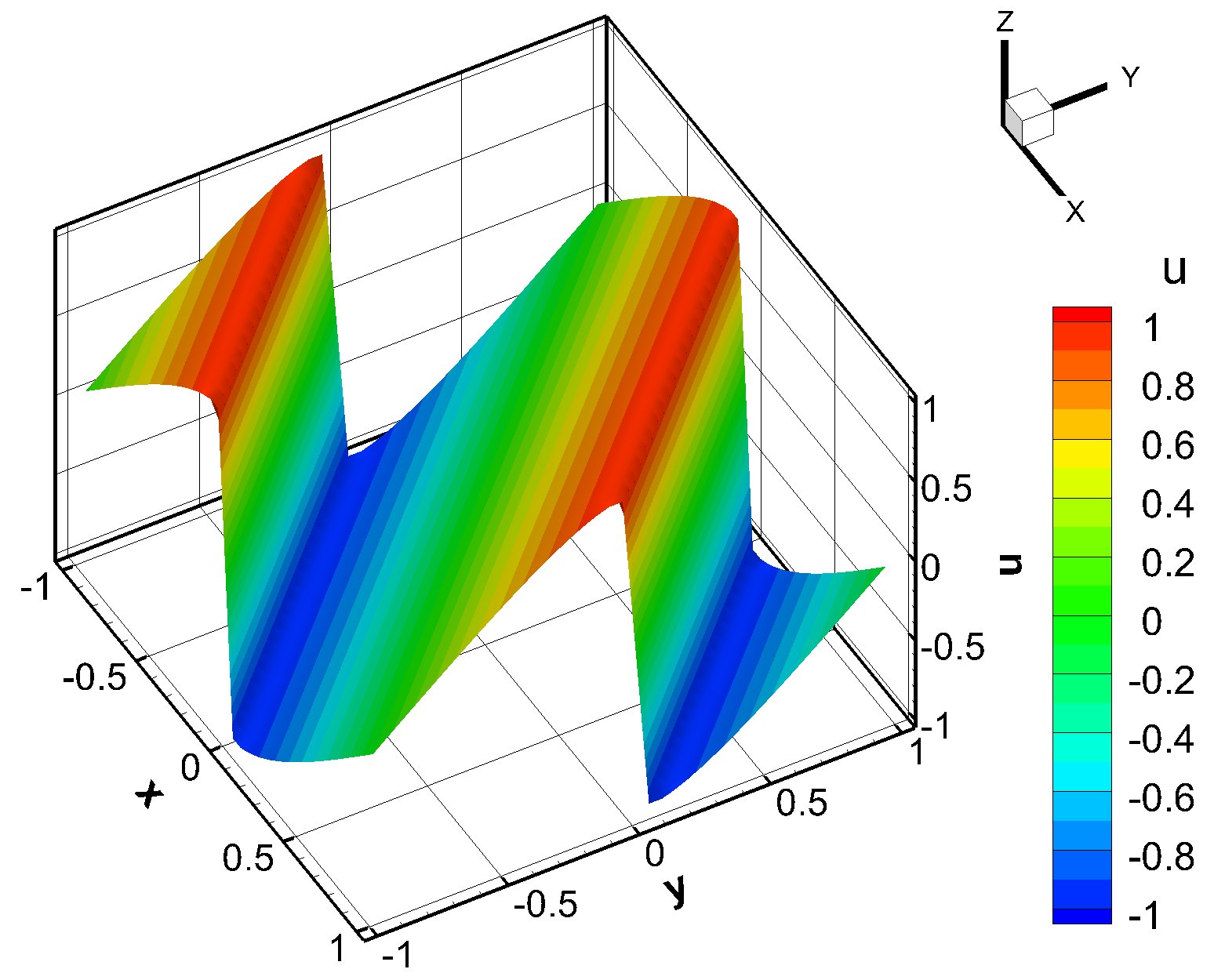}
		\caption{$\mathbb{P}^4$}
	\end{subfigure}
	\quad
	\begin{subfigure}[t][][t]{0.31\textwidth}
		\centering
		\includegraphics[width=\textwidth]{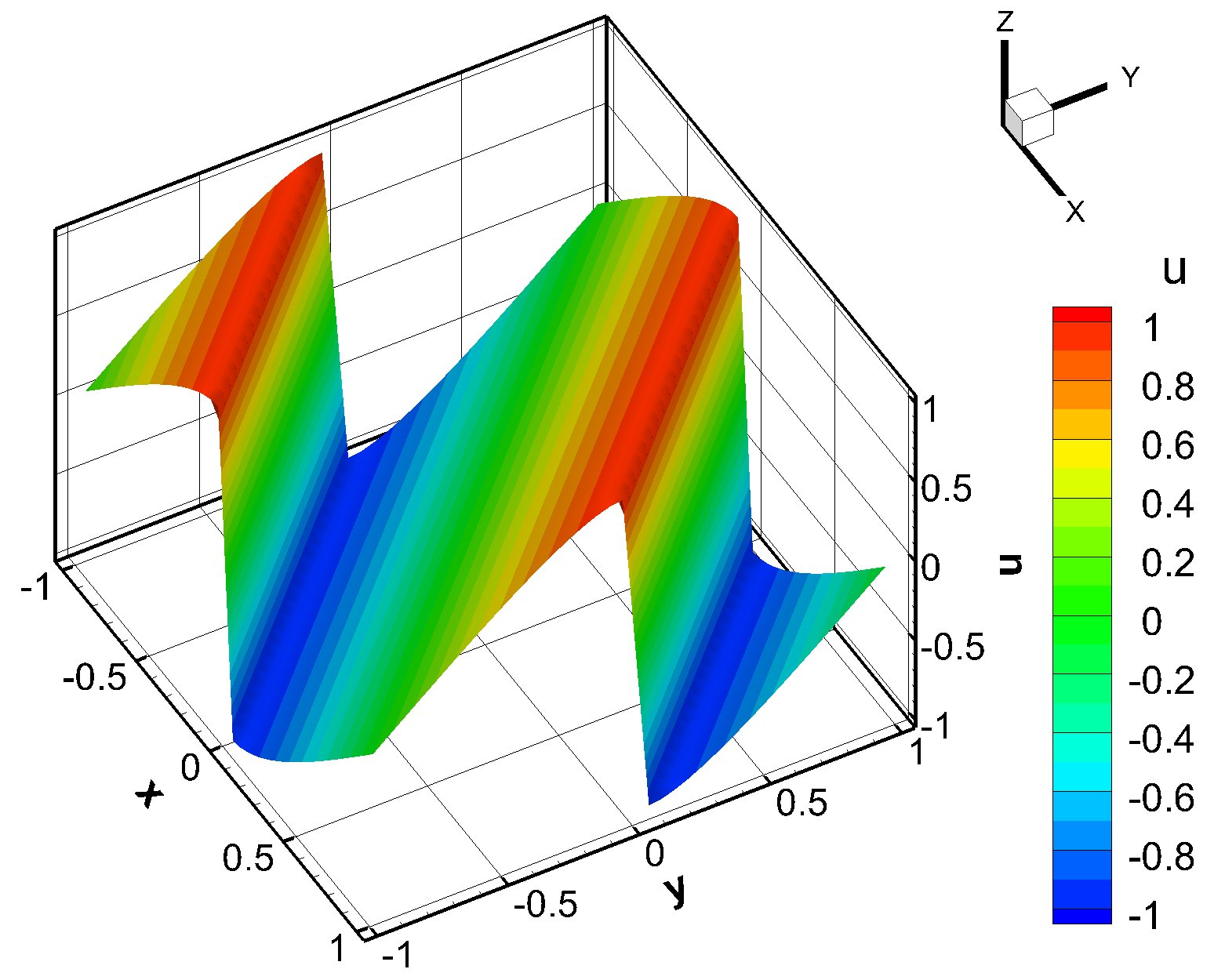}
		\caption{$\mathbb{P}^6$}
	\end{subfigure}
	
	\begin{subfigure}[t][][t]{0.31\textwidth}
		\centering
		\includegraphics[width=\textwidth]{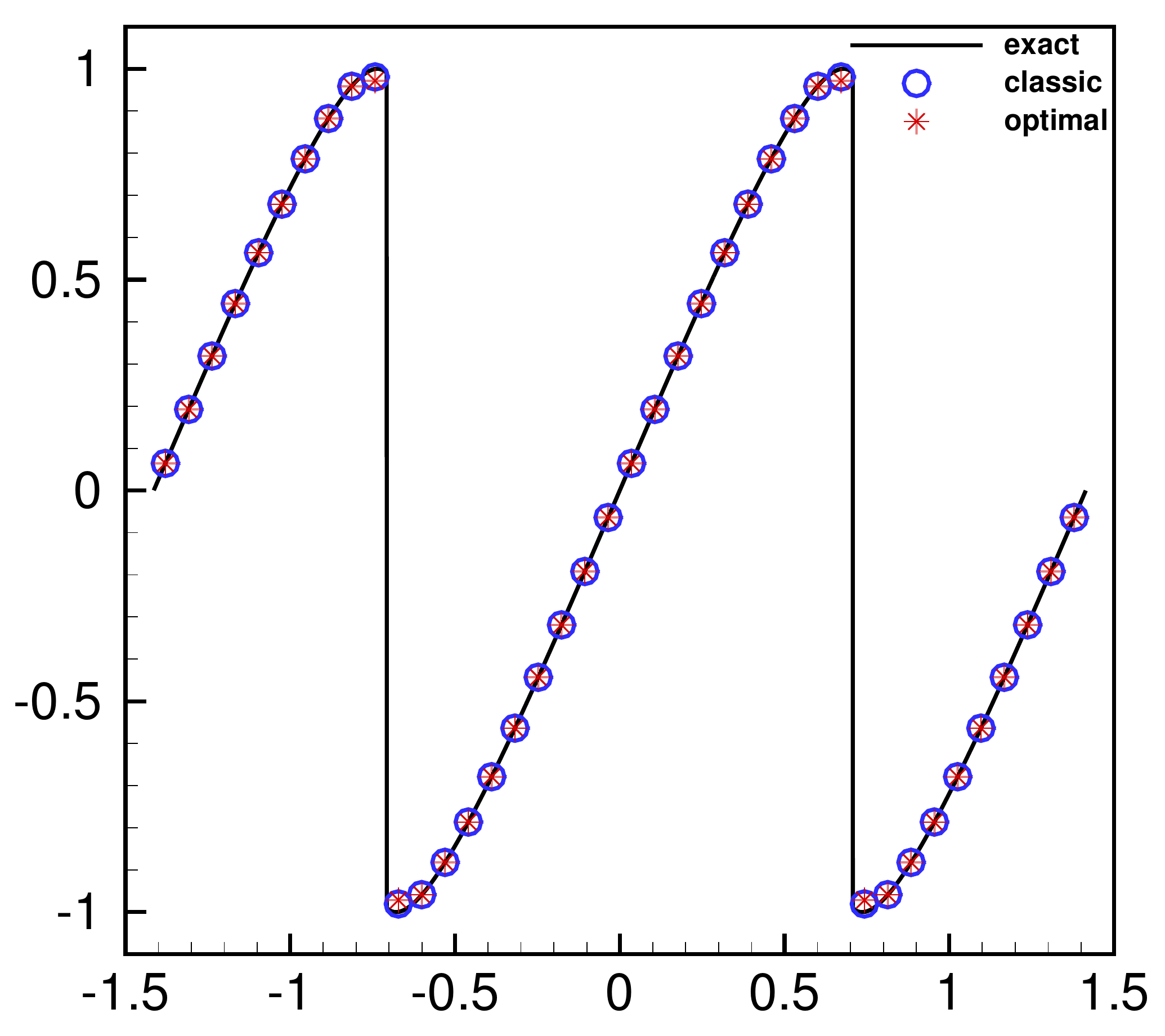}
		\caption{$\mathbb{P}^2$}
	\end{subfigure}
	\quad
	\begin{subfigure}[t][][t]{0.31\textwidth}
		\centering
		\includegraphics[width=\textwidth]{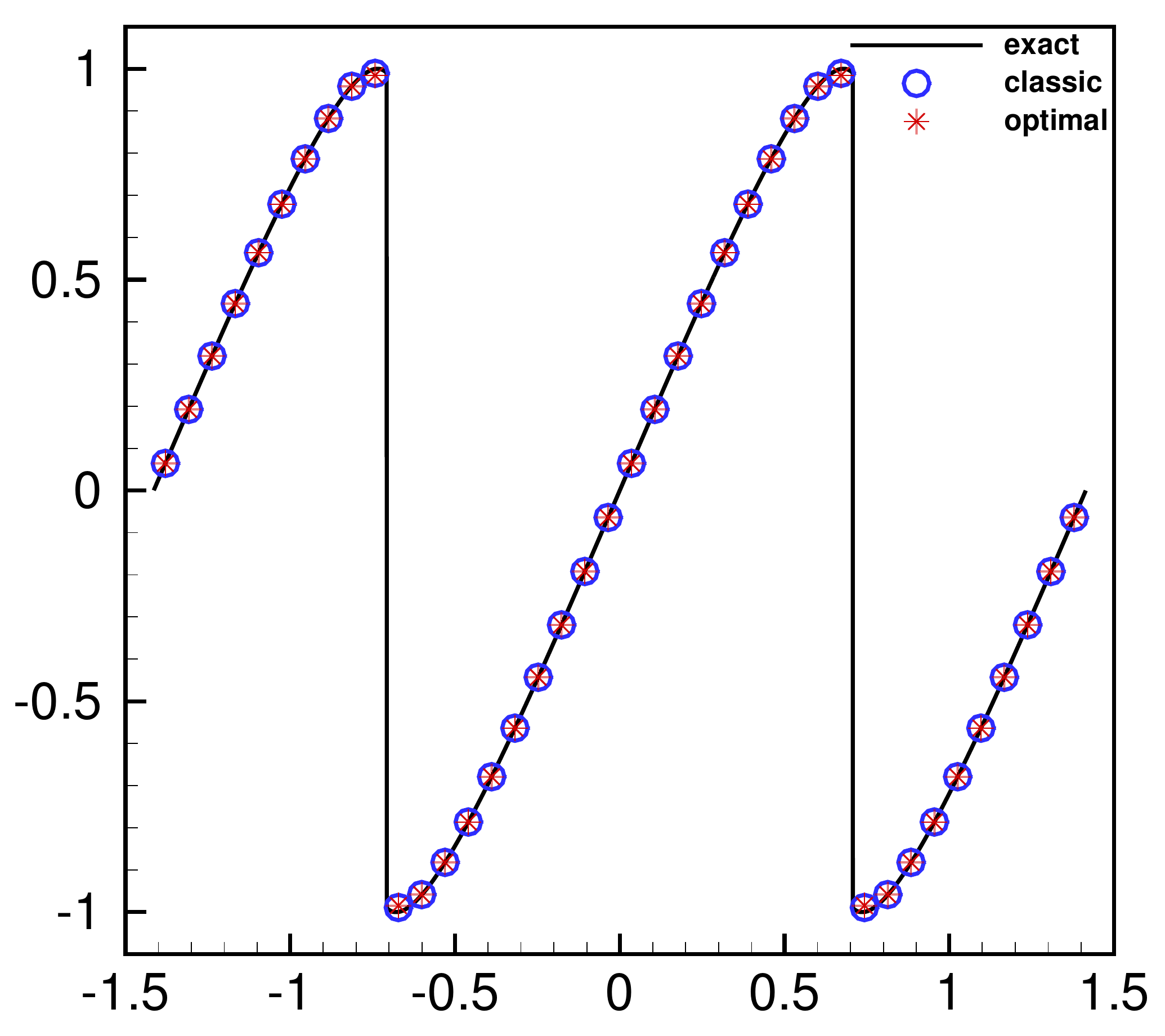}
		\caption{$\mathbb{P}^4$}
	\end{subfigure}
	\quad
	\begin{subfigure}[t][][t]{0.31\textwidth}
		\centering
		\includegraphics[width=\textwidth]{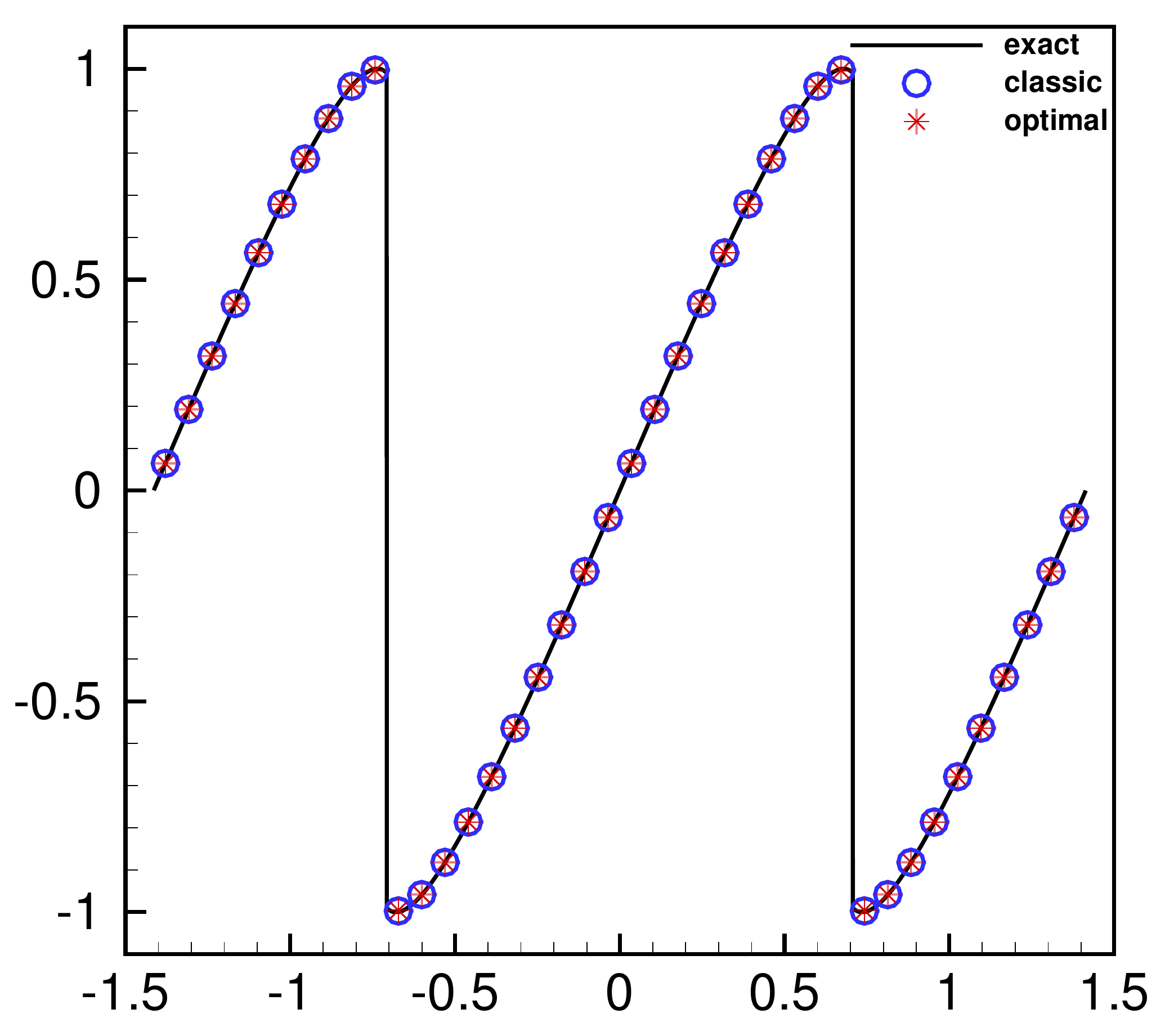}
		\caption{$\mathbb{P}^6$}
	\end{subfigure}
	\caption{Example 2: the numerical solutions at $t=0.23$ obtained by the $\mathbb{P}^2, \mathbb{P}^4, \mathbb{P}^6$-based BP DG methods. Top: the surface of the solutions with {\tt optimal} approach. Bottom: 
		Comparison of the solutions cut along $y=x$.
		The spatial mesh size is $\Delta x = \Delta y = \frac{2}{40}$.
	}
	\label{fig:ex2-1}
\end{figure}

\begin{table}[htbp] 
	\centering
	\caption{CPU time in seconds for simulating Example 2 up to $t=0.23$.}
	\label{tab:ex2}
	\setlength{\tabcolsep}{3.8mm}{
		\begin{tabular}{llccc}
			\toprule[1.5pt]
			
			Mesh & CAD & $\mathbb{P}^2$ & $\mathbb{P}^4$ & $\mathbb{P}^6$ \\
			
			\midrule[1.5pt]
			\multirow{2}{*}{ $40\times 40$} 
			& {\tt optimal} approach       & 1.098 & 1.988 & 39.015 \\
			& {\tt classic} approach      & 1.350 & 2.598 & 59.180 \\
			
			\midrule[1.5pt]
			\multirow{2}{*}{ $80\times 80$} 
			& {\tt optimal} approach       & 7.081 & 12.945 & 214.307 \\
			& {\tt classic} approach      & 8.321 & 16.272 & 323.648 \\
			
			\bottomrule[1.5pt]
		\end{tabular}
	}
\end{table}

\subsection{Example 3: Compressible Euler equations} 

In this example, we simulate the interaction of a shock and a vortex with low density and low pressure, by 
solving the two-dimensional compressible Euler equations, 
which can be 
formulated in the form of \eqref{2DCL} with 
\begin{equation} \label{Eq:Euler}
	u=\begin{pmatrix}
		\rho \\
		m_1 \\
		m_2 \\
		E
	\end{pmatrix}, \qquad {f_1}(u)=\begin{pmatrix}
		m_1 \\
		m_1 {v_1}+P \\
		m_2 v_1 \\
		(E+p) v_1
	\end{pmatrix}, \qquad {f_2}(u)=\begin{pmatrix}
		m_2 \\
		m_1 v_2 \\
		m_2 {v_2}+P \\
		(E+p) v_2
	\end{pmatrix}.
\end{equation}
Here $\rho$ is the density, $(m_1,m_2)=\rho(v_1,v_2)$ denotes the momentum vector with $(v_1,v_2)$ being the velocity field, $P$ is the pressure, and 
$E=\frac{1}{2} \rho ( {v_1}^{2} + {v_2}^{2})+\frac{P}{\gamma -1}$ denotes the total energy. 
The adiabatic index $\gamma$ is taken as $1.4$. 
The density and the internal energy should be positive, yielding the invariant region 
$$
G= \left\{ u=(\rho,m_1,m_2,E)^\top: \rho(u)>0,~ \rho e(u) := E-\frac{m_1^2+m_2^2}{2 \rho}>0 \right \}, 
$$ 
which is a convex set \cite{zhang2010b} because $\rho e(u)$ is a concave function of $u$.

The setup of our the shock-vortex interaction problem is similar to \cite{jiang1996efficient} except for that the present case involves very low density and low pressure. 
The computational domain is taken as $[0,2]\times[0,1]$.
A shock of Mach number $M=1.1$ is positioned at $x=0.5$ plane and perpendicular to the $x$-axis. 
Its left state is $(\rho_l, v_{1,l},v_{2,l},P_l) = (1,1.1\sqrt{\gamma},0,1)$ while the right state can be obtained through the Rankine-Hugoniot condition: 
\begin{align*}
	\frac{\rho_r }{\rho_l}=\frac{(\gamma+1){M}^2}{2+(\gamma-1){M}^2}, \quad
	\frac{P_r}{P_l} = 1+\frac{2\gamma}{\gamma+1}({M}^2+1), \quad
	\frac{v_{1,r}}{v_{1,l}} = \frac{2+(\gamma-1){M}^2}{(\gamma+1){M}^2}, \quad
	v_{2,r}=0.
\end{align*}
Initially, an isentropic vortex is imposed and 
centered at $(x_c,y_c) = (0.25,0.5)$ on the mean flow left to the shock. 
The perturbations to the velocity $(\delta v_1, \delta v_2)$,  temperature $T_l=P_l/\rho_l$, and entropy $S_l=\ln(P_l/\rho_l^{\gamma})$ associated with the vortex are denoted by
\begin{align*}
	(\delta v_1, \delta v_2) = \frac{\varepsilon}{r_c} e^{\alpha(1-\tau^2)}(\bar{y}, -\bar{x}), \quad
	\delta T = -\frac{(\gamma-1)\varepsilon^2}{4\alpha\gamma} e^{2\alpha(1-\tau^2)}, \quad
	\delta S =0,
\end{align*}
where $\tau = \frac{r}{r_c}, (\bar{x}, \bar{y}) = (x-x_c, y-y_c), r^2= \bar{x}^2 + \bar{y}^2$. 
Here $\epsilon$ is the strength of the vortex, $\alpha = 0.204$ controls the decay rate of the vortex, and $r_c=0.4$ is the critical radius. Different from \cite{jiang1996efficient}, we take the 
vortex 
strength as $\varepsilon=1.378106$, so that the lowest density and lowest pressure are $4.7\times 10^{-15}$ and  $8.8\times 10^{-21}$, respectively. 
\Cref{fig:ex3_2,fig:ex3_3} give the numerical solutions obtained by the $\mathbb P^2$-based and $\mathbb P^4$-based DG methods, respectively, on the uniform mesh of $450 \times 225$ cells. 
We can see that flow structures are well captured by all the BP DG schemes. 
The results of the {\tt optimal} and {\tt quasi-optimal} approaches are comparable to those of the {\tt classic} approach. 
However, the {\tt optimal} and {\tt quasi-optimal} approaches allow larger time steps, with which the CPU time is much less, as shown in Table \ref{tab:ex3Euler}.

For all the three approaches, we use the simplified BP limiter \cite{zhang2011b} as detailed in \Cref{rem:SBPEuler}. 
Without the BP limiter, the DG code would break down because of nonphysical solutions. 
Due to the presence of strong shocks in this and next examples, the WENO limiter \cite{Qiu2005} is also used, right before the BP limiter, within some adaptively detected troubled cells to suppress potential numerical oscillations.

\begin{figure}[htbp]
	\centering
	\begin{subfigure}[t][][t]{0.3\textwidth}
		\centering
		\includegraphics[width=\textwidth]{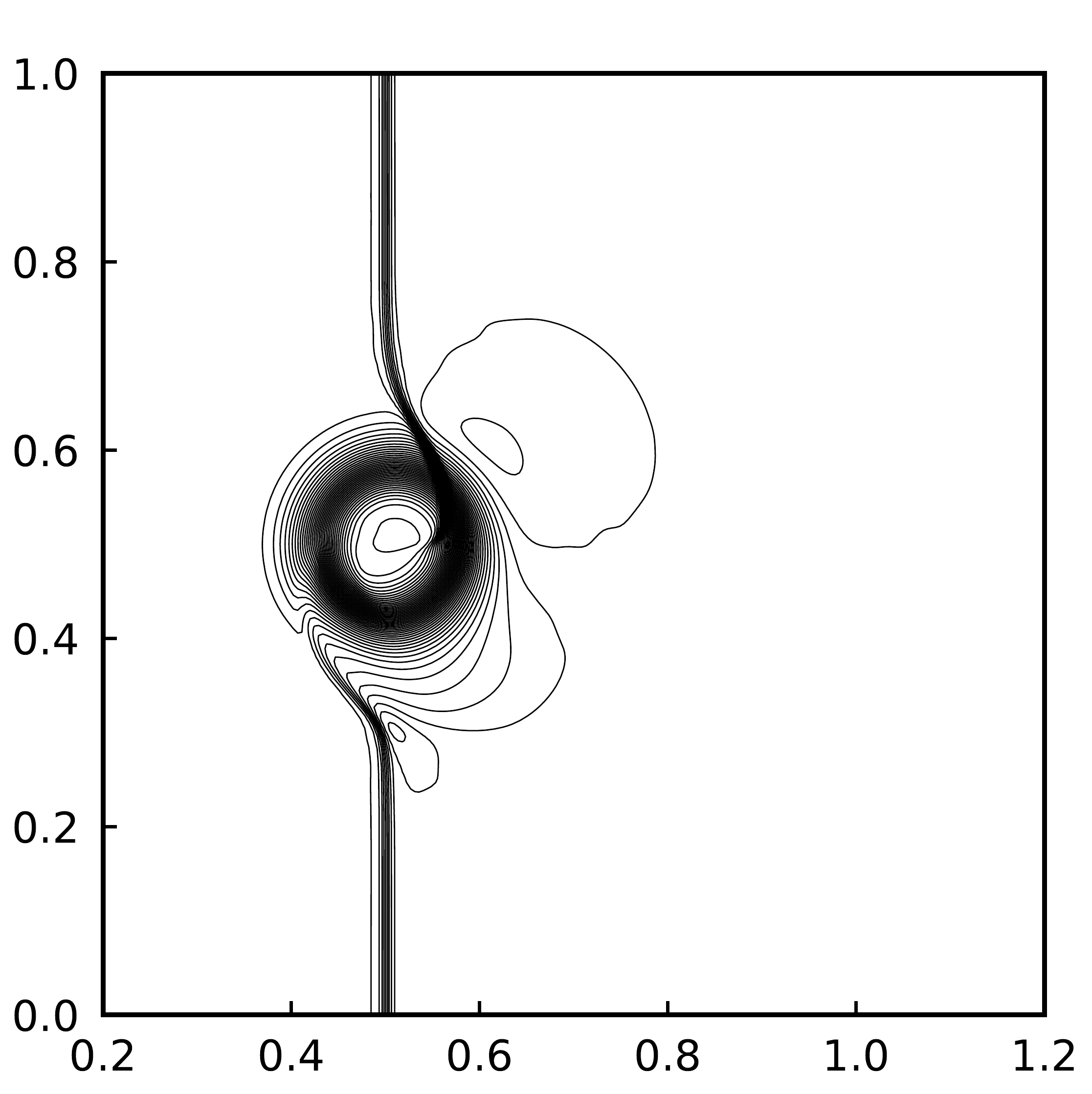}
		\caption{$t=0.2$, {\tt classic}}
		\label{fig:ex3_2-a}
	\end{subfigure}
	\quad
	\begin{subfigure}[t][][t]{0.3\textwidth}
		\centering
		\includegraphics[width=\textwidth]{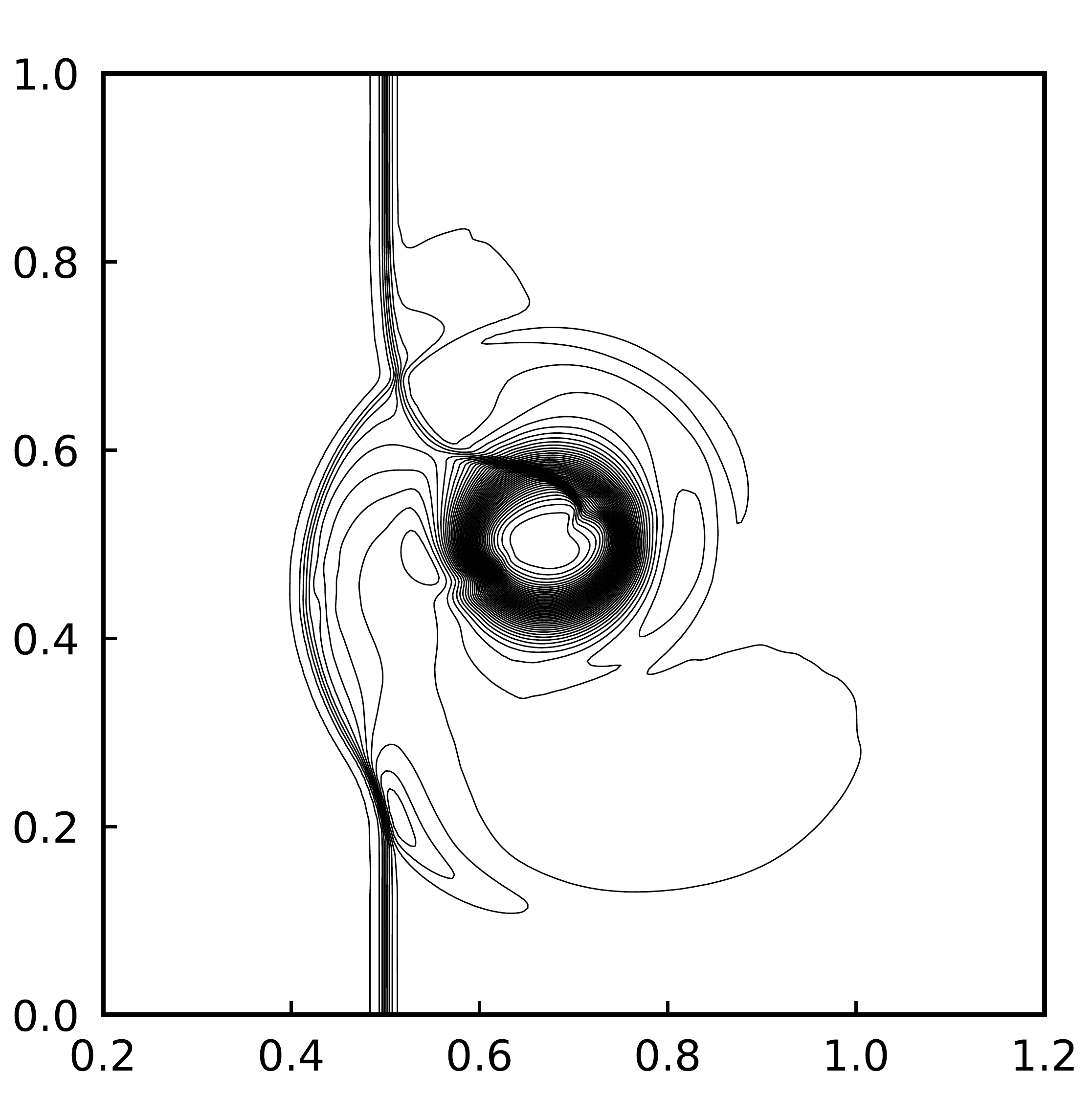}
		\caption{$t=0.35$, {\tt classic}}
		\label{fig:ex3_2-b}
	\end{subfigure}
	\quad
	\begin{subfigure}[t][][t]{0.33\textwidth}
		\centering
		\includegraphics[width=\textwidth]{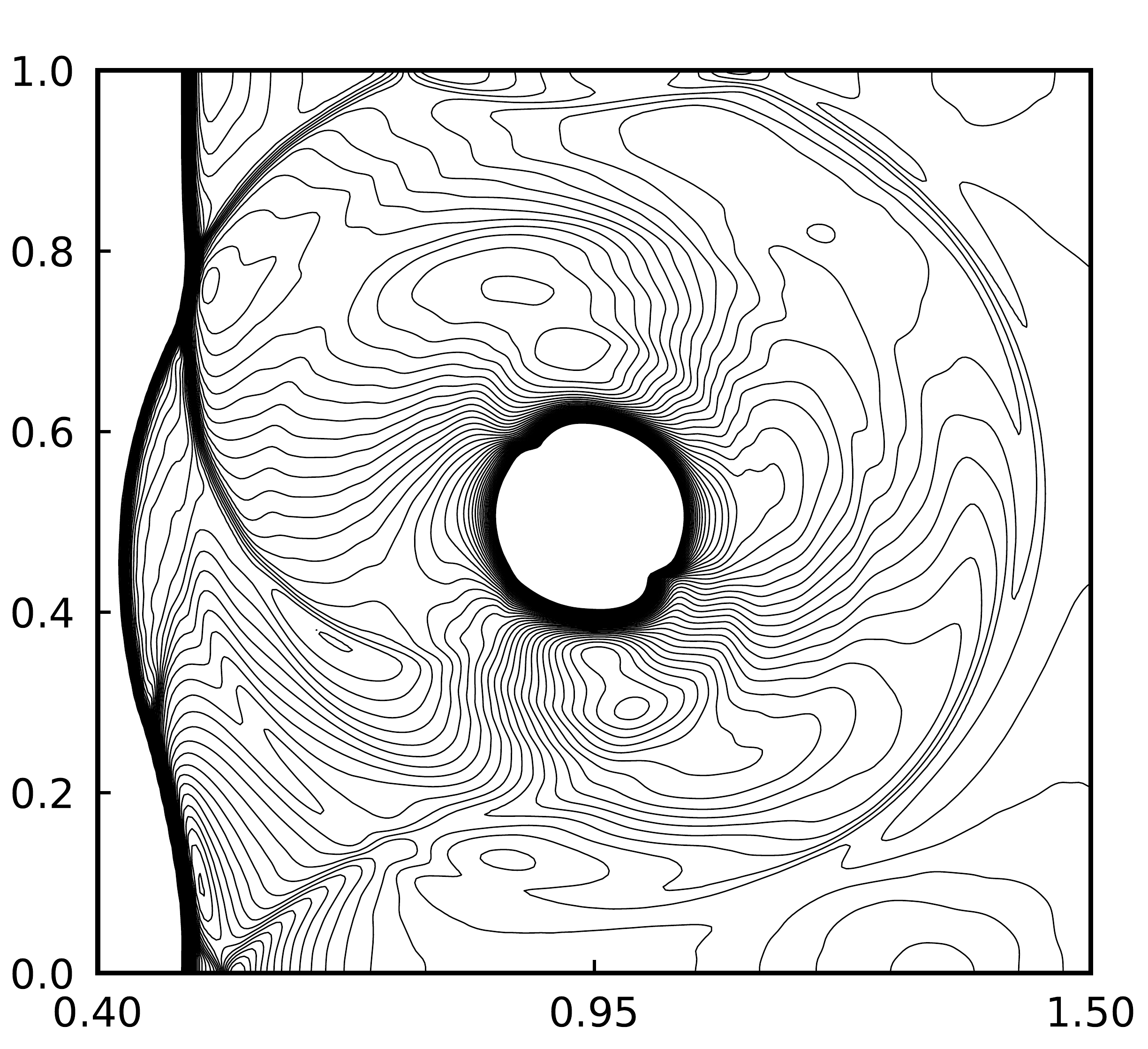}
		\caption{$t=0.6$ {\tt classic}}
		\label{fig:ex4_2-c}
	\end{subfigure}
	
	\begin{subfigure}[t][][t]{0.3\textwidth}
		\centering
		\includegraphics[width=\textwidth]{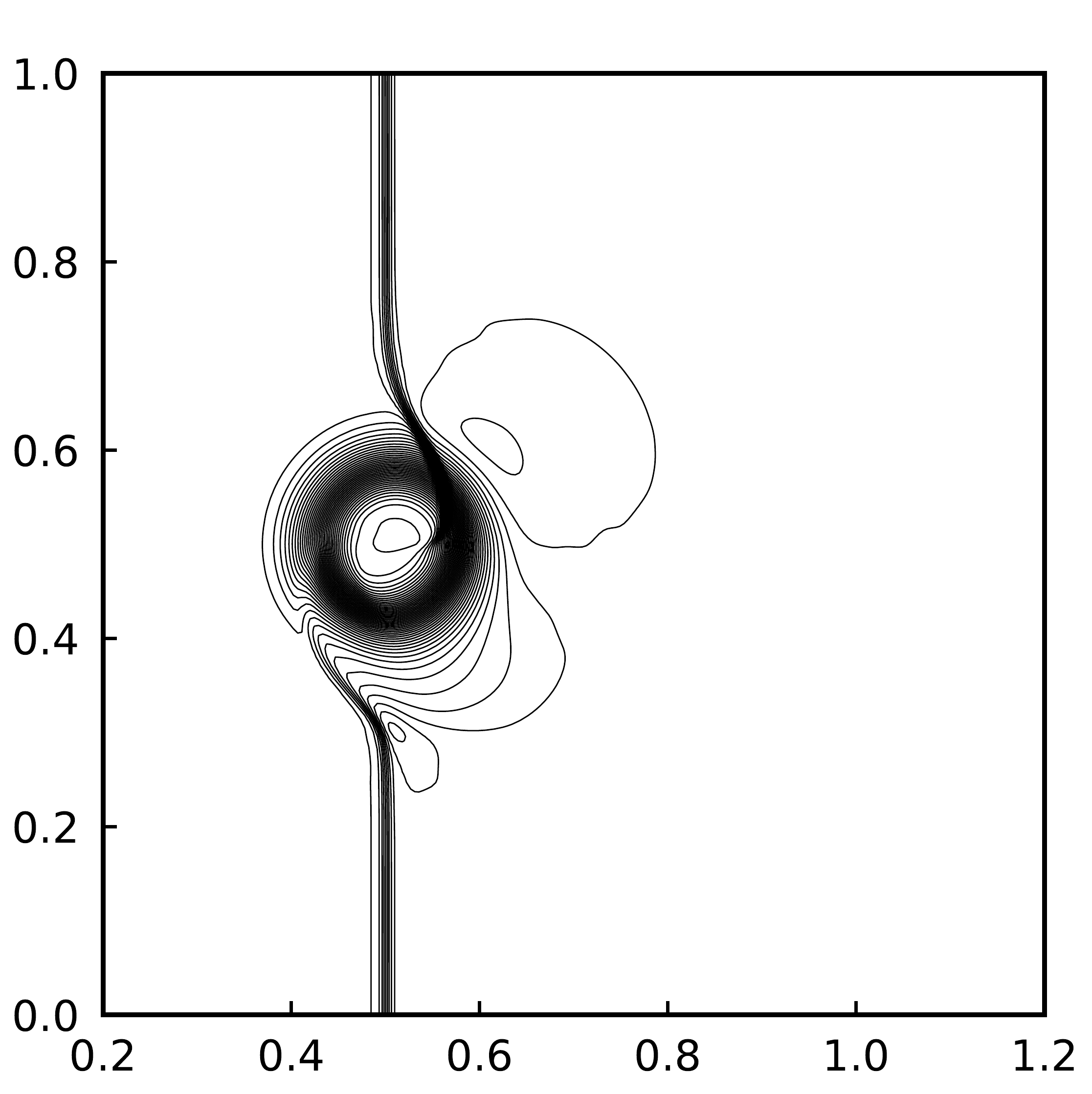}
		\caption{$t=0.2$, {\tt optimal}}
		\label{fig:ex3_2-d}
	\end{subfigure}
	\quad
	\begin{subfigure}[t][][t]{0.3\textwidth}
		\centering
		\includegraphics[width=\textwidth]{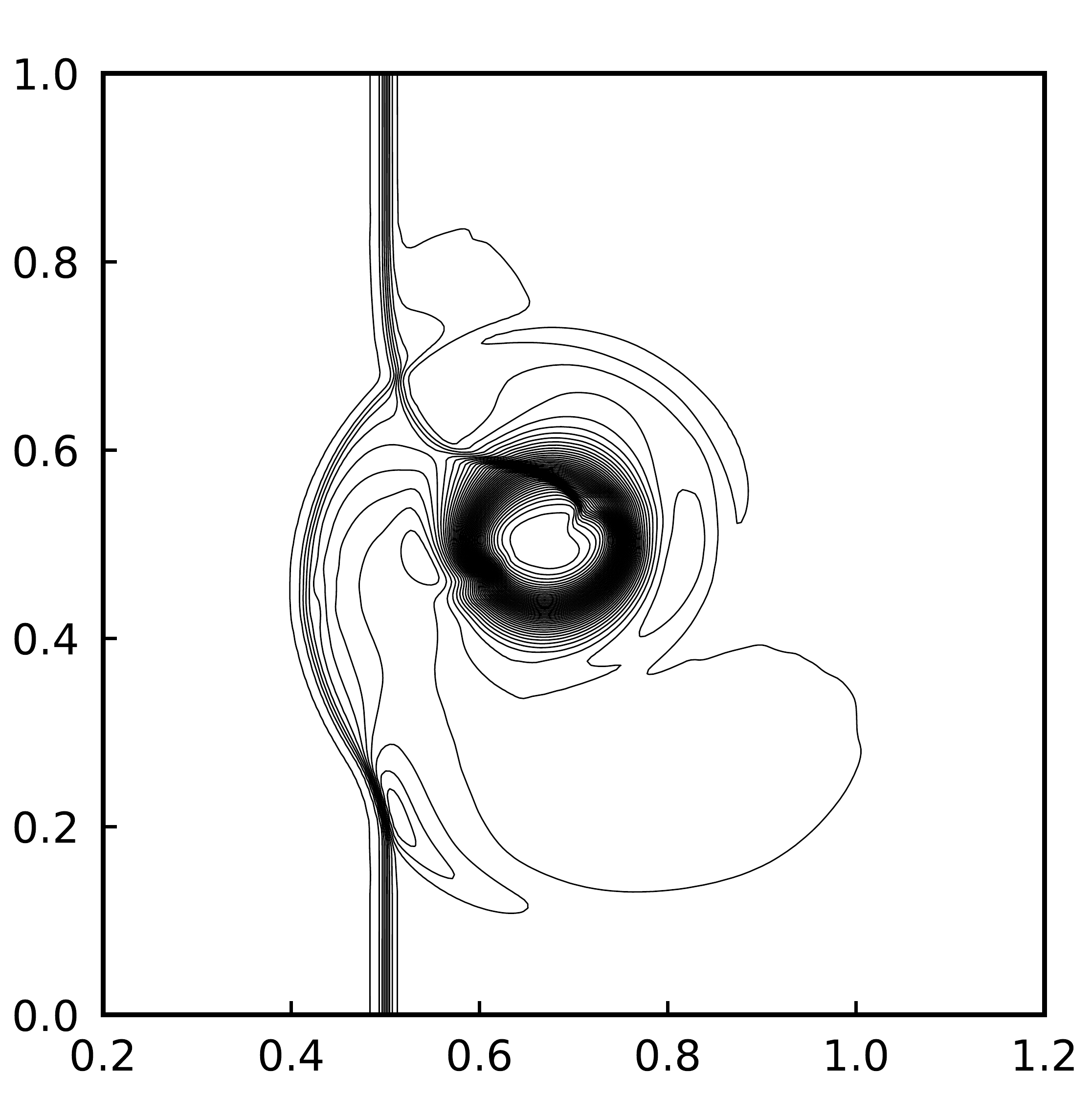}
		\caption{$t=0.35$, {\tt optimal}}
		\label{fig:ex3_2-e}
	\end{subfigure}
	\quad
	\begin{subfigure}[t][][t]{0.33\textwidth}
		\centering
		\includegraphics[width=\textwidth]{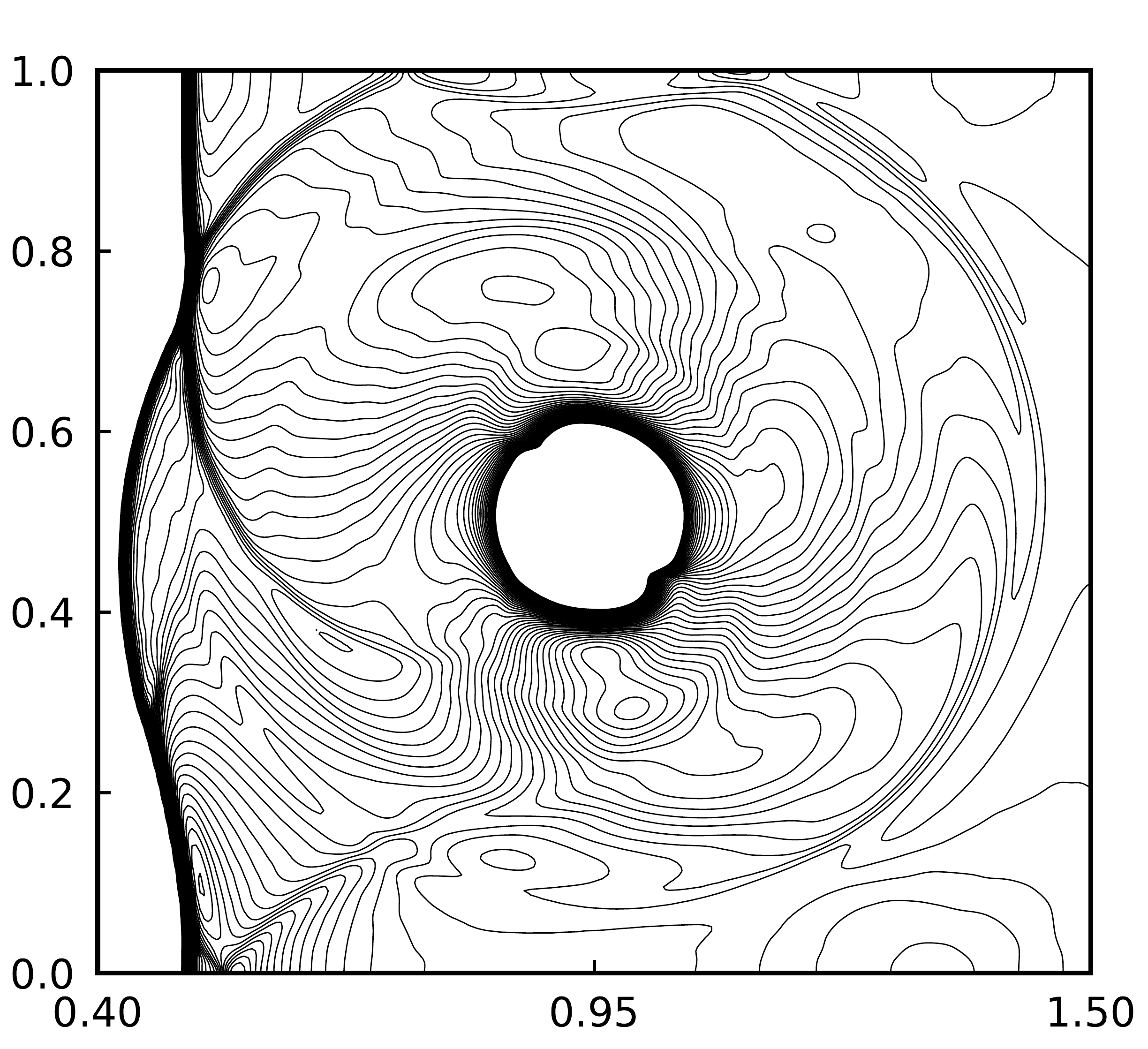}
		\caption{$t=0.6$, {\tt optimal}}
		\label{fig:ex3_2-f}
	\end{subfigure}
	
	\begin{subfigure}[t][][t]{0.3\textwidth}
		\centering
		\includegraphics[width=\textwidth]{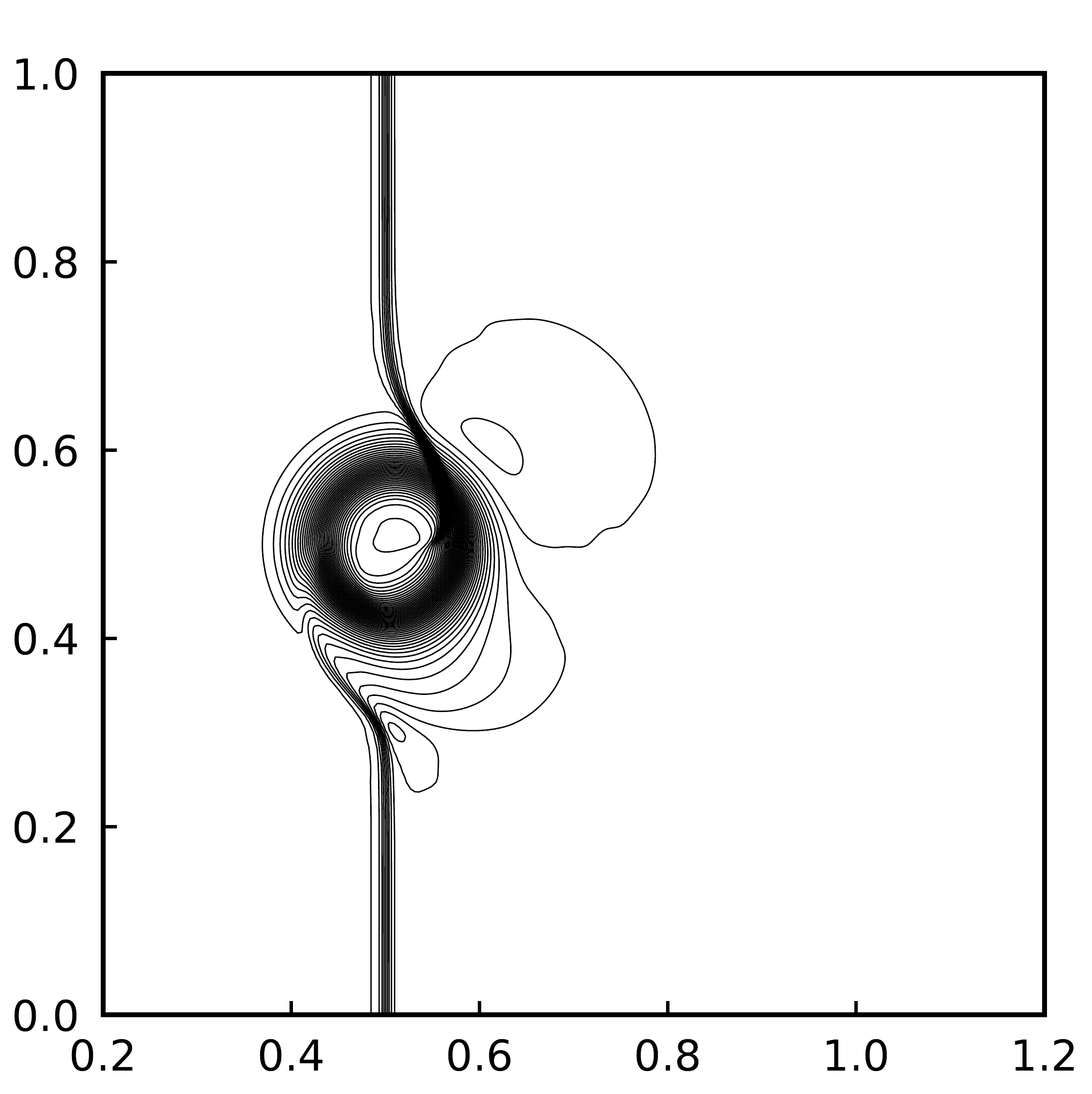}
		\caption{$t=0.2$, {\tt quasi-optimal}}
		\label{fig:ex3_2-g}
	\end{subfigure}
	\quad
	\begin{subfigure}[t][][t]{0.3\textwidth}
		\centering
		\includegraphics[width=\textwidth]{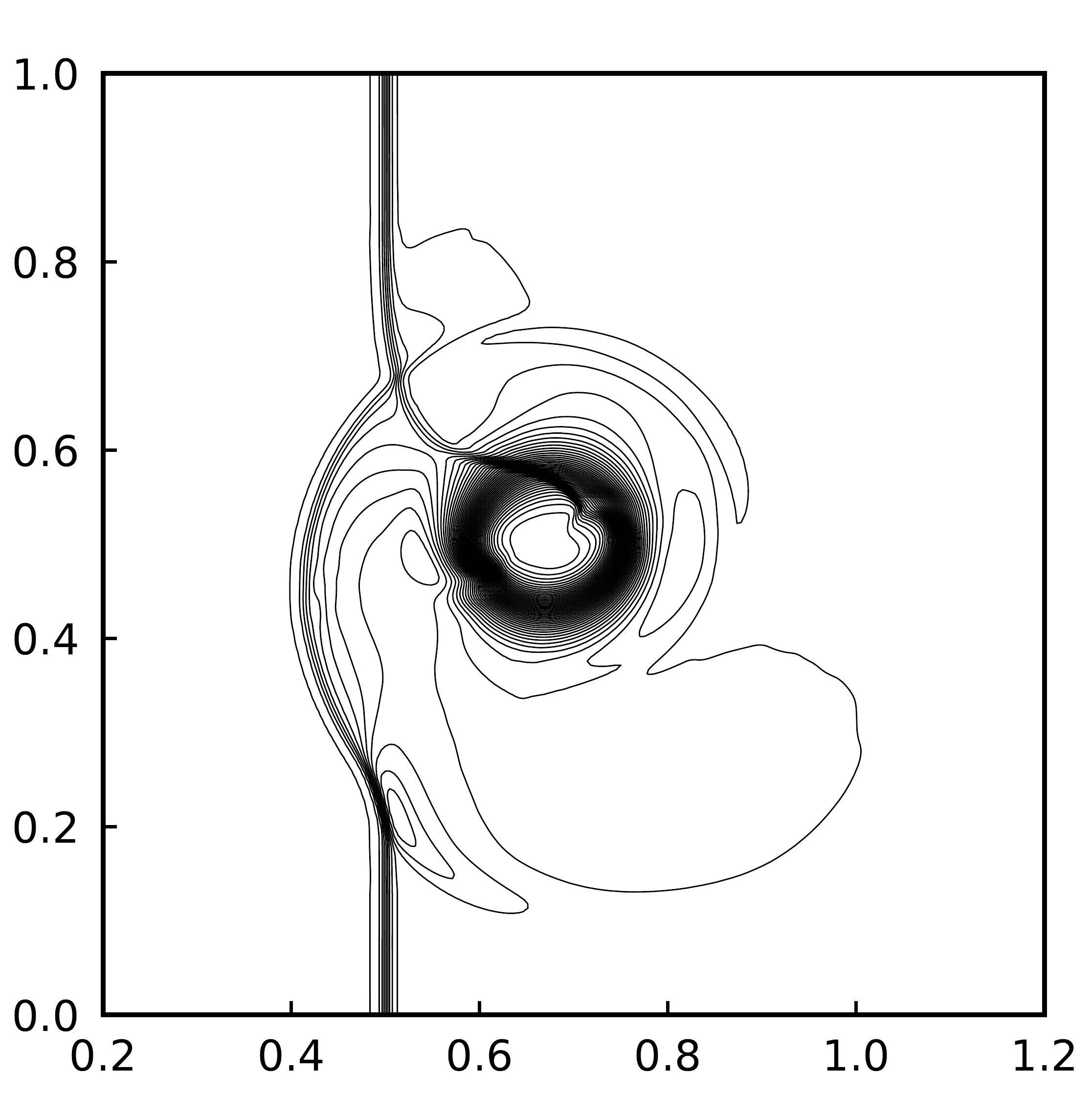}
		\caption{$t=0.35$, {\tt quasi-optimal}}
		\label{fig:ex3_2-h}
	\end{subfigure}
	\quad
	\begin{subfigure}[t][][t]{0.33\textwidth}
		\centering
		\includegraphics[width=\textwidth]{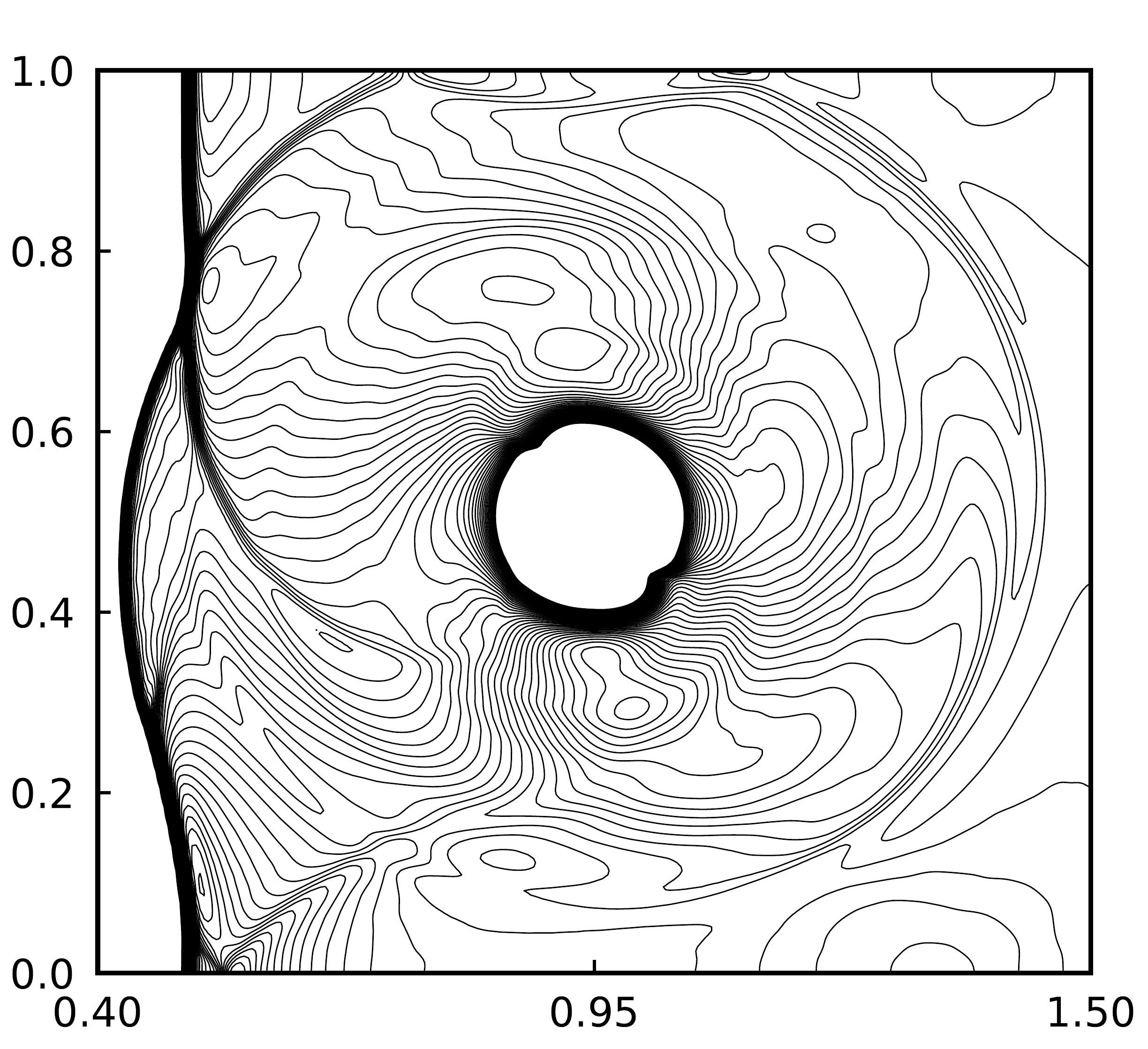}
		\caption{$t=0.6$, {\tt quasi-optimal}}
		\label{fig:ex3_2-l}
	\end{subfigure}
	\caption{Example 3: 
		The contour plots of pressure obtained by the $\mathbb P^2$-based BP DG methods (designed with three different CADs)
		at $t=0.2$, $t=0.35$, and $t=0.6$ (from let to right). 
		50 contour lines: from 0.005 to 1.33 for $t = 0.2$;
		 from 0.01 to 1.402 for $t = 0.35$; 
		 from 1.03 to 1.39: $t = 0.6$. 
	}
	\label{fig:ex3_2}
\end{figure}

\begin{figure}[htbp]
	\centering
	\begin{subfigure}[t][][t]{0.3\textwidth}
		\centering
		\includegraphics[width=\textwidth]{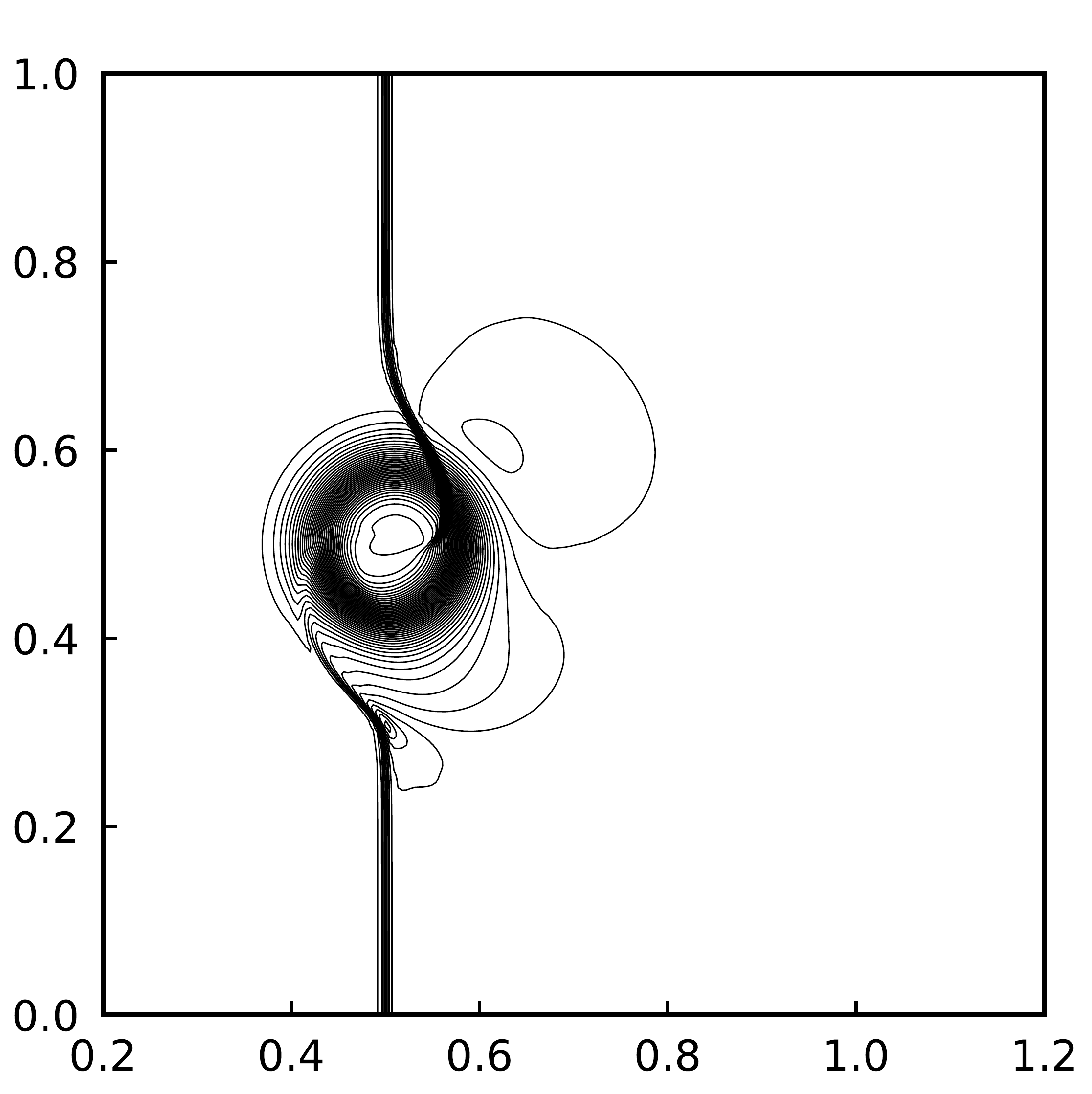}
		\caption{$t=0.2$, {\tt classic}}
		\label{fig:ex3_3-a}
	\end{subfigure}
	\quad
	\begin{subfigure}[t][][t]{0.3\textwidth}
		\centering
		\includegraphics[width=\textwidth]{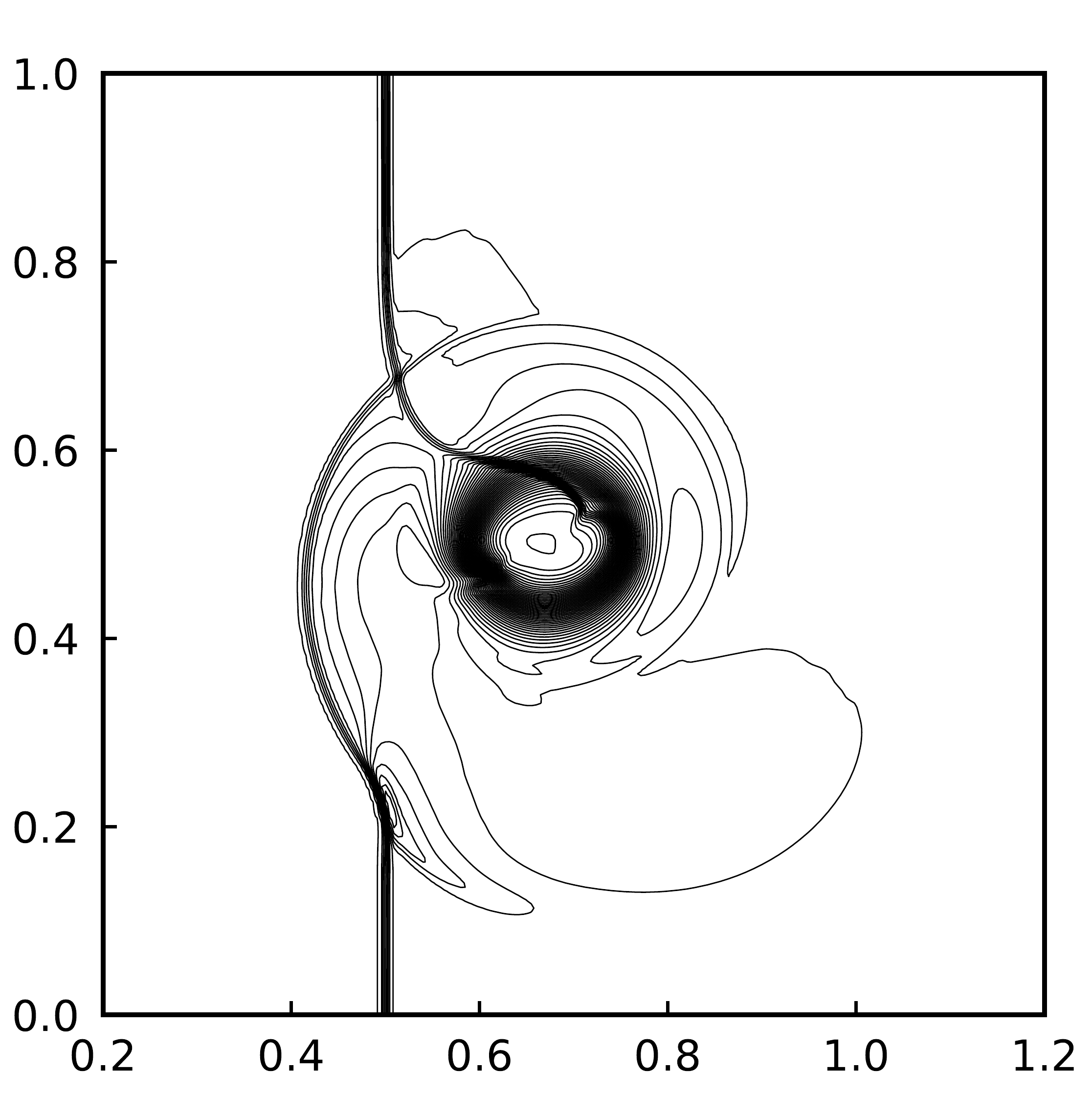}
		\caption{$t=0.35$, {\tt classic}}
		\label{fig:ex3_3-b}
	\end{subfigure}
	\quad
	\begin{subfigure}[t][][t]{0.33\textwidth}
		\centering
		\includegraphics[width=\textwidth]{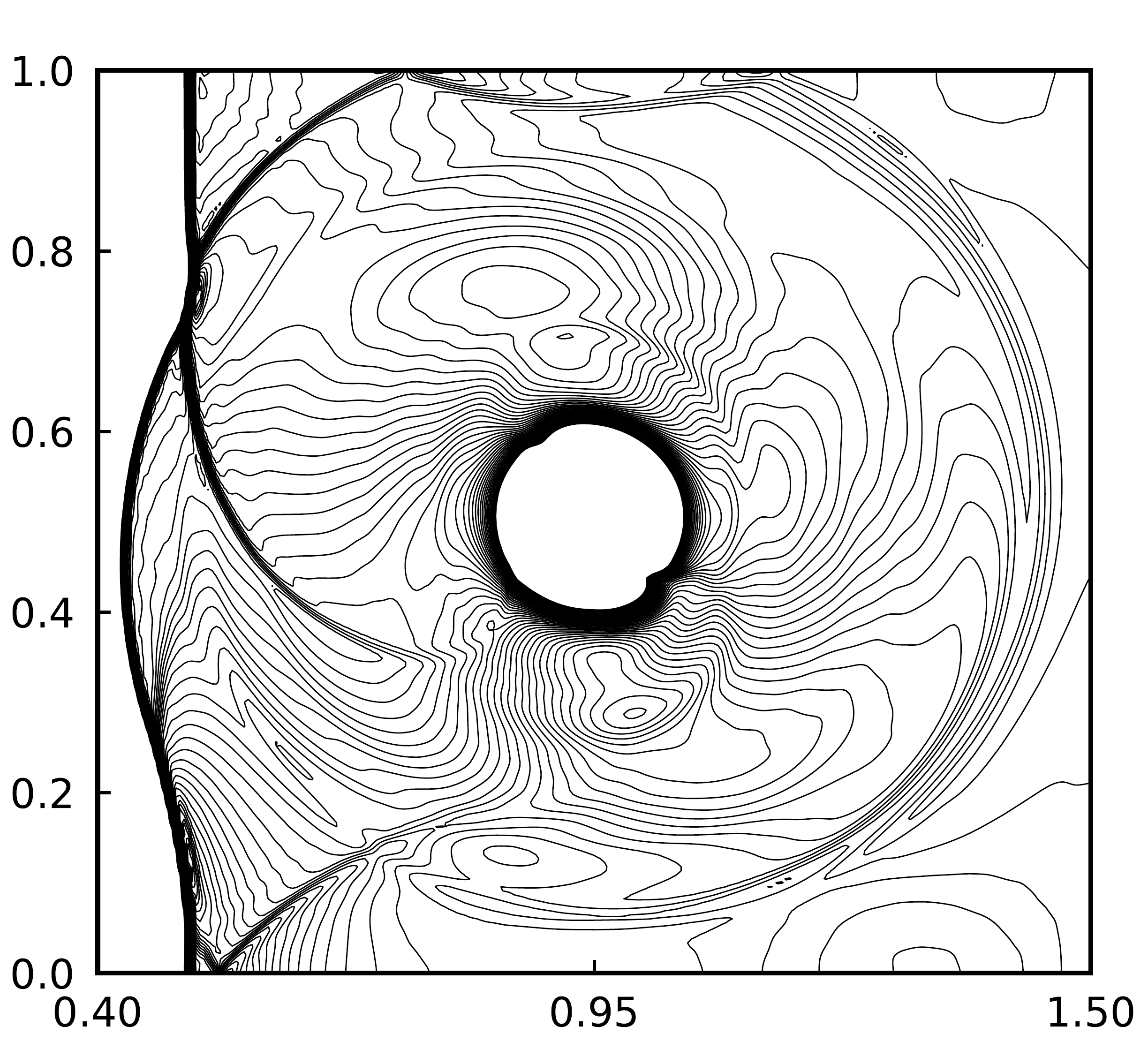}
		\caption{$t=0.6$, {\tt classic}}
		\label{fig:ex3_3-c}
	\end{subfigure}
	
	\begin{subfigure}[t][][t]{0.3\textwidth}
		\centering
		\includegraphics[width=\textwidth]{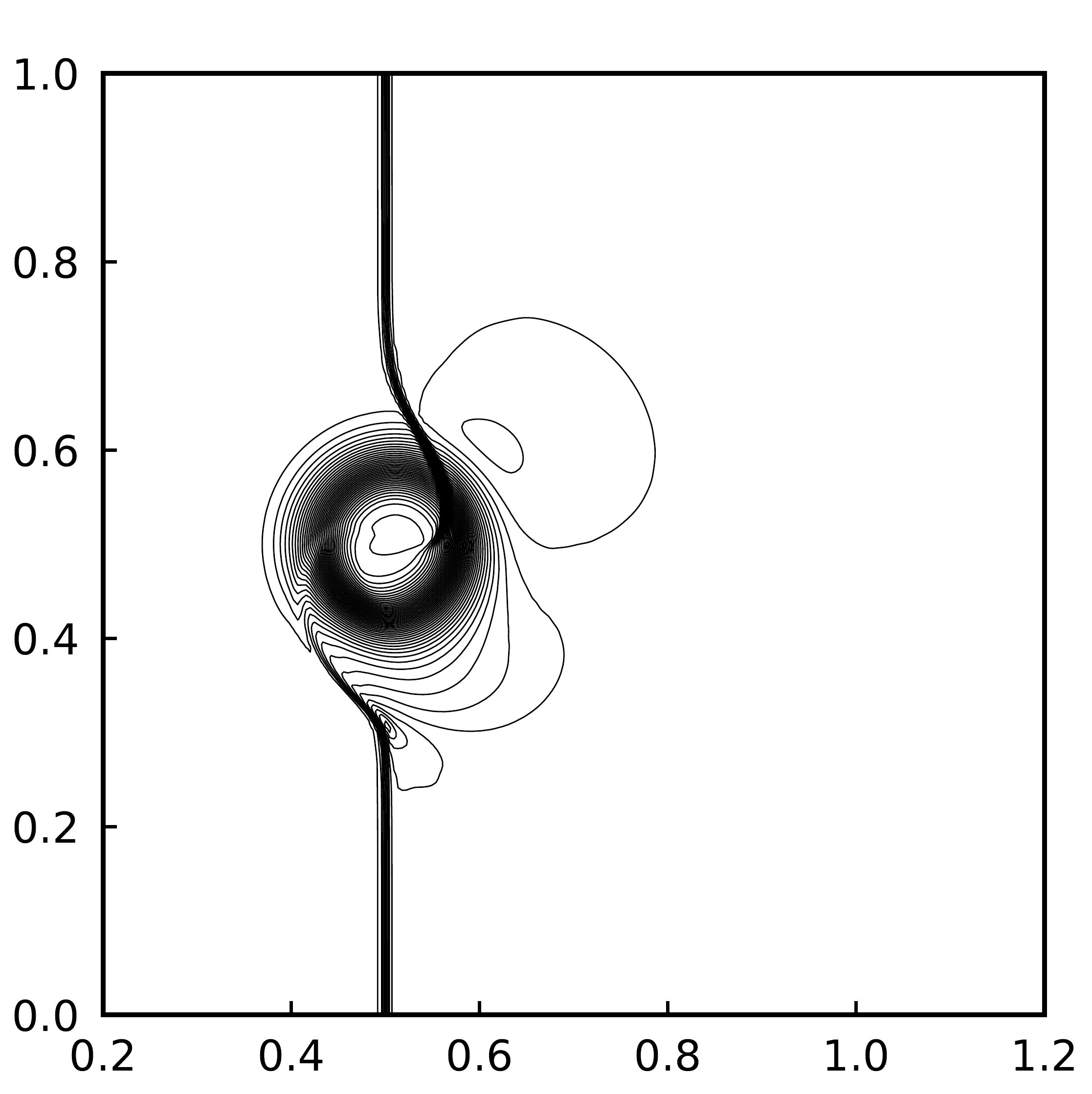}
		\caption{$t=0.2$, {\tt optimal}}
		\label{fig:ex3_3-d}
	\end{subfigure}
	\quad
	\begin{subfigure}[t][][t]{0.3\textwidth}
		\centering
		\includegraphics[width=\textwidth]{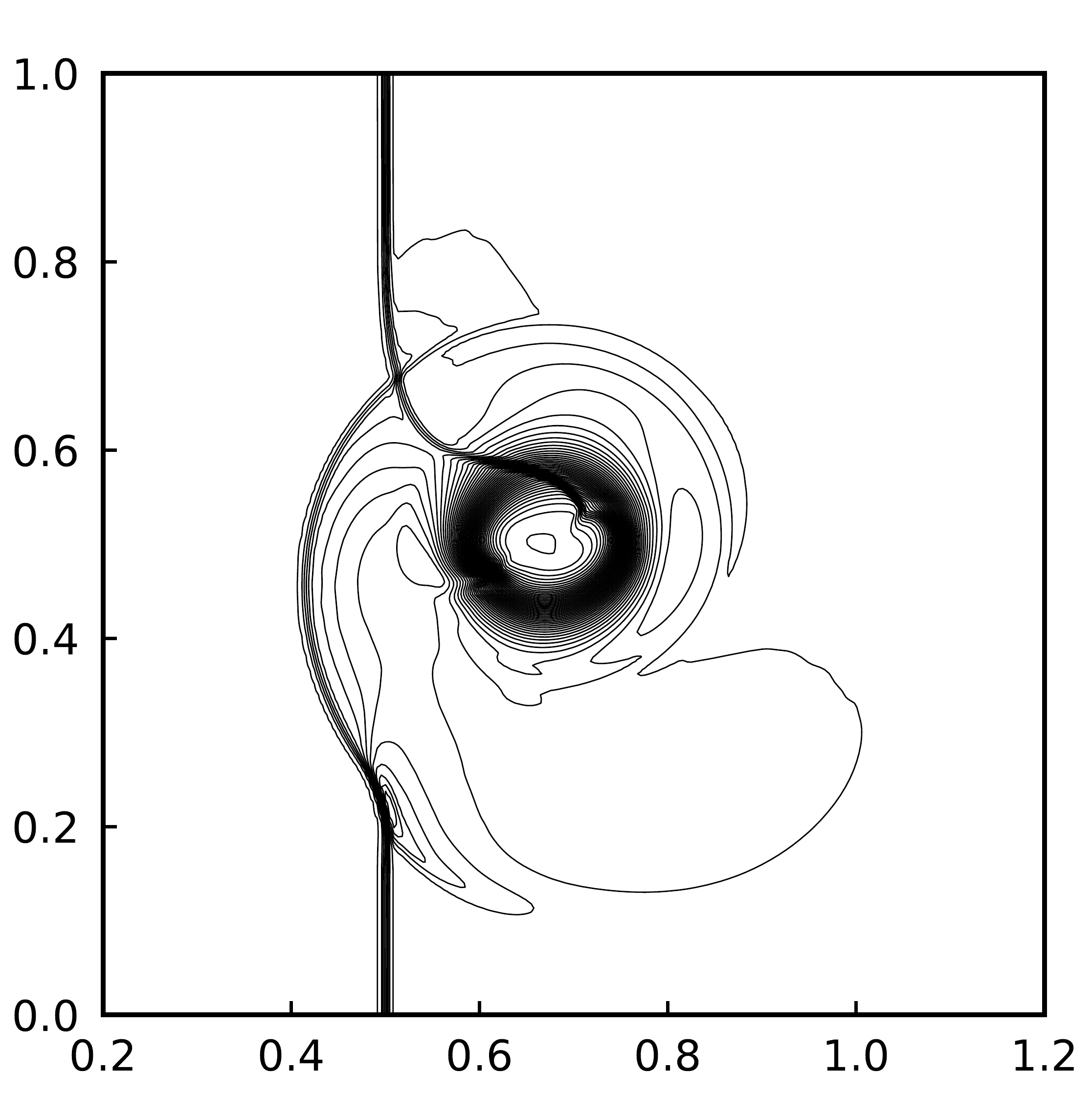}
		\caption{$t=0.35$, {\tt optimal}}
		\label{fig:ex3_3-e}
	\end{subfigure}
	\quad
	\begin{subfigure}[t][][t]{0.33\textwidth}
		\centering
		\includegraphics[width=\textwidth]{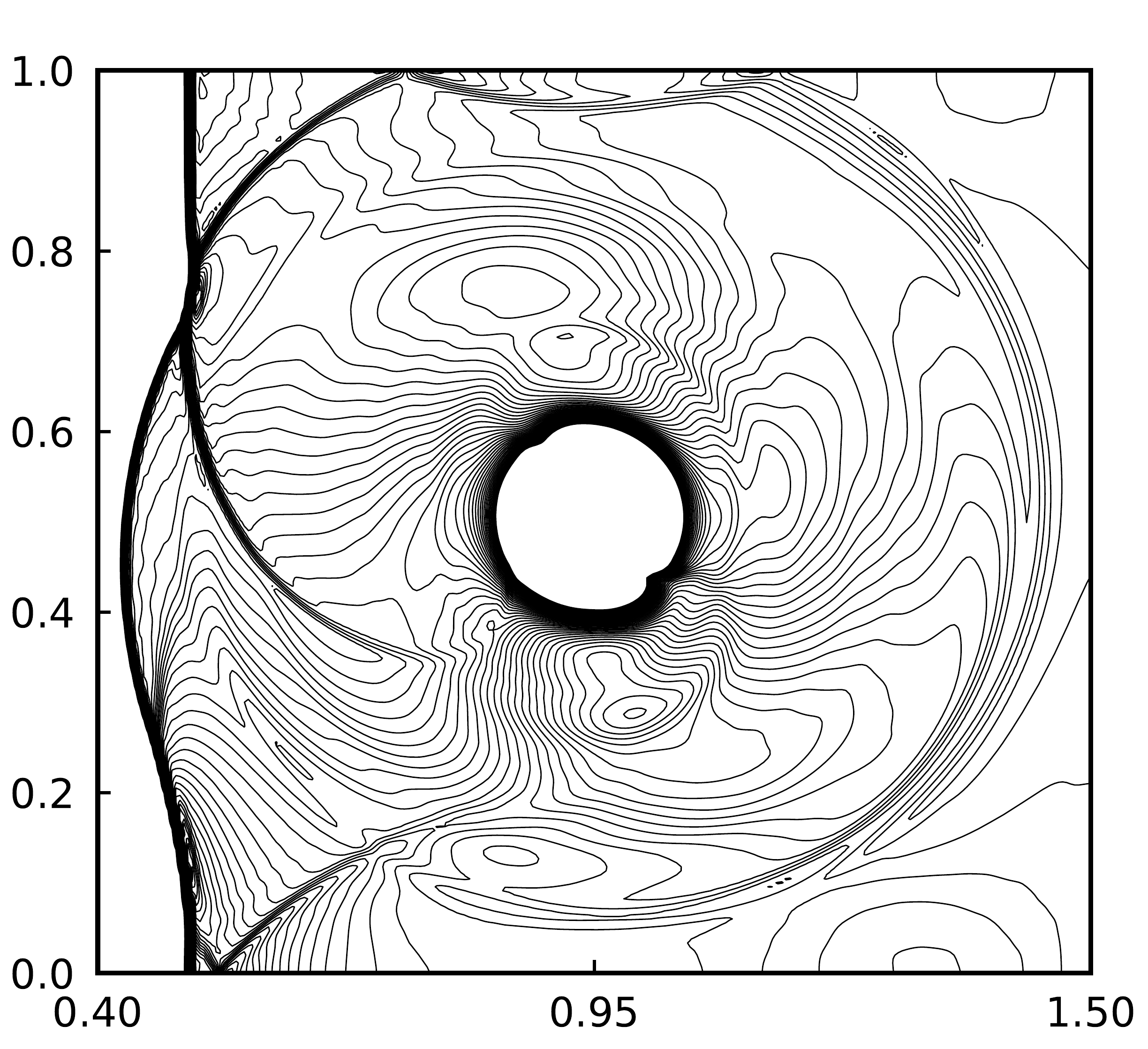}
		\caption{$t=0.6$, {\tt optimal}}
		\label{fig:ex3_3-f}
	\end{subfigure}
	
	\begin{subfigure}[t][][t]{0.3\textwidth}
		\centering
		\includegraphics[width=\textwidth]{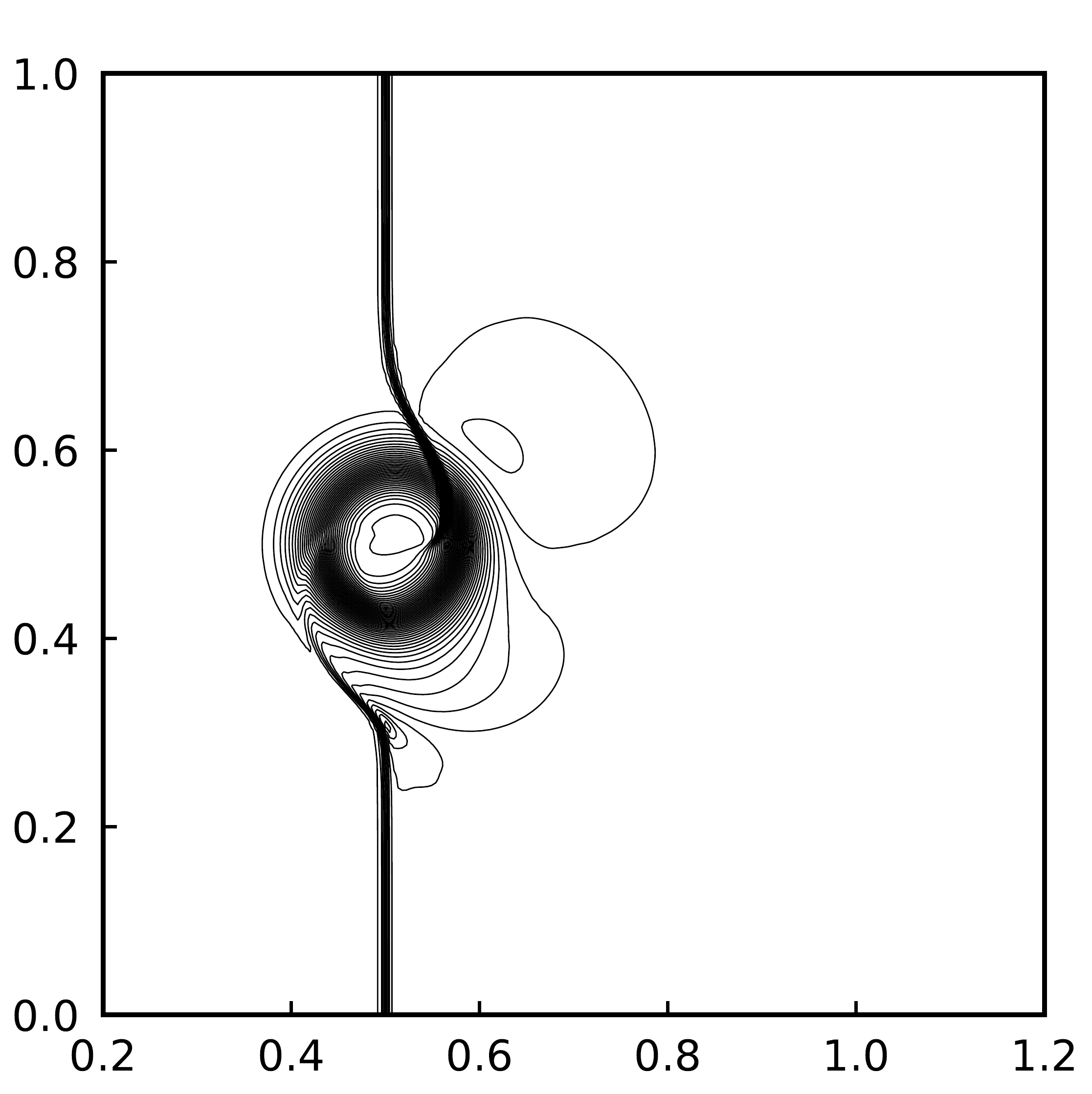}
		\caption{$t=0.2$, {\tt quasi-optimal}}
		\label{fig:ex3_3-g}
	\end{subfigure}
	\quad
	\begin{subfigure}[t][][t]{0.3\textwidth}
		\centering
		\includegraphics[width=\textwidth]{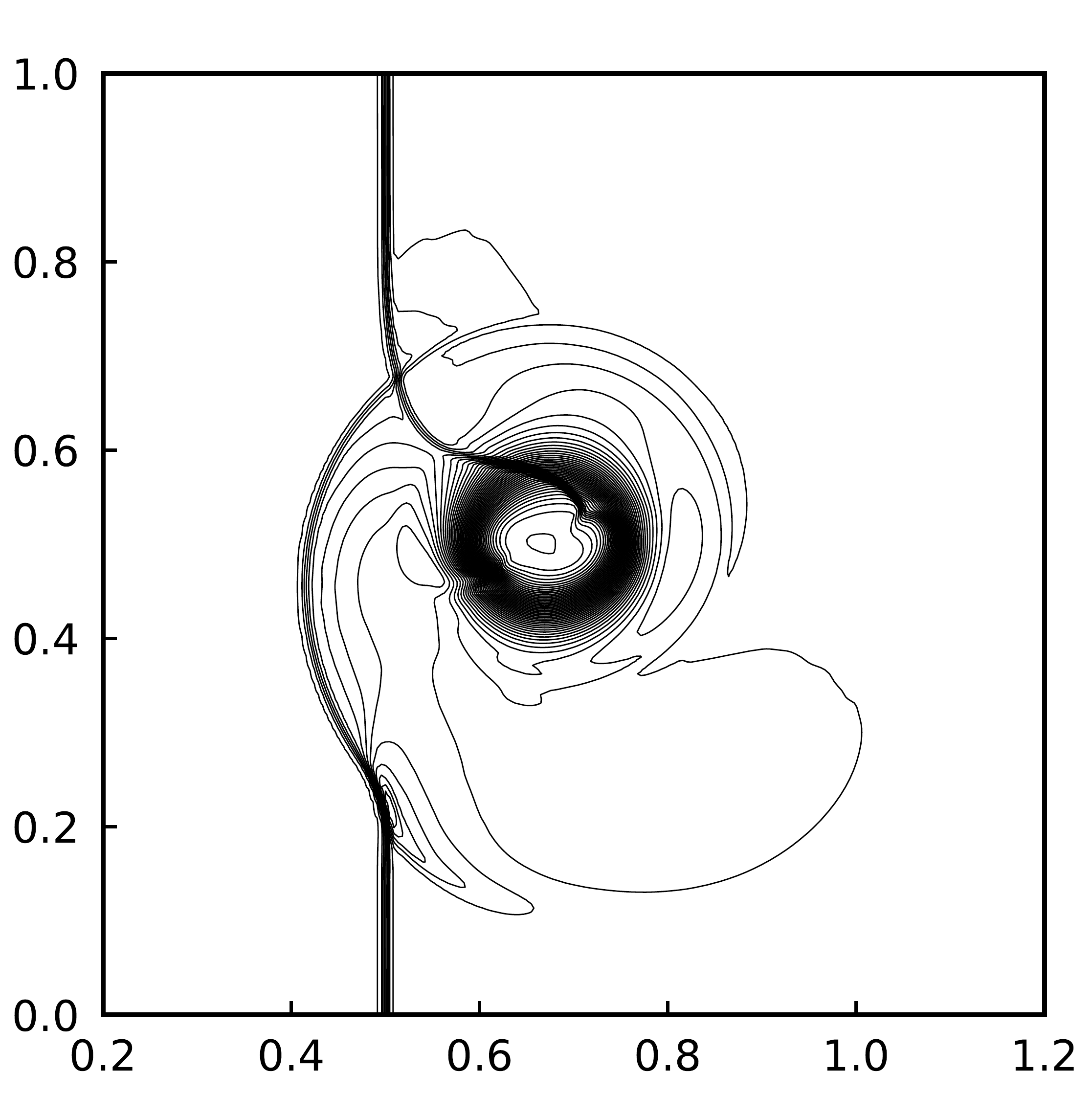}
		\caption{$t=0.35$, {\tt quasi-optimal}}
		\label{fig:ex3_3-h}
	\end{subfigure}
	\quad
	\begin{subfigure}[t][][t]{0.33\textwidth}
		\centering
		\includegraphics[width=\textwidth]{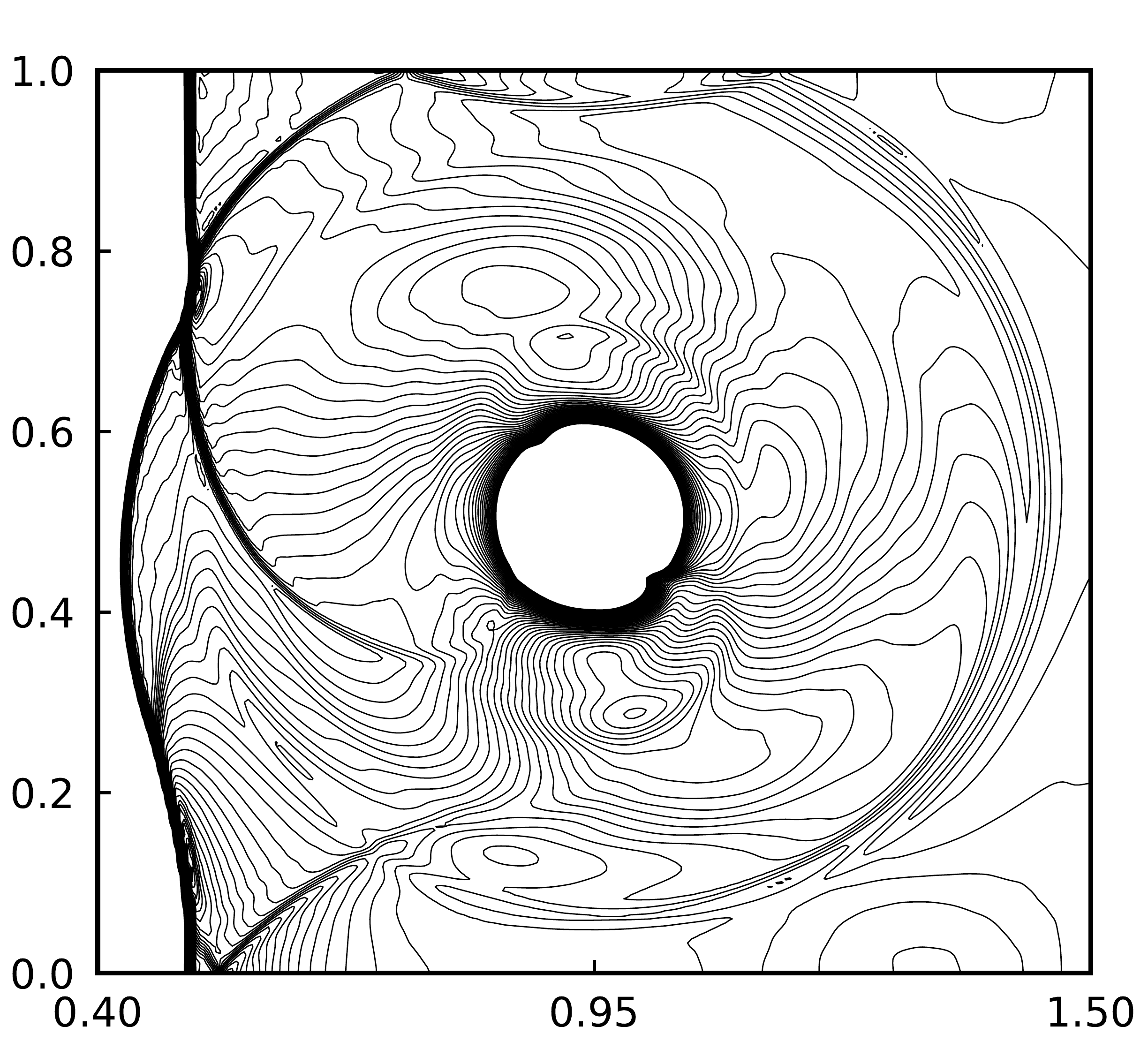}
		\caption{$t=0.6$, {\tt quasi-optimal}}
		\label{fig:ex3_3-l}
	\end{subfigure}
	\caption{
		Same as \Cref{fig:ex3_2} except for $\mathbb P^4$-based BP DG methods, which are designed with three different CADs. 
	}
	\label{fig:ex3_3}
\end{figure}

\begin{table}[htbp] 
	\centering
	\caption{CPU time in minutes of simulating Example 3 up to $t=0.6$.}
	\label{tab:ex3Euler}
	\setlength{\tabcolsep}{2mm}{
		\begin{tabular}{lccc}
			\toprule[1.5pt]
			
			& {\tt optimal} approach & {\tt quasi-optimal} approach & {\tt classic} approach \\
			
			\midrule[1.5pt]
			 $\mathbb{P}^2$     & 169.29 & 169.45 &  201.45 \\  
			 $\mathbb{P}^4$       & 1299.55 & 1327.45 & 1772.26 \\
			
			\bottomrule[1.5pt]
		\end{tabular}
	}
\end{table}

\begin{remark}[Simplified BP limiter for Euler equations]\label{rem:SBPEuler}
	As discussed in \Cref{rem:SBP} for the scalar conservation laws, in this example we use the simplified BP limiter, which modifies the DG polynomial $p_{ij}(x,y)=:(\rho_{ij} (x,y),{\bm m}_{ij}(x,y),E_{ij} (x,y))$ to $\tilde p_{ij}(x,y)$ as follows. 
	\begin{itemize}
		\item First, modify the density to enforce its positivity via 
		$$
		\widehat \rho_{ij} (x,y) = \theta_1 ( \rho_{ij} (x,y) - \bar \rho_{ij}  ) +  \bar \rho_{ij}  \quad \mbox{with} \quad 
		\theta_1 := \min\left\{  \left|
		\frac{ \bar \rho_{ij} - \epsilon_1 }{ \bar \rho_{ij} - \rho_{\min,ij} } \right|, 1
		\right\},
		$$
		where $\epsilon_1$ is a small positive number introduced for avoiding the effect of round-off error and can be taken as $\epsilon_1=\min\{ 10^{-13}, \bar \rho_{ij} \}$. 
		$\rho_{\min,ij} = \min\{ \rho( u_{i\pm\frac12,j}^\mp ), \rho( u_{i,j\pm\frac12}^\mp ), \rho( \Pi_{ij} ) \}$. 
		Here $\Pi_{ij}$ is defined by \eqref{eq:478a3} with $\Wmu$ taken as $\Wmu_\star$ for the {\tt optimal} approach, taken as $\Wmu_\star^{\tt Q}$ for the {\tt quasi-optimal} approach, and 
		taken as $\omega_1^{\tt GL}$ for the {\tt classic} approach, respectively.
		\item Modify $\widehat p_{ij}(x,y)=:(\widehat \rho_{ij} (x,y),{\bm m}_{ij}(x,y),E_{ij} (x,y))$ to $\tilde p_{ij}(x,y)$ to enforce the positivity of $\rho e$ by 
		$$
		\tilde p_{ij}(x,y) = \theta_2 ( \widehat p_{ij} (x,y) - \bar u_{ij}  ) +  \bar u_{ij}  \quad \mbox{with} \quad 
		\theta_2 := \min\left\{  \left|
		\frac{ \rho e(\bar u_{ij}) - \epsilon_2 }{ \rho e( \bar u_{ij} ) - \widehat{ \rho e}_{\min,ij} } \right|, 1
		\right\},
		$$	
		where $\epsilon_2$ is a small positive number introduced for avoiding the effect of round-off error and can be taken as $\epsilon_1=\min\{ 10^{-13}, \rho e( \bar u_{ij} ) \}$, and 
		$$
		\widehat{ \rho e}_{\min,ij} = \min \{  \rho e( \widehat u_{i\pm\frac12,j}^\mp ), \rho e( \widehat u_{i,j\pm\frac12}^\mp ), \rho e( \widehat \Pi_{ij} )  \},
		$$
		which are computed via \eqref{eq:u1}, \eqref{eq:u2}, and \eqref{eq:478a3} but based on $\tilde p_{ij}(x,y)$. 
	\end{itemize} 	
\end{remark}

\subsection{Example 4: 2D Riemann problem of relativistic hydrodynamics}
In this example, we simulate a 2D Riemann problem \cite{WuTang2015} with large Lorentz factor, low density, and low pressure for special relativistic hydrodynamics, 
whose governing equations take the form of \eqref{2DCL} with 
\begin{equation} \label{Eq:RHD}
	u=\begin{pmatrix}
		D \\
		m_1 \\
		m_2 \\
		E
	\end{pmatrix}, \qquad {f_1}(u)=\begin{pmatrix}
		Dv_1 \\
		m_1 {v_1}+P \\
		m_2 v_1 \\
		m_1
	\end{pmatrix}, \qquad {f_2}(u)=\begin{pmatrix}
		D v_2 \\
		m_1 v_2 \\
		m_2 {v_2}+P \\
		m_2
	\end{pmatrix}.
\end{equation}
Here $D=\rho W$ is the density with $\rho$ being the rest-mass density, $(m_1,m_2)=\rho h W^2 (v_1,v_2)$ denotes the momentum vector with $(v_1,v_2)$ being the velocity, $E=\rho h W^2 -P$ is the total energy with $P$ being the pressure, 
$W = (1-v_1^2-v_2^2)^{\frac12}$ denotes the Lorentz factor, and $h=1+\frac{\gamma P }{(\gamma -1)\rho}$ is the specific enthalpy with $\gamma$ being the adiabatic index. Normalized units are used here such that the speed of light equals one.  
For the relativistic hydrodynamic system \eqref{Eq:RHD}, 
the density and pressure should be positive, and the magnitude of velocity should be smaller than the speed of light. As proved in \cite{WuTang2015}, these constraints form an invariant region which can be equivalently expressed as 
$$
G= \left\{ u=(D,m_1,m_2,E)^\top: D(u)>0,~ q(u) := E-\sqrt{D^2+m_1^2+m_2^2}>0 \right \}, 
$$ 
where $q(u)$ is a concave function of $u$ so that $G$ is a convex set \cite{WuTang2015}.

The initial conditions of this 2D Riemann problem \cite{WuTang2015} are given by 
\begin{align*}
	(\rho, u, v, p) =
	\begin{cases}
		(0.1,0,0,20), & x>0.5, y>0.5,  \\
		(0.00414329639576,0.9946418833556542,0,0.05), & x<0.5, y>0.5,  \\
		(0.01,0,0,0.05), & x<0.5, y<0.5, \\
		(0.00414329639576,0,0.9946418833556542,0.05), & x>0.5, y<0.5. 
	\end{cases}
\end{align*}
The adiabatic index $\gamma$ is taken as $5/3$, and outflow conditions are specified on the boundary of the computational domain $[0,1]^2$. 
Figure \ref{fig:Riemann2} displays the contours of density logarithm $\ln \rho$, obtained by using the BP DG methods designed with three different CADs, on the mesh of $400\times 400$ uniform cells at $t=0.4$. 
It is seen that two moving shocks and two stationary contact discontinuities interact each other, forming a mushroom-like structure expanding to the left-bottom region. 
The results of three different approaches are consistent and 
agree well with those computed in \cite{WuTang2015}. 
Table \ref{tab:Riemann2} presents the CPU time in minutes for optimal, quasi-optimal and classic approaches, demonstrating the efficiency of using optimal and quasi-optimal approaches with larger time steps. 
We also observe that the maximum wave speeds in $x$- and $y$-directions are very close so that the quasi-optimal CAD is very close to OCAD in the simulation, since the exact solution of this problem is symmetric with respect to $y=x$.


\begin{figure}[htbp]
	\centering
	\begin{subfigure}[t][][t]{0.31\textwidth}
		\centering
		\includegraphics[width=\textwidth]{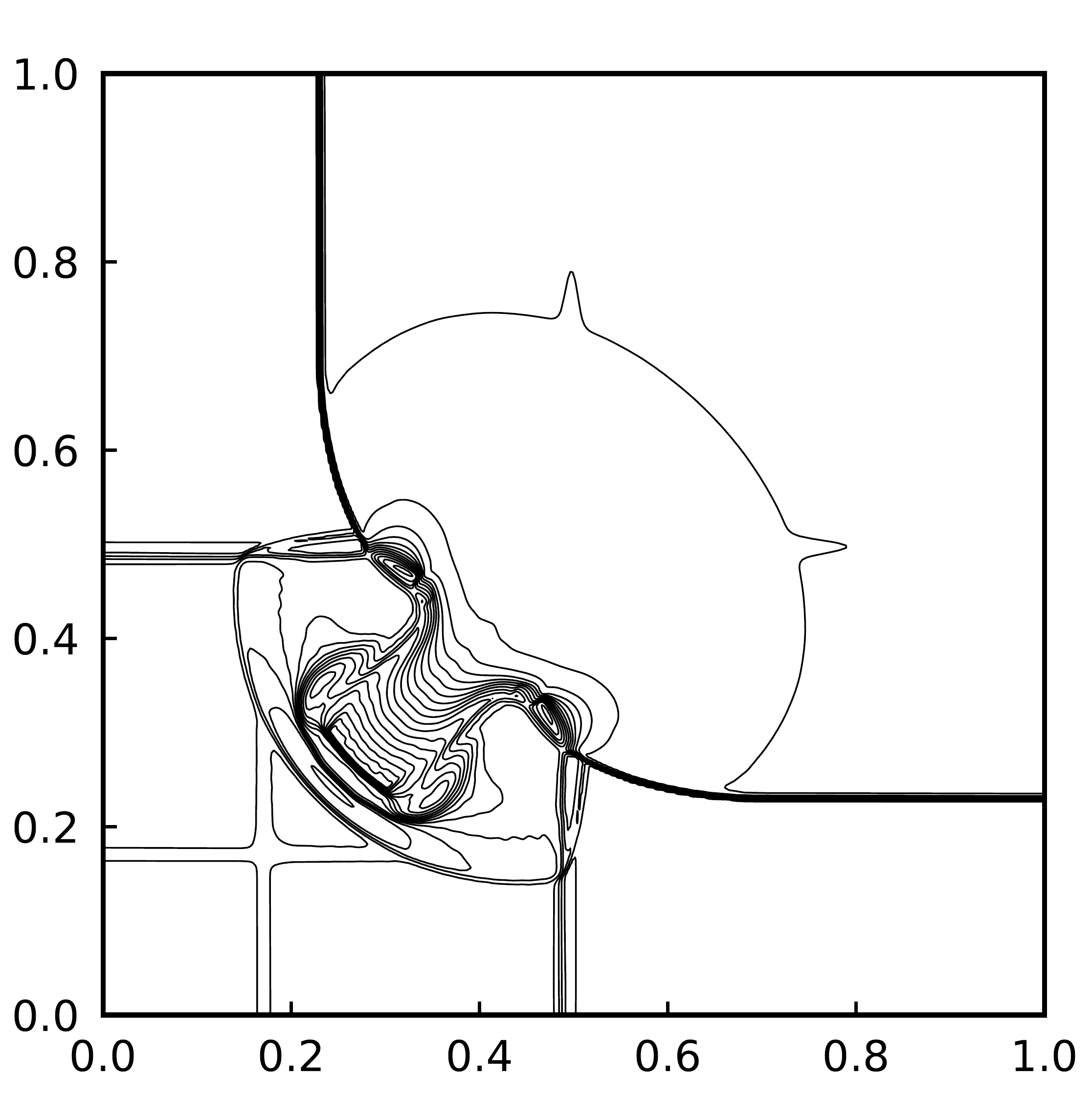}
		\caption{$\mathbb{P}^2$-based: {\tt optimal}}
		\label{fig:Riemann2-a}
	\end{subfigure}
	\quad
	\begin{subfigure}[t][][t]{0.31\textwidth}
		\centering
		\includegraphics[width=\textwidth]{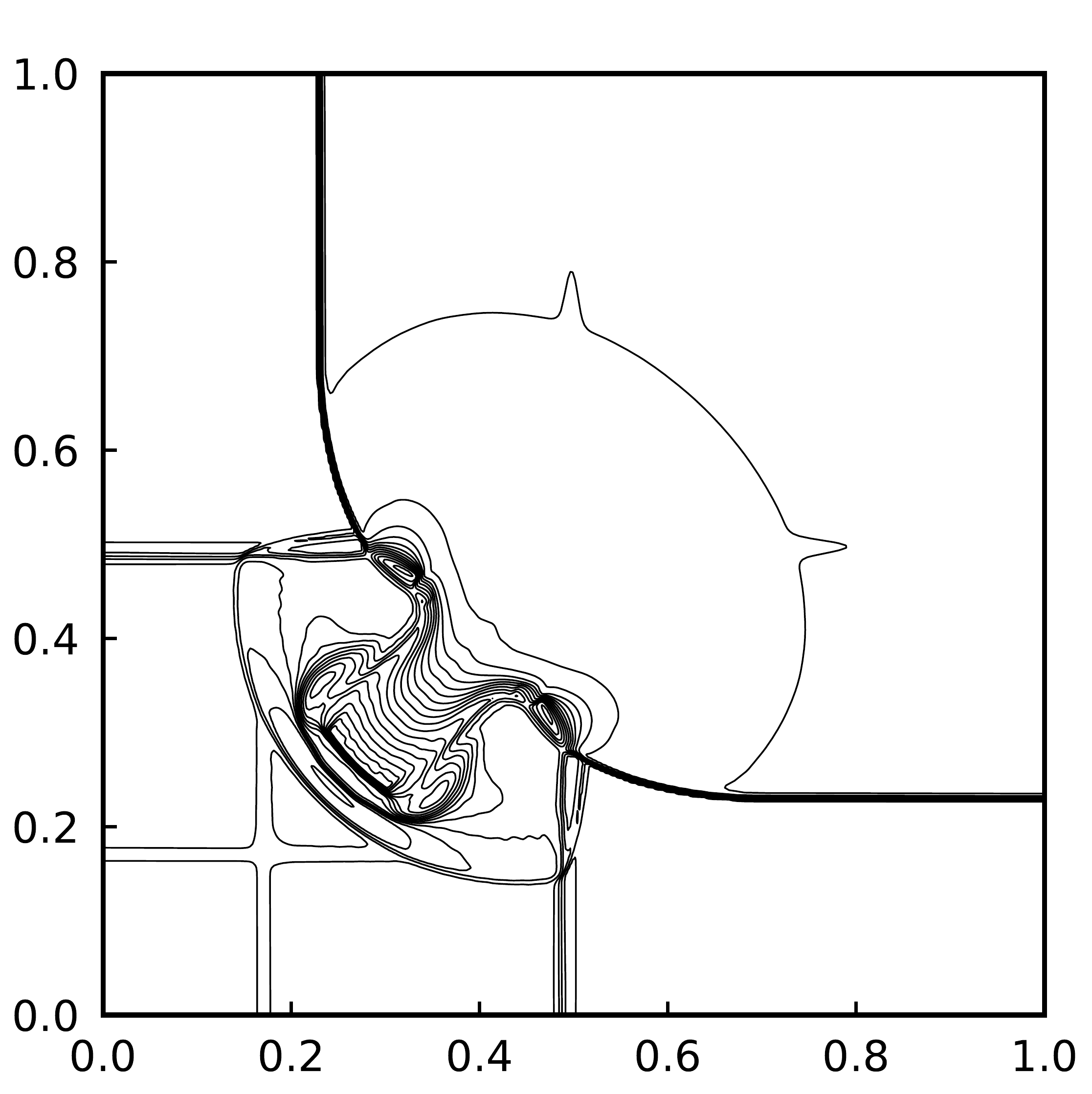}
		\caption{$\mathbb{P}^2$-based: {\tt quasi-optimal}}
		\label{fig:Riemann2-b}
	\end{subfigure}
	\quad
	\begin{subfigure}[t][][t]{0.31\textwidth}
		\centering
		\includegraphics[width=\textwidth]{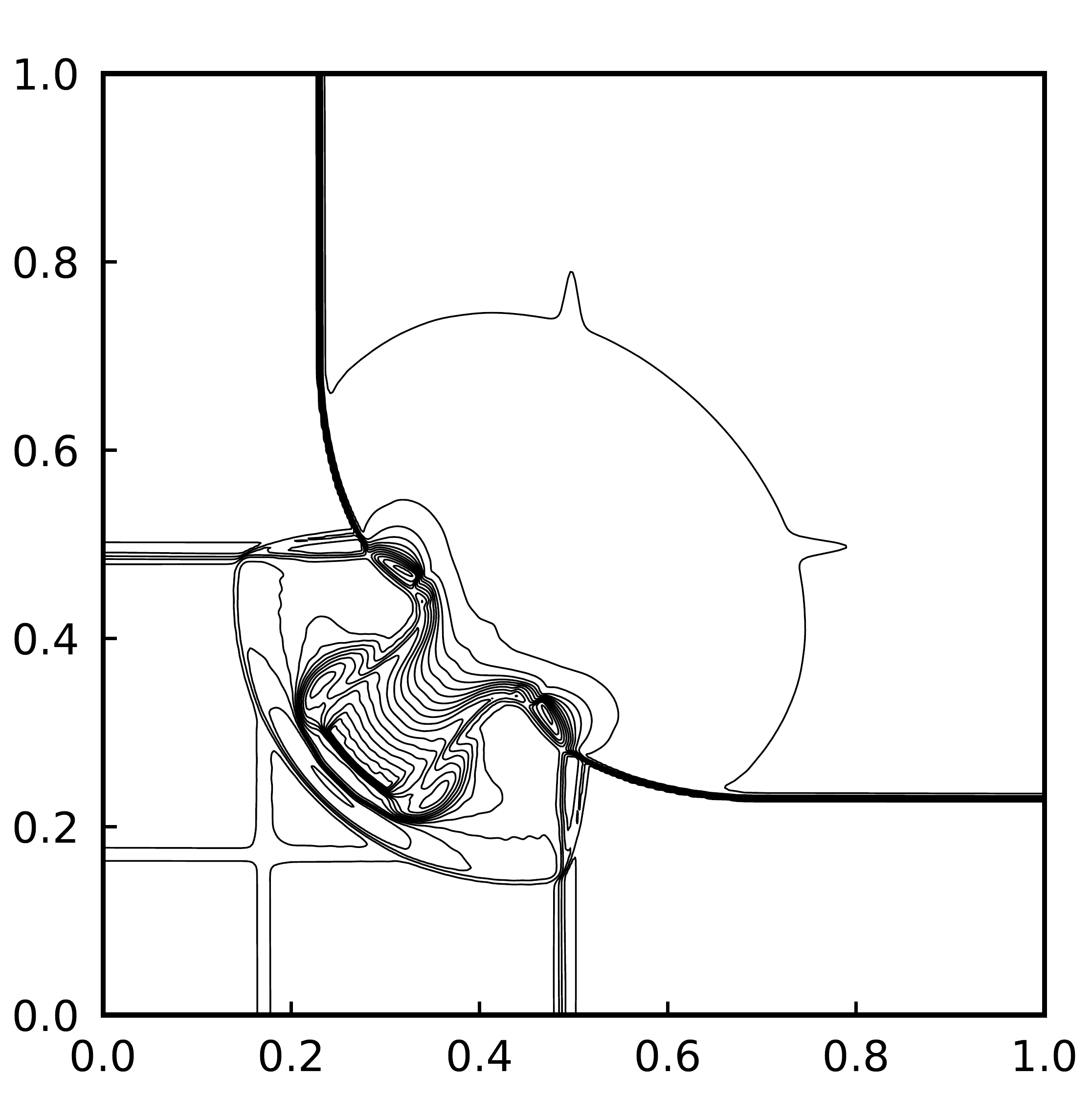}
		\caption{$\mathbb{P}^2$-based: {\tt classic}}
		\label{fig:Riemann2-c}
	\end{subfigure}
	
	\begin{subfigure}[t][][t]{0.31\textwidth}
		\centering
		\includegraphics[width=\textwidth]{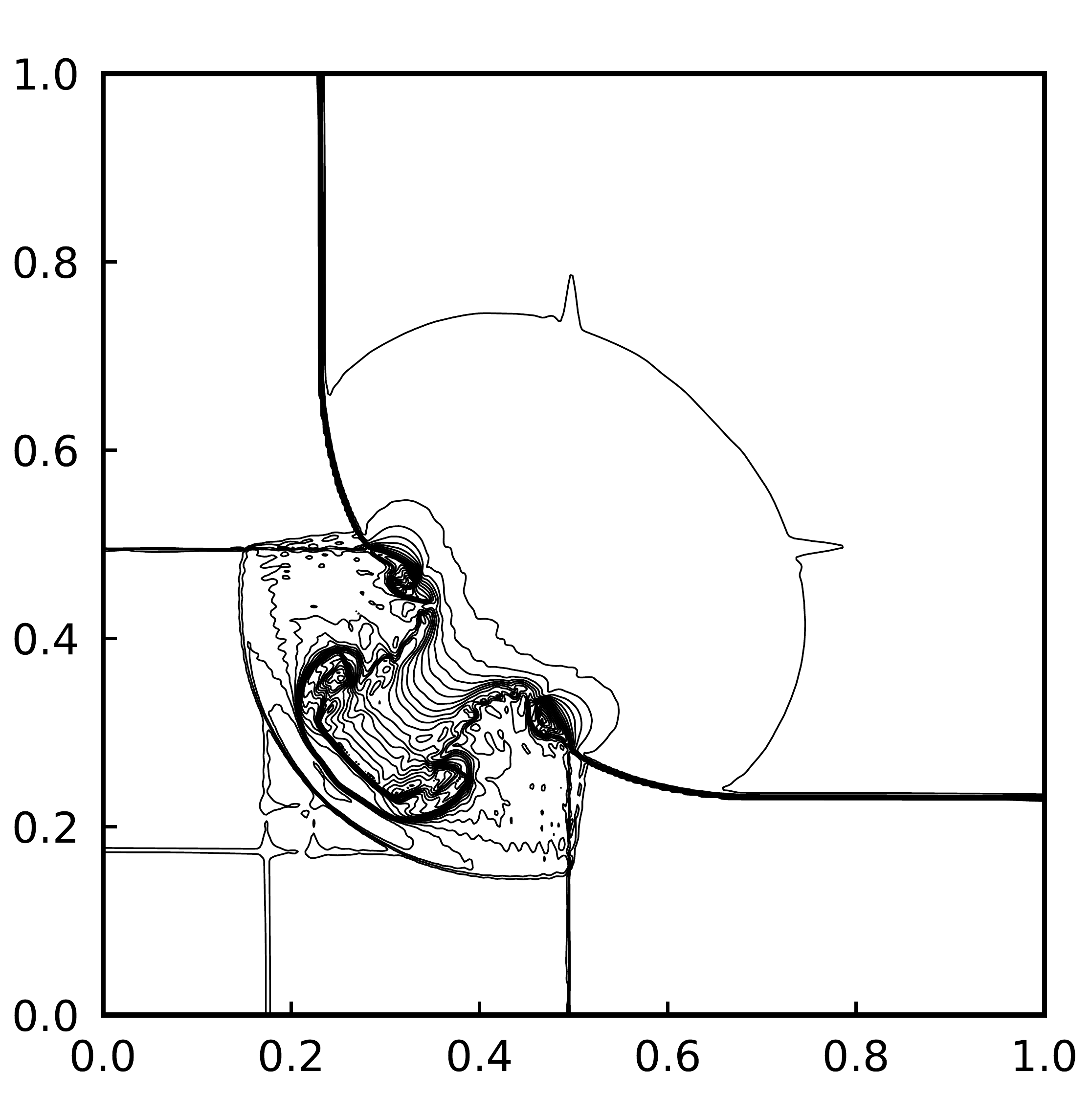}
		\caption{$\mathbb{P}^4$-based: {\tt optimal}}
		\label{fig:Riemann2-d}
	\end{subfigure}
	\quad
	\begin{subfigure}[t][][t]{0.31\textwidth}
		\centering
		\includegraphics[width=\textwidth]{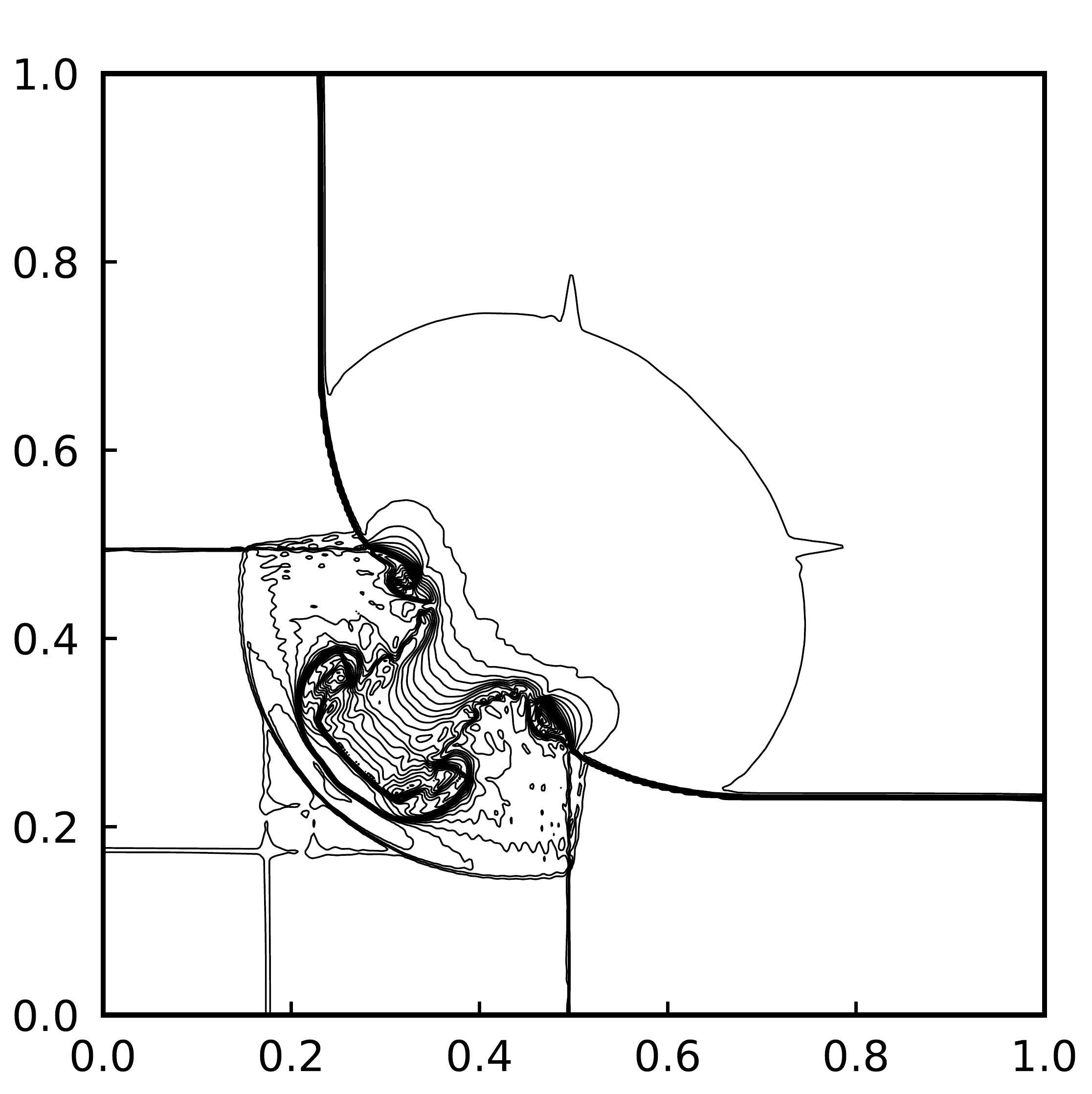}
		\caption{$\mathbb{P}^4$-based: {\tt quasi-optimal}}
		\label{fig:Riemann2-e}
	\end{subfigure}
	\quad
	\begin{subfigure}[t][][t]{0.31\textwidth}
		\centering
		\includegraphics[width=\textwidth]{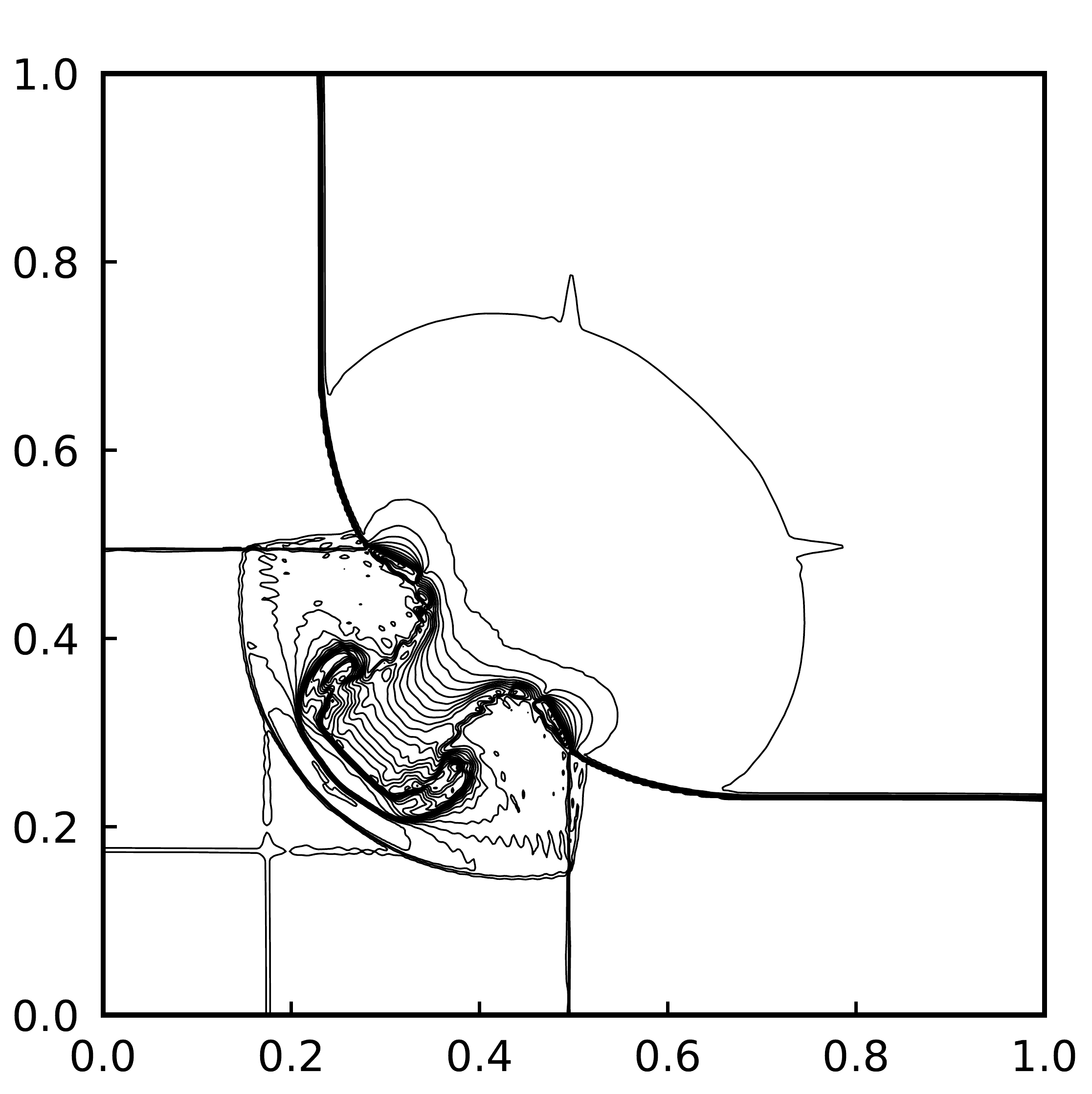}
		\caption{$\mathbb{P}^4$-based: {\tt classic}}
		\label{fig:Riemann2-f}
	\end{subfigure}
	\caption{Example 4: The density logarithm $\ln \rho$ with 25 equally spaced contour lines  from $-9.5$ to $-2.38$ at $t=0.4$.
	}
	\label{fig:Riemann2}
\end{figure}

\begin{table}[htbp] 
	\centering
	\caption{ CPU time in minutes of simulating Example 4 up to $t=0.4$.}
	\label{tab:Riemann2}
	\setlength{\tabcolsep}{2mm}{
		\begin{tabular}{lccc}
			\toprule[1.5pt]
			
			& {\tt optimal} approach & {\tt quasi-optimal} approach & {\tt classic} approach \\
			
			\midrule[1.5pt] 
			$\mathbb{P}^2$       & 292.28 & 298.75 & 358.83 \\
			$\mathbb{P}^4$       & 1623.67& 1597.97& 2103.50\\
			\bottomrule[1.5pt]
		\end{tabular}
	}
\end{table}

\subsection{Example 5: Axisymmetric jet of relativistic hydrodynamics}

In the last example, we simulate a challenging astrophysical jet problem \cite{zhang2006ram,WuTang2015} by solving the 
axisymmetric relativistic hydrodynamic equations, 
which can be written in the cylindrical coordinates $(r,z)$ as 
\begin{equation}\label{2DjetCL}
	\frac{\partial }{\partial t} u+ \frac{\partial }{\partial r} f_1(u) + \frac{\partial }{\partial z} f_2(u)  =  s(u) 
\end{equation}
with $u$, $f_1(u)$, and $f_2(u)$ defined as \eqref{Eq:RHD}, and
$$
s(u)=-\frac{1}{r} ( D v_1, m_1 v_1, m_2 v_1, m_1 ). 
$$
The adiabatic index is taken as $5/3$. 
The computational domain is set as $[0,15]\times[0,45]$, which is divided into $240\times720$ uniform cells. 
Initially, the domain is full of the static uniform medium with 
$$
(\rho,~ v_1,~ v_2,~ P) = (0.01,~ 0.0,~ 0.0,~ 0.000170304823218172071).
$$ 
A high-speed relativistic jet 
with state 
$$(\rho_b,~ v_{1,b},~ v_{2,b},~ P_b) = (0.01,~ 0.0,~ 0.99,~ 0.000170304823218172071)$$
is injected in $z$-direction through nozzle $(r\leq1)$ of the bottom boundary $(z=0)$. 
In other words, the fixed inflow condition $(\rho_b,v_{1,b},v_{2,b},P_b)$ is applied on $\{r\leq 1, z=0 \}$ of the bottom boundary. 
The symmetrical condition is specified on the left boundary $r=0$, outflow conditions are applied on other boundaries. 
For this jet, the classical Mach number is $6$, and the corresponding relativistic Mach number is about $41.95$. 

\begin{table}[htbp] 
	\centering
	\caption{CPU time in hours of simulating Example 5 up to $t=100$.}
	\label{tab:RHDJet1}
	\setlength{\tabcolsep}{2mm}{
		\begin{tabular}{lccc}
			\toprule[1.5pt]
			
			& {\tt optimal} approach & {\tt quasi-optimal} approach & {\tt classic} approach \\
			
			\midrule[1.5pt] 
			$\mathbb{P}^2$   & 45.8 & 50.12 &  59.72 \\   
			$\mathbb{P}^4$  & 297.26 &  334.59 &  414.19 \\ 
			
			\bottomrule[1.5pt]
		\end{tabular}
	}
\end{table}

The high speed and low pressure make this test very challenging. 
Without using the BP technique, the simulation with a DG code would break down quickly. 
Figure \ref{fig:RHDJet1} displays the schlieren images of rest-mass density logarithm $\ln \rho$ in the domain $[-15, 15]\times [0,45]$ by respectively using the {\tt optimal}, {\tt quasi-optimal}, and {\tt classic} approaches at $t=100$. 
The results demonstrate the excellent robustness for all the three approaches. 
One can see the turbulent structures are produced, and the jet dynamics and morphology agree well with those simulated in \cite{zhang2006ram,WuTang2015}. 
Table \ref{tab:RHDJet1} also displays the CPU time in this test, further confirming the notable advantage of the {\tt optimal} and {\tt quasi-optimal} approaches in efficiency.


\begin{figure}[htbp]
	\centering
	\begin{subfigure}[t][][t]{0.31\textwidth}
		\centering
		\includegraphics[width=\textwidth]{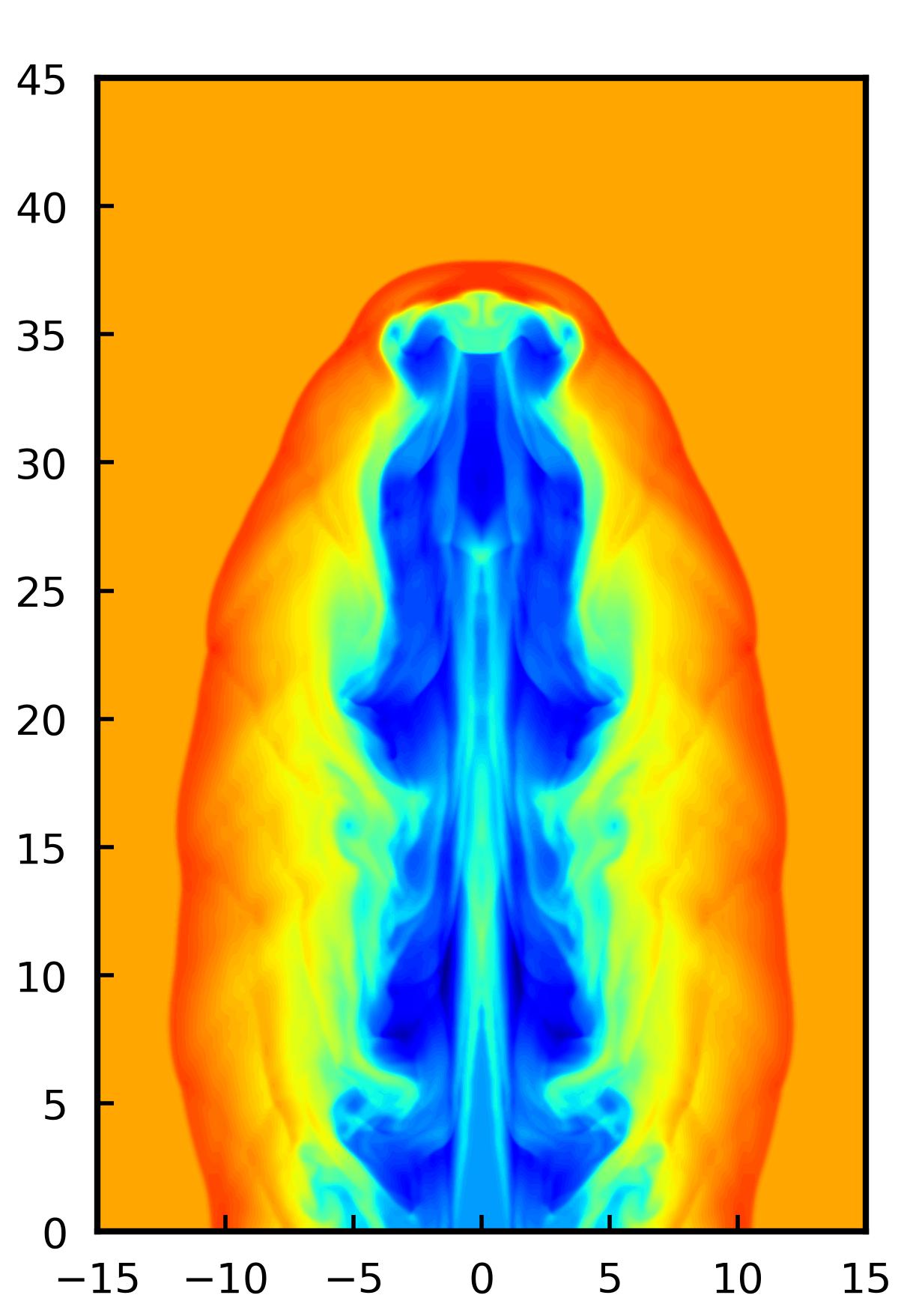}
		\caption{$\mathbb{P}^2$-based: {\tt optimal}}
		\label{fig:RHDJet1-a}
	\end{subfigure}
	\quad
	\begin{subfigure}[t][][t]{0.31\textwidth}
		\centering
		\includegraphics[width=\textwidth]{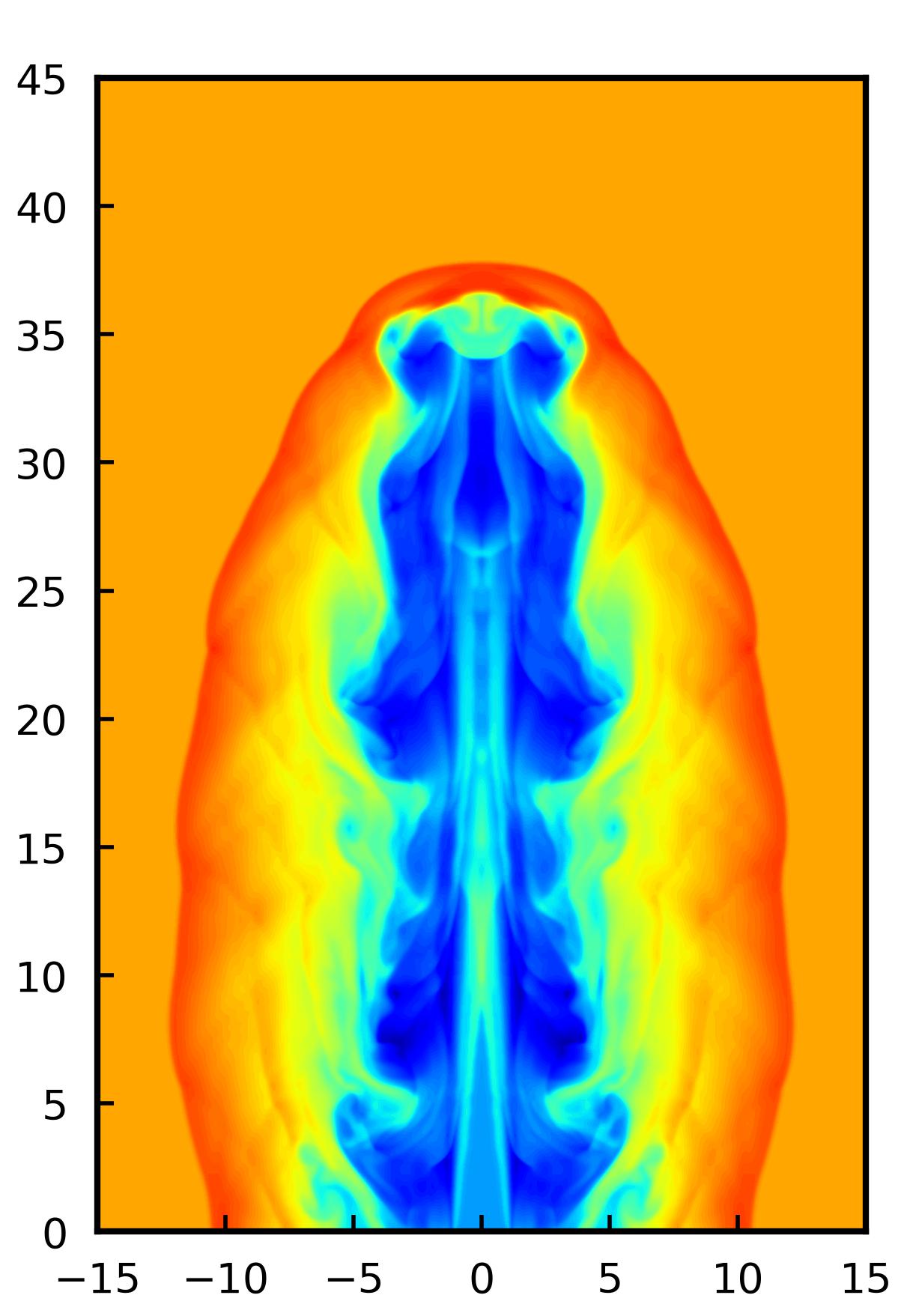}
		\caption{$\mathbb{P}^2$-based: {\tt quasi-optimal}}
		\label{fig:RHDJet1-b}
	\end{subfigure}
	\quad
	\begin{subfigure}[t][][t]{0.31\textwidth}
		\centering
		\includegraphics[width=\textwidth]{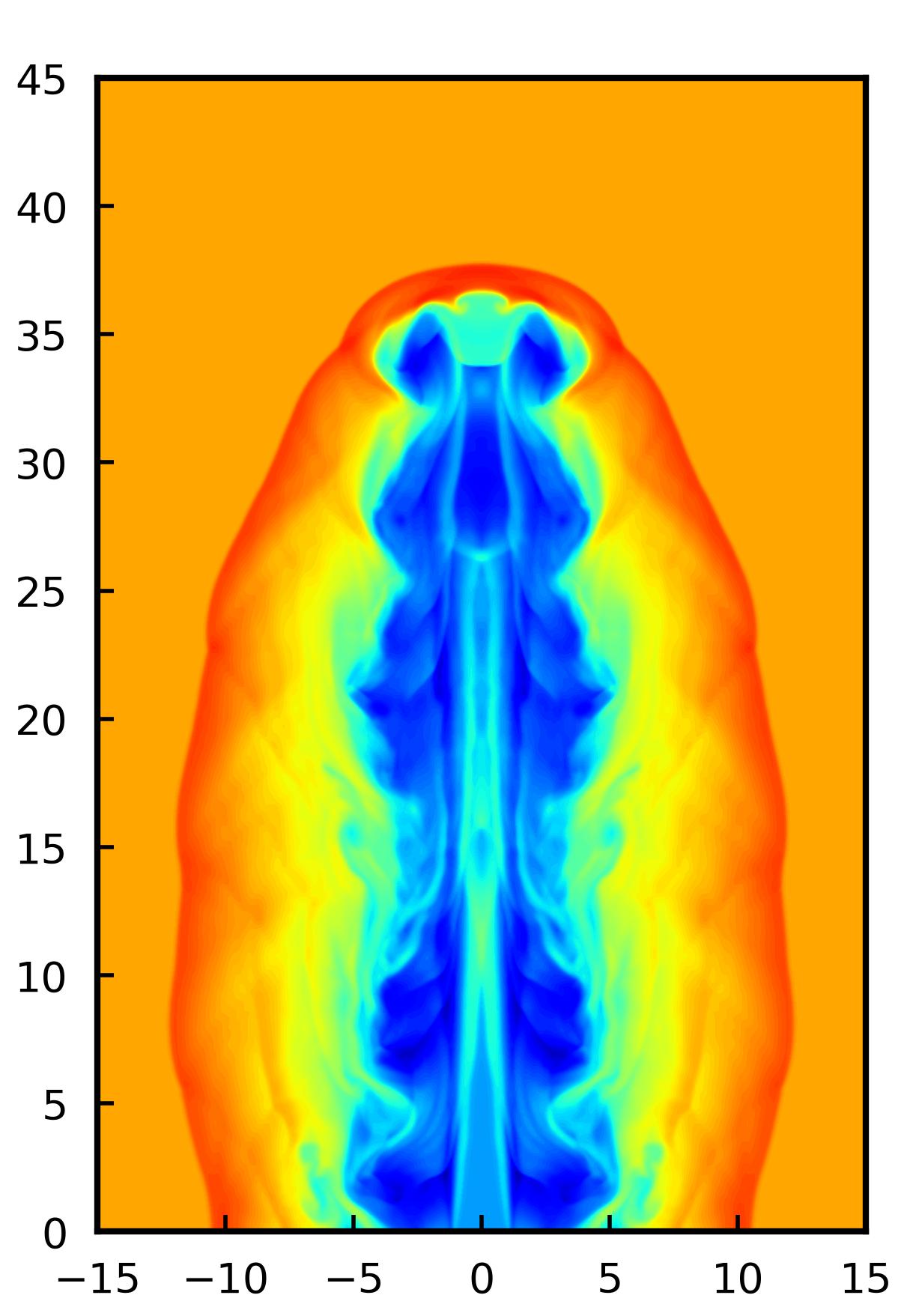}
		\caption{$\mathbb{P}^2$-based: {\tt classic}}
		\label{fig:RHDJet1-c}
	\end{subfigure}
	
	\begin{subfigure}[t][][t]{0.31\textwidth}
		\centering
		\includegraphics[width=\textwidth]{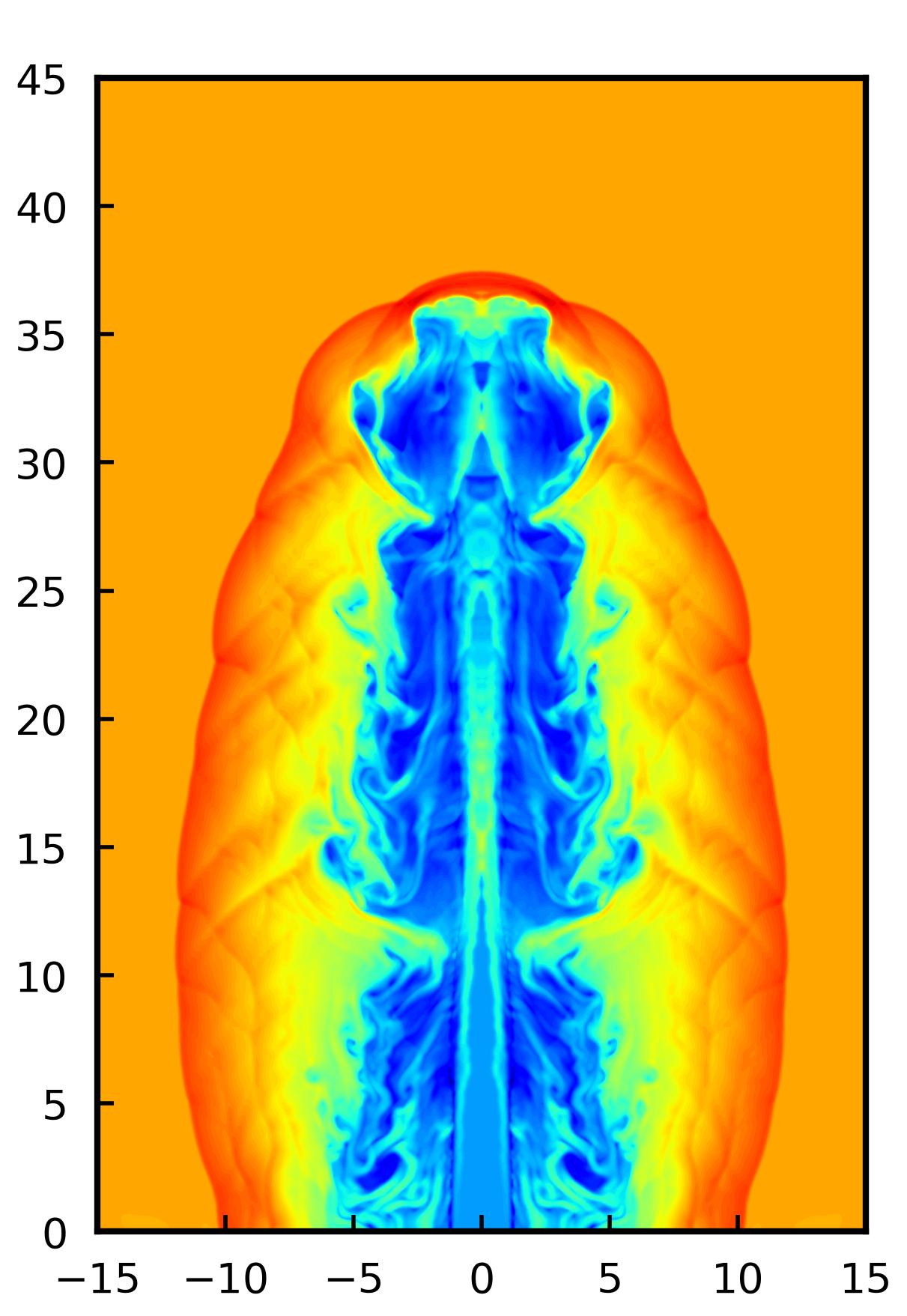}
		\caption{$\mathbb{P}^4$-based: {\tt optimal}}
		\label{fig:RHDJet1-d}
	\end{subfigure}
	\quad
	\begin{subfigure}[t][][t]{0.31\textwidth}
		\centering
		\includegraphics[width=\textwidth]{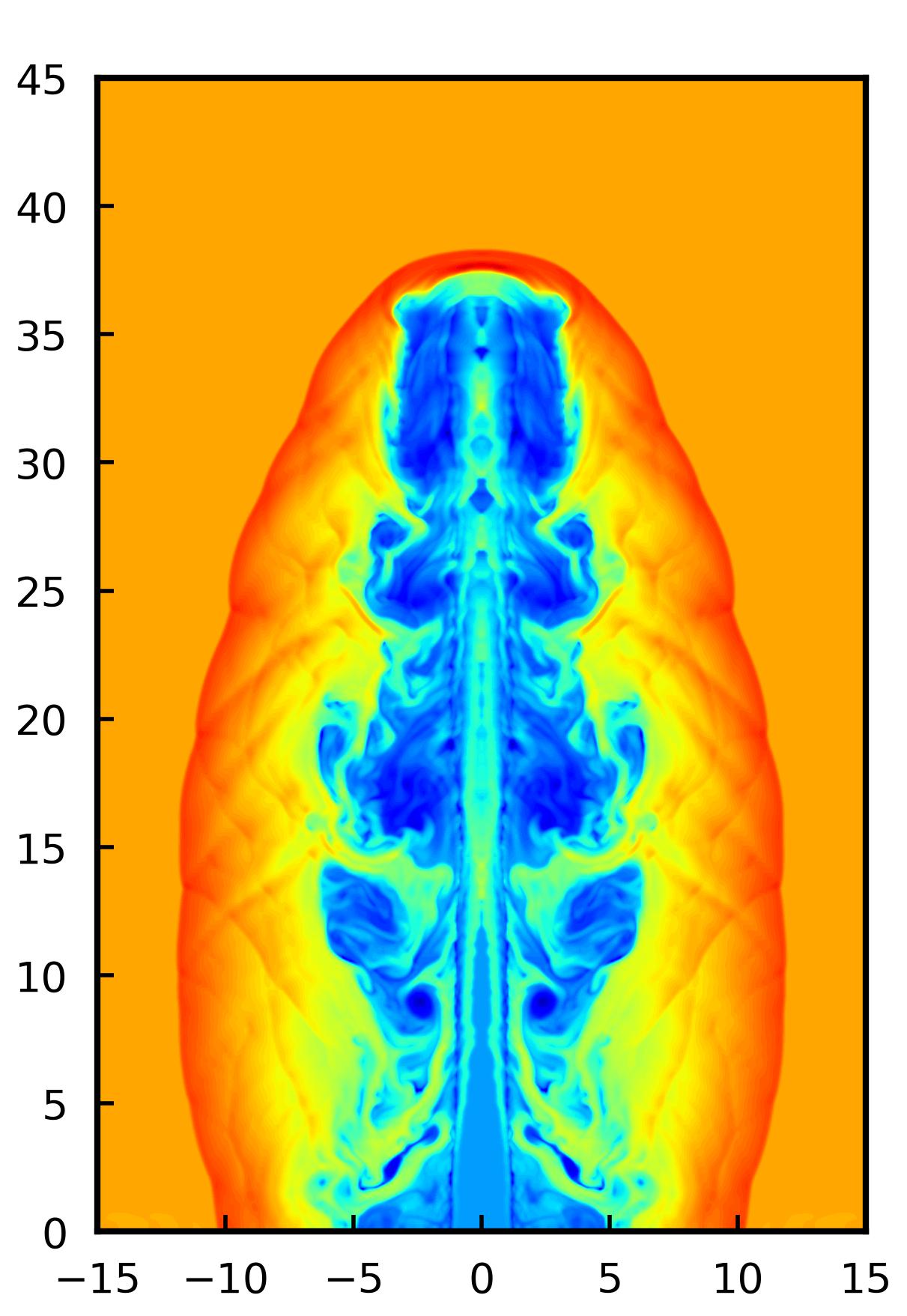}
		\caption{$\mathbb{P}^4$-based: quasi-optimal}
		\label{fig:RHDJet1-e}
	\end{subfigure}
	\quad
	\begin{subfigure}[t][][t]{0.31\textwidth}
		\centering
		\includegraphics[width=\textwidth]{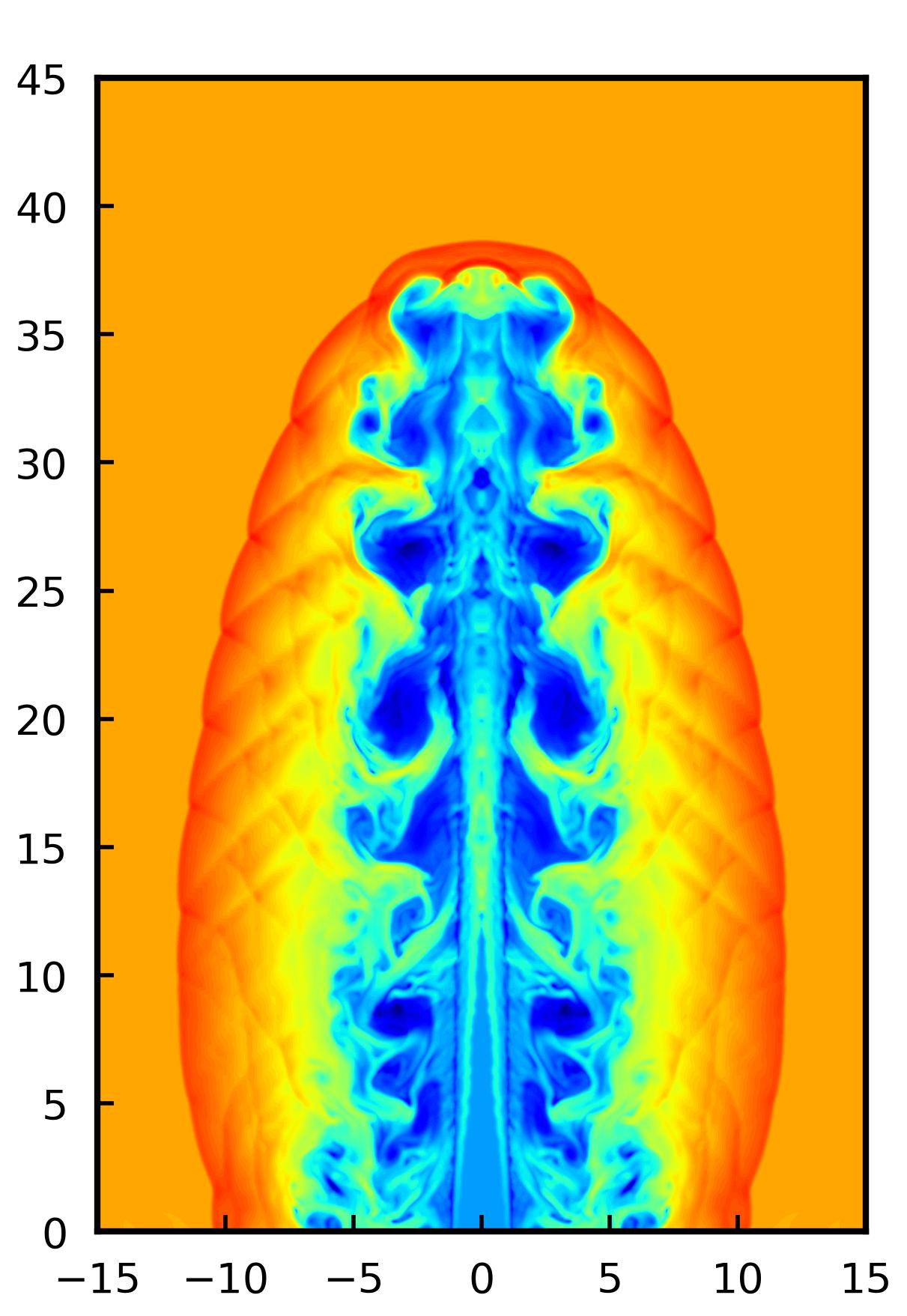}
		\caption{$\mathbb{P}^4$-based: {\tt classic}}
		\label{fig:RHDJet1-f}
	\end{subfigure}
	\caption{Example 5: The rest-mass density logarithm $\ln \rho$ at $t=100$. 
	}
	\label{fig:RHDJet1}
\end{figure}

\section{Conclusions}\label{sec:conclusion}

In this paper, we have presented the first systematic analysis of the 
OCAD problem for constructing efficient high-order BP numerical methods within Zhang--Shu framework. We have proved that the classic 1D CAD originally proposed by Zhang and Shu is optimal for general $\mathbb P^k$ spaces of an arbitrary $k \in \mathbb N_+$. 
We have also established the general theory for studying the 2D OCAD problem on Cartesian meshes. Based on the proposed theory, 
we have proved that the classic 2D CAD \eqref{eq:U2Dsplit} is optimal for general $\mathbb Q^k$ spaces of an arbitrary $k \in \mathbb N_+$.

Unfortunately, the classic CAD is not optimal for the widely used 2D $\mathbb P^k$ spaces. 
As the polynomial degree $k$ increases, seeking the genuine OCAD is more and more difficult. 
We have developed a systematic approach to 
find the genuinely optimal CADs for the 2D $\mathbb P^k$ spaces. 
We have derived the analytical formulas of OCADs for $\mathbb P^k$ spaces with $k \le 7$. 
A general algorithm has also been proposed to construct the OCADs for $\mathbb P^k$ spaces with $k\ge 8$. 
Based on some geometric insights, we have also proposed a more practical quasi-optimal CAD, which can be easily constructed via a convex combination of the OCADs in three special cases. We have demonstrated that our quasi-optimal CAD can achieve a near-optimal BP CFL condition, which is very close (at least 95\%) to the optimal one. 
The discovery of OCADs and quasi-optimal CADs is highly nontrivial yet meaningful, as it leads to an improvement of high-order BP schemes for a large class of hyperbolic or convection-dominated equations, at the little cost of only a slight and local modification to the implementation code. 
The remarkable advantages in efficiency 
have been confirmed by 
several numerical examples covering four hyperbolic partial differential equations.

The presented theory on OCAD is highly nontrivial and involves novel techniques from several branches of mathematics. For example, we have proved  
the existence of OCAD by using Carath\'eodory's theorem from {\em convex geometry}, and we have simplified the 2D OCAD problem to a symmetric OCAD problem based on {\em the invariant theory of symmetric group}. 
Most notably, we have discovered that the symmetric OCAD problem is closely related to 
polynomial optimization of a positive linear functional on the positive polynomial cone, by which we have established four useful criteria for examining the optimality of a feasible CAD. Some geometric insights have also been provided to interpret our critical findings. 
Our future work will include exploring OCADs and quasi-optimal CADs on unstructured meshes and 3D meshes.

\appendix

\section{Proof of \Cref{thm:3161}}\label{sec:A0}

The proof of \Cref{thm:3161} is given as follows.

\begin{proof}
	We only need to prove the result for $\theta \in [-1,0]$, as the conclusion for $\theta \in [0,1]$ then directly follows from \Cref{thm:sym_theta} and \Cref{cor:1575}. 	
	
	The proof for $\theta \in [-1,0]$ consists of the following three steps.  
	
	{\tt Step 1: Verify the feasibility condition (i) in \Cref{def:2D_FCAD}.} 
	Thanks to \Cref{lem:Gfs-invarint}, we only need to verify 
	that the symmetric CAD \eqref{eq:2879} is feasible 
	for the ${\mathscr G}_s$-invariant subspace $\mathbb{P}^4({\mathscr G}_{s}) = \mathbb{P}^5({\mathscr G}_{s})={\rm span} 
	\{1,x^2,y^2,x^4,x^2y^2,y^4\}$. 
	In other words, it suffices to verify the correctness of the following equations:
	\begin{align*}
		1 = & \frac{1}{1} \cdot \Wmu_\star (1+\theta) + \frac{1}{1} \cdot \Wmu_\star (1-\theta) + \omega_1 + \omega_2, \\
		\frac{1}{3} = & \frac{1}{1} \cdot \Wmu_\star (1+\theta) + \frac{1}{3} \cdot \Wmu_\star (1-\theta) + \omega_1 (x^{(1)})^2, \\
		\frac{1}{3} = & \frac{1}{3} \cdot \Wmu_\star (1+\theta) + \frac{1}{1} \cdot \Wmu_\star (1-\theta) + \omega_1 (y^{(1)})^2 + \omega_2 (y^{(2)})^2, \\
		\frac{1}{5} = & \frac{1}{1} \cdot \Wmu_\star (1+\theta) + \frac{1}{5} \cdot \Wmu_\star (1-\theta) + \omega_1 (x^{(1)})^4, \\
		\frac{1}{9} = & \frac{1}{3} \cdot \Wmu_\star (1+\theta) + \frac{1}{3} \cdot \Wmu_\star (1-\theta) + \omega_1 (x^{(1)})^2 (y^{(1)})^2, \\
		\frac{1}{5} = & \frac{1}{5} \cdot \Wmu_\star (1+\theta) + \frac{1}{1} \cdot \Wmu_\star (1-\theta) + \omega_1 (y^{(1)})^4 + \omega_2 (y^{(2)})^4,
	\end{align*}
	which are equivalent to
	\begin{subequations}\label{eq:2378}
		\begin{align}
			\label{eq:2378a} 1-2\Wmu_\star = & \omega_1 + \omega_2, \\
			\label{eq:2378b} \frac{1}{3} - \frac{2}{3} \Wmu_\star \theta - \frac{4}{3} \Wmu_\star = & \omega_1 (x^{(1)})^2, \\
			\label{eq:2378c} \frac{1}{3} + \frac{2}{3} \Wmu_\star \theta - \frac{4}{3} \Wmu_\star = & \omega_1 (y^{(1)})^2 + \omega_2 (y^{(2)})^2, \\
			\label{eq:2378d} \frac{1}{5} - \frac{4}{5} \Wmu_\star \theta - \frac{6}{5} \Wmu_\star = & \omega_1 (x^{(1)})^4, \\
			\label{eq:2378e} \frac{1}{9} - \frac{2}{3} \Wmu_\star = & \omega_1 (x^{(1)})^2 (y^{(1)})^2, \\
			\label{eq:2378f} \frac{1}{5} + \frac{4}{5} \Wmu_\star \theta - \frac{6}{5} \Wmu_\star = & \omega_1 (y^{(1)})^4 + \omega_2 (y^{(2)})^4.
		\end{align}
	\end{subequations}
	It is easy to check that the formulas of $\omega_1$, $\omega_2$, $(x^{(1)},y^{(1)})$ and $(x^{(2)},y^{(2)})$, given in \eqref{eq:2890b}--\eqref{eq:2890d}, always automatically satisfy the equations \eqref{eq:2378a}--\eqref{eq:2378e}.  
	The remaining task is to verify \eqref{eq:2378f}. 
	Submitting the formulas of $\omega_1$, $\omega_2$, $y^{(1)}$ and $y^{(2)}$ into \eqref{eq:2378f}, we can equivalently 
	reformulate \eqref{eq:2378f} into  
	\[
	\frac{1}{5} + \frac{4}{5} \Wmu_\star \theta - \frac{6}{5} \Wmu_\star =
	\frac{
		36\,\theta^3 \overline{\omega}_\star^3 
		-30\,\theta^2 \overline{\omega}_\star^3
		+17\,\theta^2 \overline{\omega}_\star^2
		-72\,\theta \overline{\omega}_\star^3
		+18\,\theta \overline{\omega}_\star^2
		+66\, \overline{\omega}_\star^3
		-59\, \overline{\omega}_\star^2
		+14\, \overline{\omega}_\star 
		-1
	}{
		45\,\theta^2 \overline{\omega}_\star^2
		+18\,\theta \overline{\omega}_\star^2
		+36\,\theta \overline{\omega}_\star 
		-63\, \overline{\omega}_\star^2
		+72 \overline{\omega}_\star
		- 9},
	\]
	which is equivalent to a cubic equation of $\overline{\omega}_\star$:
	\begin{equation}\label{eq:2978}
		12(1-\theta^2) \overline{\omega}_\star^3 + (26 \theta^2-50) \overline{\omega}_\star^2 + 14 \overline{\omega}_\star - 1 = 0.
	\end{equation}
	Now we only need to verify that $\overline{\omega}_\star$ defined \eqref{eq:2890a} satisfies \eqref{eq:2978}. 
	In fact, the following cubic equation 
	\begin{equation}\label{eq:2982}
		12(1-\theta^2) + (26 \theta^2-50) \mu + 14 \mu^2- \mu^3 = 0
	\end{equation}
	has the following positive zero
	$$ \mu = \frac{14}{3}+\frac{2}{3}\sqrt{78\,\theta^2+46} \cos \left[ \frac{1}{3} \arccos\frac{1476\,\theta^2-244}{(78\,\theta^2+46)^{\frac{3}{2}}} \right] = \frac1 {\overline{\omega}_\star }.$$
	This implies that $\overline{\omega}_\star$ defined \eqref{eq:2890a} satisfies \eqref{eq:2978}. 
	In summary, we have completed the verification of the feasibility condition (i) for the symmetric CAD \eqref{eq:2879}.
	
	{\tt Step 2: Verify the feasibility conditions (ii) and (iii) in \Cref{def:2D_FCAD}.} 
	The plots of $\Wmu_\star$, $x^{(1)}$, $x^{(2)}$, $y^{(1)}$, $y^{(2)}$, $\omega^{(1)}$, $\omega^{(2)}$ are displayed in  \Cref{fig:2417}, which clearly shows that 
	$$
	\Wmu_\star > 0, \quad \omega^{(1)}>0, \quad \omega^{(2)}>0, \quad (x^{(s)},y^{(s)}) \in [0,1]^2,~ s=1,2. 
	$$
	
	{\tt Step 3: Verify the optimality of the symmetric CAD \eqref{eq:2879}.} 
	Consider 
	\begin{equation}\label{eq:2407}
		p^\star(x,y) = q_\star^2(x,y) \quad \mbox{with} \quad q_\star(x,y) := \left(
		(y^{(2)})^2 - (y^{(1)})^2 \right)x^2 + (x^{(1)})^2 y^2 - (x^{(1)})^2(y^{(2)})^2, 
	\end{equation}
	which vanishes at all the internal nodes of symmetric CAD \eqref{eq:2879}. 
	Moreover, $q_\star(x,y) \in \mathbb{P}^2$. Thus, $p^\star(x,y)$ is an element of $(\mathbb{P}^2)^2$ and belongs to $\mathbb{P}^4_{+}$ and $\mathbb{P}^5_{+}$. 
	According to \Cref{lem:512}, 
	$p^\star(x,y)$ is the critical positive polynomial for both $\phi^\star (\theta,\mathbb{P}^4_{+})$ and 
	$\phi^\star (\theta,\mathbb{P}^5_{+})$.  
	Moreover, 
	Conjectures \ref{con:2070} and \ref{con:2071} hold true for all $\theta\in [-1,1]$ and $4\le k \le 5$ with
	$$
	\Wmu_\star(\theta,\mathbb{P}^k) = \phi^\star (\theta,\mathbb{P}^k_{+})= \phi^\star(\theta,(\mathbb{P}^2)^2), \qquad k=4,5.
	$$
	
	In summary, we have proved that \eqref{eq:2879} is an OCAD for $\mathbb{P}^4$ and $\mathbb{P}^5$ in the case of $\theta \in [-1,0]$. By \Cref{lem:1525,thm:sym_theta}, it can be proved that \eqref{eq:2879} is also an OCAD in case of $\theta \in [0,1]$.
	The proof is completed. 
\end{proof}

\section{Proof of \Cref{thm:3650}}\label{sec:A1}

The proof of \Cref{thm:3650} is given as follows.

\begin{proof}
	We only need to prove the result for $\theta \in [-1,0]$, as the conclusion for $\theta \in [0,1]$ then directly follows from \Cref{thm:sym_theta} and \Cref{cor:1575}.

	In the case of $\theta \in [-1,0]$,  we have $y_3=y_4=0$ and the formulas in \eqref{eq:3733}, \eqref{eq:3737} and \eqref{eq:3752} imply that the following moment equations are true:
	\begin{equation}\label{eq:3831}
		\begin{aligned}
			m_{02} & = \sum_{s = 1}^{4} \omega_s y_s^2 ,&
			m_{04} & = \sum_{s = 1}^{4} \omega_s y_s^4 ,&
			m_{06} & = \sum_{s = 1}^{4} \omega_s y_s^6 , \\
			m_{22} & = \sum_{s = 1}^{4} \omega_s x_s^2 y_s^2 ,&
			m_{24} & = \sum_{s = 1}^{4} \omega_s x_s^2 y_s^4 ,&
			m_{42} & = \sum_{s = 1}^{4} \omega_s x_s^4 y_s^2 .
		\end{aligned}
	\end{equation}
	The formulas in \eqref{eq:3791} and \eqref{eq:3796} lead to
	\[
	m_0 = \sum_{s = 3}^{4} \omega_s,       \quad
	m_2 = \sum_{s = 3}^{4} \omega_s x_s^2, \quad
	m_4 = \sum_{s = 3}^{4} \omega_s x_s^4, \quad
	m_6 = \sum_{s = 3}^{4} \omega_s x_s^6,
	\]
	implying that the following moment equations are true:
	\begin{equation}\label{eq:3849}
		m_{00} = \sum_{s = 1}^{4} \omega_s      , \quad
		m_{02} = \sum_{s = 1}^{4} \omega_s x_s^2, \quad
		m_{04} = \sum_{s = 1}^{4} \omega_s x_s^4, \quad
		m_{06} = \sum_{s = 1}^{4} \omega_s x_s^6.
	\end{equation}
	The moment equations in \eqref{eq:3831} and \eqref{eq:3849} imply the CAD \eqref{eq:3724} satisfies the feasibility condition (i). 
	The weights and coordinates are plotted in \Cref{fig:3862}, which demonstrates that $x_s, y_s \in [0,1]$ and $\omega_s \ge 0$.
	Thus, the feasibiilty condition (ii) and (iii) are satisfied and the CAD \eqref{eq:3724} is feasible.

	Next, we will show that the boundary weight $\overline{\omega}_\star$ given in \eqref{eq:3733} is equal to $\phi^\star(\theta,(\mathbb{P}^3)^2)$.
	According to \Cref{lem:2080}, $\phi^\star(\theta,(\mathbb{P}^3)^2)$ is the smallest real root of polynomial $\mathcal{F}_\theta(\phi)$ in the following form
	\[
	\mathcal{F}_\theta(\phi) = \frac{3^{14} \, 5^{8} \, 7^{2}}{4096} 
	\mathcal{F}^{(1)}_\theta(\phi) 
	\mathcal{F}^{(2)}_\theta(\phi) 
	\mathcal{F}^{(3)}_\theta(\phi) 
	\mathcal{F}^{(4)}_\theta(\phi)
	\]
	with
	\begin{equation}\label{eq:3324}
		\begin{aligned}
			\mathcal{F}^{(1)}_\theta(\phi) & = 6 \phi-1,\\\
			\mathcal{F}^{(2)}_\theta(\phi) & = (12\theta^2 - 12)\phi^3 + (50 - 26\theta^2)\phi^2 - 14\phi + 1 \\
			\mathcal{F}^{(3)}_\theta(\phi) & = (12\theta^3 - 48\theta^2 - 12\theta + 48)\phi^3 + (48\theta + 30\theta^2 - 102)\phi^2 + (- 6\theta+20)\phi - 1, \\
			\mathcal{F}^{(4)}_\theta(\phi) & = (12\theta^3+48\theta^2 - 12\theta - 48)\phi^3 + (48\theta - 30\theta^2 + 102)\phi^2 + (- 6\theta - 20)\phi + 1.
		\end{aligned}
	\end{equation}
	
	\begin{figure}
		\centering
		\includegraphics[width = 0.6\textwidth]{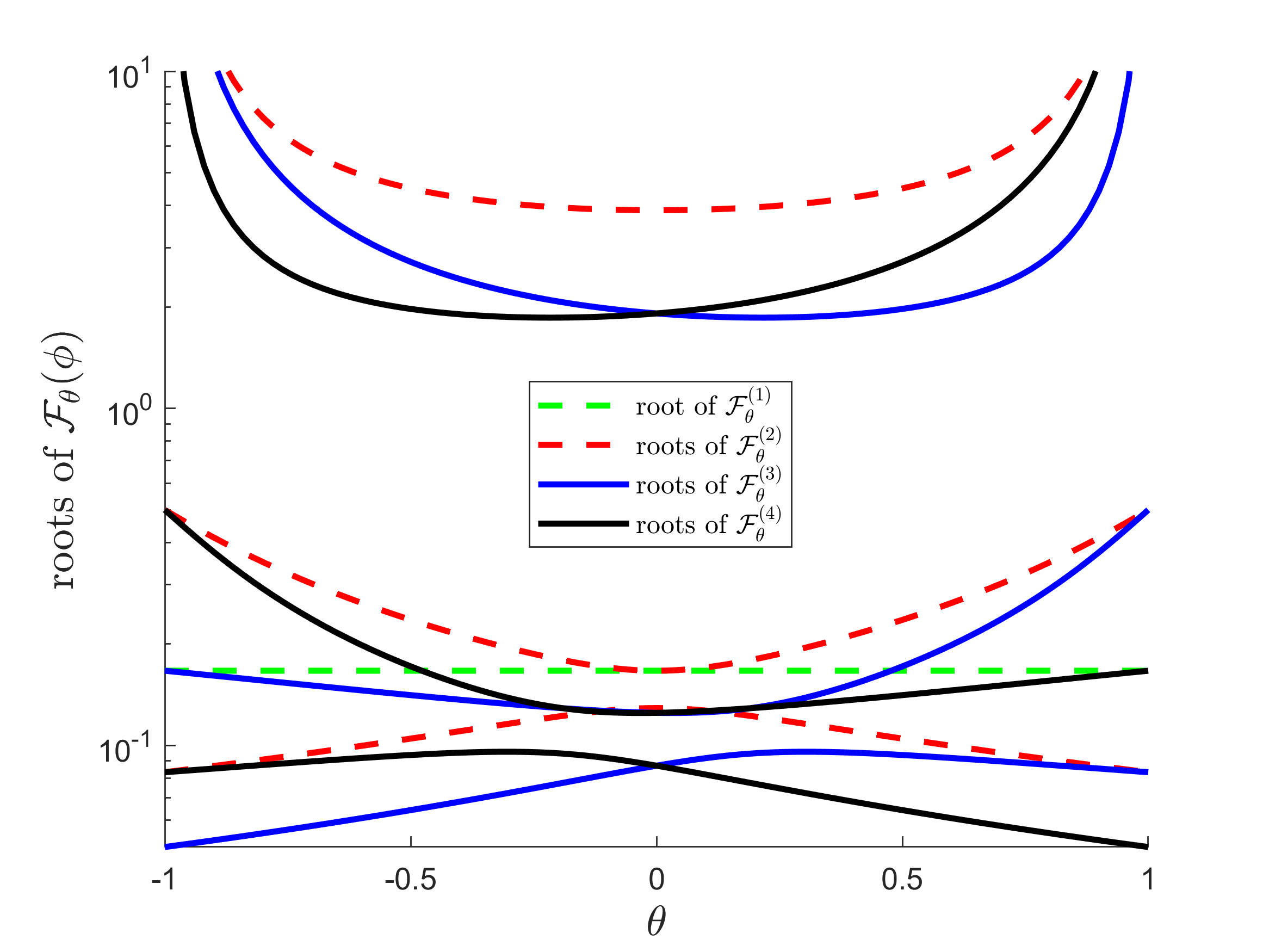}
		\caption{Roots of $\mathcal{F}^{(1)}_\theta(\phi)$, 
			$\mathcal{F}^{(2)}_\theta(\phi)$, 
			$\mathcal{F}^{(3)}_\theta(\phi)$ and 
			$\mathcal{F}^{(4)}_\theta(\phi)$ in \eqref{eq:3324} as functions of $\theta$.}\label{fig:3346}
	\end{figure}
	
	The real roots of 
	$\mathcal{F}^{(1)}_\theta(\phi)$, 
	$\mathcal{F}^{(2)}_\theta(\phi)$, 
	$\mathcal{F}^{(3)}_\theta(\phi)$ and 
	$\mathcal{F}^{(4)}_\theta(\phi)$ are plotted in \Cref{fig:3346}. It is clear that the smallest root of $\mathcal{F}_\theta(\phi)$ is the root of either $\mathcal{F}^{(3)}_\theta(\phi)$ (for $\theta \in [-1,0]$) or $\mathcal{F}^{(4)}_\theta(\phi)$ (for $\theta \in [0,1]$).
	With the help of the cubic formula from \cite{weisstein2002cubic}, the smallest real root of $\mathcal{F}^{(3)}_\theta(\phi)$ has the following explicit formula
	\[
	\phi^{(3)}_{1}(\theta) = \left[-2\theta+\frac{20}{3}+\frac{2}{3}\sqrt{126\,\theta^2-96\theta+94}\cos\left(\frac{1}{3}\arccos\frac{-864\theta^3+2916\,\theta^2-288\theta-532}{(126\,\theta^2-96\theta+94)^{\frac{3}{2}}}\right)\right]^{-1},
	\]
	and the smallest real root $\mathcal{F}^{(4)}_\theta(\phi)$ has the following explicit formula
	\[
	\phi^{(4)}_{1}(\theta) = \left[2\theta+\frac{20}{3}+\frac{2}{3}\sqrt{126\,\theta^2+96\theta+94}\cos\left(\frac{1}{3}\arccos\frac{864\theta^3+2916\,\theta^2+288\theta-532}{(126\,\theta^2+96\theta+94)^{\frac{3}{2}}}\right)\right]^{-1}.
	\]
	Thus,
	\[
	\begin{aligned}
		\phi^\star(\theta,(\mathbb{P}^3)^2) & = 
		\begin{dcases}
			\phi^{(3)}_{1}(\theta) & {\rm if }~~ \theta \in [-1,0], \\
			\phi^{(4)}_{1}(\theta) & {\rm if }~~ \theta \in [0,1], \\
		\end{dcases} \\
		& =  \left[2|\theta|+\frac{20}{3}+\frac{2}{3}\sqrt{126\,\theta^2+96|\theta|+94}\cos\left(\frac{1}{3}\arccos\frac{864|\theta|^3+2916\,\theta^2+288|\theta|-532}{(126\,\theta^2+96|\theta|+94)^{\frac{3}{2}}}\right)\right]^{-1}.
	\end{aligned}
	\]
	So far, we have proved that \eqref{eq:3724} is a feasible CAD and $\overline{\omega}_\star = \phi^\star(\theta,(\mathbb{P}^3)^2)$. By \Cref{lem:513}, \eqref{eq:3724} is the symmetric OCAD for $\mathbb{P}^6$ and $\mathbb{P}^7$ spaces. The proof is completed.
	
\end{proof}

\section{Fully Symmetric OCADs for $\theta = 0$ and $\mathbb{P}^8$ to $\mathbb{P}^{14}$ Spaces} \label{sec:A2}

This appendix gives the  fully symmetric OCADs for $\mathbb{P}^{k}$ spaces with $8 \le k \le 15$ in the case of $\theta =0$. 
See \Cref{fig:P8P9P10P11,fig:P12P13P14P15}.

\begin{sidewaysfigure}[b!]
	\vspace{15cm}
	\centering
	{
		\begin{minipage}[t]{0.45\textwidth}
			\centering
			\includegraphics[width=\textwidth]{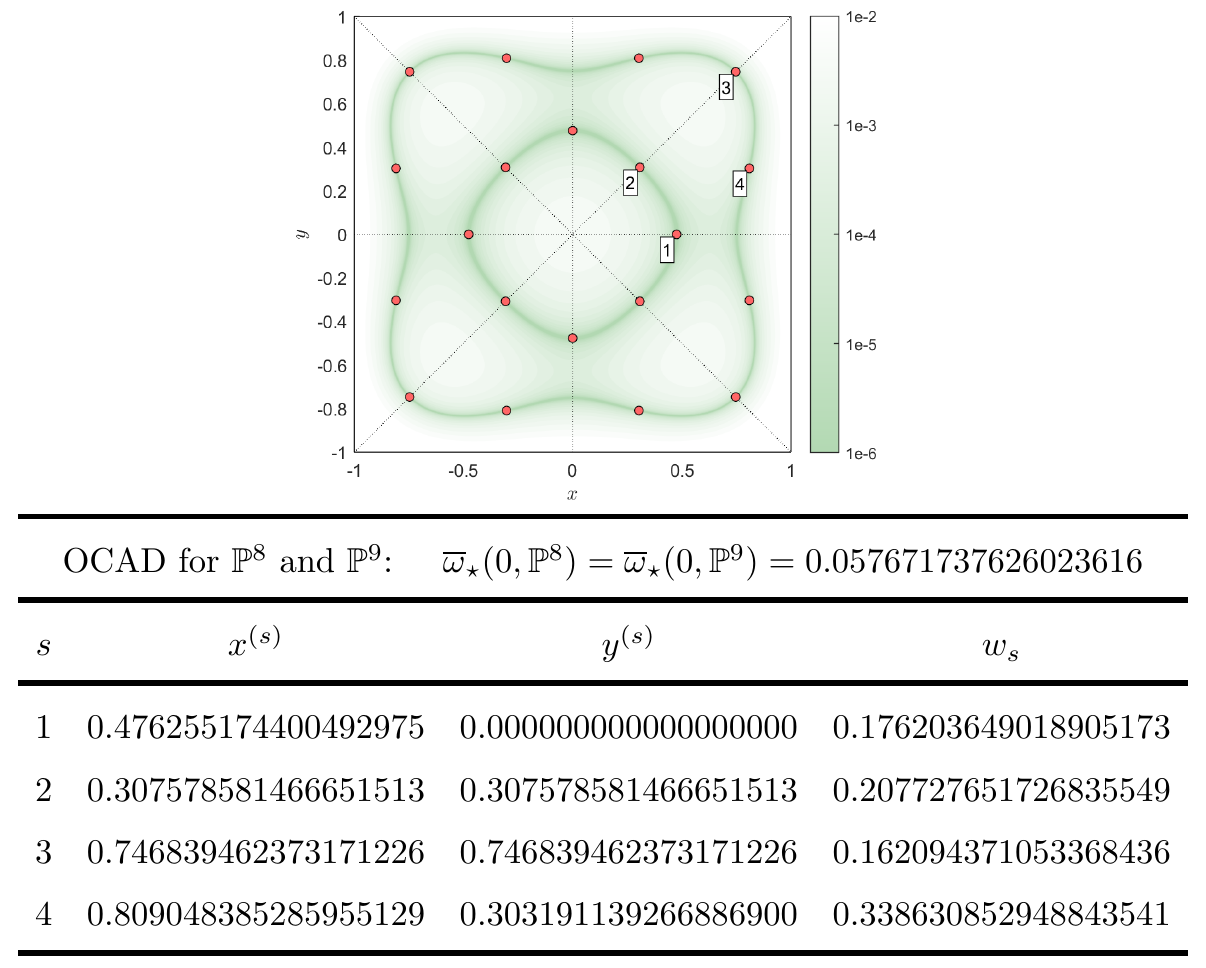}
		\end{minipage}
	}
	\quad 
	{
		\begin{minipage}[t]{0.45\textwidth}
			\centering
			\includegraphics[width=\textwidth]{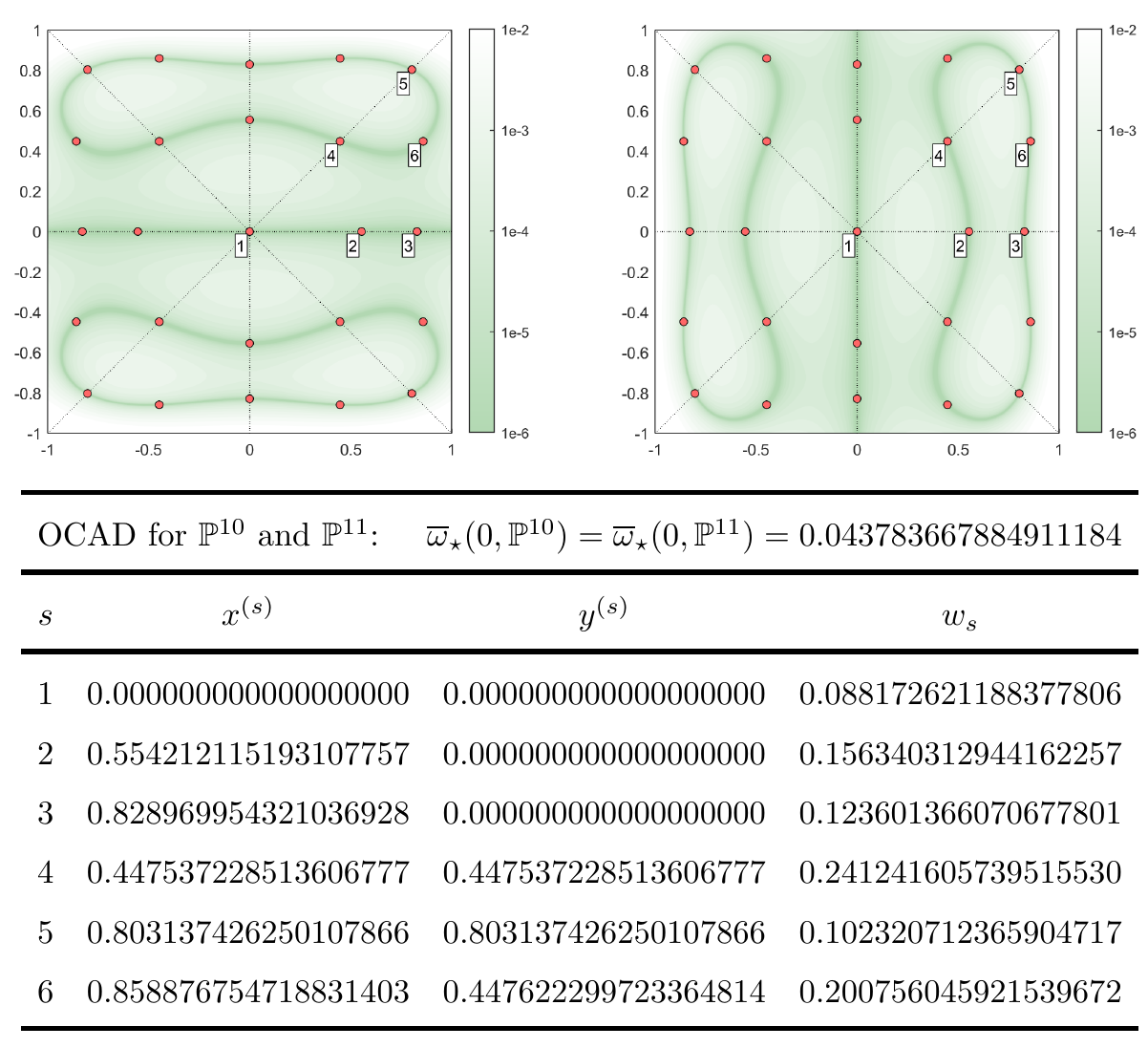}
		\end{minipage} 
	}
	\caption{
		Fully symmetric OCADs in the case of $\theta =0$. Left: $\mathbb P^8$ and $\mathbb P^9$; Right: $\mathbb P^{10}$ and $\mathbb P^{11}$. 
		The background shows the critical positive polynomials, which vanish at all the internal nodes. Note that there are two independent critical positive polynomials for both $\mathbb P^{10}$ and $\mathbb P^{11}$.
	}
	\label{fig:P8P9P10P11}
\end{sidewaysfigure}

\begin{sidewaysfigure}[b!]
	\vspace{136mm}
	\centering
	{
		\begin{minipage}[t]{0.45\textwidth}
			\centering
			\includegraphics[width=\textwidth]{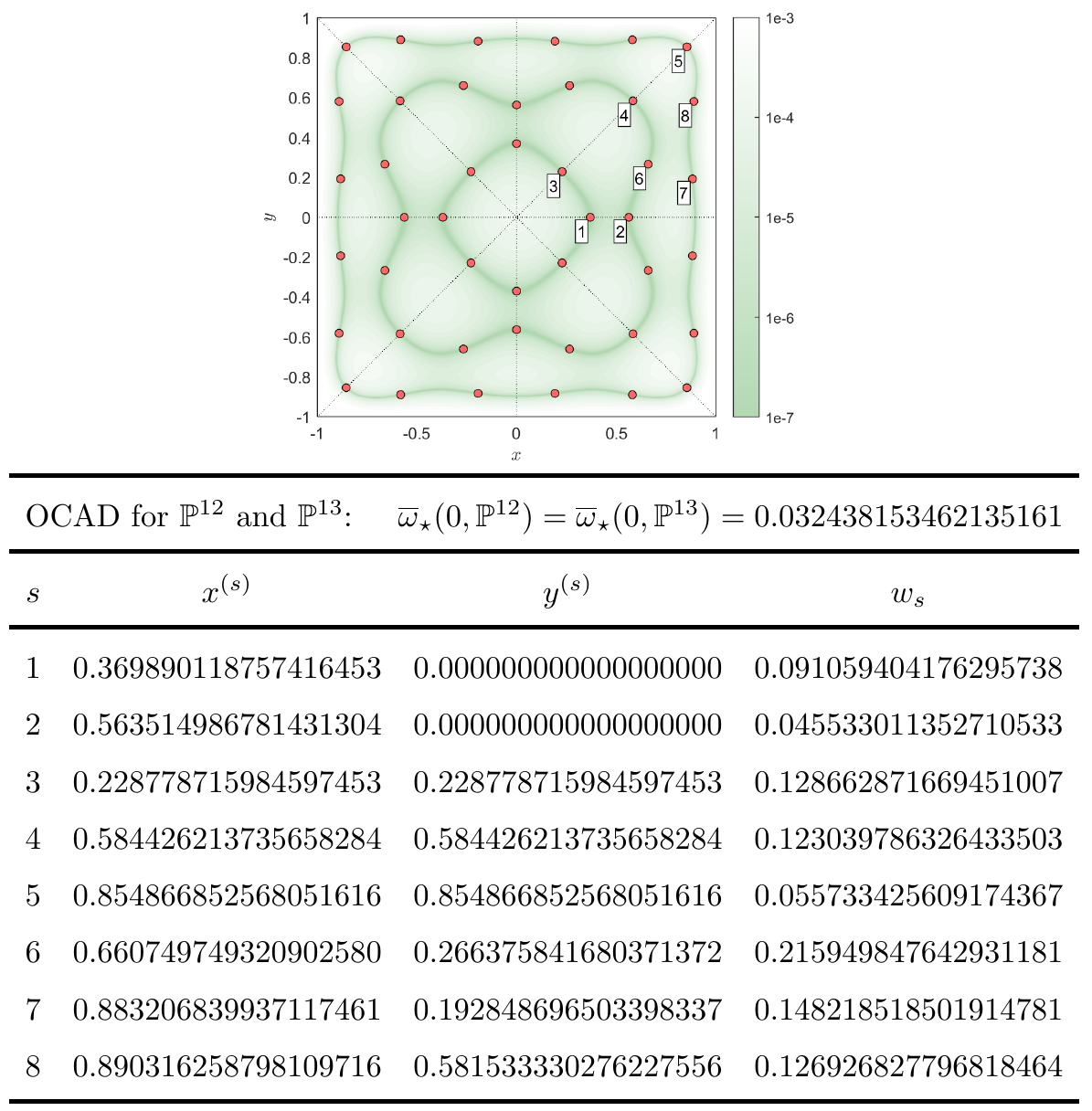}
		\end{minipage}
	}
	\quad 
	{
		\begin{minipage}[t]{0.45\textwidth}
			\centering
			\includegraphics[width=\textwidth]{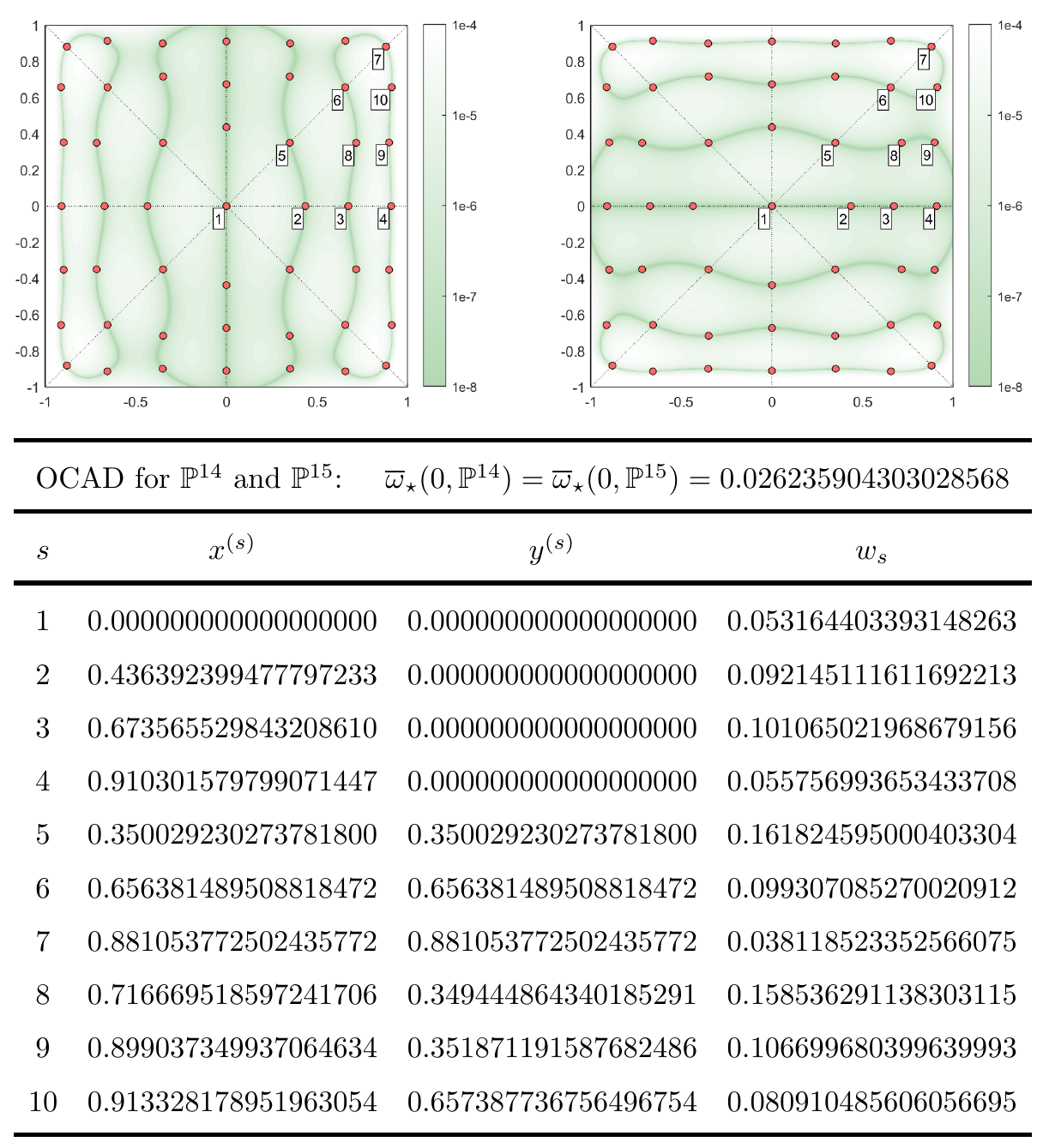}
		\end{minipage} 
	}
	\caption{
		Fully symmetric OCADs in the case of $\theta =0$. Left: $\mathbb P^{12}$ and $\mathbb P^{13}$; Right: $\mathbb P^{14}$ and $\mathbb P^{15}$. 
		The background shows the critical positive polynomials, which vanish at all the internal nodes. Note that there are two independent critical positive polynomials for both $\mathbb P^{14}$ and $\mathbb P^{15}$.
	}
	\label{fig:P12P13P14P15}
\end{sidewaysfigure}


\bibliographystyle{amsplain}

\bibliography{references_short}

%
%
%
%




\end{document}